\documentclass[11pt]{article}
\usepackage[margin=1.00in]{geometry}
\usepackage{amsmath,amssymb,amsthm,mathtools}
\usepackage{enumitem}
\usepackage{graphicx}
\newcommand{\safeincludegraphics}[2][]{%
	\IfFileExists{#2}{\includegraphics[#1]{#2}}{%
		\fbox{\parbox{0.75\linewidth}{\centering Missing figure: \texttt{\detokenize{#2}}}}%
	}%
}
\usepackage{tikz}
\usetikzlibrary{arrows.meta}
\usepackage[colorlinks=true,linkcolor=blue,citecolor=blue,urlcolor=magenta]{hyperref}
\usepackage{microtype}
\usepackage{mathrsfs}
\usepackage[font=small]{caption}
\theoremstyle{plain}
\newtheorem{theorem}{Theorem}[section]
\newtheorem{proposition}[theorem]{Proposition}
\newtheorem{conjecture}[theorem]{Conjecture}
\newtheorem{lemma}[theorem]{Lemma}
\newtheorem{corollary}[theorem]{Corollary}

\newtheorem{definition}[theorem]{Definition}

\newcommand{\grayurl}[1]{%
  \href{#1}{\textcolor{gray}{\nolinkurl{#1}}}%
}

\theoremstyle{definition}
\newtheorem{remark}[theorem]{Remark}

\theoremstyle{plain}
\newtheorem{example}[theorem]{Example}

\newcommand{\R}{\mathbb R}
\newcommand{\E}{\mathbb E}
\newcommand{\Pp}{\mathbb P}
\renewcommand{\P}{\mathbb P}
\newcommand{\Ssym}{\mathbb S}
\newcommand{\operatornorm}[1]{\operatorname{#1}}
\newcommand{\Tr}{\operatorname{Tr}}
\newcommand{\diag}{\operatorname{diag}}
\newcommand{\off}{\operatorname{off}}
\newcommand{\op}{\operatorname{op}}
\newcommand{\one}{\mathbf 1}

\newcommand{\Var}{\operatorname{Var}}
\newcommand{\psione}{\psi_1}
\newcommand{\psitwo}{\psi_2}

\newcommand{\Vol}{\operatorname{Vol}}
\newcommand{\supp}{\operatorname{supp}}

\newcommand{\tr}{\operatorname{tr}}

\newcommand{\Ran}{\operatorname{Range}}
\newcommand{\Cov}{\operatorname{Cov}}

\newcommand{\eps}{\varepsilon}
\newcommand{\pto}{\stackrel{p}{\to}}
\newcommand{\fone}{\mathrm{(F1)}}
\newcommand{\ftwo}{\mathrm{(F2)}}

\providecommand{\HS}{\mathrm{HS}}

\title{Universality and sharp thresholds for ellipsoid fitting}
\author{Frederic Koehler \thanks{Department of Statistics and Data Science Institute, University of Chicago. \url{fkoehler@uchicago.edu}}\and Youngtak Sohn \thanks{Division of Applied Mathematics, Brown University, \url{youngtak_sohn@brown.edu}}}
\date{\today}

\begin{document}
	\maketitle
	
	\begin{abstract}
    We establish a sharp phase transition for fitting random vectors by an ellipsoid. The random vectors have independent subgaussian coordinates with mean zero, variance one, and a common fourth moment, and the number of vectors is proportional to the square of the dimension. We identify an explicit satisfiability threshold such that, with high probability, a positive definite ellipsoid passes through every data point below the threshold, whereas no positive semidefinite fit exists above it. We also determine the optimal squared fitting error throughout the unsatisfiable regime. In particular, the threshold depends on the coordinate distributions only through their common fourth moment, revealing a fourth moment universality phenomenon. For standard Gaussian data the threshold is $1/4$, resolving the ellipsoid fitting conjecture.
	\end{abstract}
	
	\tableofcontents

	\section{Introduction}
\label{sec:model-threshold-main}
Given points $x_1,\ldots,x_n\in\R^d$, the ellipsoid fitting problem asks
whether there exists a positive semidefinite matrix $R\succeq0$ such that
$$
	x_i^\top Rx_i=1,
	\quad\textnormal{for all}\quad i=1,\ldots,n.
$$
Any matrix satisfying these conditions is called an \emph{ellipsoid fit}.

This work studies this problem when $x_1,\ldots,x_n$ are \textit{random}. For independent standard Gaussian vectors, this problem was introduced by~\cite{saunderson2011subspace,saunderson-chandrasekaran-parrilo-willsky2012diagonal, saunderson-parrilo-willsky2013diagonal} in connection
with diagonal plus low rank decompositions. Numerical experiments in
\cite{saunderson2011subspace,
saunderson-parrilo-willsky2013diagonal}
suggested a sharp transition near $d^2/4$ leading to the Gaussian ellipsoid fitting conjecture.

\begin{conjecture}[Gaussian ellipsoid fitting conjecture]
	\label{conj:gaussian-ellipsoid-fitting}
	Let $x_1,\ldots,x_n\overset{\mathrm{i.i.d}}{\sim}N(0,I_d)$, where
	$n=n(d)$. For every fixed $\varepsilon>0$, as $d\to\infty$:
	\begin{enumerate}[label=\textup{(\roman*)}]
		\item If $n\le(1-\varepsilon)\frac{d^2}{4},$	then an ellipsoid fit exists with probability tending to one.
		
		\item If $n\ge(1+\varepsilon)\frac{d^2}{4}$, then no ellipsoid fit exists with probability tending to one.
	\end{enumerate}
\end{conjecture}
A real symmetric $d\times d$ matrix has $d(d+1)/2$ free parameters. If
positive semidefiniteness were ignored, dimension counting would therefore suggest a
feasibility threshold near $d^2/2$. Conjecture~\ref{conj:gaussian-ellipsoid-fitting}
predicts that the constraint $R\succeq0$ cuts this threshold in half, so
that this random semidefinite program (SDP) undergoes a sharp feasibility
transition near $d^2/4$.

Ellipsoid fitting is thus a simple yet nontrivial example of a random semidefinite program exhibiting a sharp phase transition. Such transitions are widely studied in random convex optimization and semidefinite relaxations; see, e.g.~\cite{almt2014living,barak2019nearly}. It is also closely connected to other
problems in theoretical computer science and machine learning, including
overcomplete independent component analysis
\cite{podosinnikova2019overcomplete} and Sum-of-Squares lower bounds for the
Sherrington--Kirkpatrick model
\cite{ghosh-jeronimo-jones-potechin-rajendran2020sos-sk}. We refer to
\cite{potechin-turner-venkat-wein2023nearoptimal,
maillard-kunisky2023replica}
for further discussion of these and other connections.

Previously, the works~\cite{hsieh-kothari-potechin-xu2023ellipsoid,tulsiani-wu2023ellipsoid,bandeira-maillard-mendelson-paquette2024quadratic} established that an ellipsoid fit exists for $n\le d^2/C$ for large enough constant $C>0$. Earlier works had established feasibility first for $n=o(d^{3/2})$~\cite{ghosh-jeronimo-jones-potechin-rajendran2020sos-sk}, and later for $n\le d^2/(\log d)^{O(1)}$~\cite{kane-diakonikolas2023nearly,potechin-turner-venkat-wein2023nearoptimal}. 

Substantial progress toward the sharp transition at $d^2/4$ was made by Bandeira and Maillard~\cite{bandeira-maillard2023ellipsoid}, who established the sharp threshold in an approximate sense: below the threshold, there exist well-conditioned approximate fits with a vanishing error, whereas above the threshold no well-conditioned fit can achieve this. Complementing this rigorous result, Maillard and Kunisky~\cite{maillard-kunisky2023replica} predicted the sharp transition and the
typical shape of fitting ellipsoids using the replica method from statistical physics~\cite{mezard2009information}. We refer to Section~\ref{subsec:further:literature} for further related literature.

\subsection{Main results}
In this work, we resolve
Conjecture~\ref{conj:gaussian-ellipsoid-fitting} and extend it to a broad
class of non-Gaussian distributions. The existence of an ellipsoid fit can
depend strongly on the distribution of the points. For example, if the
$x_i$ are uniform on $\{\pm 1\}^d$, then $S=I_d/d$ fits every sample,
regardless of how large $n$ is. Thus one cannot expect a
distribution independent threshold. It is natural to ask whether a sharp
transition persists for broader classes of distributions and, if so, which
characteristics determine its location. We show that, for random vectors with
independent subgaussian coordinates with mean zero and variance one, the threshold is determined solely
by their common fourth moment $\E[x_{ij}^4]\equiv \kappa$.

Recall that for a random variable $Z$, its subgaussian norm is defined by
\[
\|Z\|_{\psitwo}:=\inf\{t>0:\E\exp(Z^2/t^2)\le2\}.
\]
\begin{theorem}
	\label{thm:phase-transition}
	Let $(x_i)_{1\leq i\leq n}\in \R^d$ be i.i.d. random vectors such that $x_1=(x_{1j})_{1\leq j\leq d}$ has indpenedent coordinates satisfying
	\begin{equation}\label{eq:independent:coordinate:distribution:assumption}
		\E[x_{1j}]=0,\qquad \E \left[x_{1j}^2\right]=1,\qquad
		\E \left[x_{1j}^4\right]=\kappa,\qquad \sup_{j\le d}\|x_{1j}\|_{\psitwo}\le K_x,
	\end{equation}
    where $\kappa>1$ and $K_x>0$ do not depend on $n,d$. Let
	$n,d\to\infty$ with $n/d^2\to\alpha\in(0,\infty)$.  There is an explicit
	threshold $\alpha_\star(\kappa)$, defined in Eq.~\eqref{eq:alpha-star-def},
	which depends only on $\kappa$, such that the following holds.
	\begin{enumerate}[label=\textup{(\roman*)}]
		\item If $\alpha<\alpha_\star(\kappa)$, then with probability tending to one, there exists an ellipsoid fitting all $(x_i)_{1\leq i\leq n}$:
		\[
		\exists R\succ 0,\quad\textnormal{such that}\quad x_i^{\top}R x_i=1\quad \textnormal{for all}\quad 1\leq i\leq n.
		\]
		Moreover, such $R\in \R^{d\times d}$ can be chosen to be well-conditioned: $\lambda_{\max}(R)/\lambda_{\min}(R)=O(1)$. 
		\item If $\alpha>\alpha_\star(\kappa)$, then with probability tending to one
		there is no $R\succeq0$ such that $x_i^\top R x_i=1$ for all $1\leq i\leq n$. Moreover, the optimal squared fitting error converges in probability:
		\begin{equation}\label{eq:thm:least:squares}
			\Gamma_X:=\inf_{R\succeq0, \operatorname{Tr}R=d}\,\inf_{r\geq 0}\,
			\frac{1}{n}\sum_{i=1}^n\left(\frac{x_i^\top Rx_i}{\sqrt{d}}-r\right)^2
			\pto e_\star(\alpha,\kappa),
		\end{equation}
        where $e_\star(\alpha,\kappa)$ is defined in Eq.~\eqref{eq:optimal-fitting-error-main}. For $\alpha>\alpha_\star(\kappa)$, $e_\star(\alpha,\kappa)>0$ holds, thus an approximate ellipsoid fitting is not possible with any $R\succeq 0$ with $\Tr R=d$.
	\end{enumerate}
\end{theorem}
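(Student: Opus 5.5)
The plan is to reduce both parts of Theorem~\ref{thm:phase-transition} to a convex min--max problem over Wishart-type data, prove universality from the coordinate distribution to a Gaussian-equivalent model that matches moments up to fourth order, and then solve the Gaussian-equivalent problem sharply with the convex Gaussian min-max theorem (CGMT).

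\textbf{Step 1: decomposition.} Write $x_i^\top R x_i = \langle R, x_ix_i^\top\rangle$ and split $R = \tau I + \bar R$ with $\Tr \bar R=0$ (so $\tau=1$ under $\Tr R=d$). Then $\langle I, x_ix_i^\top\rangle=\|x_i\|^2 = d+O(\sqrt d)$. Next decompose $\bar R$ into its diagonal and off-diagonal parts. The off-diagonal contribution $\sum_{j\neq k}\bar R_{jk}x_{ij}x_{ik}$ has a Gaussian-like linear functional structure. The diagonal contribution is $\sum_j \bar R_{jj}(x_{ij}^2-1)$, with variance $(\kappa-1)\sum_j\bar R_{jj}^2$; this is exactly where $\kappa$ enters.

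\textbf{Step 2: universality.} I would prove, by a Lindeberg swapping argument applied to a smoothed (soft-min, entropy-regularized) version of the objective $\Gamma_X$ restricted to well-conditioned $R$, that $\Gamma_X$ has the same limit as for a surrogate model. In the surrogate, $x_ix_i^\top-I$ is replaced by $G_i/\sqrt{\cdot}$ plus a diagonal with independent entries of variance $\kappa-1$, where $G_i$ is a GOE-type off-diagonal matrix. Using the subgaussian tails to control third-order terms, the difference per swap should be $O(d^{-3/2})$ times the number of swaps, which makes the total negligible. This step yields the fourth-moment universality.

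\textbf{Step 3: the Gaussian problem.} For the surrogate, Gordon's CGMT applied to $\min_R\max_\lambda \frac1n\sum_i \lambda_i(\langle R,Z_i\rangle/\sqrt d - r) - \|\lambda\|^2/4$ yields a scalar-plus-spectral auxiliary problem. The auxiliary problem reads
\[
\min_{R\succeq0,\Tr R=d}\ \Bigl(\sqrt{\alpha}\,\bigl\|\,\cdot\,\bigr\|\ \text{vs.}\ \langle R, W\rangle\Bigr),
\]
where $W$ is GOE plus an independent diagonal. Its value is governed by the free-probability distance from a semicircle-plus-$\kappa$-dependent spectral measure to the PSD cone, i.e.\ quantities like $\min_{R\succeq0}\|R - (\mu I + tW)\|_F$. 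These concentrate, and their limits yield $e_\star(\alpha,\kappa)$ via the fixed-point equations of \eqref{eq:optimal-fitting-error-main}, with $\alpha_\star(\kappa)$ given by the zero set of $e_\star$. For $\alpha>\alpha_\star$, $e_\star>0$ immediately gives part (ii). CGMT and the universality step handle only restricted or well-conditioned $R$, so I would add a separate argument showing that near-optimal $R$ can be taken with bounded operator norm, for instance via strong convexity of the auxiliary problem or the convex-analysis localization typical in CGMT.

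\textbf{Step 4: exact feasibility in part (i), the main obstacle.} The limit $\Gamma_X\to0$ gives only approximate fits. To obtain an exact fit I would take the well-conditioned approximate minimizer $R_0$ that Step 3 supplies for $\alpha<\alpha_\star$, with $\lambda_{\min}(R_0)\ge c$, and correct it by a least-squares step, $R = R_0 + \mathcal A^*(\mathcal A\mathcal A^*)^{-1}(1-\mathcal A R_0)$, where $\mathcal A R=(x_i^\top Rx_i)_i$. I must show that $\|\mathcal A^*(\mathcal A\mathcal A^*)^{-1}v\|_{\op}<c$. This needs both a lower bound on $\lambda_{\min}(\mathcal A\mathcal A^*)\gtrsim d^2$, which holds for $n\le (1/2-\epsilon)d^2$ by known kernel-matrix results, and smallness in operator norm rather than Frobenius norm. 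Frobenius-norm smallness $o(\sqrt d)$ is insufficient, so I would instead use a margin argument. First solve the problem at a slightly larger $\alpha'\in(\alpha,\alpha_\star)$ with target residual zero along a sequence of scales, then bound the correction's operator norm by $\epsilon$-net plus chaining over the residual vector. The residual vector is approximately orthogonal to the structure, which gives an operator norm of order $\|v\|_2\sqrt{d}/d^{2}\cdot\sqrt d$. I expect this operator-norm control of the correction to be the hardest part.
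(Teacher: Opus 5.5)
Your Gaussian-side plan matches the paper: a Gaussian equivalent with covariance $\mathcal C_\kappa$, CGMT for it, and a well-conditioned Gaussian fit below threshold. But Step 4, which is the heart of part (i), has a genuine gap.

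\textbf{Step 4: exactification.} The residual $v$ of the approximate fit is built from the same data as $\mathcal A$, so you need a bound that holds uniformly over a class of residuals. Smallness in $\ell_2$ cannot define that class. In the paper's normalization $W_i=(x_ix_i^\top-I_d)/\sqrt d$ the approximate fit gives $\|y\|_2=o(d)$. The vector $\lambda_0=e_1-e_2\in\one^\perp$ has $\|\mathcal L_X^*\lambda_0\|_*=O(\sqrt d)$. By duality, a residual with one coordinate of size $\sqrt d$ (a single badly fit point) has $\|y\|_2=o(d)$ yet needs an $\Omega(1)$ operator-norm correction, whatever correction you choose.

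So you first need an $\ell_\infty$ bound on the residual; the paper gets $\|\mathcal A_X\widehat R_d\|_\infty\le(\log d)^2$ by a leave-one-out argument. Even with that bound, your least-squares correction cannot be controlled uniformly, because it is a fixed linear map. Let $w\in\one^\perp$ be the centered vector $(\operatorname{sgn}(x_{i1}^2-1))_{i\le n}$, scaled so that $v:=\mathcal A_X\mathcal A_X^*w$ has $\|v\|_2\le\delta d$. Then $\|v\|_\infty=O(\delta\log d)$. But the least-squares correction of $v$ is $\mathcal L_X^*w$, whose $(1,1)$ entry is of order $\delta\sqrt d$.

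Your heuristic that the residual is ``approximately orthogonal to the structure'' treats $v$ as independent of $X$. No net or chaining argument can rescue it: a net of residuals has $\exp(\Theta(d^2))$ points, while upper-tail concentration of an operator norm for a fixed $v$ gives only probabilities of order $e^{-cd}$.

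The paper instead uses the operator-norm-minimal correction, which exploits the roughly $(1/2-\alpha)d^2$-dimensional kernel of $\mathcal A_X$. By duality, $\Xi_{\op}(y)=\sup_{\lambda\in\one^\perp}|\langle\lambda,y\rangle|/\|\mathcal L_X^*\lambda\|_*$. The paper proves the uniform lower bound $\|\mathcal L_X^*\lambda\|_*\ge c\bigl(\sqrt d(\log d)^{-4}\|h\|_1+d\|t\|_2\bigr)$ for a head/tail split $\lambda=h+t$. A lower bound is the right direction because its failure is a small-ball event. For each fixed head/tail pair this event has probability $(C\gamma)^{\Theta(d^2)}$, which does beat the net. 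That estimate rests on three ingredients:
\begin{enumerate}
\item a vector small-ball estimate with no density assumption, which is needed for discrete coordinates;
\item a covering of nuclear-norm balls by products of column balls;
\item sequential block revealing, which raises the exponent from $d^2/4$ (one decoupled block, enough only for $\alpha<1/4$) to $(1-1/L)d^2/2$, covering all $\alpha<\alpha_\star(\kappa)\le 1/2$.
\end{enumerate}

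\textbf{Step 2: lower bound in part (ii).} A soft-min Lindeberg comparison on bounded or well-conditioned $R$ lives on a domain of Gaussianity and only gives $\limsup\Gamma_X\le e_\star(\alpha,\kappa)$. But $\Gamma_X$ is an infimum over the whole trace-$d$ cone. For non-Gaussian data you must also exclude spiked competitors such as $R=(1-c/\sqrt d)I_d+c\sqrt d\,e_1e_1^\top$, whose projections $\langle W_i,R\rangle$ are not asymptotically Gaussian. CGMT localization controls only the Gaussian problem, so it cannot do this. Your bookkeeping also fails as written: $n\asymp d^2$ swaps at $O(d^{-3/2})$ each sum to $O(d^{1/2})$, so you need per-swap errors $o(d^{-2})$.

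The paper sidesteps localization entirely. Because the loss is quadratic, it runs Lindeberg on the full cone with a log-det self-concordant barrier. The barrier keeps the leave-one-out optimizer interior and bounds the Taylor remainder, so covariance matching cancels the second-order term. The barrier parameter is $\vartheta=O(d)$ rather than $O(d^2)$, which makes the removal cost $\gamma\vartheta\log n$ vanish. The ridge is then removed by a Koltchinskii--Mendelson small-ball localization.
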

Using $\alpha_\star(3)=1/4$, we immediately have
\begin{corollary}
  The Gaussian ellipsoid fitting Conjecture~\ref{conj:gaussian-ellipsoid-fitting} holds.
\end{corollary}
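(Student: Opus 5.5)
The corollary is a direct specialization of Theorem~\ref{thm:phase-transition}, so the plan is to check its hypotheses for Gaussian data and compute the threshold at $\kappa=3$.

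\textbf{Hypotheses.} If $x_i\sim N(0,I_d)$ i.i.d., then the coordinates $x_{1j}$ are i.i.d.\ standard normal. They satisfy $\E[x_{1j}]=0$, $\E[x_{1j}^2]=1$ and $\E[x_{1j}^4]=3$. The fourth moment is therefore a common $\kappa=3>1$ that does not depend on $n$ or $d$. For the subgaussian norm, $\E\exp(Z^2/t^2)=(1-2/t^2)^{-1/2}$ for $Z\sim N(0,1)$, and this equals $2$ at $t^2=8/3$. So $\|x_{1j}\|_{\psitwo}=\sqrt{8/3}=:K_x$ is an absolute constant, and assumption \eqref{eq:independent:coordinate:distribution:assumption} holds.

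\textbf{Threshold value.} Next I evaluate the explicit formula \eqref{eq:alpha-star-def} at $\kappa=3$ and confirm $\alpha_\star(3)=1/4$. This is a direct computation from the definition, which the paper states as a fact. It should match the Bandeira--Maillard and Maillard--Kunisky prediction $d^2/4$, which gives a consistency check on the formula.

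\textbf{Reduction to the proportional regime.} The conjecture is phrased with $n=n(d)$ and inequalities, not with a limit $n/d^2\to\alpha$, so I use a subsequence argument plus monotonicity in $n$.

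For part (i), fix $\varepsilon>0$ and suppose $n\le(1-\varepsilon)d^2/4$. Existence of a fit is monotone decreasing in $n$: a fit for $n'$ points also fits any subset of them. Couple the first $n$ points with a sample of size $n'=\lfloor(1-\varepsilon/2)d^2/4\rfloor$. Then $n'/d^2\to\alpha=(1-\varepsilon/2)/4<1/4$, and Theorem~\ref{thm:phase-transition}(i) gives a fit $R\succ0$ for the $n'$ points with probability $\to1$. Hence a fit exists for the first $n$ points with probability $\to1$.

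For part (ii), suppose $n\ge(1+\varepsilon)d^2/4$. Non-existence is monotone increasing in $n$. Take $n'=\lceil(1+\varepsilon/2)d^2/4\rceil\le n$, so $n'/d^2\to\alpha>1/4$. Theorem~\ref{thm:phase-transition}(ii) says the first $n'$ points admit no PSD fit with probability $\to1$, so neither do all $n$ points.

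\textbf{Expected difficulty.} There is no real obstacle here: all the substance is in the theorem. The only points needing care are the monotonicity coupling used to handle sequences $n(d)$ without a limit, and verifying the closed-form value $\alpha_\star(3)=1/4$ from the definition.
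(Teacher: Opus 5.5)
Your proposal is correct and follows the paper's route: specialize Theorem~\ref{thm:phase-transition} to $\kappa=3$, where $\omega_3=0$ gives $\alpha_\star(3)=s(0)=\tfrac12\int x^2\,d\nu=1/4$. Your subset-monotonicity coupling, which passes from an arbitrary $n(d)$ to the proportional regime $n/d^2\to\alpha$, is correct and supplies a detail the paper leaves implicit when it says the corollary follows immediately.
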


\begin{figure}[t]
	\centering
	\safeincludegraphics[width=0.65\textwidth]{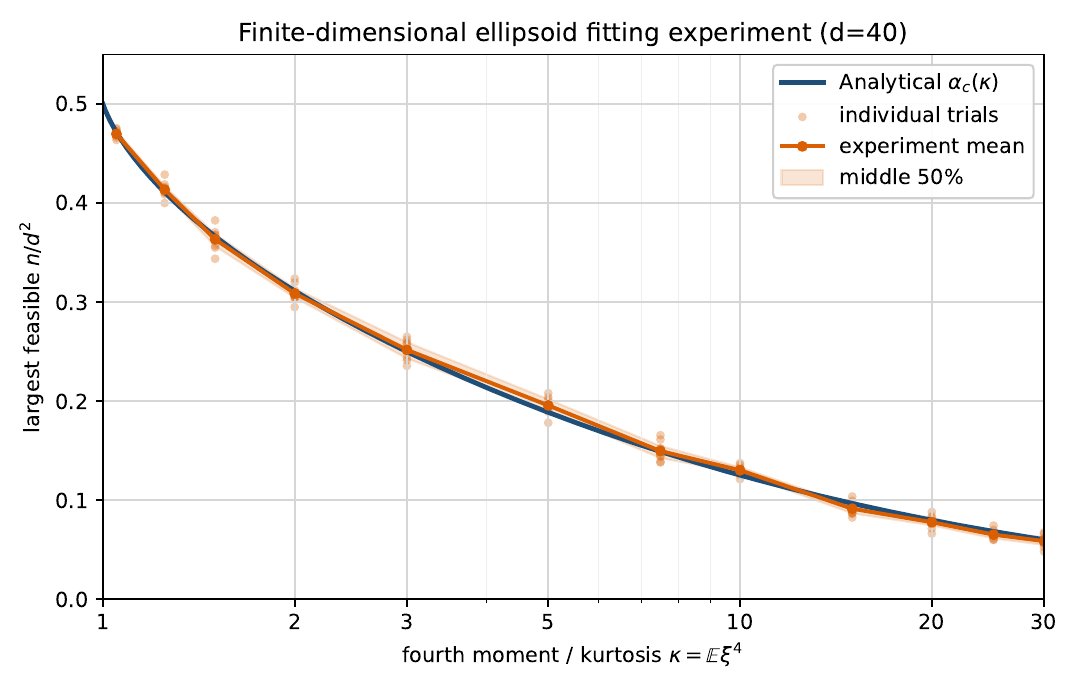}
    \caption[font=small]{The phase diagram and a numerical experiment
		in dimension $d=40$.  The curve is the theoretical threshold
		$\alpha_\star(\kappa)$.  For each value of $\kappa$, eight independent
		trials use i.i.d. coordinates drawn from a centered, variance one
		mixture of two Gaussians with fourth moment $\kappa$; we refer to Appendix~\ref{sec:numerical-experiment}. For each trial, a binary
		search in the number of samples $n$ (y-axis) estimates the largest sample size for which the SDP maximizing
		$t$ subject to $R-tI_d\succeq0$ and
		$x_i^\top R x_i=d$, $i=1,\ldots,n$, is feasible with nonnegative
		optimum. 
        }
	\label{fig:alpha-star-kappa-experiment}
\end{figure}

A couple of remarks concerning Theorem~\ref{thm:phase-transition} are in order.

First, the quantity $\Gamma_X$ in Eq.~\eqref{eq:thm:least:squares} has a simple interpretation as the optimal squared fitting error. The constraint $\Tr R=d$ fixes the average eigenvalue of $R$ to be one and, in particular, excludes the trivial choice $R=0$. For each $R$, the parameter $r\ge0$ selects the common level against which the values $x_i^\top Rx_i/\sqrt d$ are compared. Here, the normalization $1/\sqrt{d}$ reflects the fluctuation scale: for $R=I_d$, $x_i^\top Rx_i=\|x_i\|_2^2$ fluctuates around $d$ on the scale $\sqrt d$.  Thus, $\Gamma_X$ stays of order one as $d\to\infty$. Finally, if $\Gamma_X=0$ is attained at $(R,r)$ with $r>0$, then $S=R/(r\sqrt d)$ satisfies $x_i^\top Sx_i=1$ for all $i$, and hence defines an ellipsoid fit. Conversely, $\Gamma_X>0$ rules out the existence of an ellipsoid fit.

Second, the role of the fourth moment $\kappa$ has a probabilistic interpretation.  Under
the assumptions of Theorem~\ref{thm:phase-transition},
$$
	\Var\left(\|x_1\|_2^2\right)=d(\kappa-1).
$$
Thus larger values of $\kappa$ correspond to greater radial fluctuations around the sphere of radius $\sqrt d$, suggesting that the points should be harder to fit. This intuition is reflected in the fact that $\kappa\mapsto\alpha_\star(\kappa)$ is nonincreasing (see Figure~\ref{fig:alpha-star-kappa-experiment}). Here, if $\kappa=1$, then we must have $x_i\in \{\pm 1\}^d$ for $\kappa=1$, so $S=I_d/d$ fits every sample regardless of $n$. Thus, the nondegenerate regime is $\kappa>1$.

    Finally, Theorem~\ref{thm:phase-transition} also covers discrete coordinates.  For example, fix $p\in (0,1/2)$, and let $x_{ij}=\frac{\xi_{ij}-p}{\sqrt{p(1-p)}}$ where $(\xi_{ij})_{1\leq i\leq n,1\leq j\leq d}\stackrel{i.i.d.}{\sim}\operatorname{Ber}(p)$. Then $x_{ij}$ has mean zero, variance one, and its subgaussian norm is bounded by a constant depending only on $p$. Also, the fourth moment $\E x_{ij}^4=\frac1{p(1-p)}-3$ ranges over $(1,\infty)$ as $p$ ranges over $(0,1/2)$.
    
    For such discrete data, it is not a priori clear that exact fitting should remain possible when $n\asymp d^2$, let alone exhibit a sharp transition. Questions of singularity, invertibility, and anti-concentration for random matrices with discrete entries such as $X=[x_1,\ldots, x_n]\in \R^{d\times n}$ have long been central topics in random matrix theory; see e.g.~\cite{tao2007singularity, rudelson-vershynin2008littlewood, tao2009inverse, rudelson-vershynin2010nonasymptotic}. In fact, our proof of Theorem~\ref{thm:phase-transition}-(i) is motivated by several ideas from this literature. We refer to Section~\ref{sec:proof:overview} for a proof overview.

\begin{figure}[t]
	\centering
	\includegraphics[width=\linewidth]{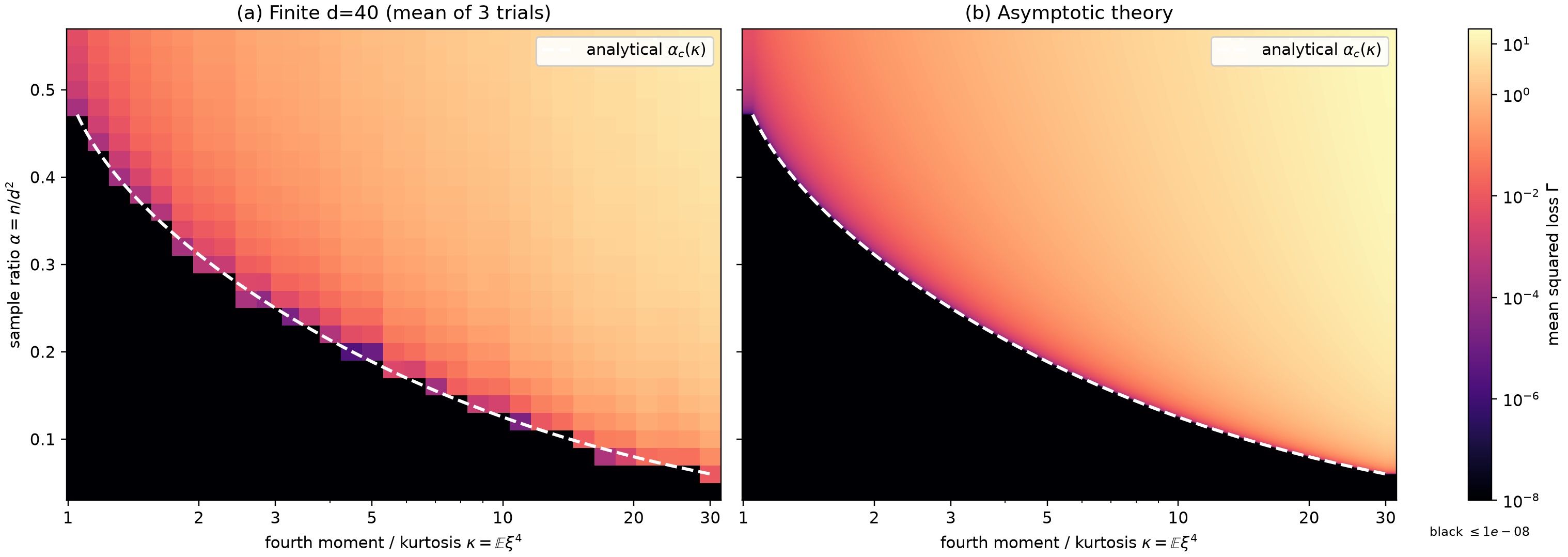}
	\caption[font=small]{Squared loss for the $d=40$ experiment. The setup is the same as
		Figure~\ref{fig:alpha-star-kappa-experiment}, except that for each cell we average the squared loss over three trials.  The left panel is empirical and the
		right panel is the asymptotic formula.  The x-axis is the fourth moment
		$\kappa$, the y-axis is $n/d^2$, and black denotes zero loss.}
	\label{fig:square-loss}
\end{figure}
Figure~\ref{fig:square-loss} shows the same phase diagram as Figure~\ref{fig:alpha-star-kappa-experiment} through the total optimal squared fitting error normalized by $d^2$, whose limit is $\alpha e_\star(\alpha,\kappa)$. Brighter colors indicate larger optimal fitting errors. We refer to Appendix~\ref{sec:numerical-experiment} for  implementation details.

\subsubsection{The Gaussian model and explicit formula}
Beyond numerical evidence, the value $d^2/4$ in the Gaussian ellipsoid
fitting conjecture is naturally connected to a Gaussian matrix model, which was made rigorous for approximate fitting by~\cite{bandeira-maillard2023ellipsoid}.

To formulate it, let $\Ssym^d$ denote the space of real symmetric
$d\times d$ matrices, equipped with the Frobenius inner product
\[
\langle A,B\rangle:=\Tr(AB)\,,\qquad A,B\in \Ssym^d.
\]
Define the linear map $\mathcal{L}_X:\Ssym^d \to \R^n$ as
\begin{equation}\label{eq:definition:L}
\mathcal L_X(R):=\big(\langle W_i,R\rangle\big)_{i\le n},\qquad\textnormal{where}\qquad W_i:=\frac{x_ix_i^\top-I_d}{\sqrt d}.
\end{equation}
Let $\one\in\R^n$ be the all-ones vector and let
$P_{\one^\perp}$ be the orthogonal projection onto $\one^\perp$. Since
this projection subtracts the empirical mean, $\Gamma_X$ in Eq.~\eqref{eq:thm:least:squares} can be written as
$$
	\Gamma_X
	= 	\inf_{R\succeq 0. \Tr R=d}\inf_{b\in \R} \frac1n\sum_{i=1}^{n}\left(\langle W_i, R\rangle -b\right)^2=
	\inf_{R\succeq 0. \Tr R=d}
	\frac1n\|\mathcal A_X R\|_2^2,\quad\textnormal{where}\quad \mathcal A_X:= P_{\one^\perp} \mathcal{L}_X.
$$
In particular, $\Gamma_X$ is an \textit{empirical risk minimization} (ERM) with respect to observations $(W_i)_{i\leq n}$. Note that the dimension of $W_i$ is $d(d+1)/2$, which is the same order as $n$ in the regime $n/d^2\to \alpha$. In this regime, recent literature on universality of ERM (e.g.~\cite{hu2022universality, montanari-saeed2022universality,han2023universality}) motivates
replacing the $(W_i)_{i\leq n}$ with i.i.d. Gaussian matrices $(G_i)_{i\leq n}\in \Ssym^d$ matching first two moments with $W_i$.

Note that $\E W_i=0$, and the covariance of $W_i$ depends
on the coordinate law only through its fourth moment $\kappa$: for every
$A,B\in\Ssym^d$,
\begin{equation}
	\label{eq:C-kappa-trace}
	\E\left[
	\langle W_i,A\rangle\langle W_i,B\rangle
	\right]
	=
	\frac1d\Big(2\langle A, B\rangle +(\kappa-3)\big\langle {\rm diag}(A), {\rm diag}(B)\big\rangle
	\Big)
	=:\mathcal{C}_\kappa(A,B),
\end{equation}
where ${\rm diag}(A):=(A_{11},\ldots, A_{dd})$. Let $G_1,\ldots,G_n$ be independent symmetric Gaussian matrices with
this covariance. Concretely, their upper-triangular entries are
independent and satisfy
\begin{equation}\label{eq:def:gaussian:features}
	(G_1)_{jj}\sim N\left(0,\frac{\kappa-1}{d}\right),
	\qquad
	(G_1)_{j\ell}
	\sim N\left(0,\frac1d\right),
	\quad j<\ell.
\end{equation}
Then, $\E[
\langle G_i,A\rangle\langle G_i,B\rangle]
=\mathcal{C}_\kappa(A,B).$ Define the corresponding Gaussian optimization problem by
$$
	\mathcal L_{G,\kappa}(A):=\bigl(\langle G_i,A\rangle_F\bigr)_{i\le n},
	\qquad
	\mathcal A_{G,\kappa}:=P_{\one^\perp}\mathcal L_{G,\kappa},
	\qquad
	\Gamma_{G,\kappa}
	:=
	\inf_{R\succeq 0, Tr R=d}
	\frac1n\|\mathcal A_{G,\kappa}R\|_2^2.
$$
Observe that when $\kappa=3$, the matrices $G_1,\ldots,G_n$ are
independent GOE matrices~\cite{anderson-guionnet-zeitouni2010random}.
Their law is rotationally invariant in $\Ssym^d$, and hence
$\ker(\mathcal A_{G,3})$ is a uniformly oriented random subspace of
codimension $n-1$. Moreover,
\[
\Gamma_{G,3}=0
\quad\Longleftrightarrow\quad
\ker(\mathcal A_{G,3})\cap\Ssym_+^d\neq\{0\},
\qquad
\Ssym_+^d:=\{A\in\Ssym^d:A\succeq0\}.
\]
This intersection undergoes a transition at the statistical dimension
 of $\Ssym_{+}^d$, which is known to be $d(d+1)/4$
\cite{chandrasekaran2012convex,almt2014living}; see also
\cite[Section~3]{bandeira-maillard2023ellipsoid}. This explains the
threshold $d^2/4$ in Conjecture~\ref{conj:gaussian-ellipsoid-fitting}.

For general $\kappa$, however, the Gaussian matrices $G_i$'s are no longer isotropic in $\Ssym^d$. Nevertheless, we show in Section~\ref{sec:gaussian-anisotropy-threshold} that the asymptotic value of $\Gamma_{G,\kappa}$ can be characterizes using a suitable convex Gaussian min-max theorem (CGMT)~\cite{gordon1988milman, thrampoulidis-oymak-hassibi2015regularized}. 
\begin{proposition}
	\label{thm:gaussian-master}
	Fix $\kappa>1$, and suppose $n,d\to\infty$ with $n/d^2\to\alpha\in(0,\infty)$.
	\begin{enumerate}[label=\textup{(\roman*)}]
		\item If $\alpha<\alpha_\star(\kappa)$, then there are constants
		$0<m<M<\infty$, depending only on $\alpha$ and $\kappa$ such that with probability tending to one there exists
		$R\in\Ssym^d$ satisfying
		$$
			mI_d\preceq R\preceq MI_d,
			\qquad
			\Tr R=d,
			\qquad
			\mathcal A_{G,\kappa} R=0.
		$$

		\item We have $\Gamma_{G,\kappa}\pto e_\star(\alpha,\kappa)$. If $\alpha>\alpha_\star(\kappa)$, then $e_\star(\alpha,\kappa)>0$ and
		$\P(\Gamma_{G,\kappa}\ge c_0)\to1$ for some
		$c_0=c_0(\alpha,\kappa)>0$.
	\end{enumerate}
\end{proposition}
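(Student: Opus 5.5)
We reduce Proposition~\ref{thm:gaussian-master} to an isotropic Gaussian problem and then analyze it with the CGMT. The first step is to whiten the covariance $\mathcal C_\kappa$. Decompose $R=D+O$ with $D=\diag$ part and $O$ the off-diagonal part. Then
\[
\langle G_i,R\rangle\stackrel{d}{=}\frac{1}{\sqrt d}\Bigl(\sqrt{\kappa-1}\,\langle g_i,\diag(R)\rangle+\sqrt2\,\langle H_i,O\rangle_{\mathrm{off}}\Bigr),
\]
where $g_i\in\R^d$ and $H_i$ (with $d(d-1)/2$ upper-triangular entries) are standard Gaussian vectors. Composing with $P_{\one^\perp}$ only removes one direction, which is negligible at order $n$; concretely, we introduce the free offset $b$ back and minimize over it. Thus
\[
\Gamma_{G,\kappa}=\min_{R\succeq0,\Tr R=d}\max_{\|u\|\le1}\frac{1}{\sqrt n}\,u^\top\mathcal A_{G,\kappa}R ,
\]
squared. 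This is a bilinear Gaussian form $u^\top \mathbf G\, \Sigma^{1/2}\mathrm{vec}(R)$ with a convex compact feasible set in $R$ (it is compact because $R\succeq0$ and $\Tr R=d$). The CGMT therefore applies in both directions, since everything is convex--concave.

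The auxiliary problem becomes
\[
\min_{R}\max_{\|u\|\le1}\Bigl(\|P_{\one^\perp}u\|\,\langle h,\Sigma^{1/2}R\rangle+\|\Sigma^{1/2}R\|_F\,\langle g,P_{\one^\perp}u\rangle\Bigr)/\sqrt n,
\]
where $h$ is a Gaussian symmetric matrix (GOE-type after whitening) and $\|\Sigma^{1/2}R\|_F^2=d\,\mathcal C_\kappa(R,R)=2\|R\|_F^2+(\kappa-3)\|\diag R\|^2$. Maximizing over $u$ gives
\[
\frac{1}{\sqrt n}\Bigl(\|g\|\sqrt{\mathcal C_\kappa(R,R)\,d}+\langle h,\Sigma^{1/2}R\rangle\Bigr)_+ .
\]
Rescaling by $\|g\|\approx\sqrt n\approx \sqrt{\alpha}\,d$ and writing $\langle h,\Sigma^{1/2}R\rangle=-\langle Z,R\rangle$ with $Z$ a $\kappa$-anisotropic Wigner matrix (independent diagonal of variance proportional to $\kappa-1$), the problem reduces to the scalar variational question
\[
\psi_d(\alpha,\kappa)=\min_{R\succeq0,\Tr R=d}\Bigl(\sqrt{\alpha}\,d\sqrt{2\|R\|_F^2+(\kappa-3)\|\diag R\|^2}-\langle Z,R\rangle\Bigr).
\]

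The second step is to compute the limit of $\psi_d/d^{2}$. We introduce a Lagrange multiplier for the norm, writing $\sqrt{a}=\min_{\tau>0}(a/(2\tau)+\tau/2)$. This turns the problem into a strongly convex quadratic minimization over the PSD cone with a trace constraint. Its minimizer is a spectral projection of the form
\[
R=\bigl(\tau(Z+\mu I)-\text{diagonal correction}\bigr)_+ .
\]
The diagonal correction is the main difficulty, because the $(\kappa-3)\|\diag R\|^2$ term breaks rotation invariance. We plan to handle it with a second multiplier: dualize $\diag R=v$ by adding $\langle \Lambda,\diag R\rangle$ with $\Lambda$ diagonal. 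By concentration and exchangeability of coordinates, the optimal $\Lambda$ is asymptotically a scalar multiple of the diagonal of $Z$, i.e.\ $\Lambda=\beta\,\diag(Z)$ plus a constant. The anisotropic part of $Z$ is exactly a diagonal Gaussian of variance $(\kappa-3)/d$ added to a GOE, so this choice of $\Lambda$ effectively replaces $Z$ by a GOE plus a tunable independent diagonal Gaussian. The spectral law of $(\mathrm{GOE}+\text{diag})$ is a free convolution of the semicircle with a Gaussian. Hence the limit is given by a finite-dimensional saddle point in $(\tau,\mu,\beta)$, involving integrals of $(x+\mu)_+^k$ against that free convolution. This saddle point is what we expect to match Eq.~\eqref{eq:optimal-fitting-error-main} and Eq.~\eqref{eq:alpha-star-def}; the latter is the point where the optimal value changes sign. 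The uniform convergence of the random objective to its deterministic limit follows from Lipschitz concentration in $Z$ together with the convergence of the spectral measure.

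The third step is to transfer the conclusions back to the primary problem. For (ii), the CGMT gives matching upper and lower bounds on $\Gamma_{G,\kappa}$ in probability, so $\Gamma_{G,\kappa}\pto e_\star=(\lim\psi/d^2)_+^2/\alpha$ up to normalization. Positivity of $e_\star$ for $\alpha>\alpha_\star$ comes from the explicit formula, which gives (ii) with $c_0=e_\star/2$. For (i), we use the standard CGMT localization argument: restrict the primary problem to the set $\{R: mI\preceq R\preceq MI\}$, which is still convex. We must show that the auxiliary optimum restricted to this set is still strictly negative when $\alpha<\alpha_\star$. The auxiliary minimizer is $(\cdot)_+$ of a shifted spectrum, which would have zero eigenvalues. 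The plan is to exploit strict slack: for $\alpha<\alpha_\star$ the value is strictly negative, so we may perturb the minimizer to $R'=(1-\delta)R+\delta I$ and clip its top eigenvalues at $M$. This loses only $O(\delta)+o_M(1)$ in the objective and keeps it negative. The CGMT then shows that the restricted primary problem has value $0$, i.e.\ an $R$ in the restricted set with $\mathcal A_{G,\kappa}R=0$ exists; it attains this value because the set is compact. We expect the main obstacle to be the diagonal anisotropy in the second step, namely justifying rigorously that the diagonal multiplier reduces to a scalar parameter; the trace normalization also needs care.
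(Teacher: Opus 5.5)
Your architecture matches the paper's. You use a Gaussian representation of $\mathcal A_{G,\kappa}$ with $n-1$ i.i.d.\ rows and apply the two-sided CGMT on the compact convex set $T_\kappa\mathcal R_d$. You reduce the auxiliary problem to a semicircle variational problem, where your trace multiplier $\mu$ plays the role of the paper's shift $\zeta\sqrt{2d/(\kappa-1)}\,I_d$. For (i), you run a CGMT comparison on $\{mI_d\preceq R\preceq MI_d,\ \Tr R=d\}$ with essentially the perturbed profile $(x-\omega_\kappa)_++\ell$ as witness. The gap is your second step, which carries all the content: you have no valid way to dispose of the diagonal anisotropy $c_\kappa\|R_{\diag}\|_F^2$, and the mechanism you sketch is partly wrong.

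First, take $1<\kappa<3$. The decomposition ``GOE plus an independent diagonal Gaussian of variance $(\kappa-3)/d$'' does not exist there, since the variance is negative. More seriously, $c_\kappa<0$, so $\|T_\kappa R\|_F^2=\|R\|_F^2+c_\kappa\|R_{\diag}\|_F^2$ is small for diagonal-heavy $R$. Lower-bounding the auxiliary infimum then requires showing that no near-minimizer puts its mass on the diagonal. In your multiplier language, the concave term gives $\min_R\min_\Lambda$, so you must control every diagonal $\Lambda$. Exchangeability only yields permutation-equivariance in law, not that the optimal $\Lambda$ is asymptotically constant. The paper sidesteps this by polar duality. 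It uses $(\mathbb S_\kappa^d)^\circ=-\mathbb S_{\kappa^\vee}^d$ with $\kappa^\vee=1+4/(\kappa-1)>3$ (Lemma~\ref{lem:rescaled-psd-cone-duality}), Moreau's identity $\|\Pi_{\mathbb S_\kappa^d}(Y)\|_F^2+\|\Pi_{\mathbb S_{\kappa^\vee}^d}(-Y)\|_F^2=\|Y\|_F^2$, and the scalar identity $\chi_\kappa(\zeta)+\chi_{\kappa^\vee}(\zeta^\vee)=\tfrac12+2\zeta^2/(\kappa-1)$ (Lemma~\ref{lem:shifted-duality}). Together these reduce everything to $\kappa\ge3$.

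Second, for $\kappa\ge3$ the ``$\beta\,\diag(Z)$ plus free convolution'' idea is a red herring. On the spectral scale, $\diag(Z)$ has operator norm $O(\sqrt{\log d/d})$, so it cannot change the limiting spectral law. Likewise $|\langle(\mathsf G_d)_{\diag},T_\kappa R\rangle_F|\le O(\sqrt d)\,\|T_\kappa R\|_F$, which is a factor $d^{-1/2}$ below the main terms. The only surviving anisotropic effect is $\|R_{\diag}\|_F^2\approx(\Tr R)^2/d$, which is a scalar.

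The paper proves the two matching bounds concretely. For one direction, Cauchy--Schwarz gives $\|R_{\diag}\|_F^2\ge(\Tr R)^2/d$, the right direction when $c_\kappa\ge0$. Von Neumann's trace inequality then reduces to an eigenvalue problem solved by $(g_i-\omega_d)_+$. For the other direction, plug in $U\diag((g_i-\omega_d)_+)U^\top$; its Haar eigenvectors make its diagonal concentrate at its mean (Lemma~\ref{lem:haar-diagonal-averaging}).

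If your saddle point genuinely involved a nontrivial free convolution of $\nu$ with a Gaussian, it could not reproduce $\alpha_\star$ and $e_\star$, which depend only on $\nu$ and $c_\kappa$. So ``we expect this to match'' is unsupported. Part (i) inherits the same issue, because evaluating $\|T_\kappa R'\|_F$ at your perturbed witness needs exactly this diagonal-concentration fact. Clipping at $M$ is unnecessary, since the GOE spectrum is bounded with high probability.
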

We now define the threshold $\alpha_\star(\kappa)$ and the limiting optimal squared fitting error $e_\star(\alpha,\kappa)$. These formulas arise from the CGMT analysis of $\Gamma_{G,\kappa}$.

Let $\nu$ be the rescaled semicircle law such that ${\rm supp}(\nu)=[-\sqrt2,\sqrt2]$:
\[
d\nu(x)=\frac{1}{\pi}\sqrt{2-x^2}
\mathbf 1_{\{|x|\le\sqrt2\}}dx.
\]
For $\omega <\sqrt{2}$, let
\[
s(\omega):=\int(x-\omega)_+^2\,d\nu(x),\qquad m(\omega):=\int(x-\omega)_+\,d\nu(x).
\]
Here $\omega$ acts as a spectral cutoff, while $m(\omega)$ and
$s(\omega)$ measure the first two moments of the spectrum above this
cutoff. Set
\[
\mathcal E_\kappa(\omega)
:=
s(\omega)+\frac{\kappa-3}{2}m(\omega)^2.
\]
The second term accounts for the anisotropy and
vanishes in the GOE case $\kappa=3$. Then,
\begin{equation}
\label{eq:alpha-star-scalar-main}
\alpha_\star(\kappa)
=
\mathcal E_\kappa(\omega_\kappa)
=
s(\omega_\kappa)
+\frac{\kappa-3}{2}m(\omega_\kappa)^2,
\end{equation}
where $\omega_\kappa\in [-\sqrt{2},\sqrt{2}]$ is the unique solution of
\[
\omega_\kappa=\frac{\kappa-3}{2}m(\omega_\kappa).
\]
For $\kappa=3$, this gives $\omega_\kappa=0$, thus we have $\alpha_\star(3)=s(0)=1/4$.

We also prove in
Section~\ref{subsec:auxiliary} that $\alpha_\star(\kappa)$ admits a variational representation
\begin{equation}
\label{eq:alpha-star-def}
\alpha_\star(\kappa)
=
\sup_{\substack{f\in L^2(\nu),\ f\ge0\\ \int f\,d\nu=1}}
\frac{\left(\int xf(x)\,d\nu(x)\right)^2}
{\int f(x)^2\,d\nu(x)+(\kappa-3)/2}.
\end{equation}
This representation is useful for studying the dependence of the
threshold on $\kappa$; in particular,
$\alpha_\star(\kappa)\to1/2$ as $\kappa\downarrow1$.

For each $\alpha>0$ and $\kappa>1$, define the limiting optimal squared fitting error $e_\star(\alpha,\kappa)$ via
\begin{equation}
\label{eq:optimal-fitting-error-main}
e_\star(\alpha,\kappa)
=
\frac2\alpha\Big(\frac{\kappa-3}{2}m(\omega_{\alpha,\kappa})-\omega_{\alpha,\kappa} \Big)_+^2.
\end{equation}
where $\omega_{\alpha,\kappa}<\sqrt2$ is the unique solution of
\[
\mathcal E_\kappa(\omega_{\alpha,\kappa})=\alpha.
\]
\begin{remark}
	For rotationally invariant data, whereby $x_i$'s are i.i.d. with $Ox_i\stackrel{d}{=}x_i$ for any orthogonal matrix $O$, Maillard and Kunisky
	\cite{maillard-kunisky2023replica} used the replica method from statistical physics to predict a critical threshold depending only on the normalized radial
	variance $\tau=\lim_{d\to\infty}d^{-1}\Var(\|x\|_2^2)$.  Under the
	assumptions of Theorem~\ref{thm:phase-transition}, $\tau=\kappa-1$, and
	their threshold formula is equivalent to
	\eqref{eq:alpha-star-scalar-main}.  The limiting
	optimal squared fitting error in
	\eqref{eq:optimal-fitting-error-main} seems to be new.
\end{remark}

\begin{remark}
	\label{rem:gaussian-energy-quadratic-onset}
	As $\alpha\downarrow\alpha_\star(\kappa)$ from above, we can compute (Lemma~\ref{lem:gaussian-energy-quadratic-onset})
	\begin{equation}
		\label{eq:gaussian-energy-quadratic-onset}
		e_\star(\alpha,\kappa)
		=
		\frac{(\alpha-\alpha_\star(\kappa))^2}
		{2\alpha_\star(\kappa)m(\omega_\kappa)^2}
		+o\bigl((\alpha-\alpha_\star(\kappa))^2\bigr).
	\end{equation}
	Thus, in terms of the $\Gamma_X$ (or $\Gamma_{G,\kappa}$), the phase
transition at $\alpha=\alpha_\star(\kappa)$ is second order (``continuous''), because the first derivative is zero at both sides of the phase transition, whereas the second derivative jumps discontinuously from zero.
\end{remark}

\subsubsection{Universality and exact fitting}

Having analyzed the Gaussian model, we next transfer the asymptotic formula for $\Gamma_{G,\kappa}$ to $\Gamma_X$ through universality. In fact, at the level of approximate fitting, this universality holds for a substantially broader class of distributions and allows dependence among the coordinates of each $x_i$.

We say the random vector $x_1\in \R^d$ satisfies \textit{approximate tensorization of variance} with dimension-free constant $C>0$ if the following holds: for every measurable $f: \R^d\to \R$ with $\Var(f(x_1))<\infty$, 
\begin{equation}
	\label{eq:approximate-variance-tensorization}
	\Var(f(x_1))
	\le C\sum_{j=1}^d
	\E\left[\Var(f(x_1)\mid x_{1,-j})\right],
\end{equation}
where $x_{1,-j}$ denotes all coordinates of $x_1$ except the $j$'th. This condition allows nontrivial dependence among the coordinates. While product measures satisfy \eqref{eq:approximate-variance-tensorization} with $C=1$ by the Efron--Stein inequality~\cite{vanhandel2016probability}, it also holds for classes of weakly dependent measures; more precisely, those where the Glauber dynamics has a dimension-free spectral gap (see, e.g., \cite[Chapter 2]{vanhandel2016probability} or \cite{LP}).

The following theorem extends Theorem~\ref{thm:phase-transition}-\textup{(ii)} beyond independent coordinates.

\begin{theorem}[Universality of the optimal squared fitting error]
	\label{thm:intro-approx-tensorization-squared-loss}
	Let $x_1,\ldots,x_n\in\R^d$ be i.i.d. random vectors,
	and write $x_1=(x_{1j})_{j\le d}$. Assume that $\E[x_1]=0, \Cov(x_1)=I_d$, and that for $W_1:=\frac{x_1x_1^\top-I_d}{\sqrt d}$, we have for some fixed $\kappa>1$,
	$$
		\E\left[
		\langle W_1,A\rangle
		\langle W_1,B\rangle
		\right]
		=\mathcal{C}_\kappa(A,B),
		\qquad A,B\in\Ssym^d.
	$$
    Moreover, assume that $x_1$ satisfies
	\eqref{eq:approximate-variance-tensorization} with a dimension-free
	constant $C>0$, and $\sup_{j\le d}\E|x_{1j}|^8\le M$ for some constant $M<\infty$
	independent of $d$. If $n,d\to\infty$ with $n/d^2\to\alpha\in(0,\infty)$, then
	\begin{equation}
		\label{eq:coordinate-squared-loss-formula}
		\Gamma_X\pto  e_\star(\alpha,\kappa).
	\end{equation}
\end{theorem}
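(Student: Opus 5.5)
Since Proposition~\ref{thm:gaussian-master}(ii) gives $\Gamma_{G,\kappa}\pto e_\star(\alpha,\kappa)$, the plan is to show that $\Gamma_X-\Gamma_{G,\kappa}\pto 0$. This is an ERM universality statement: the features $W_i$ are replaced by Gaussian $G_i$ with the same covariance $\mathcal C_\kappa$, in dimension $p=d(d+1)/2\asymp n$. The key tool will be a Lindeberg interpolation carried out on a smoothed surrogate of the problem, in the spirit of \cite{montanari-saeed2022universality,hu2022universality}.

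\textbf{Step 1: reduction to a compact, smooth problem.} The feasible set $\mathcal K=\{R\succeq0,\Tr R=d\}$ is convex and contains only matrices with $\|R\|_F\le d$. The natural scale, however, is $\|R\|_F\asymp\sqrt d$: under this normalization the single-sample loss $\langle W_i,R\rangle$ is $O(1)$ when $\|R\|_F=O(\sqrt d)$. I therefore first show that near-minimizers lie in $\{\|R\|_F\le C\sqrt d\}$ with high probability, for both $X$ and $G$.

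For this I use the restricted lower isometry $\frac1n\|\mathcal A R\|_2^2\ge c\|R\|_F^2/d-C'$ on the relevant cone. For $G$ this follows from Gordon's inequality. For $X$ it follows from a one-sided small-ball/Mendelson-type lower bound, which needs only second moments together with the $L^4$--$L^2$ control provided by the eighth-moment assumption.

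On this bounded set I replace the infimum by the free energy
\[
F_\beta(\mathcal A)=-\tfrac{1}{\beta n}\log\int_{\mathcal K\cap B}e^{-\beta\|\mathcal A R\|_2^2}\,\mu(dR).
\]
Here $\mu$ is the uniform measure on the bounded set. The smoothing error is $O(p\log(\beta)/(\beta n))$. Choosing $\beta$ large but fixed makes this error small, uniformly in both models.

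\textbf{Step 2: Lindeberg swap.} I replace $W_i$ by $G_i$ one sample at a time, comparing $\E\,\phi(F_\beta)$ for bounded smooth test functions $\phi$. Write $F^{(i)}$ for the interpolant, viewed as a function of the one-dimensional projections $\langle W_i,R\rangle$ under the Gibbs measure. Since the first two moments match, the third-order Taylor remainder controls each swap. A single swap should cost $o(1/n)$.

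This bound requires that, for typical $R$ under the Gibbs measure of the leave-one-out system, the scalar $\langle W_i,R\rangle$ has bounded third and fourth moments. It also requires that it is approximately Gaussian in the sense needed for the CLT-type comparison of \cite{hu2022universality}. In particular, it must hold uniformly over the Gibbs-typical set of $R$, which is random but independent of $W_i$.

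\textbf{Step 3: the main obstacle.} The hardest step is establishing this one-dimensional CLT for $\langle W_1,R\rangle=(x_1^\top Rx_1-\Tr R)/\sqrt d$ uniformly over $R$ with $\|R\|_F\lesssim\sqrt d$, $R\succeq0$. This is where approximate tensorization \eqref{eq:approximate-variance-tensorization} and the eighth moments enter.

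The quadratic form is a degree-2 function of $x_1$, and its variance is $\mathcal C_\kappa(R,R)$. To show that it is close to Gaussian with this variance, I would control the variance of the "derivative'' $Rx_1$ via \eqref{eq:approximate-variance-tensorization}, using a second-order Poincaré-type bound. Concretely, I apply \eqref{eq:approximate-variance-tensorization} to $f=\|Rx_1\|^2$ and use $\E|x_{1j}|^8\le M$ to get
\[
\Var\|Rx_1\|^2\lesssim \sum_j \E\bigl[(Rx_1)_j^2 x_{1j}^2\bigr]\cdot\|R\|_{\op}^2 .
\]
This is small relative to $(\E\|Rx_1\|^2)^2$ whenever $\|R\|_{\op}\ll\|R\|_F$. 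A Chatterjee-type normal approximation then gives closeness in distribution.

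The case $\|R\|_{\op}\gtrsim\|R\|_F$, i.e.\ spiky $R$, must be shown to be Gibbs-atypical. I would argue this by showing that such $R$ form a set of negligible volume relative to the bulk. The rest of the proof is routine once this anti-spikiness and the uniform CLT are in place.
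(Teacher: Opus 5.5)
\textbf{There is a genuine gap, and it sits in Step~3.} It is exactly the obstruction that blocks the free-energy / domain-of-Gaussianity route for this problem.

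\emph{Why the swap needs a uniform CLT.} Swapping a whole feature is not a small perturbation, because $\langle W_i,R\rangle=O(1)$. So two-moment matching plus a third-order remainder does not make a single swap cost $o(1/n)$. You genuinely need $\langle W_i,R\rangle$ to be close in law to $\langle G_i,R\rangle$, uniformly over Gibbs-typical $R$. This fails for spiked $R$ such as $(1-c/\sqrt d)I_d+c\sqrt d\,e_1e_1^\top$, even when the $x_i$ are Gaussian.

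\emph{Why volume cannot exclude spikes.} Take $\mathcal K=\{R\succeq0,\Tr R=d\}$ in your notation and any unit vector $v$. The set $\{\lambda_{\max}(R)\ge\eta\sqrt d\}$ contains the homothetic copy $(1-\eta/\sqrt d)\mathcal K+\eta\sqrt d\,vv^\top$, up to a harmless Frobenius truncation. Its relative volume is $(1-\eta/\sqrt d)^{\dim\mathcal K}\approx e^{-\eta d^{3/2}/2}=e^{-o(d^2)}$. The Gibbs weights, by contrast, are $e^{-\beta\|\mathcal AR\|_2^2}$ with $\|\mathcal AR\|_2^2$ of order $n\asymp d^2$. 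Hence spiked matrices would carry the Gibbs measure as soon as they lower the empirical loss by much more than $d^{-1/2}$ per sample.

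\emph{What would actually be needed.} Ruling this out is an \emph{energy} statement about the non-Gaussian landscape, uniform in $R$. In other words, you must show that near-minimizers lie in a domain of Gaussianity. That is not available a priori, and it is the central difficulty of this route. A candidate-dependent spiked/non-spiked decomposition might be made to work, but it is substantially more involved. Restricting the integral to non-spiked $R$ would at best give the one-sided bound $\Gamma_X\le\Gamma_{G,\kappa}+o(1)$.

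\emph{Two smaller issues.} First, normal approximation for $x^\top Rx$ under approximate tensorization is not off the shelf: second-order Poincar\'e inequalities are Gaussian or independent-input tools. Your variance estimate should read $\Var\|Rx\|_2^2\lesssim\|R^2\|_F^2\le\|R\|_{\op}^2\|R\|_F^2$, and that bound alone is not a CLT. Second, in Step~1 the $L^4$--$L^2$ control of $\langle W,R\rangle$ does not follow from coordinatewise eighth moments when the coordinates are dependent; it also requires approximate tensorization.

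\textbf{How the paper avoids this.} It exploits that the loss is quadratic and runs Lindeberg on the regularized \emph{minimum value}, not on a free energy. It adds a ridge term and $\gamma$ times the log-det barrier, which is $d$-self-concordant on $\{R\succ0,\Tr R=d\}$. Self-concordance keeps the leave-one-out optimizer interior and controls the cubic remainder. As a result, inserting a feature $z$ costs $r_i(z)^2/(2(n+\ell_i(z)))$, up to an error $O(\gamma^{-1/2}n^{-3/2})$ in conditional mean. Here $r_i(z)$ is the residual at the leave-one-out optimizer and $\ell_i(z)$ is a quadratic form in $z$ built from the inverse Hessian.

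Covariance matching then gives $\E_i r_i(W_i)^2=\E_i r_i(G_i)^2$ and $\E_i\ell_i(W_i)=\E_i\ell_i(G_i)$. The fluctuations of $\ell_i$ are controlled by the quadratic-form variance bound $\mathrm{(F2)}$. So only $\mathrm{(F1)}$ and $\mathrm{(F2)}$ are needed. Both follow from approximate tensorization and eighth moments through Poincar\'e-type $L^p$ estimates, with $\tau_d=O(d^{-1/2})$. No CLT, domain of Gaussianity, or spike exclusion ever enters.

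The barrier is then removed at cost $\gamma\vartheta\log n$; this vanishes because $\vartheta=O(d)$ rather than $O(d^2)$. The ridge is removed by a small-ball localization of the unregularized minimizers, which is close to your Step~1.
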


Although Theorem~\ref{thm:intro-approx-tensorization-squared-loss} is motivated by previous universality results~\cite{montanari-saeed2022universality,bandeira-maillard2023ellipsoid}, it is worth pointing out that our proof follows a different approach.

The approximate fitting result of~\cite{bandeira-maillard2023ellipsoid} uses free energy interpolation, building on the techniques of~\cite{montanari-saeed2022universality}, to show that interpolating $(W_i)_{i\leq n}$ to $(G_i)_{i\leq n}$ through $t\in [0,\pi/2]\mapsto \cos(t)W_i+\sin(t)G_i$ does not change the asymptotic free energy. Passing to
a (low temperature) limit yields universality of the associated ERM for $\ell_p$ losses for $1\le p<4/3$, but does
not cover the squared loss $p=2$.

More importantly, such comparisons require the optimizers to be
localized to a deterministic domain of Gaussianity $\mathcal D_d\subset\Ssym^d$ (cf.~\cite[Definition~1]{montanari-saeed2022universality}). In particular, the laws of the one-dimensional projections $\langle W_1,R\rangle$ and $\langle G_1,R\rangle$ must be asymptotically the same uniformly over $R\in\mathcal D_d$: for every bounded
Lipschitz function $\varphi:\R\to\R$,
\[
\sup_{R\in\mathcal D_d}
\left|
\E\varphi\left(\langle W_1,R\rangle\right)
-
\E\varphi\left(\langle G_1,R\rangle\right)
\right|
\longrightarrow0.
\]
However, this condition fails on the full set
$\{R\succeq 0:\Tr R=d\}$, even when $x_1\sim N(0,I_d)$. Indeed, recalling that $W_1=(x_1x_1^\top -I_d)/\sqrt{d}$, it is not hard to see that the above convergence fails when $R$ has a spectral spike, e.g.~$R=\left(1-\frac{c}{\sqrt d}\right)I_d
+c\sqrt d\,e_1e_1^\top$ for fixed $c>0$.

A natural sufficient condition excluding this behavior is the absence of spectral spikes.  When the coordinates $(x_{1j})_{j\leq d}$ are independent and satisfy Eq.~\eqref{eq:independent:coordinate:distribution:assumption}, de Jong's central limit theorem for quadratic forms~\cite{dejong1987quadratic} may be used to show that if sequence of matrices $(R_d)_{d\geq 1}$ satisfy
\[
\frac{\|R_d\|_{\op}}{\|R_d\|_F} \to 0,\quad\textnormal{then}\quad \frac{\langle W_1, R\rangle}{\sqrt{\Var(\langle W_1, R_d\rangle)}}\stackrel{d}{\to}N(0,1)\quad\textnormal{as $d\to\infty$.}
\]
Therefore, any $\mathcal{D}_d\subseteq \{R: \|R_d\|_{\op}\leq \eps_d \|R_d\|_F\}$ with $\eps_d=o(1)$ is a domain of Gaussianity. However, there is no a priori reason for all near-minimizers to lie in such a domain. Although each candidate can be decomposed into spiked and nonspiked parts, this decomposition depends on the candidate matrix and leads to a substantially more involved comparison.

Theorem~\ref{thm:intro-approx-tensorization-squared-loss} shows that
neither a no-spike restriction nor coordinate independence is intrinsic to
universality of $\Gamma_X$. Our proof bypasses these restrictions through a Lindeberg argument that replaces $W_i$ by $G_i$ one at a time. An important observation is that $\Gamma_X$ involves
a squared loss, whereas the free-energy interpolation of
\cite{montanari-saeed2022universality} treats general sufficiently smooth losses. For squared loss, the Lindeberg approach is particularly suitable: after a second-order expansion, the matching first and second moments of
$W_i$ and $G_i$ yield the required cancellation.

To carry out this Lindeberg argument, the main difficulty is that we have a constrained optimization problem (with p.s.d. cone), and the
original optimization problem need not be stable under replacing $W_i$ by $G_i$. We address this by adding a self-concordant barrier (cf. Definition~\ref{def:self-concordance}), which keeps the optimizer strictly in the interior and
provides the derivative bounds needed for second-order Taylor expansion. This role of self-concordant barriers is standard in
optimization, notably in interior-point methods
\cite{nesterov-nemirovskii1994interior} and bandit linear optimization
\cite{abernethy2009competing}. Their use in universality of ERM seems to be new and may be of independent
interest. We refer to Theorem~\ref{thm:positive-barrier-row-universality} for
a general universality theorem for squared loss.

\begin{remark}	\label{lem:zero-inflated-sphere-goe-matching-unsat}
   In view of Theorem~\ref{thm:intro-approx-tensorization-squared-loss}, it is natural to ask how far the exact-fitting conclusion of
Theorem~\ref{thm:phase-transition}\textup{(i)} extends beyond independent coordinates. The following example shows that isotropy, uniform subgaussianity, and covariance matching alone do not suffice. In this
sense, exact fitting is more delicate than approximate fitting; our proof of the latter relies on concentration estimates, whereas the
passage to exact fitting requires anti-concentration. This distinction is well-known in random constraint satisfaction problems (see Section~\ref{subsec:further:literature}): a vanishing fraction of exceptional constraints may be
	negligible for an averaged loss and yet destroy exact satisfiability.

	Let $z\in \R^d$ be uniform on $\{u\in \R^{d}: \|u\|_2=\sqrt{d}\}$, and let $J\in \{0,1\}$ be independent of $z$
	with $\P(J=1)=\frac{d}{d+2}$, and set $x=\sqrt{\frac{d+2}{d}}Jz$.
	Then, $\E[x]=0, \Cov(x)=I_d$, and $x$ is a uniformly subgaussian vector (cf. \cite[Definition 3.4.1]{vershynin2018highdimensional}). Moreover, using rotational invariance of $z$, a direct calculation shows that $x$ has the same fourth moments as $g\sim N(0,I_d)$, thus $W=(xx^\top-I_d)/\sqrt d$ has covariance form
	$\mathcal{C}_3(\cdot,\cdot)$ defined in \eqref{eq:C-kappa-trace}. 
    
    Nevertheless, if $x_1,\ldots,x_n$ are i.i.d. copies of $x$ and
	$n/d^2\to\alpha>0$, then with probability tending to one there is no ellipsoid fit. Indeed, since $\P(x=0)=2/(d+2)$ and $n\asymp d^2$, the sample contains a zero
vector with probability tending to one. Consequently, no ellipsoid can fit
all the sample points. We refer to
Appendix~\ref{subsec:exact-fitting-obstructions} for further examples and discussion of obstructions to exact fitting.
\end{remark}

To prove the exact fitting in Theorem~\ref{thm:phase-transition}-\textup{(i)}, it remains to improve from approximate fitting to exact fitting under the independence of coordinates. Our argument first constructs a well-conditioned matrix $mI_d\preceq \widehat R_d\preceq M I_d$ whose residual
$\widehat\rho:=-\mathcal A_X\widehat R_d\equiv -P_{\one^\perp}\mathcal L_X \widehat R_d$ satisfies
\[
\frac{\|\widehat\rho\|_2}{d}\pto 0,
\qquad
\frac{(\log d)^4}{\sqrt d}\|\widehat\rho\|_\infty\pto 0.
\]
The technical core of the exactification argument is the following
interpolation theorem.

\begin{theorem}
	\label{thm:small-residual-interpolation}
	Let $x_1,\ldots,x_n\in \R^d$ be i.i.d. random vectors with independent coordinates satisfying~\eqref{eq:independent:coordinate:distribution:assumption} for some $K_x>0$.  Suppose that $n/d^2\to\alpha\in(0,1/2)$.  There is a constant $C<\infty$,
	depending only on $\alpha$ and $K_x$, such that with probability tending
	to one, the following holds: for every
	$y\in\Ran(\mathcal A_X)$, there exists $\Delta_y\in\Ssym^d$ satisfying
	$$
		\mathcal A_X\Delta_y=y
	$$
	and
	$$
		\|\Delta_y\|_{\op}
		\le C\left(
		\frac{\|y\|_2}{d}
		+\frac{(\log d)^4}{\sqrt d}\|y\|_\infty
		\right).
	$$
\end{theorem}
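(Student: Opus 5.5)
We need, for every $y\in\Ran(\mathcal A_X)$, a preimage with small operator norm. We split $y$ into an $\ell_2$-controlled part and an $\ell_\infty$-controlled part and handle them by different mechanisms: a least-norm inverse for the first, and an explicit near-diagonal construction for the second.

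\textbf{Step 1 (Frobenius-norm least squares).} Let $\mathcal A_X^\dagger$ be the minimum-Frobenius-norm right inverse on $\Ran(\mathcal A_X)$. Since $n/d^2\to\alpha<1/2$, the number of constraints is below $\dim\Ssym^d\approx d^2/2$. I would show that with high probability
\[
\sigma_{\min}\bigl(\mathcal A_X^*|_{\Ran \mathcal A_X}\bigr)\ge c\sqrt{d}\cdot\frac{1}{\sqrt d}\cdot\sqrt d .
\]
Concretely, the Gram matrix $\mathcal L_X\mathcal L_X^*=(\langle W_i,W_j\rangle)_{ij}$ has diagonal entries about $d(\kappa-1)/d\cdot\ldots\approx d$ and, restricted to $\one^\perp$, its smallest eigenvalue is $\gtrsim d$. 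The bound $\lambda_{\min}\gtrsim d$ would come from treating it as a sample covariance of $n$ nearly isotropic vectors in dimension $\approx d^2/2>n$, via the known lower-tail estimates for matrices with independent heavy-ish rows. This gives
\[
\|\mathcal A_X^\dagger y\|_F\le C\|y\|_2/\sqrt d .
\]
A Frobenius bound alone only yields $\|\cdot\|_{\op}\le C\|y\|_2/\sqrt d$, which is too weak by a factor $\sqrt d$. So we need delocalization: $\Delta=\mathcal A_X^* v$ with $v=(\mathcal A_X\mathcal A_X^*)^{-1}y$, so $\Delta=d^{-1/2}\sum_i v_i(x_ix_i^\top-I)$ up to projection. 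The operator norm of $\sum_i v_i x_ix_i^\top$ for a \emph{fixed} $v$ is $\lesssim \|v\|_2\sqrt{d}+\|v\|_\infty\,d\log d$ by matrix Bernstein. Since $\|v\|_2\lesssim\|y\|_2/d$, this gives $\|\Delta\|_{\op}\lesssim\|y\|_2/d$ plus an $\ell_\infty$ term.

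\textbf{Step 2 (uniformity over $y$, the main obstacle).} The vector $v$ depends on $X$, so a fixed-$v$ matrix Bernstein bound does not apply directly. My first attempt would be a leave-one-out analysis: show $\|v\|_\infty\lesssim \|y\|_\infty/d+\|y\|_2/d^{3/2}$-type bounds by expressing $v_i$ through the Schur complement, namely $v_i=(y_i-\langle W_i,\Delta^{(i)}\rangle)/\ldots$, where $\Delta^{(i)}$ is independent of $x_i$. Then quadratic-form concentration (Hanson–Wright with the subgaussian assumption, costing the $(\log d)^{O(1)}$ factors) controls $\langle W_i,\Delta^{(i)}\rangle$ uniformly over $i$ by a union bound. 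For the operator norm, decompose $\sum_i v_i x_ix_i^\top$ and use a net argument over unit $u\in\R^d$: $u^\top\Delta u=d^{-1/2}\sum_i v_i((u^\top x_i)^2-1)$. Condition on the leave-one-out structure, or alternatively use a decoupling/resolvent expansion to reduce to sums with almost independent weights. This is where I expect the $(\log d)^4$ losses and most of the technical work to be, and it is the step I would budget most effort for.

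\textbf{Step 3 (assembly).} Linearity in $y$ means that bounding $\|\mathcal A_X^\dagger y\|_{\op}$ by the two norms suffices, and homogeneity lets us restrict to the set $\{\|y\|_2/d+(\log d)^4\|y\|_\infty/\sqrt d\le1\}$. Applying the bounds of Steps 1–2 to $\Delta_y:=\mathcal A_X^\dagger y$ then gives the claim for all $y$ simultaneously, on the single high-probability event where the Gram lower bound and the leave-one-out estimates hold. Here $C$ depends on $\alpha$ through the spectral gap $1/2-\alpha$ and on $K_x$ through the Hanson–Wright constants.
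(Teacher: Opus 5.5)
\textbf{Your choice $\Delta_y:=\mathcal A_X^\dagger y$ cannot work.} The minimum-Frobenius-norm preimage does not satisfy the claimed estimate, so no leave-one-out or decoupling argument in Step~2 can make your proposal go through. Take $u=e_1$ and $y:=\mathcal A_X(\sqrt d\,uu^\top)\in\Ran(\mathcal A_X)$, i.e.\ $y_i=x_{i1}^2-\frac1n\sum_j x_{j1}^2$. Then $\|y\|_2\asymp\sqrt{(\kappa-1)n}\asymp d$, and with high probability $\|y\|_\infty=O(\log d)$, so the right-hand side of the theorem is $O(1)$.

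On the other hand, $\mathcal A_X^\dagger y=P(\sqrt d\,uu^\top)$, where $P$ is the Frobenius projection onto $(\ker\mathcal A_X)^\perp$. Since $\mathcal A_X(uu^\top)=\mathcal A_X P(uu^\top)$,
\[
u^\top(\mathcal A_X^\dagger y)u=\sqrt d\,\|P(uu^\top)\|_F^2\ \ge\ \sqrt d\,\frac{\|\mathcal A_X(uu^\top)\|_2^2}{\|\mathcal A_X\|_{F\to 2}^2}\ \ge\ c\sqrt d .
\]
Here we used $\|\mathcal A_X(uu^\top)\|_2\asymp\sqrt d$ and the a priori bound $\|\mathcal A_X\|_{F\to2}\le C\sqrt d$ (Proposition~\ref{thm:apriori-estimates}). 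So $\mathcal A_X^\dagger$ loses a factor $\sqrt d$ on exactly the ``$\ell_2$ part'' it was meant to handle.

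In your notation, $v=(\mathcal A_X\mathcal A_X^*)^{-1}y$ is aligned with $(\langle u,x_i\rangle^2-1)_i$. This alignment produces the spike $u^\top(\mathcal L_X^*v)u=\langle v,\mathcal A_X(uu^\top)\rangle\gtrsim\sqrt d$. That is a genuine property of the minimum-norm interpolant, not a loss in the fixed-$v$ matrix Bernstein bound. Separately, Step~3's ``single high-probability event'' does not exist. Leave-one-out and Hanson--Wright bounds hold for each fixed $y$, not simultaneously for all $y\in\Ran(\mathcal A_X)$, and the application to the data-dependent residual $\widehat\rho$ needs the latter.

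\textbf{What is missing.} You need to control the operator-norm-minimal preimage through its dual. By Lemma~\ref{lem:range-duality}, $\inf\{\|\Delta\|_{\op}:\mathcal A_X\Delta=y\}=\sup_{\lambda\in\one^\perp}|\langle\lambda,y\rangle|/\|\mathcal L_X^*\lambda\|_*$. Uniformity in $y$ is then automatic once you have a uniform \emph{nuclear}-norm lower bound on $\mathcal L_X^*\lambda$. Frobenius information is too weak for this. Even $\|\mathcal L_X^*\lambda\|_*\gtrsim d\|\lambda\|_2$ fails: for $\lambda=e_1-e_2$ one has $\|\mathcal L_X^*\lambda\|_*=O(\sqrt d)$.

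The paper proves Theorem~\ref{thm:centered-compatibility} instead. With $h$ the top $O(d\log d)$ coordinates of $\lambda$ and $t$ the rest, $\|\mathcal L_X^*\lambda\|_*\ge c(\sqrt d(\log d)^{-4}\|h\|_1+d\|t\|_2)$. The bound $|\langle\lambda,y\rangle|\le\|h\|_1\|y\|_\infty+\|t\|_2\|y\|_2$ then finishes the proof.

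Sparse heads are handled by an explicit dual certificate built from $Q_{\mathcal I}\approx I$. Light tails require \emph{anti-concentration} uniformly over nets of size $\exp(\Theta(d^2))$. This combines three ingredients:
\begin{itemize}
\item the conditional vector small-ball bound of Proposition~\ref{prop:vector-smallball}, applied to fresh coordinate blocks;
\item a covering of nuclear-norm balls by products of column balls;
\item sequential block revealing, so that the exponent $(1-1/L)d^2/2$ beats the entropy $\alpha d^2$ for every $\alpha<1/2$.
\end{itemize}
Your plan has no anti-concentration input at all. That is also why the Gram lower bound in Step~1 is unsupported. The relevant matrix is $D\times n$ with $n<D$ and independent \emph{columns} whose entries are dependent, and the standard independent-row small-ball or lower-tail results do not cover that setting.
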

Applying the theorem with $y=\widehat\rho$ yields
$\Delta_{\widehat\rho}$ with
$\|\Delta_{\widehat\rho}\|_{\op}\pto 0$. Consequently,
$\widehat R_d+\Delta_{\widehat\rho}$ remains positive definite and satisfies $\mathcal A_X
\bigl(\widehat R_d+\Delta_{\widehat\rho}\bigr)=0.$ After rescaling, $R=\widehat R_d+\Delta_{\widehat\rho}$ gives an exact ellipsoid fit. The proof of Theorem~\ref{thm:small-residual-interpolation} rests on a vector anti-concentration estimate in
Proposition~\ref{prop:vector-smallball}, which may be of independent
interest. We refer to Section~\ref{sec:proof:overview} for an overview of
the argument.

Beyond this application, Theorem~\ref{thm:small-residual-interpolation}
may be viewed as a minimum-norm interpolation result, a topic that has
received considerable attention recently in machine learning; see, e.g.,
\cite{belkin-hsu-ma-mandal2019reconciling,
bartlett-long-lugosi-tsigler2020benign,
hastie-montanari-rosset-tibshirani2022ridgeless,
chinot-loffler-vandegeer2022robustness,
koehler-zhou-sutherland-srebro2021uniform}.
In our setting, the
estimate must hold uniformly over all possible residuals $y$ (cf. \cite{chinot-loffler-vandegeer2022robustness}) and the interpolating correction is matrix-valued. The proof of Theorem~\ref{thm:small-residual-interpolation} rests on a vector anti-concentration estimate in
Proposition~\ref{prop:vector-smallball}, which may be of independent
interest. We refer to Section~\ref{sec:proof:overview} for an overview of
the argument. 
\subsection{Further related literature}
\label{subsec:further:literature}

\paragraph{Universality of ERM.} The basic question in the study of universality of empirical risk minimization (ERM) is whether the limiting value of a random optimization problem built from non-Gaussian observations (a.k.a. features)
$(W_i)_{i\le n}$ remains unchanged when they are replaced by
Gaussian variables $(G_i)_{i\le n}$ with matching low-order moments.
In shallow neural networks, this principle became known as the Gaussian
equivalence conjecture, and was used in \cite{gerace2020generalisation,loureiro2021learning, goldt2022gaussian} to derive asymptotic predictions using Gaussian techniques. Rigorous universality theorems were subsequently developed for random
features models and broader classes of ERM~\cite{hu2022universality,
montanari-saeed2022universality, han2023universality}. Further developments include Gaussian mixture~\cite{dandi2023universality}, multi-layer neural networks~\cite{schroder2024deterministic}, and max-margin classification~\cite{montanari2023universality}. This max-margin problem is particularly close to ours: its separability
threshold parallels the exact fitting transition in
Theorem~\ref{thm:phase-transition}. A key step in its analysis is a restricted strong
convexity estimate (cf. \cite[Lemma 1]{montanari2023universality}) from compressed sensing literature
\cite{candes-tao2007dantzig, donoho2006most}. Indeed, this connection motivated the restricted lower bounds in Theorem~\ref{thm:centered-compatibility} used in our
exact fitting argument.

\paragraph{Random matrix theory.} The proof of the interpolation result, Theorem~\ref{thm:small-residual-interpolation}, draws on methods from
discrete random matrix theory. A central object of study in this area is the least singular value of random matrices with independent, possibly discrete,
entries~\cite{tao2007singularity,rudelson-vershynin2008littlewood,
tao2009inverse,rudelson-vershynin2009rectangular, koltchinskii-mendelson2013smallest}.
Anti-concentration estimates, often formulated as small-ball probability
bounds, play an important role. Such bounds control the probability that a
random sum lies in a ball of small radius (see
\cite{rudelson-vershynin2010nonasymptotic} for a survey).  To prove Theorem~\ref{thm:small-residual-interpolation}, we recast the
statement as a nuclear norm lower bound (cf. Theorem~\ref{thm:centered-compatibility}), and adapt techniques from this
literature, including the decomposition into compressible and
incompressible vectors~\cite{rudelson-vershynin2008littlewood} and vector
anti-concentration estimates~\cite{rudelson-vershynin2015smallball}.

\paragraph{Convex optimization.}
As mentioned above, our universality proof of Theorem~\ref{thm:intro-approx-tensorization-squared-loss} uses regularization by a
self-concordant barrier. The theory of self-concordance was originally
developed for interior-point methods in convex optimization
\cite{nesterov-nemirovskii1994interior,nesterov2004introductory,
renegar2001interior}. The log-determinant barrier used in the proof of Theorem~\ref{thm:intro-approx-tensorization-squared-loss} is the classical example for semidefinite programs, such as the ellipsoid fitting problem. The crucial property of self-concordance is that it controls second-order Taylor expansions, which also lie at the core of the Lindeberg replacement argument.

In interior-point methods, the regularization level is gradually lowered
along the central path. In our argument, it remains fixed while the samples
are replaced one at a time. This sequential structure is reminiscent of
follow-the-regularized-leader (FTRL) methods in online convex optimization
\cite{hazan2016introduction}, and leads naturally to Bregman divergences (see Section~\ref{sec:shared-row-universality-proof}).
A related use of self-concordant barriers appears in the work of Abernethy,
Hazan, and Rakhlin~\cite{abernethy2009competing} on bandit linear
optimization.

\paragraph{Random CSPs} The ellipsoid fitting problem may be viewed as a continuous random
constraint satisfaction problem (CSP): the matrix $R$ is the continuous
variable, and each sample $x_i$ imposes the random constraint
$x_i^\top R x_i=1$. Thus,
Theorem~\ref{thm:phase-transition} identifies the SAT--UNSAT threshold
$\alpha_\star(\kappa)$: with high probability the problem is satisfiable (SAT)
below the threshold and unsatisfiable (UNSAT) above it. Sharp thresholds in
discrete random CSPs, such as random $k$-SAT and NAE-SAT, have long been
studied in statistical physics~\cite{mpz02, kmrsz07} and probability~\cite{friedgut1999sharp, dss16, dss22, NSS22FOCS}. Continuous random CSPs have also been
studied, with the spherical perceptron model as a canonical example
\cite{shcherbina2003rigorous,
franz2017universality,
montanari2024tractability}. While the rigorous establishment of the SAT
side is approached by using the second moment method in the literature, our SAT proof follows a different
route: it uses random-matrix anti-concentration to turn an approximate
ellipsoid fit into an exact one through
Theorem~\ref{thm:small-residual-interpolation}.

\paragraph{Concurrent work.} In the final preparation of the manuscript, we learned of two independent concurrent works: (1) de la Cerda, Potechin, Tulsiani and Xu \cite{delacerda2026sharpphasetransitionellipsoid} and (2) Misiakiewicz and Wen \cite{misiakiewicz2026sharpsatunsatphasetransition}. Both works establish the conjectured sharp threshold $d^2/4$ for Gaussian
ellipsoid fitting. The present
paper also studies universality beyond Gaussian ellipsoid fitting and the optimal squared fitting loss.

\paragraph{Acknowledgements}
We thank Sofia de la Cerda, Aaron Potechin, Madhur Tulsiani, and Jeff Xu for coordinating the preparation of the manuscripts. Y.S. is supported by NSF grant DMS-2601870, and thanks Korean Institute for
Advanced Study, where part of this work was carried out.

\paragraph{Statement of AI use} During the later stages of this project, the authors used ChatGPT 5.5 Pro for assistance with parts of the proofs and with writing
and debugging code for numerical simulations. The authors subsequently
used ChatGPT 5.6 for editorial revision of an earlier draft. All AI-assisted calculations and suggestions were independently checked by the authors, who take full responsibility for the contents of the paper.
\paragraph{Notation}
The space of real symmetric $d\times d$ matrices is $\Ssym^d$, and
$\Ssym_+^d$ is the positive semidefinite cone.  For symmetric matrices,
$A\succeq B$ means $A-B\in\Ssym_+^d$, and $A\succ0$ means $A$ is
positive definite.  We write $\langle A,B\rangle\equiv \langle A,B\rangle_F:=\Tr(AB)$ and denote the Frobenius norm by $ \|A\|_F:=\langle A,A\rangle^{1/2}$. We respectively denote $\|A\|_{\op}$ and $\|A\|_*$ as the operator and nuclear norms. The identity matrix is $I_d$, or simply $I$ when the dimension is clear.
The symbol $\Sigma_\kappa$ denotes the covariance operator on $\Ssym^d$
associated with $\mathcal{C}_\kappa$~\eqref{eq:C-kappa-trace}:
\begin{equation}
	\label{eq:Sigma-kappa-def-main}
	\langle A,\Sigma_\kappa B\rangle_F=\mathcal{C}_\kappa(A,B),\qquad A,B\in\Ssym^d.
\end{equation}
$\one \in \R^n$ denotes the all-ones vector and the Euclidean orthogonal projection onto $\one^\perp$ is denoted by
$P_{\one^\perp}: \R^n\to \R^n$.  We write
$[n]=\{1,\ldots,n\}$. We write $B_2^m$ and $S^{m-1}$ for
the Euclidean unit ball and sphere in $\R^m$, $\Vol(\cdot)$ for Lebesgue volume,
and $\Ran(A)$ for the range of a linear map $A$.  If $v\in\R^n$ and
$\mathcal I\subset[n]$, then $v_{\mathcal I}$ denotes the vector supported on
$\mathcal I$ that agrees with $v$ on $\mathcal I$ and is zero on
$\mathcal I^c$.  For a real random variable $Z$, and $p\geq 1$, it's $L^p$ norm is dentoed by $\|Z\|_{L^p}:=(\E[|Z|^p])^{1/p}$. Finally, $O_\P(a_d)$ means bounded in probability after division by $a_d$.

\section{Proof overview}
\label{sec:proof:overview}
This section derives
Theorems~\ref{thm:phase-transition}
and~\ref{thm:intro-approx-tensorization-squared-loss} from intermediate
results proved in later sections. Our Gaussian input is
Proposition~\ref{thm:gaussian-master}, proved in
Section~\ref{sec:gaussian-anisotropy-threshold} using the convex Gaussian
min--max theorem
\cite{gordon1988milman,thrampoulidis-oymak-hassibi2015regularized}. Following the terminology of random constraint satisfaction problems, we
call an instance \emph{SAT} if it admits an ellipsoid fit and \emph{UNSAT}
otherwise.

\subsection{SAT: approximate fitting and exactification}
\label{subsec:proof:overview:sat}
Recall from \eqref{eq:definition:L} that
$\mathcal A_X:=P_{\one^\perp}\mathcal L_X$. Since
$P_{\one^\perp}$ subtracts the empirical mean,
$\mathcal A_XR=0$ precisely when the values $x_i^\top Rx_i$ are constant in $1\leq i \leq n$. Thus a positive-definite matrix in $\ker(\mathcal A_X)$ yields,
after rescaling, an exact ellipsoid fit.

\subsubsection{The approximate fit}
To obtain an exact fit for $\alpha<\alpha_\star(\kappa)$, we first construct a
well-conditioned approximate fit $\widehat R_d$ such that
$n^{-1}\|\mathcal A_X\widehat R_d\|_2^2=o(1)$, together with an
$\ell_\infty$ bound that rules out large coordinates of $\mathcal{A}_X\widehat{R}_d$. For Gaussian data, Bandeira--Maillard
\cite[Theorem~1.4]{bandeira-maillard2023ellipsoid} proved an analogous approximate-fitting result below $\alpha_\star(3)=1/4$ with vanishing
normalized $\ell_p$ error for $1\le p<4/3$ but without the $\ell_\infty$ control. This $\ell_\infty$ control, which is obtained through a leave-one-out stability argument, is crucial for our exactification step as will be outlined below.

\begin{proposition}
	\label{thm:approx-fit}
	Assume that $x_1,\ldots,x_n\in\R^d$ are i.i.d. random vectors with
	independent coordinates satisfying
	\eqref{eq:independent:coordinate:distribution:assumption}. Suppose that
	$n/d^2\to\alpha$ with $0<\alpha<\alpha_\star(\kappa)$. Then, there are
	constants $0<m<1<M<\infty$, depending on $\alpha,\kappa,K_x$, and
	random matrices $\widehat R_d$ such that $mI_d\preceq \widehat{R}_d\preceq MI_d$ and with probability
	tending to one,
	\[
	\frac1n\|\mathcal A_X\widehat R_d\|_2^2
	\le \left(\frac{\log d}{d}\right)^{1/10},
	\qquad
	\|\mathcal A_X\widehat R_d\|_\infty\le(\log d)^2.
	\]
\end{proposition}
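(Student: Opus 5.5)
The plan is to build $\widehat R_d$ as the minimizer of a strongly regularized, barrier-penalized least-squares problem, and to transfer its properties from the Gaussian model (Proposition~\ref{thm:gaussian-master}) by a Lindeberg swap. Concretely, for a small fixed $\lambda>0$ and the constants $0<m<M$ supplied by Proposition~\ref{thm:gaussian-master}(i), consider
\[
F_X(R):=\frac1n\|\mathcal A_X R\|_2^2+\lambda\,\Phi(R),\qquad R\in\mathcal K:=\{R:\ \tfrac m2 I\preceq R\preceq 2M I,\ \Tr R=d\},
\]
where $\Phi$ is a normalized self-concordant barrier for $\mathcal K$, for instance $\Phi(R)=-\frac1d\big(\log\det(R-\tfrac m2I)+\log\det(2MI-R)\big)$. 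We take $\widehat R_d:=\arg\min F_X$, which is unique by strict convexity; rescaling $m,M$ then gives the stated spectral bounds deterministically.

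The argument then runs in four steps.
\begin{enumerate}[label=\textup{(\arabic*)}]
\item \emph{Gaussian side.} In the Gaussian model, plugging the feasible $R$ from Proposition~\ref{thm:gaussian-master}(i) into the objective shows $\min F_G\le \lambda\Phi(R)=O(\lambda)$. Hence $\frac1n\|\mathcal A_G\widehat R_G\|_2^2=O(\lambda)$.
\item \emph{Universality of the minimum value.} Transfer $\min F$ from $G$ to $X$ with the Lindeberg/self-concordance scheme behind Theorem~\ref{thm:positive-barrier-row-universality}. We replace $W_i$ by $G_i$ one at a time and expand the leave-one-out minimizer to second order, using the barrier to control third derivatives within Dikin ellipsoids. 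First and second moments match by \eqref{eq:C-kappa-trace}, so each swap costs $o(1/n)$. This gives $|\min F_X-\min F_G|\pto0$, so $\frac1n\|\mathcal A_X\widehat R_d\|_2^2\le C\lambda+o_\P(1)$.
\item \emph{Quantitative rate.} To reach the rate $(\log d/d)^{1/10}$, let $\lambda=\lambda_d\to0$ polynomially slowly. We track the Lindeberg error explicitly; it is polynomially small in $d$ with $\lambda^{-O(1)}$ losses from barrier constants. Balancing the two terms yields a bound of the form $\lambda+d^{-c}\lambda^{-C}$, which is optimized to the claimed power.
\item \emph{$\ell_\infty$ control by leave-one-out.} Let $\widehat R^{(i)}$ minimize $F$ with sample $i$ removed; it is independent of $x_i$. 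Then
\[
\langle W_i,\widehat R_d\rangle=\langle W_i,\widehat R^{(i)}\rangle+\langle W_i,\widehat R_d-\widehat R^{(i)}\rangle .
\]
The first term is a centered quadratic form in $x_i$ with $\|\widehat R^{(i)}\|_F\lesssim\sqrt d$ and $\|\widehat R^{(i)}\|_{\op}\le 2M$. Hanson--Wright then bounds it by $O(\log d)$ with probability $1-d^{-10}$, and a union bound covers all $i\le n$. For the second term, strong convexity of $F$ in the local Hessian norm, which the barrier provides with modulus $\gtrsim\lambda/d$ in Frobenius norm, gives a first-order optimality estimate: $\|\widehat R_d-\widehat R^{(i)}\|$ is at most the gradient of the single added term, $\frac2n|\langle W_i,\widehat R_d\rangle|\,\|P W_i\|$. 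This yields a self-bounding inequality $|\langle W_i,\widehat R_d\rangle|\le C\log d+\theta|\langle W_i,\widehat R_d\rangle|$ with $\theta\lesssim \|W_i\|_F^2/(n\lambda/d)=O(d\cdot d/(d^2\lambda))$. The centering term $P_{\one^\perp}$ shifts all entries by the empirical mean, which is $O(1)$ by the $\ell_2$ bound.
\end{enumerate}

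I expect the main obstacle to be step~(4). The crude constant $\theta$ is not small unless $\lambda$ is large, so the leave-one-out comparison must be done in the Hessian geometry. Concretely, I would bound $\langle W_i,H^{-1}W_i\rangle$ for $H=\nabla^2F^{(i)}$ (independent of $x_i$) by its expectation $\approx\Tr(H^{-1}\Sigma_\kappa)$ plus Hanson--Wright fluctuations. This produces $\theta_i=\frac1n\langle W_i,H^{-1}W_i\rangle=O(1/\lambda)\cdot O(1)$ but with a favorable structure: the exact solution $\langle W_i,\widehat R\rangle=\langle W_i,\widehat R^{(i)}\rangle/(1+\theta_i)$, up to third-order barrier terms, only contracts. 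The remaining $(\log d)^2$ slack absorbs those corrections and the union bound.
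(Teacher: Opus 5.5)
Your plan is workable and has the same skeleton as the paper's proof: the well-conditioned Gaussian fit of Proposition~\ref{thm:gaussian-master}(i) serves as a competitor on the enlarged window $[m_0/2,2M_0]$, universality is applied to a regularized optimal value, and a leave-one-out residual is controlled by Hanson--Wright. It differs in the choice of estimator and, more substantively, in how you get the sup-norm bound.

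\emph{The estimator.} The paper takes $\widehat R_d$ to be the minimizer of the \emph{ridge}-regularized objective $F_{W(X)}(\cdot,\cdot;\beta_d)$ over the closed window $\overline{\mathcal R_d^{(m,M)}}$, with $\beta_d=(\log d/d)^{1/9}$. The barrier $\mathcal B_{m,M}$ appears only inside Corollary~\ref{cor:matrix-scale-universality}(i), which adds and then removes it. Testing the Gaussian side at $(R_G,b_G)$ then gives $\Phi_{W(X)}(\mathcal R_d^{(m,M)};\beta_d)=O_\P(\beta_d)$, with the corollary used as a black box. Keeping the barrier in the estimator, as you do, avoids the removal step. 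However, Theorem~\ref{thm:positive-barrier-row-universality} then does not apply verbatim, because it relies on a ridge $\beta>0$ for the inverse-Hessian bounds of Lemma~\ref{lem:soft-leave-one-out-hessian-bounds}. You would either add a small ridge, or check that the two-sided barrier (Frobenius modulus $\gtrsim\lambda/(dM^2)$) together with the data term in the intercept direction gives the same bounds.

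Also, matching first and second moments does not by itself make each swap cost $o(1/n)$. The comparison of $r_i^2/(n+\ell_i)$ uses concentration of the quadratic form $\ell_i(W_i)$, i.e.\ $\mathrm{(F2)}$. For independent coordinates this is supplied by Proposition~\ref{thm:approx-tensorization-feature-regularity} with $\tau_d=O(d^{-1/2})$, and you should say you verify $\mathrm{(F1)}$--$\mathrm{(F2)}$.

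\emph{The sup-norm bound.} The obstacle you anticipate in step~(4) is self-inflicted. Let $(R^{(i)},b^{(i)})$ minimize the leave-one-out objective $F^{(i)}$, and let $(\widehat R,\widehat b)$ minimize $F^{(i)}+\frac1n(\langle W_i,\cdot\rangle-b)^2$. Add $F^{(i)}(\widehat R,\widehat b)\ge F^{(i)}(R^{(i)},b^{(i)})$ to the optimality inequality of $(\widehat R,\widehat b)$ tested at $(R^{(i)},b^{(i)})$. This gives
\[
|\langle W_i,\widehat R\rangle-\widehat b|\le|\langle W_i,R^{(i)}\rangle-b^{(i)}|
\]
exactly, for any regularizer, with no Hessian, no Taylor expansion, and no control of $\theta_i$. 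This is precisely the paper's argument. The remaining ingredients are:
\begin{itemize}
\item Hanson--Wright applied conditionally on $(x_j)_{j\ne i}$;
\item $\max_i|b^{(i)}|=O_\P(1)$, which comes from the ridge on $b$;
\item a union bound over $i$;
\item $\|P_{\one^\perp}v\|_\infty\le2\|v\|_\infty$.
\end{itemize}
Together these give $\|\mathcal A_X\widehat R_d\|_\infty=O_\P(\log d)$.

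Your contraction $r_i/(1+\theta_i)$ is the quadratic-model shadow of this inequality. Turning it into a proof would require bounding $\langle a,u_i-\widehat u_i\rangle$ through the remainder estimates of Lemma~\ref{lem:self-concordant-insertion-control}. That does work, with an additive error of order $(n\gamma)^{-1/4}|r_i|^{3/2}$, but it is unnecessary.

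Finally, without a ridge on $b$, the intercept is not bounded ``by the $\ell_2$ bound'', which only controls centered values. It is bounded by $|\langle\overline W,R\rangle|\le d\|\overline W\|_{\op}=O_\P(1)$ for $R\succeq0$ with $\Tr R=d$, as in the paper's bound on $b_G$.
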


We prove Proposition~\ref{thm:approx-fit} in
Section~\ref{subsec:approx-tensorization-regularity}, after establishing
Theorem~\ref{thm:intro-approx-tensorization-squared-loss}, since the same
universality argument yields the required $\ell_2$ estimate.

\subsubsection{Small-residual interpolation}
\label{sec:sat-duality-proof}
The technical heart of Theorem~\ref{thm:phase-transition} is to prove \textit{exact fit}. As explained in Remark~\ref{lem:zero-inflated-sphere-goe-matching-unsat}, this is the central challenge in pinning down the satisfiability of a random CSP such as ellipsoid fitting. Let $\widehat R_d$ be the approximate fit supplied by
Proposition~\ref{thm:approx-fit}, and define its residual by $\widehat\rho:=-\mathcal{A}_X\widehat{R}_d$. If we can find $\Delta\in \Ssym^d$ such that
\[
\mathcal A_X\Delta=\widehat\rho,
\qquad
\|\Delta\|_{\op}=o(1),
\]
then $\mathcal A_X(\widehat R_d+\Delta)=0$.  Since
$\widehat R_d\succeq mI_d$, the $\widehat{R}_d+\Delta\succ 0$ for all large $d$.

The difficulty is that $\widehat R_d$, and hence $\widehat\rho$, depends
on the same data $(x_i)_{i\leq n}$ that define $\mathcal A_X$. Thus, considering a fixed deterministic residual does not suffice, and in order to prove exact fitting, we need an
estimate holding \textit{uniformly over all} $y\in\Ran(\mathcal A_X)$.
To formulate it, define
\[
\Xi_{\op}(y):=
\inf\bigl\{\|\Delta\|_{\op}:\mathcal A_X\Delta=y\bigr\}.
\]
Our goal is to prove that $\Xi_{\op}(y)=o(1)$ holds uniformly whenever
$y$ has sufficiently small  $\ell_2$ and $\ell_\infty$ norms (that Proposition~\ref{thm:approx-fit} supplies for
$y=\widehat\rho$).  We begin with a dual formulation of
$\Xi_{\op}(\cdot)$.

To this end, regard $\mathcal A_X$ as a map from $\Ssym^d$ into
$\one^\perp$, equip $\one^\perp$ with the Euclidean inner product, and
equip $\Ssym^d$ with the Frobenius inner product. The adjoint of
$\mathcal L_X$ is
\[
\mathcal L_X^*\lambda
=\sum_{i=1}^n\lambda_iW_i
=\frac1{\sqrt d}\sum_{i=1}^n\lambda_i(x_ix_i^\top-I_d),
\qquad \lambda\in\R^n.
\]
For $R\in\Ssym^d$ and $\lambda\in\one^\perp$, self-adjointness of
$P_{\one^\perp}$ gives
\[
\langle\lambda,\mathcal A_XR\rangle
=\langle\lambda,\mathcal L_XR\rangle
=\langle\mathcal L_X^*\lambda,R\rangle_F.
\]
Thus the adjoint of $\mathcal A_X:\Ssym^d\to\one^\perp$ is precisely the
restriction of $\mathcal L_X^*$ to $\one^\perp$. Moreover, for
$\lambda\in\one^\perp$, the identity terms cancel and
$\mathcal L_X^*\lambda=d^{-1/2}\sum_i\lambda_i x_ix_i^\top$.
\begin{lemma}\label{lem:range-duality}
	For every $y\in\Ran(\mathcal{A}_X)$,
	\[
	\Xi_{\op}(y)=\sup_{\lambda\in\one^\perp}
	\left\{\langle\lambda,y\rangle:\|\mathcal{L}_X^*\lambda\|_*\le1\right\}=
	\sup_{\substack{\lambda\in\one^\perp\\ \mathcal{L}_X^*\lambda\ne0}}
	\frac{|\langle\lambda,y\rangle|}{\|\mathcal{L}_X^*\lambda\|_*}.
	\]
\end{lemma}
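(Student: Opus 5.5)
This is finite-dimensional linear programming duality: the operator norm on $\Ssym^d$ is dual to the nuclear norm, so the minimum-norm problem has a clean dual. Regard $\mathcal A_X$ as a linear map $\Ssym^d\to\one^\perp$ with adjoint $\mathcal A_X^*=\mathcal L_X^*|_{\one^\perp}$, as computed just before the lemma. Fix $y\in\Ran(\mathcal A_X)$. Then the feasible set $\{\Delta:\mathcal A_X\Delta=y\}$ is a nonempty affine subspace, and $\Xi_{\op}(y)$ is the distance in operator norm from $0$ to that affine subspace.

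\textbf{Weak duality.} Take any $\Delta$ with $\mathcal A_X\Delta=y$ and any $\lambda\in\one^\perp$ with $\|\mathcal L_X^*\lambda\|_*\le1$. Then
\[
\langle\lambda,y\rangle=\langle\lambda,\mathcal A_X\Delta\rangle=\langle\mathcal L_X^*\lambda,\Delta\rangle_F\le\|\mathcal L_X^*\lambda\|_*\|\Delta\|_{\op}\le\|\Delta\|_{\op},
\]
using the trace duality $|\langle A,B\rangle|\le\|A\|_*\|B\|_{\op}$. Taking the supremum over $\lambda$ and the infimum over $\Delta$ gives $\sup\le\Xi_{\op}(y)$.

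\textbf{Strong duality.} I would argue by Hahn--Banach, or equivalently via the distance-to-subspace formula. Fix one solution $\Delta_0$, so the feasible set is $\Delta_0+\ker\mathcal A_X$. In the normed space $(\Ssym^d,\|\cdot\|_{\op})$ the distance from $\Delta_0$ to the subspace $\ker\mathcal A_X$ equals
\[
\max\{\langle Z,\Delta_0\rangle : Z\in(\ker\mathcal A_X)^\perp,\ \|Z\|_*\le1\},
\]
because the dual norm of $\|\cdot\|_{\op}$ is $\|\cdot\|_*$ and the annihilator of $\ker\mathcal A_X$ is $(\ker\mathcal A_X)^\perp$. This is the standard duality, valid in finite dimensions with the infimum attained by compactness. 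Next, $(\ker\mathcal A_X)^\perp=\Ran(\mathcal A_X^*)=\{\mathcal L_X^*\lambda:\lambda\in\one^\perp\}$. Writing $Z=\mathcal L_X^*\lambda$ gives $\langle Z,\Delta_0\rangle=\langle\lambda,y\rangle$, which yields the first equality. The only delicate point is this identification of the annihilator with the range of the adjoint restricted to $\one^\perp$, and it follows from the adjoint computation above.

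\textbf{Second equality.} This is a homogeneity argument. If $\mathcal L_X^*\lambda=0$ for some $\lambda\in\one^\perp$, then $\langle\lambda,y\rangle=\langle\mathcal L_X^*\lambda,\Delta_0\rangle_F=0$, so such $\lambda$ contribute nothing to the supremum. This is exactly where the hypothesis $y\in\Ran(\mathcal A_X)$ is used. For $\lambda$ with $\mathcal L_X^*\lambda\neq0$, rescale to $\lambda/\|\mathcal L_X^*\lambda\|_*$, and replace $\lambda$ by $-\lambda$ to turn $\langle\lambda,y\rangle$ into its absolute value. The supremum is nonnegative since $\lambda=0$ is allowed, so both expressions coincide. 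When every nonzero $\lambda$ has $\mathcal L_X^*\lambda=0$, both sides equal $0$ under the convention $\sup\emptyset=0$, consistent with $y=0$.
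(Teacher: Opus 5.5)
Your proof is correct and reaches the same duality by a slightly different route.

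\textbf{The paper's route.} It writes the problem in epigraph form and invokes Lagrangian strong duality via Slater's condition. It reads off the dual constraint $\|\mathcal L_X^*\lambda\|_*\le 1$ from the fact that $\inf_\Delta\{\|\Delta\|_{\op}-\langle\mathcal L_X^*\lambda,\Delta\rangle_F\}=-\infty$ otherwise.

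\textbf{Your route.} You prove weak duality by hand. You get strong duality from the Hahn--Banach minimum-norm (distance-to-subspace) formula in $(\Ssym^d,\|\cdot\|_{\op})$, combined with $(\ker\mathcal A_X)^\perp=\Ran(\mathcal A_X^*)=\mathcal L_X^*(\one^\perp)$.

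\textbf{Comparison.} In finite dimensions the two are equivalent: the distance formula is itself the strong-duality statement for this norm-minimization problem, so neither buys extra generality. Your version makes explicit where $y\in\Ran(\mathcal A_X)$ enters: it gives the existence of $\Delta_0$, and it makes the objective depend on $\lambda$ only through $\mathcal L_X^*\lambda$. It also handles the degenerate case where the second supremum is over an empty set. The paper's version is a one-line appeal to a standard theorem.

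\textbf{A terminology point.} Your opening sentence calls this linear-programming duality, but the operator-norm constraint is a semidefinite (conic) constraint, not a polyhedral one. Your argument never actually uses LP duality, so nothing breaks.
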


\begin{proof}
 The primal problem is feasible because $y\in\Ran(\mathcal{A}_X)$.  In epigraph form it
	is
	\[
	\inf_{\Delta,t} t \quad\text{subject to }\mathcal{A}_X\Delta=y,\quad \|\Delta\|_{\op}\le t.
	\]
	If $\mathcal{A}_X\Delta_0=y$, then $(\Delta_0,t_0)$ with
	$t_0>\|\Delta_0\|_{\op}$ is strictly feasible relative to the affine equality
	constraint.  Hence strong finite-dimensional convex duality applies, by
	Slater's condition; see, for instance,
	Boyd--Vandenberghe~\cite[Section~5.2.3]{boyd-vandenberghe2004convex}.
	The Lagrangian, with multiplier $\lambda\in\one^\perp$, is
	\[
	\|\Delta\|_{\op}+\langle\lambda,y-\mathcal{A}_X\Delta\rangle
	=\langle\lambda,y\rangle+\|\Delta\|_{\op}
	-\langle \mathcal{L}_X^*\lambda,\Delta\rangle_F .
	\]
	The infimum over $\Delta$ is finite exactly when
	$\|\mathcal{L}_X^*\lambda\|_*\le1$, since the nuclear norm is dual to the operator norm.
	This gives the stated dual.  If $\mathcal{L}_X^*\lambda=0$, then
	$\lambda\perp\Ran(\mathcal{A}_X)$, so $\langle\lambda,y\rangle=0$.  Such directions do not
	affect the supremum.
\end{proof}
In view of Lemma~\ref{lem:range-duality}, a naive approach is to try to prove that with high probability,
\begin{equation}\label{eq:naive}
\|\mathcal L_X^* \lambda\|_* \gtrsim\max\left\{(\log d)^{2+\eta}\|\lambda\|_1\,,\, d\|\lambda\|_2\right\},\quad\textnormal{for all}\quad\lambda\in\one^\perp,
\end{equation}
for some fixed $\eta>0$, which would imply Theorem~\ref{thm:phase-transition}-$(i)$. Indeed, recall that the residual $\widehat{\rho}=-\mathcal{A}_X \widehat{R}_d$. satisfies $\|\widehat{\rho}\|_2\ll \sqrt{n}\asymp d$ and $\|\widehat{\rho}\|_{\infty}\leq (\log d)^2$ by Proposition~\ref{thm:approx-fit}. Setting $y=\widehat\rho$ in Lemma~\ref{lem:range-duality}, and using the two bounds $|\langle\lambda,\widehat\rho\rangle|\leq \|\lambda\|_2 \|\widehat\rho\|_2$ and $|\langle\lambda,\widehat\rho\rangle|\leq\|\lambda\|_1\|\widehat\rho\|_\infty$, \eqref{eq:naive} would give
\[
\Xi_{\op}(\widehat\rho)\lesssim \frac{\|\widehat \rho\|_{\infty}}{(\log d)^{2+\eta}}+ \frac{\|\widehat\rho\|_2}{d}=o(1).
\]
We could therefore choose $\Delta$ with $\mathcal A_X\Delta=\widehat\rho$ and $\|\Delta\|_{\op}=o(1)$.

However, it turns out that the naive inequality \eqref{eq:naive} is \textit{false} even for fixed $\lambda\in \one^\perp$.  A counterexample can be constructed as follows. Let $(e_i)_{1\leq i\leq n}$ denote the standard orthonormal basis of $\R^n$, and let $1\le m\le(n-2)/2$ and small $\eps>0$, to be chosen below, and set
\[
\lambda_0=h_0+t_0,\quad\textnormal{where}\quad h_0=e_1-e_2,\quad t_0=\varepsilon\left(\sum_{i=3}^{m+2}e_i-\sum_{i=m+3}^{2m+2}e_i\right).
\]
The vectors $h_0$ and $t_0$ may be respectively viewed as the sparse \textit{head} and dense \textit{tail} of the vector $\lambda_0$. Note that
\[
	\|\mathcal{L}_X^* h_0\|_*
	=
	\left\|
	\frac{x_1x_1^\top-x_2x_2^\top}{\sqrt d}
	\right\|_*
	\le
	\frac{\|x_1\|_2^2+\|x_2\|_2^2}{\sqrt d}
	=
	O_\P(\sqrt d).
\]
For the tail,
Proposition~\ref{thm:apriori-estimates} below establishes that with high probability,
$\|\mathcal L_X^*z\|_*\le Cd\|z\|_2$ holds uniformly in $z\in \R^n$.  Hence $\|\mathcal L_X^*t_0\|_*
\lesssim d\|t_0\|_2
\asymp d\sqrt m\,\varepsilon$. Consequently,
\[
	\|\mathcal{L}_X^*\lambda_0\|_*
	=
	O_\P\bigl(\sqrt d+d\sqrt m\,\varepsilon\bigr).
\]
Now take $m=\lfloor n/4\rfloor\asymp d^2$ and
$\varepsilon=d^{-3/2}$. Then, $\|\mathcal{L}_X^*\lambda_0\|_*
	=O_\P(\sqrt d)$ while
\[
	d\|\lambda_0\|_2= d\sqrt{2+2m\eps^2}\asymp d,
	\qquad
	(\log d)^{2+\eta}\|\lambda_0\|_1=(\log d)^{2+\eta}(1+m\eps)
	\asymp \sqrt d\,(\log d)^{2+\eta}.
\]
Thus \eqref{eq:naive} fails for every fixed $\eta>0$.

We note that this obstruction from $\lambda_0$ is reminiscent of the head/tail (or compressible/incompressible) decomposition from sparse recovery~\cite{candes-tao2007dantzig}) or random matrix literature~\cite{rudelson-vershynin2008littlewood}. Motivated by these ideas, we decompose every $\lambda\in \R^n$ into a head $h$ and a tail $t$, and measure the two parts in different norms.

\begin{theorem}\label{thm:centered-compatibility}
	Let $x_1,\ldots,x_n\in\R^d$ be i.i.d. random vectors with independent
	coordinates satisfying~\eqref{eq:independent:coordinate:distribution:assumption} fomr some $K_x>0$. Suppose that
	$n/d^2\to\alpha\in(0,1/2)$. Then there are constants $c,C>0$, depending
	only on $\alpha$ and $K_x$, such that with probability $1-o(1)$,
	every $\lambda\in\R^n$ admits a set $\mathcal I=\mathcal I(\lambda) \subset[n]$ consisting of
	its $|\mathcal I|$ largest coordinates of $\lambda$ in absolute value, with
    \[|\mathcal I|\le Cd\log d
    \]
    such that writing
	$h=\lambda_{\mathcal I}$ and $t=\lambda_{\mathcal I^c}$, we have
	\[
	\|\mathcal{L}_X^*\lambda\|_*\equiv \left\|\frac1{\sqrt d}\sum_{i=1}^n\lambda_i(x_ix_i^\top-I_d)\right\|_* \ge
	c\left(\frac{\sqrt d}{(\log d)^4}\|h\|_1+d\|t\|_2\right).
	\]
\end{theorem}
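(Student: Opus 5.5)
The plan is to split $\mathcal L_X^*\lambda=\mathcal L_X^*h+\mathcal L_X^*t$ and prove two lower bounds separately, one for sparse and one for spread vectors, choosing the head size adaptively. Write $M(\lambda):=\sum_i\lambda_i(x_ix_i^\top-I_d)$, so that $\|\mathcal L_X^*\lambda\|_*=d^{-1/2}\|M(\lambda)\|_*$.

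The target inequality is homogeneous in $\lambda$, so we normalize and order the coordinates by absolute value. We then pick the head $\mathcal I$ as the top $k$ coordinates, with $k\le Cd\log d$ determined by a dichotomy. The choice is made so that either the tail has small $\ell_2$ norm relative to $\|h\|_1/\sqrt d$, or the tail is "incompressible" in the sense of \cite{rudelson-vershynin2008littlewood}: a constant fraction of its $\ell_2$ mass is spread over $\gtrsim d\log d$ coordinates of comparable size.

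\textbf{Step 1 (head).} We first show that for sparse vectors the nuclear norm sees essentially the $\ell_1$ mass. For $|\mathcal I|\le Cd\log d$, we want, uniformly over all such $\mathcal I$ and all $h$ supported on $\mathcal I$,
\[
\|M(h)\|_*\ge c\,d\,\|h\|_1/(\log d)^4 .
\]
The route is to test $M(h)$ against a good sign matrix. Take
\[
B=\sum_{i\in\mathcal I}\operatorname{sgn}(h_i)\,\frac{x_ix_i^\top}{\|x_i\|^2},
\]
or rather its suitably normalized version. Near-orthogonality of the $x_i$, with $|\langle x_i,x_j\rangle|\lesssim\sqrt{d\log d}$ and $\|x_i\|^2\approx d$, gives $\|B\|_{\op}\lesssim(\log d)^{O(1)}$: the Gram matrix of $k\le Cd\log d$ normalized vectors has operator norm $O(\log d)$ by standard covariance bounds. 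One then computes $\langle B,M(h)\rangle\approx d\|h\|_1$ minus cross terms. The $I_d$ part contributes $\Tr(B)\sum_i\lambda_i$, which we control separately.

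\textbf{Step 2 (tail).} For the tail we want $\|M(t)\|_*\gtrsim d^{3/2}\|t\|_2$, holding with high probability uniformly over $t$ whose entries are bounded by $\|h\|_1/|\mathcal I|$. This is where $\alpha<1/2$ enters. Since $n<d(d+1)/2$, the operator $\mathcal L_X$ has a least singular value of order $\sqrt{n}$ in the Frobenius sense, so $\|M(t)\|_F\gtrsim d\|t\|_2$. Nuclear norm, however, requires spread spectrum, i.e.\ $\|M\|_*\ge\|M\|_F^2/\|M\|_{\op}$. For spread $t$, $\|M(t)\|_{\op}\lesssim\sqrt d\|t\|_2$ by a matrix Bernstein/net argument. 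Combined with $\|M(t)\|_F\gtrsim d\|t\|_2$, this yields $\|M(t)\|_*\gtrsim d^{3/2}\|t\|_2$. To make this uniform we use a net over incompressible $t$ together with a small-ball estimate for $\|M(t)\|_F$ of the vector-anticoncentration type (Proposition~\ref{prop:vector-smallball}), using independence of coordinates. The covering cost of order $d^2$ dimensions is beaten by $\exp(-cd^2)$ small-ball probabilities.

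\textbf{Step 3 (combine).} Finally we combine the two bounds by contradiction, using the a priori estimate $\|\mathcal L_X^*z\|_*\le Cd\|z\|_2$ from Proposition~\ref{thm:apriori-estimates}. If $d\|t\|_2$ dominates, then the triangle inequality together with Step 2, and a cross estimate showing $M(h)$ cannot cancel $M(t)$, finish the argument. The cross estimate comes from testing against the projection onto the span of the $x_i$, $i\in\mathcal I$, which has rank $\le Cd\log d$ and does not carry most of the nuclear norm of $M(t)$. Otherwise $\|h\|_1\sqrt d/(\log d)^4$ dominates, and testing against $B$ from Step 1, with $|\langle B,M(t)\rangle|\le\|B\|_{\op}\|M(t)\|_*$ controlled, suffices.

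The main obstacle is Step 2 together with the non-cancellation in Step 3. Getting uniformity over all tails at the sharp $d^{3/2}$ scale needs anticoncentration for discrete entries, and also requires handling the centering term $\sum_i t_i I_d$. I would first try to reduce to $\lambda\in\one^\perp$-like behavior by absorbing the $I_d$ component into the head estimate.
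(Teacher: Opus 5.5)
\textbf{Step 2 fails: the operator-norm bound it needs is false uniformly.} Fix a unit vector $v\in\R^d$ and set $a_v:=(\langle x_i,v\rangle^2-1)_{i\le n}$ and $t:=a_v/\|a_v\|_2$. With high probability $\|a_v\|_2\asymp\sqrt n\asymp d$, because $\Var(\langle x,v\rangle^2)\ge\min\{2,\kappa-1\}$. Also $\|a_v\|_\infty\lesssim\log d$. So $t$ is as spread as you could ask: $\|t\|_\infty\lesssim(\log d)/d$, hence it is $K$-light and incompressible. Yet $v^\top M(t)v=\langle t,a_v\rangle=\|a_v\|_2\asymp d\|t\|_2$.

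Thus $\|M(t)\|_{\op}\gtrsim d\|t\|_2$, not $\sqrt d\|t\|_2$. Since $\|M(t)\|_F\lesssim d\|t\|_2$ by the a priori estimate, the inequality $\|M\|_*\ge\|M\|_F^2/\|M\|_{\op}$ yields only $\|M(t)\|_*\gtrsim d\|t\|_2$, a factor $\sqrt d$ short. The theorem still holds for this $t$ only because the part of $M(t)$ away from the spike has spread spectrum. No argument that sees $M(t)$ only through $\|\cdot\|_F$ and $\|\cdot\|_{\op}$ can detect this, and since the uniform statement is false, no net argument can establish it. Separately, a uniform Frobenius least-singular-value bound for $\mathcal L_X^*$ over the whole range $\alpha<1/2$ is not a standard fact for these dependent, possibly discrete quadratic features. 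The dimension count $n<d(d+1)/2$ gives nothing quantitative.

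\textbf{The central gap is non-cancellation between $M(h)$ and $M(t)$ in Step 3.} Since $|\mathcal I|\asymp d\log d>d$, the span of $\{x_i:i\in\mathcal I\}$ is all of $\R^d$, so your projection is the identity. If you instead mean the Frobenius projection onto $\operatorname{span}\{W_i:i\in\mathcal I\}\subset\Ssym^d$, such projections do not contract the nuclear norm.

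What is actually needed is a lower bound on the nuclear-norm distance from $\mathcal L_X^*t$ to that span. It must hold uniformly over light $t$ and over heads with $\|h\|_1$ as large as a power of $d$ times $\|t\|_2$. The paper allows $\|h\|_1\le d^{2/3}\|t\|_2$, and this polynomial margin over $\sqrt d/(\log d)^4$ is what makes your ``otherwise'' case trivially absorbable.

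The paper proves this by a direct nuclear-norm small-ball estimate:
\begin{enumerate}
\item It passes to coordinate blocks $U_\ell\times I_\ell$ with $U_\ell=I_1\cup\cdots\cup I_{\ell-1}$.
\item It conditions on $(x_i)_{i\in\mathcal I}$ and the $U_\ell$-coordinates, so that $\mathcal L_{X,\ell}^*h$ is a fixed shift and $\mathcal L_{X,\ell}^*t$ has conditionally independent columns.
\item It applies Proposition~\ref{prop:vector-smallball} columnwise, with Lemma~\ref{lem:effective-rank} supplying the conditional covariance on most directions.
\item It covers nuclear-norm balls by products of column balls (Lemma~\ref{lem:product-covering}).
\end{enumerate}

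The exponent of the small-ball bound is essential. A union bound over a net of size $(C/\gamma)^{n+o(d^2)}$ closes only if the small-ball probability is $(C\gamma)^{D}$ with $D>n\approx\alpha d^2$, so an unspecified $\exp(-cd^2)$ is not enough. A single decoupled block gives $D\approx d^2/4$, which covers only $\alpha<1/4$. Revealing $L$ blocks sequentially gives $D\approx(1-1/L)d^2/2$, which is how every $\alpha<1/2$ is reached.

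\textbf{A smaller issue in Step 1.} The unweighted sign matrix $B$ does not work as stated. The term $\Tr(B)\sum_j h_j$ can be of size $|\mathcal I|\,\|h\|_1\gg d\|h\|_1$, so it cannot be controlled separately. It is cancelled only by the off-diagonal terms $\sum_{i\ne j}\operatorname{sgn}(h_i)\langle x_i,x_j\rangle^2/\|x_i\|^2$. Moreover, for adversarial $\mathcal I$ and signs, the centered cross terms can exceed the diagonal contribution $\approx d\|h\|_1$ by logarithmic factors. The paper instead inverts the Gram matrix: $A=d^{-1}\sum_{i\in\mathcal I}y_iW_i$ with $y=Q_{\mathcal I}^{-1}\operatorname{sgn}(h)$ interpolates the signs exactly. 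The bound $\|y\|_\infty\lesssim(\log d)^2$ then gives $\|A\|_{\op}\lesssim(\log d)^4/\sqrt d$.
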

The proof of Theorem~\ref{thm:centered-compatibility} is one the main technical challenges of the paper and occupies Sections~\ref{sec:novel-sat-interpolation-details}. Here, we make a couple of remarks about the statement.

First, the condition $|\mathcal I|=O(d\log d)$ comes from a standard shelling argument in sparse recovery literature (see e.g.~\cite{candes-tao2007dantzig},~\cite{bickel-ritov-tsybakov2009lasso},~\cite{vandegeer-buhlmann2009conditions}). Indeed, order the coordinates of $\lambda$ by absolute value and move the largest ones into $\mathcal I$ until the remaining tail $t$ is \textit{$K$-light}, defined as $\|t\|_\infty\le (Kd)^{-1/2}\|t\|_2$. As long as the tail is not $K$-light, removing its largest coordinate decreases its squared $\ell_2$-norm by at least a $1/(Kd)$ fraction. After $\Theta(d\log d)$ steps, the tail must therefore either become $K$-light or have negligible $\ell_2$-norm. In the first case, $K$-lightness of $t$ plays a crucial role in the proof of Theorem~\ref{thm:centered-compatibility} as seen in Section~\ref{sec:light-tail-quotient}.

Second, the range $\alpha<1/2$ is optimal for Theorem~\ref{thm:centered-compatibility} in the following sense: if $\alpha>1/2$, then for all large $d$, $n>\dim\Ssym^d=\frac{d(d+1)}2,$ so the linear map $\mathcal{L}_X^*:\R^n\to\Ssym^d$ has a nonzero kernel.
	For any $\lambda\ne0$ in this kernel, $\mathcal{L}_X^* \lambda =0$ while the right-hand side of the asserted inequality is positive. We note in passing that in order to prove the SAT part of the Gaussian ellipsoid fitting conjecture, Conjecture~\ref{conj:gaussian-ellipsoid-fitting}, we only needs the range $\alpha<1/4$, in which case the proof can be
considerably simplified (see
Remark~\ref{rem:gaussian:simpler}).
	
The main challenge in Theorem~\ref{thm:centered-compatibility} is uniformity over all $\lambda\in\mathbb R^n$ throughout the range $\alpha<1/2$. Reaching this entire range is necessary for Theorem~\ref{thm:phase-transition} for every $\kappa>1$, since $\alpha_\star(\kappa)\to1/2$ as $\kappa\downarrow1$. To prove such a uniform estimate, we discretize all possible sparse heads~$h$ and light tails~$t$ by finite nets, and because $n/d^2\to \alpha$, these nets contain $\exp(\Theta(\alpha d^2))$ many points. A union bound therefore requires an exponentially small failure probability on the $d^2$ scale for each fixed pair, with enough decay to overcome the net entropy even as $\alpha\uparrow1/2$.

The following \textit{vector anti-concentration/small-probability estimate} proved in Section~\ref{sec:fixed-vector-smallball}, which might be of independent interest, plays a crucial role in obtaining the required failure probability.
\begin{proposition}
	\label{prop:vector-smallball}
	Let $(\xi_i)_{1\leq i\leq n}$ be independent random variables such that $\E \xi_i=0, \Var(\xi_i)=1$, and for some $M>0$
	\begin{equation}\label{eq:sec:mo:condition:prop:vector-smallball}
	\inf_{1\leq i\leq n}\E\bigl[\xi_i^2\mathbf 1_{\{|\xi_i|\le M\}}\bigr]\ge \frac12.
	\end{equation}
	Let $w_1,\ldots, w_n \in\R^m$ be fixed $m$ dimensional vectors. If 
	\[
	\sum_{i=1}^n w_iw_i^\top\succeq I_m,
	\qquad
	\max_{1\leq i\leq n}\|w_i\|_2\le\rho,
	\]
    for some $\rho>0$, then there is a constant $C=C(M)<\infty$ such that for every
	$\eta\ge C\rho$,
	\[
	\sup_{z\in\R^m}
	\P\left(
	\Big\|\sum_iw_i\xi_i-z\Big\|_2\le\eta\sqrt m
	\right)
	\le (C\eta)^m.
	\]
\end{proposition}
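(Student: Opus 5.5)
The plan is the Esseen-type Fourier approach to small-ball bounds in the style of Rudelson--Vershynin, combined with a decoupling/tensorization trick. Write $S=\sum_i w_i\xi_i$ and let $\phi_i(s)=\E e^{is\xi_i}$. Esseen's inequality in $\R^m$ gives, for any $z$,
\[
\P\bigl(\|S-z\|_2\le \eta\sqrt m\bigr)\le C^m\,\eta^m m^{m/2}\int_{\|\theta\|_2\le c\sqrt m/(\eta\sqrt m)}\prod_{i=1}^n|\phi_i(\langle w_i,\theta\rangle)|\,d\theta .
\]
Equivalently, one can test against a Gaussian window: bound $\P(\|S-z\|\le \eta\sqrt m)\le e^{m/2}\E\exp(-\|S-z\|^2/(2\eta^2))$ and expand $\exp(-\|v\|^2/2\eta^2)=\E_g e^{i\langle g,v\rangle/\eta}$ with $g\sim N(0,I_m)$. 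This gives
\[
\P\le e^{m/2}\,\E_g\prod_i|\phi_i(\langle w_i,g\rangle/\eta)|,
\]
which removes the supremum over $z$. I will use this Gaussian form, since it avoids the volume factor bookkeeping.

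The key step is an upper bound on $|\phi_i(s)|$ at small frequencies. Symmetrize: $|\phi_i(s)|^2=\E\cos(s(\xi_i-\xi_i'))$. Using $\cos u\le 1-cu^2$ for $|u|\le 2$ and the truncation hypothesis \eqref{eq:sec:mo:condition:prop:vector-smallball}, which gives $\E[(\xi-\xi')^2\mathbf 1_{|\xi|,|\xi'|\le M}]\ge c_0$, we get
\[
|\phi_i(s)|\le \exp(-c s^2)\qquad\text{for } |s|\le 1/M .
\]
Here $c$ depends only on $M$. For $|s|>1/M$ we just use $|\phi_i|\le 1$.

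Now set $s_i=\langle w_i,g\rangle/\eta$, so that $|s_i|\le \rho\|g\|_2/\eta$. On the typical event $\|g\|_2\le 2\sqrt m$, every $s_i$ lies in the good range provided $\eta\ge 2M\rho\cdot\sqrt m$, and then
\[
\prod_i|\phi_i|\le \exp\Bigl(-c\sum_i\langle w_i,g\rangle^2/\eta^2\Bigr)\le \exp(-c\|g\|^2/\eta^2),
\]
using $\sum w_iw_i^\top\succeq I_m$. Taking the expectation over $g$ gives $\E_g e^{-c\|g\|^2/\eta^2}=(1+2c/\eta^2)^{-m/2}\le (C\eta)^m$, which is the desired rate.

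The main obstacle is that this argument needs $\eta\gtrsim\rho\sqrt m$, whereas the statement only assumes $\eta\ge C\rho$. The issue is that $|\langle w_i,g\rangle|$ is typically of order $\|w_i\|\le\rho$, not $\rho\sqrt m$, so a uniform bound over $i$ is wasteful. To fix this, I will keep the Gaussian average coordinatewise. The bound $|\phi_i(s)|\le \exp(-c\min(s^2,M^{-2}))$ holds for all $s$, since for $|s|>1/M$ the truncated mass is still positive; this is a standard bound for lattice-free-at-scale variables, although it may fail for lattice laws. Because of that last caveat, I will instead keep the weaker but always-valid
\[
|\phi_i(s)|\le \exp\bigl(-c\,s^2\,\mathbf 1_{|s|\le 1/M}\bigr)
\]
and argue that the contribution lost from the large-$|s_i|$ indices is small. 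Write $\sum_i s_i^2\mathbf 1_{|s_i|\le 1/M}\ge \|g\|^2/\eta^2-\sum_i s_i^2\mathbf 1_{|s_i|>1/M}$. For the lost part, use $\E[\langle w_i,g\rangle^2\mathbf 1_{|\langle w_i,g\rangle|>\eta/M}]\le \|w_i\|^2 e^{-\eta^2/(4M^2\rho^2)}$, so its sum is at most $\Tr(\sum w_iw_i^\top)\,e^{-\eta^2/(4M^2\rho^2)}$. To make this usable I need a concentration or large-deviation statement for the lost part that holds with probability $e^{-\Omega(m)}$, not just in expectation.

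To get that, I will normalize as in Rudelson--Vershynin: replace the $w_i$ by $w_i'=Bw_i$ with $B=(\sum w_iw_i^\top)^{-1/2}$. This is a contraction, so $\sum w_i'w_i'^\top=I$, $\|w_i'\|\le\rho$, and the small-ball event only shrinks; hence we may assume $\sum_i w_iw_i^\top=I$ exactly. Then $\sum_i \langle w_i,g\rangle^2=\|g\|^2$. The lost part is a sum of dependent terms, but it is bounded by $\sum_i \langle w_i,g\rangle^2\mathbf 1_{|\langle w_i,g\rangle|>\eta/M}\le (M/\eta)^{2}\sum_i\langle w_i,g\rangle^4$. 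Its expectation is $3(M/\eta)^2\sum\|w_i\|^4\le 3M^2\rho^2 m/\eta^2$, which is at most $\|g\|^2/4$ in expectation when $\eta\ge C\rho$. Lipschitz concentration of $g\mapsto(\sum\langle w_i,g\rangle^4)^{1/4}$ (Lipschitz constant $\le\rho^{1/2}$ since $\sum\langle w_i,u\rangle^4\le\rho^2$ for unit $u$) gives deviations with probability $e^{-cm}$. On the complementary event we bound the integrand by $1$, and that event contributes $e^{-cm}\le (C\eta)^m$ after enlarging $C$. This tail control is the step I expect to need the most care; the remainder is the Gaussian integral computed above.
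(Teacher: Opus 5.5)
Your Gaussian-window reduction is correct, and it is the paper's regularization step in disguise. Substituting $g=\eta\theta$,
\[
\E_g\prod_i\bigl|\phi_i(\langle w_i,g\rangle/\eta)\bigr|=\Bigl(\frac{\eta}{\sqrt{2\pi}}\Bigr)^m\int_{\R^m}\prod_i\bigl|\phi_i(\langle w_i,\theta\rangle)\bigr|\,e^{-\eta^2\|\theta\|_2^2/2}\,d\theta .
\]
So the whole proposition reduces to bounding the last integral by $C^m$. After normalizing to $\sum_iw_iw_i^\top=I_m$ and using $\eta\ge\max_i\|w_i\|_2$, this is exactly what the proof of Lemma~\ref{lem:regularized-density} establishes. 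Your small-frequency bound $|\phi_i(s)|\le e^{-cs^2}$ for $|s|\le1/M$ is also fine.

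The gap is in how you treat large frequencies. Your final estimate has the form $(C\eta)^m+\P(\mathrm{bad})$, and you absorb the second term by asserting ``$e^{-cm}\le(C\eta)^m$ after enlarging $C$.'' This is false once $\eta<e^{-c}/C$. Small $\eta$ is the only nontrivial regime: the claim is vacuous when $C\eta\ge1$, and Lemma~\ref{lem:one-stage} needs $\eta\asymp\gamma$ with $\gamma$ a small constant, so that $(C\gamma)^{(1-\eps)D_L}$ beats the net size $(C/\gamma)^n$. Moreover, your concentration step does not even deliver $e^{-cm}$. The map $g\mapsto(\sum_i\langle w_i,g\rangle^4)^{1/4}$ is $\rho^{1/2}$-Lipschitz, but the threshold you need sits at scale $(\eta^2m)^{1/4}/\sqrt M$. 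Gaussian concentration therefore gives only $\exp(-c\sqrt m\,\eta/\rho)$. This is the true order in the example below, because $\sum_j g_j^4$ has stretched-exponential tails.

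More fundamentally, no bookkeeping can rescue an argument that only uses $|\phi_i(s)|\le\exp(-cs^2\mathbf 1_{\{|s|\le1/M\}})$. For each $j\le m$, take $\rho^{-2}\in\mathbb N$ copies of $w_i=\rho e_j$, so that $\sum_iw_iw_i^\top=I_m$, and set $\eta=C_0\rho$. Your majorant then factorizes as $\prod_{j\le m}\E\exp\bigl(-c\,g_j^2\eta^{-2}\mathbf 1_{\{|g_j|\le C_0/M\}}\bigr)\ge\P(|g_1|>C_0/M)^m$. That is a fixed constant to the power $m$, whereas $(C\eta)^m\to0$ as $\rho\to0$.

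For lattice laws such as Rademacher, $|\phi_i(s)|=|\cos s|$ returns to $1$ on $\pi\mathbb Z$, so there is no pointwise decay to exploit. What makes the proposition true is that once $|\phi_i|$ is effectively raised to a power of order $\|w_i\|_2^{-2}$, each resonance becomes narrow. The missing ingredient is therefore an \emph{integrated} one-dimensional bound (Lemma~\ref{lem:bl-oned-factor}): $\frac1r\int_\R|\phi(u)|^{1/r^2}e^{-s^2u^2/(2r^2)}\,du\le C(M)$ for $0<r\le s\le s_0$. To prove it, write $|\phi|^{2N}$ with $N\approx r^{-2}$ as the characteristic function of a sum $S_N$ of symmetrized copies. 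Then bound $\E e^{-S_N^2/2}\lesssim Q(S_N,1)\lesssim N^{-1/2}$ by Kolmogorov--Rogozin \eqref{eq:kolmogorov:rogozin}, using $Q(\xi,1/8)\le1-c(M)$, which follows from the truncated second-moment hypothesis.

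These one-dimensional factors are then combined by the geometric Brascamp--Lieb inequality (Theorem~\ref{thm:geometric-brascamp-lieb}) with $c_i=\|w_i\|_2^2$ and $u_i=w_i/\|w_i\|_2$. The identity $\sum_ic_i\langle u_i,\theta\rangle^2=\|\theta\|_2^2$ lets you split the Gaussian weight among the factors. With that lemma in hand, your Gaussian-window reduction finishes the proof directly, and it replaces the paper's final Markov step for removing the auxiliary noise.
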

A couple of remarks are in order.

Proposition~\ref{prop:vector-smallball} is closely resembles the result by Rudelson--Vershynin~\cite[Theorem~1.1]{rudelson-vershynin2015smallball} (see also
\cite[Theorem~1.1]{livshyts-paouris-pivovarov2016marginal}).
Indeed, if one additionally assumes $\xi_i$'s have uniformly bounded density (such as in the Gaussian ellipsoid fitting), Theorem~\ref{prop:vector-smallball} follows from \cite[Theorem 1.1]{rudelson-vershynin2015smallball}, even without the condition $\max_i \|w_i\|_2\lesssim \eta$.

The point of Proposition~\ref{prop:vector-smallball} is that it applies
without any density assumption, and hence also to discrete random
variables. Its proof follows the strategy by \cite{rudelson-vershynin2015smallball}, where we use the Fourier-analytic method of Ball--Nazarov \cite{ball-nazarov-little-level} and the Brascamp--Lieb inequality~\cite{brascamp-lieb1976}. Related bounds for anti-concentration with atomic variables appear in Littlewood--Offord theory; see e.g.~\cite{friedland-sodin2007concentration, rudelson-vershynin2008littlewood, rudelson-vershynin2009rectangular}. Those results control anti-concentration through an arithmetic structure of the coefficients called \textit{essential least common denominator}. Proposition~\ref{prop:vector-smallball} instead requires no arithmetic hypothesis, at the cost of requiring $\eta \gtrsim \max_i \|w_i\|_2$.

On the other hand, a restriction of the form $\eta \gtrsim \max_i \|w_i\|_2$ is unavoidable for discrete $\xi_i$'s. Let $m=1$, and $\xi_1,\ldots,\xi_n$ be i.i.d. Rademacher variables with $n$ even. Set $w_i=n^{-1/2}\equiv \rho$. Note that $S=\frac1{\sqrt n}\sum_{i=1}^n\xi_i$ takes values on a lattice with spacing $2\rho$, and a local central limit theorem shows $\P(S=0)\asymp\rho$. If $\eta=\rho^2$, then the interval $[-\eta,\eta]$ contains no lattice value other than zero, and hence $\P(|S|\le\eta)\asymp\rho$. This is much larger than the proposed bound $C\eta=C\rho^2$ when $\rho$ is small.

Finally, condition~\eqref{eq:sec:mo:condition:prop:vector-smallball} says that at least half of the variance of $\xi_i$ is contributed from $[-M,M]$. The constant $1/2$ is immaterial and may be replaced by any $c\in(0,1)$, with the resulting constants depending on $c$. For a random variable $\xi$, define its L\'{e}vy concentration function by
\begin{equation*}\label{eq:def:anti:concentration}
Q(\xi,L):=\sup_{x\in\mathbb R}\P\left(\xi\in[x,x+L]\right).
\end{equation*}
We prove in Lemma~\ref{lem:bl-oned-factor} that if \eqref{eq:sec:mo:condition:prop:vector-smallball} holds, then the scalar anti-concentration estimate 
\[
\sup_{1\leq i\leq n} Q(\xi_i,1/8) \le 1-c
\]
holds for some $c=c(M)>0$.  More generally, the proof of Proposition~\ref{prop:vector-smallball} uses the assumption~\eqref{eq:sec:mo:condition:prop:vector-smallball} only through such a scalar anti-concentration assumption. It therefore remains valid if~\eqref{eq:sec:mo:condition:prop:vector-smallball} is replaced by $\sup_iQ(\xi_i,L)\le1-\varepsilon$ for some fixed $L,\varepsilon>0$. In that formulation, the conclusion holds for every $\eta\ge C(L,\varepsilon)\rho$, with the constant in $(C\eta)^m$ also depending only on $L$ and $\varepsilon$.

Assuming Proposition~\ref{thm:approx-fit} and Theorem~\ref{thm:centered-compatibility}, we now prove Theorem~\ref{thm:small-residual-interpolation} and the SAT part of Theorem~\ref{thm:phase-transition}. We also use the fact $\alpha_\star(\kappa)\leq 1/2$ proven in Proposition~\ref{prop:scalar-gaussian-energy-curve} via a direct calculation.

\begin{proof}[Proof of Theorem~\ref{thm:small-residual-interpolation} and the SAT assertion in Theorem~\ref{thm:phase-transition}]
	We first prove Theorem~\ref{thm:small-residual-interpolation}. Throughout, we let $c,C>0$ denote constants that depend only on $\alpha$ and $K_x$. By Theorem~\ref{thm:centered-compatibility}, with probability $1-o(1)$ the following holds for every $\lambda\in\one^\perp$: 
	\[
	\|\mathcal{L}_X^*\lambda\|_* \ge
	c\left(\frac{\sqrt d}{(\log d)^4}\|h\|_1+d\|t\|_2\right),
	\]
    where $\lambda=h+t$ is supplied by Theorem~\ref{thm:centered-compatibility}. Using $|\langle \lambda, y\rangle |\leq \|h\|_1\|y\|_{\infty}+\|t\|_2\|y\|_2$ for $y\in \Ran(\mathcal{A}_X)$, we thus have
	\[
	\frac{|\langle\lambda,y\rangle|}{\|\mathcal{L}_X^*\lambda\|_*}
	\le
	C\left(\frac{(\log d)^4}{\sqrt d}\|y\|_\infty+\frac{\|y\|_2}{d}\right).
	\]
	Taking the supremum over $\lambda\in \one^\perp$ and combining with Lemma~\ref{lem:range-duality}, with probability $1-o(1)$, for any $y\in \Ran(\mathcal{A}_X)$,
	\[
	\Xi_{\op}(y)
	\le C\left(\frac{(\log d)^4}{\sqrt d}\|y\|_\infty+\frac{\|y\|_2}{d}\right).
	\]
 By definition of $\Xi_{\op}(y)$, after increasing $C$ by a factor of $2$, there exists $\Delta_y\in \Ssym^d$ satisfying the asserted bound (for $y=0$, take $\Delta_y=0$), which proves Theorem~\ref{thm:small-residual-interpolation}.

	To prove SAT assertion in Theorem~\ref{thm:phase-transition}, suppose
	$\alpha<\alpha_\star(\kappa)$. Since
	$\alpha_\star(\kappa)\le1/2$ by
	Proposition~\ref{prop:scalar-gaussian-energy-curve}, we have $\alpha<1/2$, thus Theorem~\ref{thm:small-residual-interpolation} applies. On the other hand, by
	Proposition~\ref{thm:approx-fit}, there are constants $m,M>0$, depending only on $\alpha,\kappa,K_x$, such that with probability $1-o(1)$ there exists $\widehat{R}_d$ with $mI_d\preceq \widehat{R}_d\preceq MI_d$, and $\widehat{\rho}=-\mathcal{A}_X \widehat{R}_d$ satisfies
	$\|\widehat \rho\|_2/d=o(1)$ and
	$\|\widehat{\rho}\|_{\infty}\leq (\log d)^2$. Combining with Theorem~\ref{thm:small-residual-interpolation}, there exists $\Delta_{\widehat \rho}\in \Ssym^d$ such that $\mathcal{A}_X \Delta_{\widehat \rho}=\widehat{\rho}$ and 
    \[
    \left\|\Delta_{\widehat{\rho}}\right\|_{\op}\leq C\left(\frac{\|\widehat\rho\|_2}{d}+\frac{(\log d)^4\|\widehat\rho\|_{\infty}}{\sqrt{d}}\right)=o(1),
    \]
   Thus, for all sufficiently large $d$, $R:=\widehat R_d+\Delta_{\widehat\rho}$ satisfies
   \[
   \frac{m}{2}I_d\preceq R\preceq 2M I_d,\qquad\mathcal{A}_X R=0.
   \]
Finally, the last identity says that $\mathcal L_X R$ is constant, so $x_i^\top Rx_i=c_d\ge0$ for every $i$. Since $R\succ0$, we have $c_d>0$ whenever there exists $i$ such that $x_i\ne0$. This occurs with probability $1-o(1)$. Indeed, Cauchy--Schwarz gives $\P(x_{11}\ne0)\ge1/\kappa$, and therefore $\P(x_i=0,~\forall i)\le(1-1/\kappa)^n=o(1)$. On the complementary event, $S:=R/c_d$ is positive definite and satisfies $x_i^\top Sx_i=1$ for every $i$. Finally, scaling does not change the condition number, and the displayed spectral bounds give $\lambda_{\max}(S)/\lambda_{\min}(S)\le4M/m$. This proves the SAT assertion in Theorem~\ref{thm:phase-transition}.
\end{proof}

\subsection{UNSAT: universality of constrained least squares}
\label{subsec:proof:overview:unsat}
\label{sec:unsat-architecture-inputs}
\label{sec:self-concordant-ridge-universality}
In this section, we prove
Theorem~\ref{thm:phase-transition}~\textup{(ii)} and
Theorem~\ref{thm:intro-approx-tensorization-squared-loss} via a Lindeberg argument.

We work in a more general framework in which the observations (a.k.a. features) need not be matrices. Specifically, we consider i.i.d. feature
vectors $Z_1,\ldots,Z_n\in\R^N$ with $\E[Z_1]=0$, $\Cov(Z_1)=\Sigma\in \R^{N\times N}$, and write
\[
Z:=(Z_i)_{i\le n},
\qquad
\lambda_\Sigma:=\|\Sigma\|_{\op}>0.
\]
For $\beta\ge0$, define the ridge-regularized \textit{least squared objective}
\[
F_Z(\theta,b;\beta)
:=
\frac1{2n}\sum_{i=1}^n
\bigl(b-\langle Z_i,\theta\rangle\bigr)^2
+\beta\left(\lambda_{\Sigma}\|\theta\|_2^2
+b^2\right)\,,\qquad \theta\in \R^N\,,~b\in \R.
\]
We refer to
$\beta(\lambda_{\Sigma}\|\theta\|_2^2+b^2)$ as the
\emph{ridge regularization}. When $\beta>0$, this makes
$(\theta,b)\mapsto F_Z(\theta,b;\beta)$ strongly convex. We first work
with $\beta>0$ and later remove the ridge regularization.

For a nonempty bounded convex domain $\mathcal D\subset\R^N$, the constriated least squares problem is
\begin{equation}
	\label{eq:constrained-ridge-value}
	\Phi_Z(\mathcal D;\beta)
	:=\inf_{\theta\in\mathcal D,\,b\in\R}F_Z(\theta,b;\beta)\,,\qquad \Phi_Z(\mathcal D):=\Phi_Z(\mathcal D;0).
\end{equation}
We first explain how the set up of Theorem~\ref{thm:intro-approx-tensorization-squared-loss} fits in this framework. Fix an isometry $\operatornorm{vec}:\Ssym^d\to\R^{N_d}$, where
\[
N_d=\frac{d(d+1)}{2},
\qquad
\langle\operatornorm{vec}(A),\operatornorm{vec}(B)\rangle
=\langle A,B\rangle_F.
\]
For example, one may list the diagonal entries followed by the
off-diagonal entries multiplied by $\sqrt2$. With a slight abuse of
notation, we identify matrices and matrix domains with their images under
this isometry, and write
\[
W=\bigl(\operatornorm{vec}(W_i)\bigr)_{i\le n}.
\]
Given $X=(x_i)_{i\le n}$, let $W\equiv W(X)$ denote the associated \textit{quadratic features} 
\[
W(X)_i=(x_ix_i^\top-I_d)/\sqrt{d}.
\]
Then, $\Gamma_X$ can be expressed in terms of $\Phi_{W(X)}$ as follows. Define 
\[
\mathcal R_d:=\big\{R\succeq0: R\in \Ssym^d\,,\, \Tr R=d\big\},
\qquad
\mathcal R_d^\circ:=\big\{R\succ0:R\in \Ssym^d\,,\, \Tr R=d\big\}.
\]
Since $\mathcal R_d^\circ$ is dense in $\mathcal R_d$, we have by continuity
\begin{equation}
	\label{eq:unregularized-affine-energy-value}
	\Phi_{W(X)}(\mathcal R_d^\circ)=\Phi_{W(X)}(\mathcal R_d)=\frac{1}{2}\Gamma_X.
\end{equation}
Similarly, for the Gaussian features $(G_i)_{i\leq n}$ in \eqref{eq:def:gaussian:features}, we have $\Phi_G(\mathcal R_d^{\circ})=\Phi_G(\mathcal R_d)=\Gamma_{G,\kappa}/2$. Thus, since $\Gamma_{G,\kappa}\pto e_\star(\alpha,\kappa)$ by Proposition~\ref{thm:gaussian-master}, in order to establish
Theorem~\ref{thm:intro-approx-tensorization-squared-loss}, it suffices to
prove
\begin{equation}\label{eq:goal:matrix:universality}
\Phi_{W(X)}(\mathcal R_d^\circ)-\Phi_G(\mathcal R_d^\circ)\pto0
\end{equation}
We next isolate the regularity properties of a feature vector used in our universality arguments.

\begin{definition}[Regular feature]
	\label{def:regular-feature-row}
	Fix $K_{\rm feat}>0$ and $\tau\ge0$.  A random vector
	$Z\in\R^N$ is called a
	$(K_{\rm feat},\tau)$-regular feature, if $\E[Z]=0$, $\Cov(Z)=\Sigma$ with
	$\lambda_{\Sigma}=\|\Sigma\|_{\op}>0$, and the following holds:
	\begin{enumerate}[leftmargin=*]
		\item[$\mathrm{(F1)}$] For every fixed $\theta\in\R^N$,
		\[
		\|\langle Z,\theta\rangle\|_{L^4}^2
		\le K_{\rm feat}^2\lambda_{\Sigma}\|\theta\|_2^2.
		\]
		\item[$\mathrm{(F2)}$] For every fixed symmetric matrix
		$T\in\R^{N\times N}$,
		\[
		\Var\left(Z^\top TZ\right)
		\le \tau^2\|T\|_F^2.
		\]
	\end{enumerate}
\end{definition}
To interpret, both conditions are concentration-type assumptions. Indeed, $\fone$ follows from a $2\to 4$
hypercontractive estimate for linear functions of $Z$: if $\|\langle Z,\theta \rangle_{L^4}\leq K_{\rm feat} \|\langle Z, \theta \rangle \|_{L^2}$, then $\fone$ follows from
$\|\langle Z,\theta\rangle\|_{L^2}^2
\le\lambda_\Sigma\|\theta\|_2^2$. $\ftwo$ is a uniform variance bound for quadratic functions of $Z$.

Note that the Gaussian feature $G\sim N(0,\Sigma_\kappa)$ defined in
\eqref{eq:def:gaussian:features} is a $(C,C/d)$-regular feature, where
$C$ depends only on $\kappa$. Indeed, since
$\|\Sigma_\kappa\|_{\op}\asymp_\kappa d^{-1}$ (see \eqref{eq:Sigma-kappa-def-main} for the definition of $\Sigma_{\kappa}$), Gaussian moment formulas
give
\begin{equation}
\label{eq:gaussian-feature-regularity}
\|\langle G,\theta\rangle\|_{L^4}^2=\sqrt{3}\theta^\top \Sigma_{\kappa}\theta
\le \sqrt3\,\|\Sigma_\kappa\|_{\op}\|\theta\|_2^2,
\qquad
\Var(G^\top TG)=2\left\| \Sigma_{\kappa}^{1/2}T \Sigma_{\kappa}^{1/2}\right\|_{F}^2
\le2\|\Sigma_\kappa\|_{\op}^2\|T\|_F^2.
\end{equation}
Corollary~\ref{cor:matrix-scale-universality}~\textup{(ii)} below shows that
\eqref{eq:goal:matrix:universality} follows once the rows of the
quadratic feature array $W(X)$ are $(K_{\rm feat},\tau_d)$-regular with
\[
K_{\rm feat}=O(1),
\qquad
\tau_d\lesssim(\log d)^{-3}.
\]
The following proposition, proved in
Section~\ref{subsec:approx-tensorization-regularity}, verifies these
conditions.

\begin{proposition}
	\label{thm:approx-tensorization-feature-regularity}
	Consider a random vector $x\in \R^d$ such that $\E[x]=0$ and $\Cov(x)=I_d$. Assume that $x$ satisfies approximate tensorization of variance~\eqref{eq:approximate-variance-tensorization} with constant
	$C>0$, and that $\max_{j\le d}\E|x_j|^8\le M,$ where $M$ is independent of $d$. Set
	\[
	W:=\frac{xx^\top-I_d}{\sqrt d},
	\]
	and suppose that
	$\Cov(\operatornorm{vec}W)=\Sigma_\kappa$ for some fixed
	$\kappa>1$. Then there exists
	$C_{\rm reg}=C_{\rm reg}(C,M,\kappa)<\infty$ such that $W$ is a
	$(K_{\rm feat},\tau_d)$-regular feature with
	\[
	K_{\rm feat}\le C_{\rm reg},
	\qquad
	\tau_d\le C_{\rm reg}d^{-1/2}.
	\]
\end{proposition}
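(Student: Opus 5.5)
The plan is to verify $\mathrm{(F1)}$ and $\mathrm{(F2)}$ directly, using only two inputs: approximate tensorization~\eqref{eq:approximate-variance-tensorization} and the covariance identity $\Cov(\operatornorm{vec}W)=\Sigma_\kappa$. The latter gives $\lambda_\Sigma=\|\Sigma_\kappa\|_{\op}=\max(2,\kappa-1)/d\asymp_\kappa d^{-1}$. For $\theta=\operatornorm{vec}(A)$ we have $\langle W,\theta\rangle=g_A/\sqrt d$ with $g_A:=x^\top Ax-\Tr A$. So $\mathrm{(F1)}$ amounts to $\|g_A\|_{L^4}^2\le C\|A\|_F^2$, and $\mathrm{(F2)}$ amounts to $\Var(Q_T)\le C d^{-1}\|T\|_F^2$, where $Q_T:=W^\top TW$ is a degree-four polynomial in $x$.

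The basic device, used repeatedly, is the following. Since a conditional variance is at most the conditional mean square deviation from any $x_{-j}$-measurable function, for every $f$ we have
\[
\Var(f(x))\le C\sum_{j\le d}\E\big[(f(x)-f(x^{(j)}))^2\big],
\]
where $x^{(j)}$ is $x$ with the $j$th coordinate set to $0$. The step $f-f(x^{(j)})$ is an explicit polynomial in which every monomial contains $x_j$. The moment hypothesis $\E|x_j|^8\le M$ controls the $x_j$ factors through H\"older. I expect this to work despite the dependence, because we never need the conditional law of $x_j$.

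\emph{Step 1 (linear forms).} For $v\in\R^d$, $\ell_v=\langle v,x\rangle$ has $\E\ell_v^2=\|v\|_2^2$. I would show $\|\ell_v\|_{L^8}\le C\|v\|_2$. Apply the device to $f=\ell_v^2$ and to $f=\ell_v^4$. The increments are $x_jv_j(2\ell_v-x_jv_j)$ and the analogous degree-four expression. H\"older gives
\[
\E\ell_v^4\le\|v\|_2^4+C\sum_j v_j^2\|x_j\|_{L^8}^2\|\ell_v\|_{L^8}^2+\dots,
\]
and similarly at level $8$. This yields a closed system of inequalities in $\|\ell_v\|_{L^4}$ and $\|\ell_v\|_{L^8}$, which I solve (a quadratic-type bootstrap, legitimate since all moments are finite by the $8$th-moment assumption and Minkowski). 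Where a $\|v\|_\infty$ term appears it is bounded by $\|v\|_2$.

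\emph{Step 2 (F1).} Here $\E g_A^2=d\,\mathcal C_\kappa(A,A)\le C\|A\|_F^2$. Apply the device to $f=g_A^2$. With $h_j=g_A(x^{(j)})$,
\[
g_A-h_j=A_{jj}x_j^2+2x_jb_j,\qquad b_j:=\sum_{k\ne j}A_{jk}x_k .
\]
Hence
\[
\Var(g_A^2)\le C\sum_j\|g_A-h_j\|_{L^4}^2\,\|g_A+h_j\|_{L^4}^2 .
\]
By Step 1 and H\"older, $\|x_jb_j\|_{L^4}\le\|x_j\|_{L^8}\|b_j\|_{L^8}\le C\|A_{j\cdot}\|_2$, so $\sum_j\|g_A-h_j\|_{L^4}^2\le C\|A\|_F^2$. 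Also $\|h_j\|_{L^4}\le\|g_A\|_{L^4}+\|g_A-h_j\|_{L^4}$. Therefore
\[
\E g_A^4\le C\|A\|_F^4+C\|A\|_F^2\|g_A\|_{L^4}^2,
\]
and solving this quadratic inequality gives $\|g_A\|_{L^4}^2\le C\|A\|_F^2$, i.e.\ $\mathrm{(F1)}$ with $K_{\rm feat}=O(1)$.

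\emph{Step 3 (F2), the main obstacle.} Write $Q_T=d^{-1}\sum T_{(ab),(ce)}(x_ax_b-\delta_{ab})(x_cx_e-\delta_{ce})$. The increment $Q_T-Q_T(x^{(j)})$ is a sum of terms of three shapes:
\[
x_j\cdot(\text{cubic form in }x_{-j}),\qquad x_j^2\cdot(\text{quadratic form}),\qquad x_j^3\cdot(\text{linear form}),\ x_j^4 .
\]
Each coefficient array is a slice of $T$ indexed by $j$, and squaring and summing these slices over $j$ recovers $\|T\|_F^2$ up to constants. Only $8$ moments of $x_j$ are available, so the factor $x_j^2$ must be paired in $L^4$ with a quadratic form, which is handled by Step 2. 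The terms $x_j^3$ and $x_j^4$ have deterministic or linear-form partners, handled by Step 1. The term $x_j\cdot(\text{cubic})$ needs an $L^{8/3}$ (or $L^4$) bound on cubic forms. I would first try the crude route: write the cubic form as $\sum_k x_k\,q_k(x)$ with quadratic forms $q_k$, and apply Cauchy--Schwarz in $k$ together with Step 2. This loses a factor of order $d$ relative to the Gaussian scale $\Var\lesssim d^{-2}\|T\|_F^2$, but it should still give $\Var(Q_T)\lesssim d^{-1}\|T\|_F^2$, i.e.\ $\tau_d\le C d^{-1/2}$. That bound is exactly what is claimed, and the slack is what allows avoiding a full hypercontractivity theory under dependence. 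If the crude route loses too much, the fallback is to redo the bootstrap of Step 2 one degree higher to get $L^4$ bounds on cubic forms directly.
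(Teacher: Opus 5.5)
Your proposal is correct and has the same skeleton as the paper's proof. Approximate tensorization becomes an Efron--Stein bound over single-coordinate changes. You bootstrap $L^8$ bounds for linear forms and $L^4$ bounds for centered quadratic forms (the paper runs the same bootstrap with $f=|g|^{p/2}$ and Young's inequality). Then a Cauchy--Schwarz that loses a factor $d$ gives $\mathrm{(F2)}$.

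The real differences are in how the single-coordinate change is made and how the $\mathrm{(F2)}$ increment is split. The paper resamples $x_j$ from its conditional law, so $x^{[j]}\stackrel{d}{=}x$. Exchangeability then lets it handle the increment in one stroke: $\langle W,TW\rangle-\langle W^{[j]},TW^{[j]}\rangle=\langle W-W^{[j]},\Pi_jT(W+W^{[j]})\rangle$, where $\Pi_j$ projects onto matrices supported on row and column $j$. It combines this with $\E\|W-W^{[j]}\|_F^4\le C$ and the single matrix-valued bound $\E\|BW\|_F^4\le Cd^{-2}\|B\|_F^4$. Your zeroing avoids the conditional law and gives explicit polynomial increments, at the price of term-by-term bookkeeping.

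That bookkeeping does close along your crude route, so you can commit to it. Set $B_{jk}=e_je_k^\top+e_ke_j^\top$ and $W^{(j)}=(x^{(j)}x^{(j)\top}-I_d)/\sqrt d$, view $T$ as a self-adjoint operator on $\Ssym^d$, and let $q_{jk}=\langle TB_{jk},W^{(j)}\rangle_F$. The cubic piece of the increment is $2d^{-1/2}x_j\sum_{k\ne j}x_kq_{jk}$. Cauchy--Schwarz in $k$, H\"older with exponents $(4,4,2)$, $\|\|x\|_2^2\|_{L^4}\le dM^{1/4}$, and Step 2 give
\[
\E\Big[x_j^2\Big(\sum_{k\ne j}x_kq_{jk}\Big)^2\Big]
\le\|x_j\|_{L^8}^2\,\big\|\|x\|_2^2\big\|_{L^4}\sum_{k\ne j}\|q_{jk}\|_{L^4}^2
\le C\sum_{k\ne j}\|TB_{jk}\|_F^2 .
\]
Summing over $j$ with the factor $4/d$ yields $Cd^{-1}\|T\|_F^2$, because $\{B_{jk}/\sqrt2\}_{j<k}$ is orthonormal.

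One detail to watch: when $x_j$ appears once in each factor of $Q_T$, you get $x_j^2$ times an uncentered quadratic form. Step 2 only bounds centered forms, so you must add the trace part, which also contributes at order $d^{-1}\|T\|_F^2$.

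No fallback is needed, and none could improve the rate. Take $T=\mathrm{id}$. Then $\Var(\|x\|_2^2)=d\,\mathcal C_\kappa(I_d,I_d)=(\kappa-1)d$, and $\|W\|_F^2=((\|x\|_2^2-1)^2+d-1)/d$, so $\Var\|W\|_F^2\asymp d$ while $\|T\|_F^2\asymp d^2$. Hence $\tau_d\asymp d^{-1/2}$ is sharp.
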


\subsubsection{Universality from self-concordant barriers}

Although ridge regularization with $\beta>0$ makes
$(\theta,b)\mapsto F_Z(\theta,b;\beta)$ strongly convex, its constrained minimum may lie at the boundary of $\mathcal D$. We therefore add a \emph{self-concordant barrier}, which places the regularized minimizer strictly in the interior, and more importantly, provides the local control needed for a second-order
Taylor expansion (cf. Lemma~\ref{lem:self-concordance-admissibility-consequences}) in our Lindeberg argument.

We use the following formulation from
\cite[Definition~2.3.1]{nesterov-nemirovskii1994interior}; see also
\cite[Definition~2.1.1, Remark~2.1.1]
{nesterov-nemirovskii1994interior},
\cite[Sections~4.1, 4.2]{nesterov2004introductory}, and
\cite[Sections~2.2, 2.3]{renegar2001interior}.
\begin{definition}[$\vartheta$-self-concordant barrier]
	\label{def:self-concordance}
Fix $\vartheta\ge1$. Let $N\ge1$, and let
$\Omega\subset\R^N$ be convex and relatively open in its affine hull,
that is, in the smallest affine subspace containing $\Omega$. Set
	\[
	\mathsf V_\Omega
	:=\operatorname{span}\{z-z':z,z'\in\Omega\}.
	\]
	Let $\Psi:\Omega\to\R$ be convex and $C^3$ along line segments in
	$\Omega$. Define the local seminorm
	\[
		\|h\|_{\Psi,z}^2:=\mathrm D^2\Psi(z)[h,h],
		\qquad z\in\Omega,\quad h\in\mathsf V_\Omega.
	\]
	We call such a convex function $\Psi$ a \emph{$\vartheta$-self-concordant barrier} for
	$\Omega$ if $\Psi(z)\to\infty$ as $z$ approaches the relative boundary
	of $\Omega$ and, for every $z\in\Omega$ and $h\in\mathsf V_\Omega$,
	\begin{equation*}
		\tag{SC}
		\bigl|\mathrm D^3\Psi(z)[h,h,h]\bigr|
		\le 2\|h\|_{\Psi,z}^3.
	\end{equation*}
	It must also satisfy the barrier-parameter bound for all $h\in \mathsf V_\Omega$
	\begin{equation*}
		\tag{$\mathrm{BP}_\vartheta$}
		|\mathrm D\Psi(z)[h]|
		\le \sqrt{\vartheta}\,\|h\|_{\Psi,z}.
	\end{equation*}
\end{definition}
The constant $2$ in $\mathrm{(SC)}$ fixes the standard normalization.
Under this normalization, $\Psi(t)=-\log t$ satisfies both
$\mathrm{(SC)}$ and $\mathrm{(BP}_1)$ with equality. The two conditions
play distinct roles: $\mathrm{(SC)}$ provides the local control needed for
the Taylor expansion in the Lindeberg argument, while
$\mathrm{(BP}_\vartheta)$ controls the cost of removing the barrier.

For $\beta\geq 0,\gamma\geq 0$, define
\begin{equation}
	\label{eq:shared-regularized-ls-value}
	\Phi_Z(\mathcal D,\mathcal B;\beta,\gamma)
	:=
	\inf_{\theta\in\mathcal D,\,b\in\R}
	\{F_Z(\theta,b;\beta)+\gamma\mathcal B(\theta)\}.
\end{equation}
We will first compare $\Phi_W(\mathcal D,\mathcal B;\beta,\gamma)$ and
$\Phi_G(\mathcal D,\mathcal B;\beta,\gamma)$ for $\beta,\gamma>0$, and then remove the barrier and ridge
regularizations. Note that for $\gamma=0$, we have $	\Phi_Z(\mathcal D,\mathcal B;\beta,0)=
	\Phi_Z(\mathcal D;\beta)$ by definition.

It is worth noting that every proper convex domain of affine dimension $m$ admits a
self-concordant barrier with parameter $O(m)$
\cite[Theorem~2.5.1]{nesterov-nemirovskii1994interior}; see also
\cite[Theorem~4.3.2]{nesterov2004introductory}. The matrix domain $\mathcal{R}_d^{\circ}$ in our application admit sharper log-determinant barrier with parameter
$\vartheta=O(d)$, although its dimension is $O(d^2)$.
This sharper scaling plays an essential role when we remove the barrier in
Section~\ref{subsubsec:barrier-ridge-removal}. We introduce the relevant barriers next.

To prove the approximate fitting in Proposition~\ref{thm:approx-fit}, introduce the domain
\begin{equation}
	\label{eq:two-sided-barrier-pair}
	\mathcal R_d^{(m,M)}:=\{R:mI_d\prec R\prec MI_d,\ \Tr R=d\},\qquad 0<m<1<M.
\end{equation}

\begin{lemma}
	\label{lem:logdet-barriers}
	On the domain $\mathcal R_d^\circ$, the function
    \begin{equation}
	\label{eq:one-sided-barrier-pair}
	\mathcal B_+(R):=-\log\det R.
\end{equation}
is a non-negative $d$-self-concordant barrier. For fixed $0<m<1<M$, on the domain $\mathcal R_d^{(m,M)}$, the function
    \begin{equation}
	\label{eq:two-sided-logdet-barrier}
	\mathcal B_{m,M}(R)
	:=-\log\det(R-mI_d)-\log\det(MI_d-R)
	+d\log\bigl((1-m)(M-1)\bigr).
\end{equation}
	is a non-negative $2d$-self-concordant barrier. Both barriers vanish at $I_d$. Finally, for
	either $\mathcal{D}=\mathcal{R}_d^\circ$ or $\mathcal{D}=\mathcal{R}_d^{(m,M)}$,
	\[
	\mathsf V_{\mathcal D}
	=\{U\in\Ssym^d:\Tr U=0\},
	\qquad
	\dim\mathsf V_{\mathcal D}=\frac{d(d+1)}2-1.
	\]
\end{lemma}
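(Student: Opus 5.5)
The plan is to reduce everything to the classical fact that $-\log\det$ is a $d$-self-concordant barrier for the open cone $\Ssym^d_{++}$, and that these properties survive restriction to affine slices and sums.

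\textbf{Domain and tangent space.} First identify $\mathsf V_{\mathcal D}$. Both domains lie in the affine hyperplane $\{\Tr R=d\}$, so every difference $R-R'$ is traceless. Conversely, both contain a relative neighborhood of $I_d$: for traceless $U$ with $\|U\|_{\op}$ small, $I_d+U$ stays in $\mathcal R_d^{(m,M)}\subset\mathcal R_d^\circ$. Hence $\mathsf V_{\mathcal D}=\{U:\Tr U=0\}$, which has dimension $d(d+1)/2-1$. The same observation shows each domain is convex and relatively open in its affine hull.

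\textbf{Self-concordance of $\mathcal B_+$.} For $R\succ0$ and symmetric $H$, set $A=R^{-1/2}HR^{-1/2}$ with eigenvalues $a_1,\dots,a_d$. Then
\[
\mathrm D\Psi[H]=-\Tr A,\qquad \mathrm D^2\Psi[H,H]=\Tr A^2,\qquad \mathrm D^3\Psi[H,H,H]=-2\Tr A^3.
\]
(SC) follows from $|\sum a_j^3|\le(\sum a_j^2)^{3/2}$, and $(\mathrm{BP}_d)$ follows from Cauchy--Schwarz, $|\sum a_j|\le\sqrt d(\sum a_j^2)^{1/2}$. Restricting to directions $h\in\mathsf V_{\mathcal D}$ preserves both inequalities. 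The function blows up at the relative boundary, since there some eigenvalue tends to $0$ while the trace stays fixed, which keeps the other eigenvalues bounded. Nonnegativity is AM--GM: $\det R\le(\Tr R/d)^d=1$, so $-\log\det R\ge0$, with equality at $I_d$.

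\textbf{The two-sided barrier $\mathcal B_{m,M}$.} Each term $-\log\det(R-mI)$ and $-\log\det(MI-R)$ is an affine reparametrization of $-\log\det$, so each is $d$-self-concordant on its domain. Write $\Psi=\Psi_1+\Psi_2$. For (SC), use $|D^3\Psi_1|+|D^3\Psi_2|\le 2(s_1^3+s_2^3)\le 2(s_1^2+s_2^2)^{3/2}$. For (BP), use $\sqrt d(s_1+s_2)\le\sqrt{2d}\,(s_1^2+s_2^2)^{1/2}$. This gives parameter $2d$. Blow-up at the relative boundary holds because an eigenvalue approaches $m$ or $M$.

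\textbf{Nonnegativity of $\mathcal B_{m,M}$ (main step).} Let the eigenvalues of $R$ be $r_j$, with $\sum r_j=d$. Then
\[
\mathcal B_{m,M}(R)=-\sum_j \log\frac{(r_j-m)(M-r_j)}{(1-m)(M-1)}.
\]
The function $g(r)=\log(r-m)+\log(M-r)$ is concave, so Jensen's inequality with mean $1$ gives $\sum_j g(r_j)\le d\,g(1)$. Hence $\mathcal B_{m,M}\ge0$, with equality at $I_d$.

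This nonnegativity argument, which rests on the trace constraint together with concavity, is the only step that is not purely formal. The rest is standard Nesterov--Nemirovskii bookkeeping.
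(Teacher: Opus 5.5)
Your proposal is correct and follows essentially the paper's argument: the same AM--GM bound gives $\mathcal B_+\ge 0$, your Jensen step for the concave $g(r)=\log(r-m)+\log(M-r)$ is exactly the paper's tangent-line inequality at $1$ (with the linear terms cancelling because $\Tr R=d$), and the direction space is read off from the trace constraint in the same way. The only difference is that you verify $(\mathrm{SC})$ and $(\mathrm{BP}_\vartheta)$ for $-\log\det$ and for the sum by direct eigenvalue computations, whereas the paper cites Nesterov--Nemirovskii (Proposition~5.4.5 for $-\log\det$, and Proposition~2.3.1 for affine restriction and additivity of barrier parameters).
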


\begin{proof}
	By \cite[Proposition~5.4.5]{nesterov-nemirovskii1994interior},
	$X\mapsto-\log\det X$ is a $d$-self-concordant barrier on the
	positive-definite cone. Moreover, by \cite[Proposition~2.3.1]{nesterov-nemirovskii1994interior}, affine restriction preserves the barrier
	parameter, while parameters add under sums.
	Restricting to $\mathcal{R}_d^\circ$ gives the assertion for
	$\mathcal B_+$. Composing with the two affine maps
	$R\mapsto R-mI_d$ and $R\mapsto MI_d-R$, and then adding the resulting
	barriers, gives the assertion for $\mathcal B_{m,M}$.

	The AM--GM inequality gives $\det R\le1$ when $\Tr R=d$, so
	$\mathcal B_+(R)\ge0$, with equality at $R=I_d$. For $\mathcal{B}_{m,M}$, apply the tangent-line inequality at $1$ to the convex function $s\mapsto-\log(s-m)-\log(M-s)$ and sum over the eigenvalues of $R$. The linear terms cancel because
	$\Tr R=d$, and the normalization in
	\eqref{eq:two-sided-logdet-barrier} gives
	$\mathcal B_{m,M}(R)\ge0$ and $\mathcal B_{m,M}(I_d)=0$.
	The direction space of either domain and its dimension follow directly
	from the trace constraint.
\end{proof}

The local seminorms associated with these barriers can be written explicitly.
A direct differentiation gives
\begin{equation}
	\label{eq:logdet-local-norm}
	\|U\|_{\mathcal B_+,R}^2
	=
	\mathrm D^2\mathcal B_+(R)[U,U]
	=
	\Tr(R^{-1}UR^{-1}U)
	=
	\|R^{-1/2}UR^{-1/2}\|_F^2.
\end{equation}
For the two-sided barrier, writing
$S_-=R-mI_d$ and $S_+=MI_d-R$, we similarly obtain
\[
	\|U\|_{\mathcal B_{m,M},R}^2
	=
	\|S_-^{-1/2}US_-^{-1/2}\|_F^2
	+
	\|S_+^{-1/2}US_+^{-1/2}\|_F^2.
\]
Thus the local seminorms become large in directions toward a nearby spectral boundary.

We are now ready to state the main comparison theorem used to prove Theorem~\ref{thm:intro-approx-tensorization-squared-loss}.
\begin{theorem}[Barrier-regularized universality]
	\label{thm:positive-barrier-row-universality}
	Fix $N,n\ge1$, $K_{\rm feat}>0$, and $\beta,\gamma>0$.
	Let $W\in\R^N$ be a $(K_{\rm feat},\tau)$-regular feature with
		covariance matrix $\Sigma$, and set
		$\lambda_{\Sigma}=\|\Sigma\|_{\op}>0$.  Let $G$ be the centered Gaussian
	feature with covariance $\Sigma$, and let
	$W_1,\ldots,W_n$ and $G_1,\ldots,G_n$ be independent arrays of i.i.d.
	copies of $W$ and $G$, respectively.

	There are constants $C,n_0<\infty$, depending only on
	$K_{\rm feat}$, with the following property.  Let
	$\mathcal D\subset\R^N$ be nonempty, bounded, convex, and relatively
	open in its affine hull, and let
	$\mathcal B:\mathcal D\to[0,\infty)$ be a
	$\vartheta$-self-concordant barrier for some $\vartheta\ge1$.
	Define the reference scale
	\begin{equation}
		\label{eq:intrinsic-reference-scale}
		\Lambda_{\rm ref}
		:=\left(1+\frac{1}{\beta}\right)
		\left(
		1+\inf_{\theta\in\mathcal D}
		\{\lambda_{\Sigma}\|\theta\|_2^2+\gamma\mathcal B(\theta)\}
		\right).
	\end{equation}
	If $n\gamma\ge n_0$, then
		\begin{equation}
			\label{eq:feature-universality-expectation}
			\left|
			\E\Phi_W(\mathcal D,\mathcal B;\beta,\gamma)
			-
			\E\Phi_G(\mathcal D,\mathcal B;\beta,\gamma)
			\right|
			\le
			C\Lambda_{\rm ref}^2\left(
			\frac1{\sqrt{n\gamma}}
			+\frac{\sqrt{\dim\mathsf V_{\mathcal D}}(\tau/\lambda_{\Sigma}+1)}{n}
			\right).
		\end{equation}
	Moreover,
		\begin{equation}
			\label{eq:feature-universality-variance}
			\Var\!\left(\Phi_W(\mathcal D,\mathcal B;\beta,\gamma)\right)
			+
			\Var\!\left(\Phi_G(\mathcal D,\mathcal B;\beta,\gamma)\right)
			\le C\frac{\Lambda_{\rm ref}^2}{n},
		\end{equation}
	and hence, for every $t>0$,
		\begin{equation}
			\label{eq:feature-universality-high-probability}
			\P\left(
			\left|\Phi_W(\mathcal D,\mathcal B;\beta,\gamma)
			-\Phi_G(\mathcal D,\mathcal B;\beta,\gamma)\right|
			>C\Lambda_{\rm ref}^2\left(
			\frac1{\sqrt{n\gamma}}
			+
			\frac{\sqrt{\dim\mathsf V_{\mathcal D}}(\tau/\lambda_{\Sigma}+1)}{n}
			\right)+t
			\right)
			\le \frac{C\Lambda_{\rm ref}^2}{nt^2}.
		\end{equation}
\end{theorem}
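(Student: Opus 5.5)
The plan is a Lindeberg replacement: swap $W_k$ for $G_k$ one row at a time, and control each swap by a second-order Taylor expansion of a leave-one-out value function. Fix $k$ and let $Z^{(k)}$ be the hybrid array whose rows $i<k$ are $G_i$ and rows $i>k$ are $W_i$; row $k$ is left empty. Set
\[
\Phi^{(k)}(u):=\inf_{\theta\in\mathcal D,b}\Bigl\{F^{(k)}(\theta,b)+\tfrac1{2n}(b-\langle u,\theta\rangle)^2\Bigr\},
\]
where $F^{(k)}$ is the objective with row $k$ removed, including the ridge and barrier terms. Then
\[
\E\Phi_W-\E\Phi_G=\sum_k\E\bigl[\Phi^{(k)}(W_k)-\Phi^{(k)}(G_k)\bigr].
\]
Let $(\hat\theta_k,\hat b_k)$ be the leave-one-out minimizer, which is independent of row $k$. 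Because of the barrier and the ridge, it is unique and lies in the interior of $\mathcal D$.

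Expand the added loss around this minimizer. Write $r(u)=\hat b_k-\langle u,\hat\theta_k\rangle$. The zeroth-order approximation is $\Phi^{(k)}\approx \Phi^{(k)}_0+\frac1{2n}r(u)^2$. The correction comes from re-optimizing and equals
\[
-\frac1{2n^2}\,r(u)^2\, v(u)^\top H_k^{-1}v(u),\qquad v(u)=(-u,1),
\]
up to higher-order terms, where $H_k$ is the Hessian of $F^{(k)}+\gamma\mathcal B$ at the minimizer. Conditionally on the other rows, the first two moments of $W_k$ and $G_k$ match, so the leading term $\frac1{2n}\E r(u)^2$ cancels exactly. The correction term is quartic in $u$ and is where the argument has to work.

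\begin{itemize}
\item The Hessian satisfies $H_k\succeq \gamma\,\mathrm D^2\mathcal B+2\beta\,\mathrm{diag}(\lambda_\Sigma I,1)$.
\item Using (F2) with $T=H_k^{-1}$ restricted to $\mathsf V_{\mathcal D}$, the quadratic form $u^\top H_k^{-1}u$ concentrates around $\Tr(\Sigma H_k^{-1})$ with fluctuation $\tau\|H_k^{-1}\|_F\lesssim\tau\sqrt{\dim\mathsf V_{\mathcal D}}/\beta\lambda_\Sigma$.
\item The mean $\Tr(\Sigma H_k^{-1})$ is the same for $W$ and $G$.
\item Combining this with (F1), which controls $\E r^4\lesssim K^4(\lambda_\Sigma\|\hat\theta\|^2+\hat b^2)^2\lesssim \Lambda_{\rm ref}^2$, gives a per-row discrepancy of order $\Lambda_{\rm ref}^2\sqrt{\dim\mathsf V}(\tau/\lambda_\Sigma+1)/n^2$.
\item The a priori size bound $\lambda_\Sigma\|\hat\theta\|^2+\hat b^2\lesssim\Lambda_{\rm ref}$ follows by comparing the objective with a near-minimizer of $\lambda_\Sigma\|\theta\|^2+\gamma\mathcal B$, taking $b=0$ and dividing by $\beta$.
\item Summing over the $n$ rows yields the second term in \eqref{eq:feature-universality-expectation}.
\end{itemize}

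The main obstacle is justifying the Taylor expansion uniformly, since the perturbed minimizer must stay inside the Dikin ellipsoid of $\mathcal B$. The plan is as follows.

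\begin{enumerate}
\item Compute the Newton step $\delta=H_k^{-1}\nabla(\text{added loss})$ and measure it in the local norm. Since $H_k\succeq\gamma\mathrm D^2\mathcal B$, this gives $\|\delta\|_{\mathcal B,\hat\theta}\lesssim \frac{|r|\,\|u\|_{H_k^{-1}}}{n\sqrt\gamma}$.
\item Bound the step size. The $\theta$-component of $H_k$ also dominates the rank-one data terms summed over the other rows, so $\|u\|_{H_k^{-1}}^2$ is typically of order $n$ times the smaller of $1$ and $1/(n\gamma)$. Consequently the step has local norm of order $|r|/\sqrt{n\gamma}$.
\item Apply (SC). It implies that within local radius $1/2$ the Hessian changes by a factor of at most $1\pm O(\text{radius})$, so the cubic remainder is bounded by the step radius times the quadratic term.
\item Sum the resulting error $\E|r|^3/(n\sqrt{n\gamma})$ over $k$ to obtain the $\Lambda_{\rm ref}^2/\sqrt{n\gamma}$ term.
\item Handle the event $|r|\gtrsim\sqrt{n\gamma}$, where the Dikin argument fails. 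There I use the crude bound $0\le\Phi^{(k)}(u)-\Phi^{(k)}_0\le\frac1{2n}r^2$ together with Markov's inequality and (F1); the condition $n\gamma\ge n_0$ guarantees the radius is below $1/2$.
\end{enumerate}

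For the variance bound \eqref{eq:feature-universality-variance}, apply Efron--Stein. Resampling one row changes $\Phi$ by at most $\frac1{2n}\max(r^2,r'^2)$, by the same crude bound applied to the leave-one-out problem. Summing $\frac{1}{n^2}\E r^4\lesssim\Lambda_{\rm ref}^2/n^2$ over $n$ rows gives $C\Lambda_{\rm ref}^2/n$. Finally, \eqref{eq:feature-universality-high-probability} follows from the expectation bound and the variance bound by Chebyshev's inequality.
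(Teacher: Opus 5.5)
Your architecture matches the paper's: a one-row Lindeberg swap around the leave-one-out minimizer, cancellation of $\frac1{2n}\E r^2$ by covariance matching, (F2) for the fluctuation of the quadratic form, self-concordance for the remainder, the crude bound $0\le\Delta\le r^2/(2n)$ on the bad event, and Efron--Stein plus Chebyshev for the variance and tail bounds (those last parts are fine). The gap is in how you linearize the re-optimization. You take the Newton step with the leave-one-out Hessian $H_k$ alone and keep only the correction $-\frac{r^2}{2n^2}\ell$, where $\ell=v^\top H_k^{-1}v$. But the inserted loss has its own Hessian $\frac1n vv^\top$, and the exact minimum of the quadratic model is $\frac{r^2}{2(n+\ell)}$. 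Your expression is the first-order expansion of this in $\ell/n$, and the discarded part $\frac{r^2\ell^2}{2n^2(n+\ell)}$ is negligible only if $\ell\ll n$.

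Nothing gives $\ell\ll n$. The theorem allows $\dim\mathsf V\gg n$, and in directions outside the span of the other rows $H_k$ reduces to $2\beta\,\mathrm{diag}(\lambda_\Sigma I,1)+\gamma\mathrm D^2\mathcal B$. So $\ell$ can be of order $\dim\mathsf V/\beta$; this is the situation in the matrix application with $\alpha<1/2$, where $\gamma\ll\beta_d\lambda_\Sigma$ and $\beta_d\to0$. When $\ell\gtrsim n$, the discarded term is of order $r^2/n$ per row and sums to order $\Lambda_{\rm ref}$, which does not vanish.

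The same issue breaks your step 2. The claim $\|u\|_{H_k^{-1}}^2\lesssim n\min\{1,1/(n\gamma)\}$ is unjustified: the other rows give no control when they do not span, and for $-\log\det$ near $I_d$ the dual local norm squared of a row is about $\|W\|_F^2\asymp d$, not $O(1)$. Hence $\|\delta\|_{\mathcal B}\lesssim|r|\,\|u\|_{H_k^{-1}}/(n\sqrt\gamma)$ does not give radius $|r|/\sqrt{n\gamma}$. It also breaks your bad-event step: $\frac{r^2}{2n}-\frac{r^2\ell}{2n^2}$ is not confined to $[0,\frac{r^2}{2n}]$, so the crude bound no longer controls the error there.

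The missing idea is to keep the rank-one term exactly (Sherman--Morrison). The quadratic model then has minimizer $\widehat u=-\frac{r}{n+\ell}H_k^{-1}v$ and value $\frac{r^2}{2(n+\ell)}\in[0,\frac{r^2}{2n}]$. Moreover $\gamma\|\widehat u^\theta\|_{\mathcal B}^2\le\langle\widehat u,H_k\widehat u\rangle=\frac{\ell r^2}{(n+\ell)^2}\le\frac{r^2}{4n}$ for every $\ell\ge0$, so no bound on $\ell$ is needed.

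You must also localize the true constrained displacement $u_\star$, not just the model step. The paper does this from optimality against the leave-one-out point: $\gamma D_{\mathcal B}(\hat\theta_k+u_\star^\theta,\hat\theta_k)\le r^2/(2n)$. This yields the two-sided bound $R(u_\star)\le\Delta-\frac{r^2}{2(n+\ell)}\le R(\widehat u)$, where $R(\cdot)$ is $\gamma$ times the barrier's deviation from its second-order Taylor polynomial.

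Finally, compare $W$ and $G$ by freezing $\ell$ at its conditional mean $\bar\ell=\E_i\ell$ in the denominator. The quantity $\E_i\frac{r^2}{n+\bar\ell}$ is identical for $W$ and $G$ by covariance matching, and $\bigl|\frac{r^2}{n+\ell}-\frac{r^2}{n+\bar\ell}\bigr|\le\frac{r^2|\ell-\bar\ell|}{n^2}$. Cauchy--Schwarz with your (F1) and (F2) estimates then gives the per-row error $\Lambda_{\rm ref}^2\sqrt{\dim\mathsf V}(\tau/\lambda_\Sigma+1)/n^2$ you were aiming for.
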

The first term on the right-hand side of
\eqref{eq:feature-universality-expectation} controls the error
from the second-order approximation used in feature replacement argument and
uses the condition $\mathrm{(F1)}$. The second term arises from
fluctuations of the corresponding quadratic terms and uses the condition $\mathrm{(F2)}$. Theorem~\ref{thm:positive-barrier-row-universality} is proved in
Section~\ref{subsubsec:proof-positive-barrier-row-universality}.

\subsubsection{Removing the barrier and ridge regularization}
\label{subsubsec:barrier-ridge-removal}

We next remove the two regularizations introduced above. We first compare
$\Phi_Z(\mathcal D,\mathcal B;\beta,\gamma)$ with
$\Phi_Z(\mathcal D;\beta)$, and then remove the ridge regularization to recover
$\Phi_Z(\mathcal D)$. The following two lemmas show how the barrier parameter $\vartheta$ enters the first comparison.
\begin{lemma}
	\label{lem:self-concordant-barrier-contraction}
	Let $\Psi:\Omega\to[0,\infty)$ be a
	$\vartheta$-self-concordant barrier on a convex domain.  For
	$x_0,x\in\Omega$ and $0<\varepsilon<1$,
	\[
		\Psi((1-\varepsilon)x+\varepsilon x_0)
		\le \Psi(x_0)+\vartheta\log(1/\varepsilon).
	\]
\end{lemma}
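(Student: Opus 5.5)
The plan is to reduce to a one-dimensional estimate along the segment from $x_0$ toward the boundary, using the barrier-parameter bound $(\mathrm{BP}_\vartheta)$.

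Set $h:=x-x_0\in\mathsf V_\Omega$ and $\phi(s):=\Psi(x_0+sh)$ for $s\in[0,1]$. All these points lie in $\Omega$ by convexity, and $\phi$ is convex and $C^3$ there. The point $(1-\varepsilon)x+\varepsilon x_0$ equals $x_0+(1-\varepsilon)h$, so the goal is
\[
\phi(1-\varepsilon)-\phi(0)\le\vartheta\log(1/\varepsilon).
\]

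The key step is the pointwise bound $\phi'(s)\le \vartheta/(1-s)$ for $s\in[0,1)$. To get it, fix $s$ and put $z=x_0+sh$.

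If $\phi'(s)\le0$ there is nothing to prove. Otherwise, $(\mathrm{BP}_\vartheta)$ gives $\phi'(s)^2\le\vartheta\,\phi''(s)$, that is, $\phi''\ge\phi'^2/\vartheta$ wherever $\phi'>0$. By convexity $\phi'$ is nondecreasing, so $\phi'>0$ on all of $[s,1]$. There the function $u:=1/\phi'$ satisfies $u'=-\phi''/\phi'^2\le-1/\vartheta$.

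The segment extends in $\Omega$ up to $s=1$, since $x\in\Omega$, and $u(1)=1/\phi'(1)>0$. Integrating $u'\le-1/\vartheta$ from $s$ to $1$ gives
\[
0<u(1)\le u(s)-\frac{1-s}{\vartheta},
\]
hence $1/\phi'(s)\ge(1-s)/\vartheta$, which is the claimed bound.

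Integrating the bound from $0$ to $1-\varepsilon$ gives
\[
\phi(1-\varepsilon)-\phi(0)\le\vartheta\int_0^{1-\varepsilon}\frac{ds}{1-s}=\vartheta\log(1/\varepsilon),
\]
which is the claim.

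The only delicate point is the ODE comparison. It needs strict positivity of $\phi'$ on the whole interval up to $1$, which convexity supplies, and finiteness of $\phi'(1)$, which holds because $x$ is an interior point. The nonnegativity of $\Psi$ is not even needed for this inequality. Alternatively, one can cite the standard estimate $\mathrm D\Psi(z)[y-z]\le\vartheta$ for $y$ in the closure of $\Omega$ from Nesterov--Nemirovskii, but the argument above is self-contained.
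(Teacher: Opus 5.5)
Your proof is correct and is essentially the paper's argument. The paper first proves the global gradient bound $\mathrm D\Psi(z)[y-z]\le\vartheta$ by the same comparison $(1/f')'\le-1/\vartheta$, then applies it with $z=x_s$, $y=x$ to get $\frac{d}{ds}\Psi(x_s)\le\vartheta/(1-s)$; this is exactly your pointwise bound, obtained by running the comparison directly on the sub-segment $[s,1]$, and you are right that nonnegativity of $\Psi$ is not used.
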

\begin{proof}
This is a consequence of the standard global gradient bound
\[
  \mathrm D\Psi(z)[y-z]\le \vartheta,
  \qquad z,y\in\Omega,
\]
for self-concordant barriers (see e.g.~\cite[Theorem~4.2.4]{nesterov2004introductory}); we recall the proof of this first. If the left-hand side is nonpositive this is immediate.
	Otherwise, put $f(t)=\Psi(z+t(y-z))$.  Convexity gives $f'(t)>0$, while
	the barrier-parameter bound gives $f'(t)^2\le\vartheta f''(t)$.  Hence
	$(1/f')'\le-1/\vartheta$; integration over $[0,1]$ yields
	$f'(0)\le\vartheta$ which is equivalent to the claim.

	Now set $x_s=(1-s)x_0+sx$.  Since
	$x-x_s=(1-s)(x-x_0)$, the preceding inequality gives $	\frac{d}{ds}\Psi(x_s)
		\le\frac{\vartheta}{1-s},$ and integrating from $0$ to $1-\varepsilon$ proves the claim.
\end{proof}

\begin{lemma}
	\label{lem:general-barrier-removal-sandwich}
	Let $\mathcal D\subset\R^N$ be nonempty, bounded, convex, and relatively
	open in its affine hull, and let
	$\mathcal B:\mathcal D\to[0,\infty)$ be a
	$\vartheta$-self-concordant barrier for some $\vartheta\ge1$.  Fix 
	$(\theta_{\rm ref},b_{\rm ref})\in\mathcal D\times\R$.  Then, for every
	features $Z=(Z_i)_{i\leq n}$, every $\beta\ge0$, every $\gamma>0$, and every
	$0<\varepsilon<1$,
	\begin{equation}
		\label{eq:general-barrier-removal-sandwich}
		0\le
		\Phi_Z(\mathcal D,\mathcal B;\beta,\gamma)
		-\Phi_Z(\mathcal D;\beta)
		\le
		\varepsilon F_Z(\theta_{\rm ref},b_{\rm ref};\beta)
		+\gamma\mathcal B(\theta_{\rm ref})
		+\gamma\vartheta\log(1/\varepsilon).
	\end{equation}
\end{lemma}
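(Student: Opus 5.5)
The lower bound is immediate because $\mathcal B\ge 0$ and $\gamma>0$. Hence for every $(\theta,b)\in\mathcal D\times\R$,
\[
F_Z(\theta,b;\beta)+\gamma\mathcal B(\theta)\ge F_Z(\theta,b;\beta)\ge \Phi_Z(\mathcal D;\beta).
\]
Taking the infimum over $(\theta,b)$ gives $\Phi_Z(\mathcal D,\mathcal B;\beta,\gamma)\ge\Phi_Z(\mathcal D;\beta)$.

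For the upper bound, I will take any near-minimizer of the unbarriered problem and contract it toward the reference point. Fix $\delta>0$ and pick $(\theta,b)\in\mathcal D\times\R$ with $F_Z(\theta,b;\beta)\le\Phi_Z(\mathcal D;\beta)+\delta$. Such a point exists since $\Phi_Z(\mathcal D;\beta)$ is an infimum over $\mathcal D\times\R$ and is finite, being nonnegative. Define the convex combination
\[
(\theta_\varepsilon,b_\varepsilon):=(1-\varepsilon)(\theta,b)+\varepsilon(\theta_{\rm ref},b_{\rm ref}).
\]
It lies in $\mathcal D\times\R$ by convexity of $\mathcal D$. Using it as a competitor gives
\[
\Phi_Z(\mathcal D,\mathcal B;\beta,\gamma)\le F_Z(\theta_\varepsilon,b_\varepsilon;\beta)+\gamma\mathcal B(\theta_\varepsilon).
\]

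I bound the two terms on the right separately. The map $(\theta,b)\mapsto F_Z(\theta,b;\beta)$ is convex, since it is a sum of squares of affine functions plus a nonnegative multiple of a convex quadratic. Therefore
\[
F_Z(\theta_\varepsilon,b_\varepsilon;\beta)\le(1-\varepsilon)F_Z(\theta,b;\beta)+\varepsilon F_Z(\theta_{\rm ref},b_{\rm ref};\beta)\le \Phi_Z(\mathcal D;\beta)+\delta+\varepsilon F_Z(\theta_{\rm ref},b_{\rm ref};\beta).
\]
The last step uses $1-\varepsilon\le1$ and $F_Z\ge0$. For the barrier term, apply Lemma~\ref{lem:self-concordant-barrier-contraction} with $\Psi=\mathcal B$, $\Omega=\mathcal D$, $x_0=\theta_{\rm ref}$ and $x=\theta$. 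This gives
\[
\mathcal B(\theta_\varepsilon)\le\mathcal B(\theta_{\rm ref})+\vartheta\log(1/\varepsilon).
\]

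Adding the two bounds, subtracting $\Phi_Z(\mathcal D;\beta)$, and letting $\delta\downarrow0$ yields \eqref{eq:general-barrier-removal-sandwich}. The only point needing care is that the contraction lemma applies to the relatively open domain $\mathcal D$ in its affine hull. That domain is exactly the setting of Definition~\ref{def:self-concordance}, and $\theta_\varepsilon$ stays in $\mathcal D$, so no boundary issues arise.
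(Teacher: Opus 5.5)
Your proof is correct and follows the paper's argument essentially verbatim. The lower bound comes from $\mathcal B\ge0$, and the upper bound comes from contracting a point toward $(\theta_{\rm ref},b_{\rm ref})$, using convexity and nonnegativity of $F_Z$ for the loss and Lemma~\ref{lem:self-concordant-barrier-contraction} for the barrier; the only cosmetic difference is that you pass to the infimum through a $\delta$-near-minimizer rather than taking the infimum over $(\theta,b)$ directly.
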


\begin{proof}
	The left inequality follows from $\mathcal B\ge0$.  Given
	$(\theta,b)\in\mathcal D\times\R$, contract it toward the $(\theta_{\rm ref}, b_{\rm ref})$:
	\[
		\theta_\varepsilon
		=(1-\varepsilon)\theta+\varepsilon\theta_{\rm ref},
		\qquad
		b_\varepsilon
		=(1-\varepsilon)b+\varepsilon b_{\rm ref}.
	\]
		By convexity and non-negativity of $F_Z(\cdot,\cdot;\beta)$,
	\[
		F_Z(\theta_\varepsilon,b_\varepsilon;\beta)
		\leq F_Z(\theta,b;\beta)+\eps F_Z(\theta_{\rm ref}, b_{\rm ref}; \beta).
	\]
   Lemma~\ref{lem:self-concordant-barrier-contraction} also gives $\mathcal B(\theta_\varepsilon)
		\le\mathcal B(\theta_{\rm ref})
		+\vartheta\log(1/\varepsilon)$.
	Using $(\theta_\varepsilon,b_\varepsilon)$ in the barrier-regularized
	problem~$\Phi_Z(\mathcal{D},\mathcal{B};\beta,\gamma)$ and taking the infimum over $(\theta,b)$ proves the upper bound.
\end{proof}
Applying \eqref{eq:general-barrier-removal-sandwich} for $Z=W,G$ and
using the triangle inequality gives
\begin{align}
	\label{eq:general-barrier-removal-comparison}
	\left|\Phi_W(\mathcal D;\beta)-\Phi_G(\mathcal D;\beta)\right|
	&\le
	\left|\Phi_W(\mathcal D,\mathcal B;\beta,\gamma)
	-\Phi_G(\mathcal D,\mathcal B;\beta,\gamma)\right|\notag\\
	&\quad+
	\varepsilon\sum_{Z\in\{W,G\}}
	F_Z(\theta_{\rm ref},b_{\rm ref};\beta)
	+2\gamma\mathcal B(\theta_{\rm ref})
	+2\gamma\vartheta\log(1/\varepsilon).
\end{align}
Combining with Theorem~\ref{thm:positive-barrier-row-universality}, the schematic cost of comparing the unregularized values is
\[
	\underbrace{(n\gamma)^{-1/2}}_{\text{feature comparison}}
	+\underbrace{\gamma\vartheta\log(1/\varepsilon)}_{\text{barrier removal}}
	+\underbrace{\varepsilon}_{\text{contraction of the objective}},
\]
in addition to $\frac{\sqrt{\dim\mathsf V_{\mathcal D}}(\tau/\lambda_{\Sigma}+1)}{n}$ in
\eqref{eq:feature-universality-expectation}. Choosing $\varepsilon=n^{-1}$ and
$\gamma\asymp(n\vartheta^2\log^2 n)^{-1/3}$ gives an error of order
\[
	\left(\frac{1+\vartheta\log n}{n}\right)^{1/3};
\]
see Proposition~\ref{prop:constrained-ridge-comparison}. Thus, when $\mathcal{D}=\mathcal{R}_d^\circ$ and $n\asymp d^2$, the generic scaling $\vartheta\asymp d^2$ would not yield
a vanishing error. Lemma~\ref{lem:logdet-barriers} instead gives
$\vartheta\le2d$, yielding the rate $(\log d/d)^{1/3}=o(1)$.

It remains to remove the ridge regularization. For this step, we restrict
to $\mathcal D=\mathcal R_d^\circ$. We use the
small-ball method~\cite{koltchinskii-mendelson2013smallest, wainwright2019high} to localize an unregularized minimizer. This requires a Rademacher complexity bound for
$\mathcal R_d^\circ$. Since $\|R\|_*=\Tr R=d$ for every
$R\in\mathcal R_d$, this domain is contained in the nuclear-norm ball of
radius $d$; see \cite{foygel2011concentration} for related complexity
bounds. The following lemma is proved in
Section~\ref{subsec:feature-ridge-removal-proof}.

\begin{lemma}[Removing the ridge]
	\label{lem:feature-ridge-removal}
	Fix $\kappa>1$ and $K_{\rm feat}>0$. For each
	$d$, let $W_1,\ldots,W_n\in\Ssym^d$ be i.i.d. copies of a
	$(K_{\rm feat},\tau_d)$-regular feature with covariance
	$\Sigma_\kappa$, and let $G_1,\ldots,G_n$ be independent Gaussian
	features with the same covariance. Write $W=(W_i)_{i\le n}$ and
	$G=(G_i)_{i\le n}$.
    
    Suppose that $n/d^2\to\alpha\in(0,\infty)$ with $\tau_d\le C_0(\log d)^{-3}$ for some fixed $C_0>0$. Then there is
	$C=C(\alpha,\kappa,K_{\rm feat},C_0)>0$ such that, with
	probability tending to one, simultaneously for $Z=W,G$ and every
	$\beta>0$,
	\[
		0\le
		\Phi_Z(\mathcal R_d^\circ;\beta)-\Phi_Z(\mathcal R_d^\circ)
		\le C\beta\log d.
	\]
\end{lemma}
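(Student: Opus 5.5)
The plan is to compare the ridge-regularized and unregularized problems through a localization argument. The lower bound $0\le\Phi_Z(\mathcal R_d^\circ;\beta)-\Phi_Z(\mathcal R_d^\circ)$ is immediate, since the ridge term is nonnegative. For the upper bound, take a near-minimizer $(R_0,b_0)$ of the unregularized problem $\Phi_Z(\mathcal R_d^\circ)$ and plug it into $F_Z(\cdot,\cdot;\beta)$. This gives
\[
\Phi_Z(\mathcal R_d^\circ;\beta)-\Phi_Z(\mathcal R_d^\circ)\le\beta\bigl(\lambda_{\Sigma_\kappa}\|R_0\|_F^2+b_0^2\bigr)+o(1),
\]
and the $o(1)$ can be taken arbitrarily small because the infimum is over an open set. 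Since $\lambda_{\Sigma_\kappa}\asymp_\kappa d^{-1}$, it therefore suffices to show that, with probability tending to one and simultaneously for $Z=W,G$, some near-minimizer satisfies $\|R_0\|_F^2\le Cd\log d$ and $b_0^2\le C\log d$. No uniformity in $\beta$ is needed beyond this, because the bound is linear in $\beta$ with a $\beta$-independent constant.

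The intercept is handled directly. For fixed $R$, the optimal $b$ is the empirical mean $n^{-1}\sum_i\langle Z_i,R\rangle$, so $|b_0|\le\|n^{-1}\sum_i Z_i\|_{\op}\,\|R\|_*=d\,\|\bar Z\|_{\op}$. I would bound $\|\bar Z\|_{\op}$ by a matrix concentration or moment estimate.
\begin{itemize}
\item For Gaussian $G$, $\bar G$ is a rescaled GOE-type matrix with entry variance of order $1/(nd)$, so $\|\bar G\|_{\op}\lesssim\sqrt{d/(nd)}\asymp d^{-1}$. This gives $b_0=O(1)$.
\item For $W$, $\bar W=(n\sqrt d)^{-1}\sum_i(x_ix_i^\top-I)$, and the sample covariance bound gives $\|\bar W\|_{\op}\lesssim d^{-1/2}\sqrt{d/n}\asymp d^{-1}$. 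The (F1)/(F2) regularity with $\tau_d\le C_0(\log d)^{-3}$ may be needed here, possibly at the cost of a $\log d$ factor, which is allowed.
\end{itemize}

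The real work is localizing $R_0$ in Frobenius norm; this is where I would use the small-ball method. Write $R_0=I_d+U$ with $\Tr U=0$. The candidate $R=I_d$, $b=0$ has objective $\frac1{2n}\sum_i\langle Z_i,I\rangle^2=O_\P(1)$, so any near-minimizer satisfies $\frac1n\sum_i(\langle Z_i,R_0\rangle-b_0)^2\le C$. I would then prove a uniform lower bound on the empirical quadratic form over the feasible set:
\[
\frac1n\sum_i\langle Z_i,U\rangle^2\ \ge\ c\,\langle U,\Sigma_\kappa U\rangle-\text{(complexity term)}.
\]
The ingredients are as follows.
\begin{itemize}
\item Paley--Zygmund with (F1) gives $\P(|\langle Z,U\rangle|\ge c\|U\|_{\Sigma})\ge c'$.
\item The complexity term is the Rademacher complexity of the nuclear-norm ball of radius $2d$ (using $\|U\|_*\le2d$), tested against the features. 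This is at most $2d\,\E\|n^{-1}\sum_i\epsilon_iZ_i\|_{\op}\lesssim 2d\cdot d^{-1}\sqrt{\log d}\cdot\|U\|$-scale, by the same operator-norm estimate as above, with a log from noncommutative Khintchine.
\end{itemize}
Since $\langle U,\Sigma_\kappa U\rangle\ge c_\kappa\|U\|_F^2/d$ (using $\kappa>1$ and $\Tr U=0$), the combined bound takes the form $\|U\|_F^2/d\lesssim 1+\sqrt{\log d}\,\|U\|_F/\sqrt d+\dots$. Solving this gives $\|U\|_F^2\lesssim d\log d$, and hence $\lambda_\Sigma\|R_0\|_F^2\lesssim\log d$.

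I expect the main obstacle to be this uniform small-ball lower bound for the non-Gaussian $W$. The matching of the complexity term requires an operator-norm bound on $n^{-1}\sum_i\epsilon_iW_i$ of order $d^{-1}\cdot\mathrm{polylog}(d)$ that uses only the regularity assumptions; this is presumably where the hypothesis $\tau_d\lesssim(\log d)^{-3}$ enters. The centering $b_0$ also has to be absorbed consistently, which I would do by working with the $P_{\one^\perp}$-projected form.
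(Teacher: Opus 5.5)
Your architecture is the same as the paper's. You test the ridge objective at a localized unregularized minimizer, and you localize it with the small-ball method against the competitor $(I_d,0)$, using Paley--Zygmund from (F1) and a Rademacher complexity bound through $\|R\|_*\le d$ and nuclear/operator duality.

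\textbf{Main gap: the operator-norm bound.} The gap is the step you flag yourself. It is the heart of the lemma, because it is the only place the hypothesis $\tau_d\le C_0(\log d)^{-3}$ is used. You need $\E\|n^{-1}\sum_i\varepsilon_iW_i\|_{\op}\lesssim\sqrt{\log d}/d$. The factor must be exactly $\sqrt{\log d}$, not an arbitrary polylog; otherwise your conclusion degrades to $C\beta(\log d)^k$. Neither of your suggested routes gives this bound.
\begin{itemize}
\item The lemma is about an arbitrary $(K_{\rm feat},\tau_d)$-regular feature with covariance $\Sigma_\kappa$. There are no vectors $x_i$, so no sample-covariance concentration is available.
\item Noncommutative Khintchine only reduces the problem to showing $\E\|\sum_iW_i^2\|_{\op}^{1/2}\lesssim\sqrt n$, and this is not automatic. $\|W_i\|_{\op}$ can be of order $\sqrt d$ (it is, for $W_i=(x_ix_i^\top-I_d)/\sqrt d$). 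So the crude bound $\sum_i\|W_i\|_{\op}^2\sim nd$ loses a factor $\sqrt d$.
\end{itemize}
The missing idea is a tail control of $\|W\|_F$ from (F2) with $T=\mathrm{Id}$: $\Var(\|W\|_F^2)\le\tau_d^2\dim\Ssym^d\lesssim d^2(\log d)^{-6}$. The paper proceeds in three steps.
\begin{itemize}
\item It truncates at $\|W_i\|_F\le L_d:=d(\log d)^{-7/4}$.
\item It applies matrix Bernstein to the sign-centered truncated part. The variance proxy is $n\|\E W^2\|_{\op}\le C_\kappa n$, which follows from the covariance alone via $v^\top\E W^2v=\sum_j\mathcal C_\kappa(S_{v,j},S_{v,j})$ with $S_{v,j}=(ve_j^\top+e_jv^\top)/2$.
\item It bounds the remainder by $\E[\|W\|_F\mathbf 1\{\|W\|_F>L_d\}]\lesssim\Var(\|W\|_F^2)/L_d^3\lesssim d^{-1}(\log d)^{-3/4}$.
\end{itemize}

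\textbf{Related problem: the intercept.} Your bound $|b_0|\le d\|\bar Z\|_{\op}$ needs a \emph{high-probability} operator-norm estimate for $\bar W$ under the same general hypotheses. An in-expectation bound plus Markov gives only $b_0^2=O_\P(\log d)$. The statement demands $b_0^2\le C\log d$ with a fixed $C$ on an event of probability tending to one. You could repair this by running the truncation argument with explicit tail probabilities.

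The paper avoids the issue by folding $b$ into the normalized class. It sets $\sigma_{R,b}^2:=\mathcal C_\kappa(R,R)+b^2$ and uses the class $\{z\mapsto(\langle z,R\rangle_F-b)/\sigma_{R,b}:\ R\in\mathcal R_d,\ \sigma_{R,b}^2\ge A\log d\}$.
\begin{itemize}
\item Since $|b|\le\sigma_{R,b}$, the intercept adds only $\E|\bar\varepsilon|\le n^{-1/2}$ to the Rademacher complexity, where $\bar\varepsilon=n^{-1}\sum_i\varepsilon_i$.
\item Koltchinskii--Mendelson needs that complexity only in expectation.
\item Comparing with the loss bound $B_{\rm I}$ of $(I_d,0)$ then localizes $R_\star$ and $b_\star$ simultaneously, with no separate bound on $\|\bar Z\|_{\op}$.
\end{itemize}
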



\begin{corollary}
	\label{cor:matrix-scale-universality}
	Fix $\kappa>1$ and $K_{\rm feat}>0$. For each $d$, let
	$W_1,\ldots,W_n\in\Ssym^d$ be i.i.d. copies of a
	$(K_{\rm feat},\tau_d)$-regular feature with covariance
	$\Sigma_\kappa$, and let $G_1,\ldots,G_n$ be independent Gaussian
	features with the same covariance. Write $W=(W_i)_{i\le n}$ and
	$G=(G_i)_{i\le n}$, and assume
	$n/d^2\to\alpha\in(0,\infty)$.
    Then, the following holds.
	\begin{enumerate}[label=\textup{(\roman*)}]
		\item Let $(\mathcal D_d,\mathcal B_d)$ be either
		$(\mathcal R_d^\circ,\mathcal B_+)$ or $(\mathcal R_d^{(m,M)},\mathcal B_{m,M})$ for fixed
		$0<m<1<M<\infty$,
		.
		For every deterministic sequence $0<\beta_d\le1$,
		\begin{equation}
			\label{eq:matrix-scale-ridge-universality}
			\left|\Phi_W(\mathcal D_d;\beta_d)
			-\Phi_G(\mathcal D_d;\beta_d)\right|
			=O_\P\left((1+\beta_d^{-1})^2
			\left(\tau_d+
			\left(\frac{\log d}{d}\right)^{1/3}\right)\right).
		\end{equation}

		\item If, in addition, $\tau_d\le C_0(\log d)^{-3}$ for some fixed
		$C_0>0$, then
		\[
			\Phi_W(\mathcal R_d^\circ)
			-\Phi_G(\mathcal R_d^\circ)\pto0.
		\]
	\end{enumerate}
\end{corollary}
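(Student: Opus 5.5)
For part (i), I would combine Theorem~\ref{thm:positive-barrier-row-universality} with Lemma~\ref{lem:general-barrier-removal-sandwich}, following the schematic error budget in the overview (this is the content of Proposition~\ref{prop:constrained-ridge-comparison}).

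\textbf{Fixing the constants.} Take $(\mathcal D_d,\mathcal B_d)$ as stated. By Lemma~\ref{lem:logdet-barriers}, the barrier parameter is $\vartheta\le 2d$, $\mathcal B_d(I_d)=0$, and $\dim\mathsf V_{\mathcal D}\asymp d^2$. Also $\lambda_\Sigma=\|\Sigma_\kappa\|_{\op}\asymp_\kappa d^{-1}$.

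\textbf{The reference scale.} Choose $\theta_{\rm ref}=I_d$ and $b_{\rm ref}=0$. Then
\[
\Lambda_{\rm ref}\le (1+\beta_d^{-1})\bigl(1+\lambda_\Sigma\|I_d\|_F^2\bigr)=O(1+\beta_d^{-1}).
\]

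\textbf{The comparison term.} Theorem~\ref{thm:positive-barrier-row-universality} controls the barrier-regularized gap by
\[
O_\P\Bigl((1+\beta_d^{-1})^2\bigl((n\gamma)^{-1/2}+\sqrt{d^2}\,(d\tau_d+1)/n\bigr)\Bigr)+O_\P\bigl((1+\beta_d^{-1})/\sqrt n\bigr).
\]
With $n\asymp d^2$, the middle term is $O(\tau_d+d^{-1})$.

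\textbf{The barrier-removal term.} Bound the right side of \eqref{eq:general-barrier-removal-comparison}. Since $\E\langle Z_i,I_d\rangle^2\lesssim \lambda_\Sigma d=O(1)$, Markov gives $F_Z(I_d,0;\beta)=\frac1{2n}\sum_i\langle Z_i,I_d\rangle^2+\beta\lambda_\Sigma d=O_\P(1)$. The term $2\gamma\mathcal B(I_d)$ vanishes, and the remaining term is $2\gamma\vartheta\log(1/\varepsilon)\lesssim\gamma d\log d$.

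\textbf{Choosing parameters.} Take $\varepsilon=n^{-1}$ and balance $(n\gamma)^{-1/2}$ against $\gamma d\log d$. This gives $\gamma\asymp(n d^2\log^2 d)^{-1/3}\asymp d^{-4/3}(\log d)^{-2/3}$. Then $n\gamma\to\infty$ (so $n\gamma\ge n_0$ eventually), and both terms are $\asymp(\log d/d)^{1/3}$. Combining the bounds proves \eqref{eq:matrix-scale-ridge-universality}.

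For part (ii), use Lemma~\ref{lem:feature-ridge-removal}. With high probability, for both $Z=W$ and $Z=G$, $\Phi_Z(\mathcal R_d^\circ;\beta)$ is within $C\beta\log d$ of $\Phi_Z(\mathcal R_d^\circ)$. Hence
\[
|\Phi_W(\mathcal R_d^\circ)-\Phi_G(\mathcal R_d^\circ)|\le 2C\beta_d\log d+O_\P\bigl(\beta_d^{-2}(\tau_d+(\log d/d)^{1/3})\bigr).
\]
Since $\tau_d\le C_0(\log d)^{-3}$, choose $\beta_d=(\log d)^{-1}\delta_d$ with $\delta_d\to0$ slowly, for instance $\delta_d=(\log d)^{-1/4}$. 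Then $\beta_d\log d\to0$, while $\beta_d^{-2}\tau_d\lesssim(\log d)^{2+1/2-3}\to0$ and $\beta_d^{-2}(\log d/d)^{1/3}\to0$.

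The main obstacle is bookkeeping rather than any conceptual step. One must confirm that the variance bound \eqref{eq:feature-universality-high-probability} gives $O_\P$ control uniformly in the chosen deterministic $\beta_d$ and $\gamma$. One must also check that the $\tau/\lambda_\Sigma$ term scales as $d\tau_d$, so that the $\tau_d$ loss tolerates only $\beta_d\gtrsim(\log d)^{-3/2}$, which is compatible with the ridge cost $\beta_d\log d$.
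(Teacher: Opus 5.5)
Your proposal is correct and follows essentially the same route as the paper. For (i), both use the reference point $(I_d,0)$, $\varepsilon=n^{-1}$ and $\gamma\asymp d^{-4/3}(\log d)^{-2/3}$ in Theorem~\ref{thm:positive-barrier-row-universality} together with Lemma~\ref{lem:general-barrier-removal-sandwich}; for (ii), both then apply Lemma~\ref{lem:feature-ridge-removal}. The only difference is your choice $\beta_d=(\log d)^{-5/4}$ in place of the paper's $\beta_d=(q_d/\log d)^{1/3}$ with $q_d=\tau_d+(\log d/d)^{1/3}$; this is cosmetic, since your choice also lies in the admissible window $(\log d)^{-3/2}\ll\beta_d\ll(\log d)^{-1}$.
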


\begin{proof}
	We first prove \textup{(i)}. Fix either $(\mathcal{D}_d, \mathcal{B}_d)=(\mathcal R_d^\circ,\mathcal B_+)$ or $(\mathcal{D}_d, \mathcal{B}_d)=(\mathcal R_d^{(m,M)},\mathcal B_{m,M})$, and write
	$\lambda_d:=\|\Sigma_\kappa\|_{\op}$. By
	Lemma~\ref{lem:logdet-barriers}, there exists $\vartheta_d$-self-concordant barrier $\mathcal{B}_d$ with $\mathcal B_d(I_d)=0$ and $\vartheta_d\asymp d$. Moreover, a direct calculation shows for $Z=W,G$,
	\[
		\E F_Z(I_d,0;\beta_d)
		= \frac{1}{2}\E\left[\left(\langle Z, I_d\rangle\right)^2\right]+\beta_d\lambda_d\|I_d\|_{F}^2=
		\frac{1}{2}\mathcal{C}_{\kappa}(I_d,I_d)
		+\beta_d\lambda_d d
		=O_\kappa(1),
	\]
	uniformly over $0<\beta_d\le1$. Hence
	$F_Z(I_d,0;\beta_d)=O_\P(1)$. Set
	\[
		\varepsilon=n^{-1},
		\qquad
		\gamma=d^{-4/3}(\log d)^{-2/3},
	\]
	so that $(n\gamma)^{-1/2}	\asymp \gamma\vartheta_d\log (1/\eps)\asymp
		\left(\frac{\log d}{d}\right)^{1/3}$. Applying \eqref{eq:general-barrier-removal-comparison} (cf. Lemma~\ref{lem:general-barrier-removal-sandwich}) with reference
	point $(I_d,0)$ therefore gives
	\begin{align}
		\left|\Phi_W(\mathcal D_d;\beta_d)
		-\Phi_G(\mathcal D_d;\beta_d)\right|
		\le
		\left|\Phi_W(\mathcal D_d,\mathcal B_d;\beta_d,\gamma)
		-\Phi_G(\mathcal D_d,\mathcal B_d;\beta_d,\gamma)\right|
		+O_\P\left(\left(\frac{\log d}{d}\right)^{1/3}\right).
		\label{eq:matrix-barrier-comparison-direct}
	\end{align}
	Evaluating \eqref{eq:intrinsic-reference-scale} at $I_d$ in the definiton of $\Lambda_{\rm ref}$ gives
	\[
		\Lambda_{\rm ref}
		\le C_\kappa(1+\beta_d^{-1}),
	\]
	since $\lambda_d d=O_\kappa(1)$. Furthermore, since $\dim\mathsf V_{\mathcal D_d}\asymp d^2$ by Lemma~\ref{lem:logdet-barriers},
	\[
		\frac{\sqrt{\dim\mathsf V_{\mathcal D_d}}}{n}
		\left(\frac{\tau_d}{\lambda_d}+1\right)
		=O_\kappa(\tau_d+d^{-1}).
	\]
	Note that $d^{-1}$ term is absorbed by $(\log d/d)^{1/3}$. Thus, Theorem~\ref{thm:positive-barrier-row-universality}, with
	\[
    t=(1+\beta_d^{-1})^2q_d\,, \quad\textnormal{where}\quad q_d:=\tau_d
			+\left(\frac{\log d}{d}\right)^{1/3}
    \]
    yields
	\[
		\left|\Phi_W(\mathcal D_d,\mathcal B_d;\beta_d,\gamma)
		-\Phi_G(\mathcal D_d,\mathcal B_d;\beta_d,\gamma)\right|
		=
		O_\P\bigl((1+\beta_d^{-1})^2q_d\bigr).
	\]
	Here its probability bound tends to zero because $nq_d^2\to\infty$.
	Combining this with
	\eqref{eq:matrix-barrier-comparison-direct} proves \textup{(i)}.

	For \textup{(ii)}, choose $\beta_d:=\left(\frac{q_d}{\log d}\right)^{1/3}$. Part~\textup{(i)}, applied to
	$(\mathcal R_d^\circ,\mathcal B_+)$, and
	Lemma~\ref{lem:feature-ridge-removal} give
	\begin{align*}
		\left|\Phi_W(\mathcal R_d^\circ)
		-\Phi_G(\mathcal R_d^\circ)\right|=
		O_\P\bigl(q_d(1+\beta_d^{-1})^2+\beta_d\log d\bigr)=
		O_\P\bigl(q_d^{1/3}(\log d)^{2/3}\bigr)
		=o_\P(1),
	\end{align*}
	where the last step follows since $\tau_d\le C_0(\log d)^{-3}$. This concludes the proof.
\end{proof}
\begin{proof}[Proofs of
	Theorem~\ref{thm:intro-approx-tensorization-squared-loss} and
	Theorem~\ref{thm:phase-transition}~\textup{(ii)}]
	Under the assumptions of
	Theorem~\ref{thm:intro-approx-tensorization-squared-loss},
	Proposition~\ref{thm:approx-tensorization-feature-regularity} shows
	that $W_1=(x_1x_1^\top-I_d)/\sqrt{d}$ is regular with $\tau_d=O(d^{-1/2})$.
	By \eqref{eq:unregularized-affine-energy-value} and
	Corollary~\ref{cor:matrix-scale-universality}~\textup{(ii)},
	\[
		\Gamma_X-\Gamma_{G,\kappa}
		=
		2\left[
		\Phi_{W(X)}(\mathcal R_d^\circ)
		-\Phi_G(\mathcal R_d^\circ)
		\right]
		\pto0.
	\]
	Combining with Proposition~\ref{thm:gaussian-master}~\textup{(ii)} therefore proves
	Theorem~\ref{thm:intro-approx-tensorization-squared-loss}.

Under the assumptions of Theorem~\ref{thm:phase-transition}, independence
gives approximate tensorization~\eqref{eq:approximate-variance-tensorization}
with $C=1$, while subgaussianity gives the required eighth-moment bound.
Moreover, a direct expansion shows that
$\Cov(\operatorname{vec}W_i)=\Sigma_\kappa$, with $\Sigma_\kappa$ defined
in~\eqref{eq:Sigma-kappa-def-main}. Thus
Theorem~\ref{thm:intro-approx-tensorization-squared-loss} gives
$\Gamma_X\pto e_\star(\alpha,\kappa)$. If
$\alpha>\alpha_\star(\kappa)$, then $e_\star(\alpha,\kappa)>0$ by
Proposition~\ref{thm:gaussian-master}, whereas an ellipsoid fit would imply
$\Gamma_X=0$. Consequently, the probability of an ellipsoid fit tends to
zero, proving Theorem~\ref{thm:phase-transition}~\textup{(ii)}.
\end{proof}

\subsubsection{Lindeberg argument: proof of
Theorem~\ref{thm:positive-barrier-row-universality}}
\label{subsubsec:proof-positive-barrier-row-universality}

We now prove Theorem~\ref{thm:positive-barrier-row-universality}. We begin
by introducing the notations used in the Lindeberg argument. Write $\mathsf V:=\mathsf V_{\mathcal D}$ and let
$P_{\mathsf V}$ denote the Euclidean projection onto $\mathsf V$. Equip
$\mathsf V\oplus\R$ with the inner product
\[
	\langle u,v\rangle
	:=
	\langle u^\theta,v^\theta\rangle+u^bv^b,
	\qquad
	u=(u^\theta,u^b),\quad v=(v^\theta,v^b).
\]
Let $W=(W_{i})_{i\leq n}$ and $G=(G_i)_{i\leq n}$ be independent, and define for $0\le i\le n$, 
\[
	Z^{[i]}:=(W_1,\ldots,W_i,G_{i+1},\ldots,G_n),
	\qquad
	\Phi_i
	:=
	\Phi_{Z^{[i]}}(\mathcal D,\mathcal B;\beta,\gamma).
\]
Thus $\Phi_0=\Phi_G(\mathcal D,\mathcal B;\beta,\gamma)$ and $\Phi_n=\Phi_W(\mathcal D,\mathcal B;\beta,\gamma)$, while $\Phi_i$ and
$\Phi_{i-1}$ differ only in the $i$'th feature. Their shared randomness is
generated by
$W_1,\ldots,W_{i-1},G_{i+1},\ldots,G_n$. The remaining feature is $W_i$
in $\Phi_i$ and $G_i$ in $\Phi_{i-1}$; these two features are independent
of each other and of the shared randomness. Accordingly, for each
$i\in[n]$, define
\begin{equation}\label{eq:def:F:leave:one:out}
	\mathcal F^{(i)}
	:=
	\sigma(W_1,\ldots,W_{i-1},G_{i+1},\ldots,G_n),
	\qquad
	\E_i[\,\cdot\,]
	:=
	\E[\,\cdot\mid\mathcal F^{(i)}].
\end{equation}
The objective obtained by omitting the $i$'th feature is
\begin{equation}\label{eq:def:objective:omit:i}
	F^{(i)}(\theta,b):=
	\frac1{2n}\bigg(\sum_{j<i}
		\bigl(\langle W_j,\theta\rangle-b\bigr)^2
	+\sum_{j>i}
		\bigl(\langle G_j,\theta\rangle-b\bigr)^2\bigg)
	+\beta\left(\lambda_\Sigma\|\theta\|_2^2
	+b^2\right)
	+\gamma\mathcal B(\theta).
\end{equation}
We use a superscript $(i)$ for quantities associated with this
leave-one-out objective.

To compare $\Phi_i$ and $\Phi_{i-1}$, we first remove the $i$th feature
and then measure the effect of inserting either $W_i$ or $G_i$. Denote the leave-one-out optimizer and optimal value by
\[
	(\theta^{(i)},b^{(i)})
	:=
	\operatorname*{arg\,min}_{\theta\in\mathcal D,\,b\in\R}
	F^{(i)}(\theta,b),
	\qquad
	m^{(i)}:=F^{(i)}(\theta^{(i)},b^{(i)}),
\]
For $z\in\R^N$, the corresponding \emph{insertion cost} $\Delta_i(z)$ is given by
\[
	\Delta_i(z)
	:=
	\inf_{\theta\in\mathcal D,\,b\in\R}
	\left\{
	F^{(i)}(\theta,b)
	+\frac{(b-\langle z,\theta\rangle)^2}{2n}
	\right\}
	-m^{(i)}.
\]
Thus $\Delta_i(z)$ is the increase in the optimal value caused by inserting $z$. Denote
\[
(\theta_i(z),b_i(z)):= 	\operatorname*{arg\,min}_{\theta\in\mathcal D,\,b\in\R}\left\{F^{(i)}(\theta,b)
	+\frac{(b-\langle z,\theta\rangle)^2}{2n}\right\}
\]
as the optimizer after this
insertion, and its change from the leave-one-out optimizer $(\theta^{(i)}, b^{(i)})$ as
\begin{equation}
	\label{eq:insertion-optimizer-update}
	u_i(z)
	:=
	\bigl(u_i^\theta(z),u_i^b(z)\bigr)
	:=
	\bigl(\theta_i(z)-\theta^{(i)},\,
	b_i(z)-b^{(i)}\bigr).
\end{equation}
Since inserting $W_i$ gives
$\Phi_i$, while inserting $G_i$ gives $\Phi_{i-1}$, we have
\[
	\Phi_i=m^{(i)}+\Delta_i(W_i),
	\qquad
	\Phi_{i-1}=m^{(i)}+\Delta_i(G_i).
\]
Consequently, the expectation difference in
Theorem~\ref{thm:positive-barrier-row-universality} reduces to
\begin{align}
	\E\Phi_W(\mathcal D,\mathcal B;\beta,\gamma)
	-\E\Phi_G(\mathcal D,\mathcal B;\beta,\gamma)
	=\sum_{i=1}^n\E\bigl[\Phi_i-\Phi_{i-1}\bigr]
	=\sum_{i=1}^n
	\E\bigl[\Delta_i(W_i)-\Delta_i(G_i)\bigr].
	\label{eq:lindeberg-insertion-telescoping}
\end{align}
We control each summand in the right-hand side by conditioning on $\mathcal F^{(i)}$ and
expanding the objective to second order around the leave-one-out optimizer.

To this end, define the Hessian of the
leave-one-out objective at its minimizer: let
$H^{(i)}:\mathsf V\oplus\R\to\mathsf V\oplus\R$ be the self-adjoint
operator defined by
\[
	\langle u,H^{(i)}v\rangle
	:=
	\mathrm D^2F^{(i)}(\theta^{(i)},b^{(i)})[u,v],
	\qquad u,v\in\mathsf V\oplus\R.
\]
The ridge regularization makes $H^{(i)}$ positive definite, and
$(H^{(i)})^{-1}$ denotes its inverse on $\mathsf V\oplus\R$.

Set
\[
	S^{(i)}
	:=
	\lambda_\Sigma\|\theta^{(i)}\|_2^2+(b^{(i)})^2,
	\qquad
	r_i(z)
	:=
	b^{(i)}-\langle z,\theta^{(i)}\rangle.
\]
Here $S^{(i)}$ measures the size of the leave-one-out optimizer $(\theta^{(i)},b^{(i)})$, while
$r_i(z)$ is the residual of $z$ at this optimizer. The definition of
$\Delta_i(z)$ gives
\begin{equation}
	\label{eq:elementary-insertion-bound}
	0\le \Delta_i(z)\le\frac{r_i(z)^2}{2n}.
\end{equation}
Indeed, the inserted loss is nonnegative, which gives the lower bound,
while evaluating the defining infimum at $(\theta^{(i)},b^{(i)})$ gives
the upper bound. To account for the adjustment of the optimizer, define
\[
	\ell_i(z)
	:=
	\left\langle
	a(z),(H^{(i)})^{-1}a(z)
	\right\rangle, \quad\textnormal{where}\quad a(z):=(-P_{\mathsf V}z,1)\in \mathsf V\oplus \R.
\]
The following proposition gives the approximation of $\Delta_i $ conditional on $\mathcal{F}^{(i)}$, and uses only the condition $\mathrm{(F1)}$ among Definition~\ref{def:regular-feature-row}.
	\begin{proposition}
		\label{prop:quadratic-one-row-response}
		Assume the hypotheses of
	Theorem~\ref{thm:positive-barrier-row-universality}. Then, uniformly over
	$i\in[n]$,
		\begin{equation}
			\label{eq:integrated-leave-one-out-scale}
			\E(S^{(i)})^2\le C\Lambda_{\rm ref}^2.
		\end{equation}
	For each
	$i\in[n]$ and $Z_i\in \{W_i,G_i\}$, we have conditionally on $\mathcal{F}^{(i)}$,
		\begin{equation}
			\label{eq:integrated-residual-moments}
			\E_i|r_i(Z_i)|^4\le C(S^{(i)})^2.
		\end{equation}
		Moreover, provided $n\gamma\ge n_0$,
		\begin{equation}
			\label{eq:soft-one-row-update}
			\E_i\left|
			\Delta_i(Z_i)-\frac{r_i(Z_i)^2}{2(n+\ell_i(Z_i))}
			\right|
			\le C\gamma^{-1/2}n^{-3/2}
			\bigl(1+(S^{(i)})^2\bigr).
		\end{equation}
	\end{proposition}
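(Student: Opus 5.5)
The plan has three parts, matching the three displays of Proposition~\ref{prop:quadratic-one-row-response}: a bound on the size $S^{(i)}$ of the leave-one-out optimizer, a residual moment bound, and a second-order expansion of the insertion cost.

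\textbf{Bound on $S^{(i)}$.} For \eqref{eq:integrated-leave-one-out-scale}, compare $F^{(i)}$ at its minimizer with any fixed $\theta_0\in\mathcal D$ and $b=0$. The ridge term gives
\[
\beta S^{(i)}\le F^{(i)}(\theta_0,0)
\le \frac1{2n}\sum_{j\neq i}\langle Z_j,\theta_0\rangle^2+\beta\lambda_\Sigma\|\theta_0\|_2^2+\gamma\mathcal B(\theta_0).
\]
Take the second moment of the right-hand side. By $\mathrm{(F1)}$, $\E\langle Z_j,\theta_0\rangle^4\le K_{\rm feat}^4\lambda_\Sigma^2\|\theta_0\|_2^4$, so Minkowski gives
\[
\E(S^{(i)})^2\lesssim (1+\beta^{-1})^2\bigl(1+\lambda_\Sigma\|\theta_0\|^2+\gamma\mathcal B(\theta_0)\bigr)^2.
\]
Optimizing over $\theta_0$ yields $C\Lambda_{\rm ref}^2$.

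\textbf{Residual moments.} The bound \eqref{eq:integrated-residual-moments} is immediate. Since $Z_i$ is independent of $\mathcal F^{(i)}$ while $(\theta^{(i)},b^{(i)})$ is $\mathcal F^{(i)}$-measurable, $\mathrm{(F1)}$ gives
\[
\E_i r_i^4\le 8\bigl((b^{(i)})^4+K_{\rm feat}^4\lambda_\Sigma^2\|\theta^{(i)}\|^4\bigr)\lesssim (S^{(i)})^2.
\]
The same holds for Gaussian $G_i$, which satisfies $\mathrm{(F1)}$ with constant $3^{1/4}$.

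\textbf{Expansion of the insertion cost.} For \eqref{eq:soft-one-row-update}, write the new objective as
\[
F^{(i)}(\theta^{(i)}+u)+\frac{(r_i+\langle a,u\rangle)^2}{2n},\qquad u\in\mathsf V\oplus\R .
\]
Here $r_i+\langle a,u\rangle$ is the residual after the update, and the stationarity of the leave-one-out optimizer removes the linear term of $F^{(i)}$. Replacing $F^{(i)}(\theta^{(i)}+u)-m^{(i)}$ by $\tfrac12\langle u,H^{(i)}u\rangle$ gives a quadratic model, minimized in closed form:
\[
\min_u\Bigl\{\tfrac12\langle u,Hu\rangle+\frac{(r+\langle a,u\rangle)^2}{2n}\Bigr\}=\frac{r^2}{2(n+\langle a,H^{-1}a\rangle)},
\]
by Sherman--Morrison. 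It remains to bound the model error. The inequality $\Delta_i\ge$ the quadratic value, up to a small error, and the reverse inequality each use self-concordance. Since $H^{(i)}\succeq\gamma\,\mathrm D^2\mathcal B$, on the Dikin-type region $\|u\|_{H}\le\delta$ the Hessian of $F^{(i)}$ stays within a factor $(1\pm C\delta/\sqrt\gamma)$ of $H^{(i)}$. This follows from (SC) for $\mathcal B$ scaled by $\gamma$, i.e. $\gamma\mathcal B$ is self-concordant with constant $2/\sqrt\gamma$; the quadratic loss and ridge terms are exactly quadratic.

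The model minimizer satisfies
\[
\|u^\star\|_H^2\le \langle a,H^{-1}a\rangle\frac{r^2}{(n+\ell)^2}\le \frac{r^2}{n},
\]
since $\ell/(n+\ell)^2\le 1/n$. So with $\delta\asymp |r|/\sqrt n$, the cubic Taylor remainder is $O(\gamma^{-1/2}\|u\|_H^3)=O(\gamma^{-1/2}|r|^3n^{-3/2})$. For the upper bound, plug in $u^\star$. For the lower bound, use convexity: on the boundary of the Dikin ball the objective already exceeds the model value, hence the true minimizer lies inside the ball, where the same remainder estimate applies. This requires $\delta\sqrt{\gamma^{-1}}$ small, i.e. $|r|\lesssim\sqrt{n\gamma}$. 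Call the complementary event, $|r|\gtrsim\sqrt{n\gamma}$, the \emph{bad event}. On it, simply use \eqref{eq:elementary-insertion-bound} to bound both terms by $r^2/n$, and control the contribution by Markov's inequality with the fourth moment: it is $\E_i[r^2\mathbf 1\{|r|\gtrsim \sqrt{n\gamma}\}]/n\le \E_i r^4/(n^2\gamma)$. Taking $\E_i$ on the good event, $\E_i|r|^3\le (\E_i r^4)^{3/4}\lesssim (S^{(i)})^{3/2}\le 1+(S^{(i)})^2$. Combining, the total is $C\gamma^{-1/2}n^{-3/2}(1+(S^{(i)})^2)$ once $n\gamma\ge n_0$, using $1/(n^2\gamma)\le \gamma^{-1/2}n^{-3/2}(n\gamma)^{-1/2}$.

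\textbf{Main obstacle.} The delicate step is the localization for the lower bound, together with the trace-constrained geometry. The update must lie in $\mathsf V$, so $a$ is projected onto $\mathsf V$, and $\theta^{(i)}+u^\theta$ must remain in $\mathcal D$; the latter is guaranteed inside the unit Dikin ellipsoid of $\gamma\mathcal B$, since $\|u\|_{\gamma\mathcal B}\le\|u\|_H$. The uniform Hessian comparison must hold along the whole segment, and I would take it from the standard self-concordance bounds, Lemma~\ref{lem:self-concordance-admissibility-consequences}.
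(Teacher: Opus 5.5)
Your proposal is correct and follows essentially the paper's proof. It uses the same reference-point comparison for $S^{(i)}$, Minkowski plus $\mathrm{(F1)}$ for the residual moments, and the same sandwich $R_i(u_i)\le \Delta_i(Z_i)-r_i^2/(2(n+\ell_i))\le R_i(\widehat u_i)$ with a good/bad split on $|r_i|$.

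The only variation is in localizing the exact minimizer $u_i$. You use a convexity argument on the boundary of the ball $\{\|u\|_{H^{(i)}}\le\delta\}$, with cutoff $|r_i|\lesssim\sqrt{n\gamma}$. The paper instead compares with $u=0$ to get $\gamma D_{\mathcal B}(\theta^{(i)}+u_i^\theta,\theta^{(i)})\le r_i^2/(2n)$, converts this via Lemma~\ref{lem:self-concordance-admissibility-consequences}\textup{(i)}, and uses the cutoff $(n\gamma)^{1/4}$. Both versions work.
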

  The proof of Proposition~\ref{prop:quadratic-one-row-response} is
deferred to Section~\ref{subsec:row-replacement-proof}. Here we explain how self-concordance is used in the proof of~\eqref{eq:soft-one-row-update}. Abbreviate
	\[
		r:=r_i(Z_i),\qquad
		a:=a(Z_i),\qquad
		H:=H^{(i)},\qquad
		\ell:=\ell_i(Z_i).
	\]
	For $u=(u^\theta,u^b)\in\mathsf V\oplus\R$, the residual after the
	displacement $u$ is
	\[
		b^{(i)}+u^b-\langle Z_i,\theta^{(i)}+u^\theta\rangle
		=r+\langle a,u\rangle.
	\]
	Since $(\theta^{(i)},b^{(i)})$ minimizes $F^{(i)}$, its first-order
	variation vanishes. Moreover, in the definition of $F^{(i)}(\theta,b)$ in \eqref{eq:def:objective:omit:i}, every term is quadratic except for $\gamma \mathcal{B}(\theta)$. As a result,
	\begin{equation}
		\label{eq:leave-one-out-taylor-identity}
		F^{(i)}(\theta^{(i)}+u^\theta,b^{(i)}+u^b)
		-F^{(i)}(\theta^{(i)},b^{(i)})
		=
		\frac12\langle u,Hu\rangle+R_i(u),
	\end{equation}
	where
	\begin{equation}
		\label{eq:definition-R-i}
		R_i(u)
		:=
		\gamma\left(
		\mathcal B(\theta^{(i)}+u^\theta)-\mathcal B(\theta^{(i)})
		-\mathrm D\mathcal B(\theta^{(i)})[u^\theta]
		-\frac12\mathrm D^2\mathcal B(\theta^{(i)})
			[u^\theta,u^\theta]
		\right).
	\end{equation}
	Consequently,
	\[
		\Delta_i(Z_i)
		=
		\inf_{\substack{u\in\mathsf V\oplus\R\\
		\theta^{(i)}+u^\theta\in\mathcal D}}
		\left\{
		\frac12\langle u,Hu\rangle
		+\frac{(r+\langle a,u\rangle)^2}{2n}
		+R_i(u)
		\right\}.
	\]
  Dropping both $R_i(u)$ and the constraint $\theta^{(i)}+u^{\theta}\in \mathcal{D}$ gives the quadratic optimization problem:
	\begin{equation}
		\label{eq:quadratic-response-formula}
		\inf_{u\in\mathsf V\oplus\R}
		\left\{
		\frac12\langle u,Hu\rangle
		+\frac{(r+\langle a,u\rangle)^2}{2n}
		\right\}
		=
		\frac{r^2}{2(n+\ell)}.
	\end{equation}
Observe that the minimizer of
\eqref{eq:quadratic-response-formula} need not be feasible for the
original problem defining $\Delta_i(Z_i)$. This is where
self-concordance plays an essential role: for $\gamma>0$, it allows us
both to control the remainder $R_i$ and to verify the feasibility of the minimizer of~\eqref{eq:quadratic-response-formula} (see Lemma~\ref{lem:self-concordant-insertion-control}).

The following standard consequence of self-concordance
\cite{nesterov-nemirovskii1994interior,nesterov2004introductory} controls the error incurred by dropping $R_i$. To formulate it, recall
that for a differentiable convex function $F:\Omega\to\R$, the Bregman
divergence from $z_{\rm ref}$ to $z$ (see, e.g.,
\cite{hazan2016introduction}) is

	\begin{equation}
		\label{eq:bregman-divergence-def}
		D_F(z,z_{\rm ref})
		:=
		F(z)-F(z_{\rm ref})
		-\mathrm DF(z_{\rm ref})[z-z_{\rm ref}].
	\end{equation}

\begin{lemma}
	\label{lem:self-concordance-admissibility-consequences}
	Let $\mathcal D\subset\R^N$ be nonempty, bounded, convex, and
	relatively open in its affine hull, and let
	$\mathcal B:\mathcal D\to\R$ be a $\vartheta$-self-concordant barrier
	for some $\vartheta\ge1$. There are universal constants
	$c,c_0,\rho_0>0$ and $C>0$ such that, for every
	$\theta\in\mathcal D$ and $u\in\mathsf V_{\mathcal D}$:
	\begin{enumerate}[label=\textup{(\roman*)}]
		\item If $\theta+u\in\mathcal D$ and
		$D_{\mathcal B}(\theta+u,\theta)\le c_0$, then
		\[
			\|u\|_{\mathcal B,\theta}\le\rho_0,
			\qquad
			D_{\mathcal B}(\theta+u,\theta)
			\ge c\|u\|_{\mathcal B,\theta}^2.
		\]
		\item If $\|u\|_{\mathcal B,\theta}\le\rho_0$, then
		$\theta+u\in\mathcal D$ and
		\[
			\left|
			\mathcal B(\theta+u)-\mathcal B(\theta)
			-\mathrm D\mathcal B(\theta)[u]
			-\frac12\mathrm D^2\mathcal B(\theta)[u,u]
			\right|
			\le C\|u\|_{\mathcal B,\theta}^3.
		\]
	\end{enumerate}
\end{lemma}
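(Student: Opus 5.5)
The plan is to reduce both parts to one-dimensional statements along the segment $t\mapsto\theta+tu$, where standard self-concordant calculus applies. Fix $\theta\in\mathcal D$ and $u\in\mathsf V_{\mathcal D}$ with $r:=\|u\|_{\mathcal B,\theta}$, and set $\phi(t):=\mathcal B(\theta+tu)$ on the maximal interval where $\theta+tu\in\mathcal D$. By (SC), $|\phi'''|\le2(\phi'')^{3/2}$. If $r=0$, the Dikin-type argument below shows that the whole line stays in $\mathcal D$, which contradicts boundedness unless $u=0$; so assume $r>0$.

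The key one-dimensional estimate is that $\psi(t):=\phi''(t)^{-1/2}$ is $1$-Lipschitz, since $|\psi'|=\tfrac12|\phi'''|(\phi'')^{-3/2}\le1$. Hence, whenever $\theta+su\in\mathcal D$ for all $s\in[0,t]$,
\[
\frac{r^2}{(1+tr)^2}\le\phi''(t)\le\frac{r^2}{(1-tr)^2}\qquad(tr<1).
\]

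For part (ii), apply the upper bound. Then $\phi''$ stays finite on $[0,1/r)$, so $\phi$ stays bounded there. Since $\mathcal B\to\infty$ at the relative boundary, the segment cannot exit $\mathcal D$ before $t=1/r$. This is the Dikin ellipsoid property, and with $\rho_0\le1/2$ it gives $\theta+u\in\mathcal D$. For the Taylor bound, write the remainder as $\int_0^1\tfrac{(1-t)^2}{2}\phi'''(t)\,dt$. Using $|\phi'''|\le2(\phi'')^{3/2}\le2r^3(1-t r)^{-3}\le16r^3$ for $r\le1/2$, the remainder is at most $C r^3$.

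For part (i), apply the lower bound. The Bregman divergence equals $D_{\mathcal B}(\theta+u,\theta)=\int_0^1(1-t)\phi''(t)\,dt$, which is at least $\int_0^1(1-t)\,r^2(1+tr)^{-2}\,dt=\omega(r):=r-\log(1+r)$ by the standard computation. The function $\omega$ is increasing, with $\omega(r)\ge r^2/4$ for $r\le1$ and $\omega(r)\ge c'r$ for $r\ge1$. Choose $c_0<\omega(\rho_0)$. Then $D_{\mathcal B}\le c_0$ forces $r\le\rho_0$, and $D_{\mathcal B}\ge\omega(r)\ge r^2/4$ gives the quadratic lower bound with $c=1/4$.

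The constants come out universal, and $\vartheta$ is not used. The only delicate point is rigorously justifying the Dikin inclusion, i.e., that the segment does not leave $\mathcal D$ before $t=1/r$. I would handle it by a continuation argument. Let $t^*$ be the sup of admissible $t$. If $t^*<1/r$, the bound on $\phi''$ keeps $\phi$ bounded as $t\uparrow t^*$. The point $\theta+t^*u$ lies in the closure of $\mathcal D$, and $\mathcal D$ is relatively open and bounded, so the only way for it not to be in $\mathcal D$ is to lie on the relative boundary. There the barrier property forces $\phi\to\infty$, a contradiction.
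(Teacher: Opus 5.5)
Your proposal is correct and follows essentially the same route as the paper: the one-dimensional reduction, the $1$-Lipschitz bound on $(\phi'')^{-1/2}$ giving the two-sided bounds on $\phi''$, and the lower bound $D_{\mathcal B}(\theta+u,\theta)\ge\omega(r)=r-\log(1+r)$ for part (i). The differences are cosmetic. You prove the Dikin inclusion by a continuation argument, where the paper cites Nesterov--Nemirovskii. You bound the third-order Taylor remainder in (ii) directly via $|\phi'''|\le 2(\phi'')^{3/2}$, where the paper sandwiches $D_{\mathcal B}$ between $\omega(r)$ and $\omega_*(r)$ and compares both to $r^2/2$.
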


\begin{proof}
		Identify the affine hull of $\mathcal D$ with a Euclidean space and put $r=\|u\|_{\mathcal B,\theta}$. For $\varphi(t):=\mathcal B(\theta+tu)$, self-concordance~$\mathrm{(SC)}$ gives $|\varphi'''(t)|\le2\varphi''(t)^{3/2}$, thus $\left|
		\frac{d}{dt}\varphi''(t)^{-1/2}
		\right|\le1$. By integrating from $0$ to $t$ and using $\varphi''(0)=r^2$,
	\[
		\frac{r^2}{(1+tr)^2}
		\le\varphi''(t)
		\le\frac{r^2}{(1-tr)^2}.
	\]
If $\theta+u\in\mathcal D$, convexity ensures that the lower bound holds
for all $0\le t\le1$. The upper bound requires $tr<1$. In particular, if
$r<1$, then $\theta+u\in\mathcal D$ by
\cite[Theorem~2.1.1]{nesterov-nemirovskii1994interior}, and both bounds hold throughout $0\le t\le1$. Using $	D_{\mathcal B}(\theta+u,\theta)
		=
		\int_0^1(1-t)\varphi''(t)dt$, we obtain
	\[
		\omega(r)
		\le D_{\mathcal B}(\theta+u,\theta)
		\le\omega_*(r),
		\qquad
		\omega(r):=r-\log(1+r),
		\qquad
		\omega_*(r):=-r-\log(1-r).
	\]
	Here the lower bound holds whenever $\theta+u\in\mathcal D$, while
	the upper bound holds whenever $r<1$. See also \cite[Theorems~4.1.5, 4.1.7, and 4.1.8]
	{nesterov2004introductory}.

	Take $\rho_0=1/2$ and $c_0=\omega(\rho_0)$. Since $\omega$ is
	increasing, the assumptions in part~\textup{(i)} imply
	$r\le\rho_0$. Moreover,
	\[
		\omega(r)
		=\int_0^r\frac{t}{1+t}\,dt
		\ge\frac{r^2}{2(1+\rho_0)}
		=\frac{r^2}{3},
	\]
	which proves part~\textup{(i)} with $c=1/3$. For part~\textup{(ii)}, $r\le\rho_0<1$ gives
	$\theta+u\in\mathcal D$. Since
	$\mathrm D^2\mathcal B(\theta)[u,u]=r^2$, the estimates
	\[
		0\le\frac{r^2}{2}-\omega(r)\le\frac{r^3}{3},
		\qquad
		0\le\omega_*(r)-\frac{r^2}{2}
		\le\frac{r^3}{3(1-r)}
		\le\frac{2r^3}{3}
	\]
	give the part~\textup{(ii)} with $C=2/3$.
\end{proof}
Using Lemma~\ref{lem:self-concordance-admissibility-consequences} and $\fone$, we show that $R_i(u)$ is sufficiently small evaluated at the relevant minimizers, so the quadratic approximation~\eqref{eq:soft-one-row-update} holds with the stated error. The
details are given in Section~\ref{subsec:row-replacement-proof}.

	It remains to compare $r_i(Z_i)^2/[2(n+\ell_i(Z_i))]$ for $Z_i=W_i$ and $Z_i=G_i$. The following proposition, proved in Section~\ref{subsec:row-replacement-proof}, controls the conditional fluctuations of
$\ell_i(Z_i)$ around its mean, and uses only the condition $\ftwo$ among Definition~\ref{def:regular-feature-row}.

	\begin{proposition}
		\label{prop:one-row-covariance-comparison}
			Assume the hypotheses of Theorem~\ref{thm:positive-barrier-row-universality}. For each$i\in[n]$, define
		\[
			\bar\ell_i
			:=\E_i\ell_i(W_i)
			=\E_i\ell_i(G_i),
		\]
		where the equality follows from covariance matching $\Cov(W_i)=\Cov(G_i)$.  Then
		$\bar\ell_i$ is nonnegative, and
		\[
			\max_{Z_i\in\{W_i,G_i\}}
			\E_i|\ell_i(Z_i)-\bar\ell_i|^2
			\le
			C\beta^{-2}\dim(\mathsf V)(\tau/\lambda_{\Sigma}+1)^2.
		\]
		Covariance matching gives $	\E_i r_i(W_i)^2
			=\E_i r_i(G_i)^2,$ and hence
		\[
			\E_i\frac{r_i(W_i)^2}{2(n+\bar\ell_i)}
			=
			\E_i\frac{r_i(G_i)^2}{2(n+\bar\ell_i)}.
		\]
	\end{proposition}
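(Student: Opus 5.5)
The plan is to condition on $\mathcal F^{(i)}$ throughout. Then $H=H^{(i)}$, $\theta^{(i)}$, $b^{(i)}$ are fixed, and $\ell_i(z)$ is a quadratic form in $z$. Write $P_{\mathsf V}z=:v$ and block-decompose $H^{-1}$ on $\mathsf V\oplus\R$ as
\[
H^{-1}=\begin{pmatrix} A & w\\ w^\top & s\end{pmatrix}.
\]
Then
\[
\ell_i(z)=\langle v,Av\rangle-2\langle w,v\rangle+s=z^\top T z-2\langle P_{\mathsf V}w,z\rangle+s,\qquad T:=P_{\mathsf V}AP_{\mathsf V}.
\]
Nonnegativity of $\bar\ell_i$ is immediate, since $H^{-1}\succ0$ gives $\ell_i\ge0$ pointwise. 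Both $W_i$ and $G_i$ are independent of $\mathcal F^{(i)}$, centered, and have the same covariance $\Sigma$, so $\E_i\ell_i(Z_i)=\Tr(T\Sigma)+s$ for both. This gives the equality $\bar\ell_i=\E_i\ell_i(W_i)=\E_i\ell_i(G_i)$.

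For the variance bound, use $(a+b)^2\le2a^2+2b^2$:
\[
\E_i|\ell_i(Z_i)-\bar\ell_i|^2\le2\Var_i(Z_i^\top TZ_i)+8\E_i\langle P_{\mathsf V}w,Z_i\rangle^2.
\]
For the quadratic part, apply $\ftwo$ to $W_i$ and \eqref{eq:gaussian-feature-regularity}, i.e.\ $\Var\le2\lambda_\Sigma^2\|T\|_F^2$, to $G_i$. This gives $\Var_i\le(\tau^2+2\lambda_\Sigma^2)\|T\|_F^2$. The linear part is at most $\lambda_\Sigma\|w\|_2^2$.

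The key input is a lower bound on the Hessian. Every term of $F^{(i)}$ apart from the ridge is convex: the squared losses are, and $\gamma\mathcal B$ is convex. Hence
\[
H\succeq 2\beta\,\mathrm{diag}(\lambda_\Sigma I_{\mathsf V},1),\qquad H^{-1}\preceq(2\beta)^{-1}\mathrm{diag}(\lambda_\Sigma^{-1}I_{\mathsf V},1).
\]
Consequently $0\preceq A\preceq(2\beta\lambda_\Sigma)^{-1}I_{\mathsf V}$, and therefore
\[
\|T\|_F^2\le\dim(\mathsf V)(2\beta\lambda_\Sigma)^{-2}.
\]
For the off-diagonal block, positive semidefiniteness of $H^{-1}$ gives $|\langle w,v\rangle|^2\le s\,\langle v,Av\rangle$. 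Taking $v=w$ yields $\|w\|^2\le\sqrt{s\,\|A\|_{\op}}\,\|w\|$, so
\[
\|w\|_2^2\le s\,\|A\|_{\op}\le(2\beta)^{-2}\lambda_\Sigma^{-1}.
\]
Putting these together,
\[
\E_i|\ell_i-\bar\ell_i|^2\lesssim\beta^{-2}\bigl(\dim(\mathsf V)(\tau/\lambda_\Sigma)^2+\dim(\mathsf V)+1\bigr)\lesssim\beta^{-2}\dim(\mathsf V)(\tau/\lambda_\Sigma+1)^2,
\]
using $\dim\mathsf V\ge1$. This is the claimed bound.

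Finally, $r_i(z)=b^{(i)}-\langle z,\theta^{(i)}\rangle$ is affine in $z$ with $\mathcal F^{(i)}$-measurable coefficients. Hence
\[
\E_ir_i(Z_i)^2=(b^{(i)})^2+\theta^{(i)\top}\Sigma\,\theta^{(i)}
\]
for both choices of $Z_i$. Since $\bar\ell_i$ is $\mathcal F^{(i)}$-measurable, the factor $1/(2(n+\bar\ell_i))$ pulls out of the conditional expectation, giving the final identity. The only point needing care is the block-inverse estimate for $w$; it is elementary via the positive semidefiniteness argument above, so I do not expect a serious obstacle.
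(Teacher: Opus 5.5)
Your proposal is correct and follows the paper's proof essentially step by step. The shared steps are: the block decomposition of $(H^{(i)})^{-1}$ on $\mathsf V\oplus\R$, the lower bound $H^{(i)}\succeq 2\beta\,\mathrm{diag}(\lambda_\Sigma I_{\mathsf V},1)$ obtained by dropping the convex loss and barrier terms, the positive-semidefiniteness (discriminant) bound on the off-diagonal block, $\ftwo$ plus the Gaussian variance formula for the quadratic part, and the affine computation of $\E_i r_i(Z_i)^2$. The only difference is that you rederive the inverse-block bounds of Lemma~\ref{lem:soft-leave-one-out-hessian-bounds} inline rather than citing that lemma.
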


\begin{proof}[Proof of
	Theorem~\ref{thm:positive-barrier-row-universality}]
	Since $\Phi_i=m^{(i)}+\Delta_i(W_i)$ and $\Phi_{i-1}=m^{(i)}+\Delta_i(G_i)$, we have by \eqref{eq:soft-one-row-update} in Proposition~\ref{prop:quadratic-one-row-response} that
	\begin{align*}
	\left|\E_i(\Phi_i-\Phi_{i-1})\right|
		&=
		\left|
		\E_i\bigl[\Delta_i(W_i)-\Delta_i(G_i)\bigr]
		\right|\\
        &\le
	\left|
		\E_i\left[
		\frac{r_i(W_i)^2}{2(n+\ell_i(W_i))}
		-
		\frac{r_i(G_i)^2}{2(n+\ell_i(G_i))}
		\right]\right|+
		C\gamma^{-1/2}n^{-3/2}
		\bigl(1+(S^{(i)})^2\bigr).
	\end{align*}
By Proposition~\ref{prop:one-row-covariance-comparison}, we have $\E_i\frac{r_i(W_i)^2}{n+\bar\ell_i}
	=
	\E_i\frac{r_i(G_i)^2}{n+\bar\ell_i}$. Hence, adding and subtracting these terms and using
$\ell_i(Z_i),\bar\ell_i\ge0$, we obtain
\[
\begin{split}
	\left|
	\E_i\left[
	\frac{r_i(W_i)^2}{2(n+\ell_i(W_i))}
	-\frac{r_i(G_i)^2}{2(n+\ell_i(G_i))}
	\right]\right|
    &\le
	\frac{1}{2n^2}
	\sum_{Z_i\in\{W_i,G_i\}}
	\E_i\Big[
	r_i(Z_i)^2
	\big|\ell_i(Z_i)-\bar\ell_i\big|
	\Big]\\
    &\le \frac{1}{2n^2}
	\sum_{Z_i\in\{W_i,G_i\}}
	\E_i\Big[
	r_i(Z_i)^4\Big]^{1/2}
	\E_i\Big[\big(\ell_i(Z_i)-\bar\ell_i\big)^2
	\Big]^{1/2},
\end{split}
\]
where we used conditional Cauchy Schwartz in the last step. Since $\E_i[r_i(Z_i)^4]\leq C(S^{(i)})^2$~\eqref{eq:integrated-residual-moments},  combining with Proposition~\ref{prop:one-row-covariance-comparison} yields
	\[
		\left|\E_i(\Phi_i-\Phi_{i-1})\right|
		\le
		C\gamma^{-1/2}n^{-3/2}
		\bigl(1+(S^{(i)})^2\bigr)
		+
		C\beta^{-1}n^{-2}
		\sqrt{\dim\mathsf V}
		\left(\frac{\tau}{\lambda_\Sigma}+1\right)S^{(i)}.
	\]
    Therefore, taking expectations, and using $\E (S^{(i)})^2\leq C\Lambda_{\rm ref}^2$~\eqref{eq:integrated-leave-one-out-scale} with $\Lambda_{\rm ref}\ge1+\beta^{-1}$, it follows that
	\[
		\left|\E\Phi_i-\E\Phi_{i-1}\right|
		\le
		C\Lambda_{\rm ref}^2
		\left[
		\gamma^{-1/2}n^{-3/2}
		+n^{-2}
		\sqrt{\dim\mathsf V}
		\left(\frac{\tau}{\lambda_\Sigma}+1\right)
		\right].
	\]
Recalling from~\eqref{eq:lindeberg-insertion-telescoping} that $\E\Phi_W(\mathcal D,\mathcal B;\beta,\gamma)
	-\E\Phi_G(\mathcal D,\mathcal B;\beta,\gamma)=\sum_{i=1}^{n}\E[\Phi_i-\Phi_{i-1}]$, summing over $i\in [n]$ proves
	\eqref{eq:feature-universality-expectation}.

	It remains to prove the variance bound. Fix $Z\in\{W,G\}$ and write
$\Phi_Z=\Phi_Z(\mathcal D,\mathcal B;\beta,\gamma)$. Let $Z_i'$ be a fresh independent copy of $Z_i$, and let $\Phi_Z^{(i)}$ denote the optimized
value after replacing $Z_i$ by $Z_i'$. The Efron--Stein inequality gives
\[
	\Var(\Phi_Z)
	\le
	\frac12\sum_{i=1}^n
	\E\bigg[\bigl|\Phi_Z-\Phi_Z^{(i)}\bigr|^2\bigg].
\]
For this variance argument, we reuse
$\mathcal F^{(i)},\E_i,\Delta_i,r_i$, and $S^{(i)}$ for the analogous
leave-one-out quantities formed from $Z$; in
particular, $\mathcal F^{(i)}=\sigma(Z_j:j\ne i)$. The preceding
leave-one-out estimates apply unchanged. Conditional on
$\mathcal F^{(i)}$, the original and resampled problems therefore have the
same leave-one-out objective, and hence $\Phi_Z-\Phi_Z^{(i)}=
	\Delta_i(Z_i)-\Delta_i(Z_i')$.  Moreover, \eqref{eq:elementary-insertion-bound} and
\eqref{eq:integrated-residual-moments} remain valid for these quantities. Therefore,
\begin{align*}
	\E_i\Big[\bigl|\Phi_Z-\Phi_Z^{(i)}\bigr|^2\Big]
	=
	\E_i\Big[\bigl|\Delta_i(Z_i)-\Delta_i(Z_i')\bigr|^2\Big]\le
	\frac{1}{4n^2}
	\E_i\bigl(r_i(Z_i)^2+r_i(Z_i')^2\bigr)^2\leq 
	\frac{C}{n^2}(S^{(i)})^2.
\end{align*}
Taking expectations and using
\eqref{eq:integrated-leave-one-out-scale} gives \[
\E\bigl|\Phi_Z-\Phi_Z^{(i)}\bigr|^2
	\le
	C\Lambda_{\rm ref}^2n^{-2}.
    \]
    Hence, summing over $i\in [n]$ proves \eqref{eq:feature-universality-variance}. Finally,
\eqref{eq:feature-universality-expectation},
\eqref{eq:feature-universality-variance}, and Chebyshev's inequality give~\eqref{eq:feature-universality-high-probability}.
\end{proof}

  	\section{Proof of the nuclear norm lower bound}

	\label{sec:novel-sat-interpolation-details}
	
	This section proves the nuclear norm lower bound in
	Theorem~\ref{thm:centered-compatibility}. One of the main ingredient is the vector small-ball probability estimate in Proposition~\ref{prop:vector-smallball}, which is proved in Section~\ref{sec:fixed-vector-smallball}.

	\subsection{A priori estimates}\label{subsec:apriori:estimates}
	We begin with a set of probabilistic estimates on $\mathcal{L}_X$ and $\mathcal{L}_X^*$ that we call \textit{a priori estimates}. To state the result, for any subset $\mathcal I\subseteq[n]$ and $ (W_i)_{i\leq n}=((x_ix_i^\top -I_d)/\sqrt{d})_{i\leq n}$, let
	\[
		Q_{\mathcal I}:=\frac1d
		\bigl(\langle W_i,W_j\rangle\bigr)_{i,j\in \mathcal I}\in \R^{|\mathcal I|\times |\mathcal I|}\,, \qquad  X_{\mathcal I}:=[\,x_i\,]_{i\in\mathcal I}
\in\mathbb R^{d\times|\mathcal I|}.
	\]
    That is, $X_{\mathcal I}$ has columns $(x_i)_{i\in \mathcal I}$.
	\begin{proposition}[A priori estimates]\label{thm:apriori-estimates}
		Let $x_1,\ldots,x_n\in\R^d$ be i.i.d. random vectors with
		independent coordinates satisfying
		\eqref{eq:independent:coordinate:distribution:assumption} with $K_x>0$.
		Assume
		$n\le C_0d^2$ for a fixed $C_0<\infty$.  For every fixed
		$C_0'<\infty$, there is $C=C(C_0,C_0',K_x)<\infty$ such that, with
		probability tending to one, the following estimates hold simultaneously:
        \begin{equation}\label{eq:first:thm:apriori-estimates}
            	\sup_{|\mathcal I|\le C_0'd\log d}
				\|Q_{\mathcal I}-I_{|\mathcal I|}\|_{\op}
				\le C\frac{(\log d)^{3/2}}{\sqrt d},\qquad  	\sup_{|\mathcal I|\le C_0'd\log d}
				\|X_{\mathcal I}\|_{\op}\le C\sqrt d\log d,
        \end{equation}
        and
		\begin{equation}\label{eq:second:thm:apriori-estimates}
				\sup_{\|R\|_F\le1}\|\mathcal L_X R\|_2\le C\sqrt d,
				\qquad
				\sup_{\|z\|_2\le1}\|\mathcal L_X^*z\|_*\le Cd.
		\end{equation}
	\end{proposition}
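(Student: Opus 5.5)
I will establish the four estimates separately and then take a union bound, since each holds with probability $1-o(1)$.

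\emph{The bound on $X_{\mathcal I}$.} I would start here because it is the easiest. For $|\mathcal I|=k\le C_0'd\log d$, write
\[
\|X_{\mathcal I}\|_{\op}=\sup_{u\in S^{d-1},\,v\in S^{k-1}}\sum_{i\in\mathcal I}v_i\langle u,x_i\rangle.
\]
For fixed $u,v$ this sum is subgaussian with norm $O(K_x)$. A net argument over $u$ and $v$ gives $\|X_{\mathcal I}\|_{\op}\le C(\sqrt d+\sqrt k)$ with failure probability $e^{-C'(d+k)}$. The union over $\binom{n}{k}\le e^{Ck\log d}$ sets of size $k$ then forces the deviation level to be $\sqrt{k\log d}\lesssim\sqrt d\log d$. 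This gives the second bound in \eqref{eq:first:thm:apriori-estimates}.

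\emph{The bound on $Q_{\mathcal I}$.} I would split $Q_{\mathcal I}$ into diagonal and off-diagonal parts.
\begin{itemize}
\item \emph{Diagonal.} We have $d\,Q_{ii}=\|x_i\|^4/d-2\|x_i\|^2/d+1$. By Hanson--Wright, $\|x_i\|^2=d+O(\sqrt{d\log d})$ uniformly in $i\le n$. Hence $Q_{ii}=1+O(\sqrt{\log d/d})$ after normalization. (The intended normalization is $\|W_i\|_F^2\approx d$, so $Q$ has unit diagonal.)
\item \emph{Off-diagonal.} We have $d\langle W_i,W_j\rangle=\langle x_i,x_j\rangle^2-\|x_i\|^2-\|x_j\|^2+d$, which is $O(d\log d)$ uniformly in $i\ne j$. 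This entrywise bound alone is too crude for $k\sim d\log d$. Instead I would write
\[
\langle W_i,W_j\rangle=\frac{\langle x_i,x_j\rangle^2-d}{d}+O\Bigl(\sqrt{\tfrac{\log d}{d}}\Bigr)\ \text{terms}.
\]
The centered Gram part is then controlled as follows: for fixed $v\in S^{k-1}$,
\[
\sum_{i\ne j}v_iv_j\bigl(\langle x_i,x_j\rangle^2-\|x_i\|^2-\|x_j\|^2+d\bigr)\Big/d^2
\]
is a decoupled chaos in the $x$'s. Conditioning on $\{x_j\}$ and applying Hanson--Wright in $x_i$ (after decoupling) gives tails $\exp(-c\min(t^2d,\,t\sqrt d))$ at scale $t$. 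The low-rank mean parts, such as $\|x_i\|^2-d$ with rank-one structure, are handled by the diagonal estimate together with the bound on $\|X_{\mathcal I}\|_{\op}$.
\end{itemize}
To beat the entropy $k\log d+k\log(1/\epsilon)\approx d(\log d)^2$ of a union bound over $\mathcal I$ and a net in $v$, I need $t\sqrt d\gtrsim d(\log d)^2$ in the subexponential regime, i.e.\ $t\asymp\sqrt d(\log d)^2$ for the unnormalized chaos. After dividing by $d$, this is the claimed $(\log d)^{3/2}/\sqrt d$ scaling, up to log bookkeeping. I expect this step to be the main obstacle: making the chaos tail sharp uniformly over $v$. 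I would first try truncating $x_i$ on the event $\|x_i\|_\infty\lesssim\sqrt{\log d}$ and using a Bernstein-type bound for the sum over $i$ of independent terms $v_i\langle x_i,\,M_{-i}x_i\rangle$, where $M_{-i}=\sum_{j\ne i}v_jx_jx_j^\top$ has operator norm $\lesssim d\log d$ by the previous step.

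\emph{The global bounds \eqref{eq:second:thm:apriori-estimates}.} The two statements are dual up to norms. Indeed $\|\mathcal L_X^*z\|_*\le\sqrt d\,\|\mathcal L_X^*z\|_F$, so it suffices to show $\|\mathcal L_X\|_{F\to2}\le C\sqrt d$. This reduces to the operator norm of the $n\times n$ Gram matrix $(\langle W_i,W_j\rangle)_{i,j}$, equivalently of $\sum_iW_i\otimes W_i$ on $\Ssym^d$, being $O(d)$.

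Its expectation is $n\Sigma_\kappa$, with norm $\asymp n/d\asymp d$. For the fluctuation I would use a matrix Bernstein / Rudelson-type inequality for sums of i.i.d.\ rank-one operators $W_i\otimes W_i$ with $\|W_i\|_F^2\approx d$. This yields
\[
\|\textstyle\sum_i W_i\otimes W_i\|\lesssim d+\sqrt{d\cdot d\log d}+d\log d,
\]
which loses a log. To avoid that loss, I would instead bound $\|\mathcal L_X R\|_2$ directly. For fixed $R$ with $\|R\|_F\le1$, the quantity $\sum_i\langle W_i,R\rangle^2$ is a sum of $n$ i.i.d.\ terms with mean $O(n/d)=O(d)$ and $\psi_{1/2}$-type tails; this gives failure probability $e^{-cd^2}$ at level $Cd$. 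For a union over a net of the $d^2/2$-dimensional Frobenius ball, I would choose $C$ large enough to dominate the net entropy $\sim d^2$. The heavy tails of $\langle W_i,R\rangle^2$ would be handled by truncation, using the bound on $Q_{\mathcal I}$ from the previous step to control the contribution of the at most $d\log d$ largest terms.
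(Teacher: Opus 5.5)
Your bound on $X_{\mathcal I}$ is fine and is essentially what the paper does. The two heavy-tailed steps, however, do not close as proposed, and they fail for the same reason.

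\textbf{The $Q_{\mathcal I}$ step.} You run a single union bound over $\mathcal I$ and a net of $S^{k-1}$, with entropy $\asymp d(\log d)^2$. That requires every fixed $v$ to fail with probability $e^{-Cd(\log d)^2}$, and no such uniform tail exists. Take $v=(e_1+e_2)/\sqrt2$. Then $v^\top(Q_{\mathcal I}-I)v=Q_{12}+O(\sqrt{\log d/d})$, where $d^2Q_{12}=\langle x_1,x_2\rangle^2-\|x_1\|^2-\|x_2\|^2+d$. Since $\langle x_1,x_2\rangle$ is only subgaussian at scale $\sqrt d$, the event $Q_{12}>C(\log d)^{3/2}/\sqrt d$ has probability about $\exp(-c\sqrt d(\log d)^{3/2})$, and this is sharp for Gaussian data. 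In your conditional Hanson--Wright step, this shows up as a subexponential exponent that degrades like $1/\|v\|_\infty$. Your decoupling is legitimate, since the kernel is degenerate; the problem is uniformity in $v$. Truncating coordinates $\|x_i\|_\infty\lesssim\sqrt{\log d}$ does not help, because large values of $\langle x_i,x_j\rangle$ come from many coordinates aligning, not from one large coordinate. Peaky and spread directions must be handled at different scales with different nets. Even your own bookkeeping reaches only $(\log d)^2/\sqrt d$, not the stated $(\log d)^{3/2}/\sqrt d$.

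\textbf{The global step.} The same defect appears one level up. For $R=uu^\top$, a single summand $\langle W_i,R\rangle^2=(\langle x_i,u\rangle^2-1)^2/d$ exceeds $Cd$ with probability $e^{-cd}$. So there is no fixed-$R$ bound $e^{-cd^2}$ to set against the $e^{cd^2}$-size Frobenius net. Peeling off only the $m\asymp d\log d$ largest summands via your $Q_{\mathcal I}$ bound does not fix this. On the restricted-isometry event, the $m$-th largest $\langle W_i,R\rangle^2$ is at most $2(d+1)/m$, so the remaining summands can still be of order $1/\log d$. Bernstein for that truncated remainder at level $Cd$ then gives only $e^{-cd\log d}$. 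What is missing is control of $\|\mathsf W_{\mathcal I}\|_{\op}$ for $|\mathcal I|$ of order $D=d(d+1)/2$, where $\mathsf W$ is the $D\times n$ matrix with columns $w_i=\sqrt{d/2}\,\operatorname{vec}(W_i)$.

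\textbf{How the paper closes it.} It obtains all three heavy-tailed bounds at once from the restricted-isometry theorem of \cite{adamczak-litvak-pajor-tomczak2009rip} for matrices with independent $\psi_1$ columns. That theorem's multi-scale argument is exactly what handles peaky versus spread directions. It applies here because Hanson--Wright gives uniformly $\psi_1$ marginals of $w_i$, and norm concentration gives $\|w_i\|_2^2/D\approx1$ uniformly in $i$. Taking $m\asymp d\log d$ gives $\sqrt{m/D}\,\log\bigl(en/(m\sqrt{m/D})\bigr)\asymp(\log d)^{3/2}/\sqrt d$, which is exactly the stated rate. Taking $m=D$ gives $\sup_{|\mathcal I|\le D}\|\mathsf W_{\mathcal I}\|_{\op}\le C\sqrt D$. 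Splitting $[n]$ into $O(1)$ blocks of size at most $D$ then yields $\|\mathsf W\|_{\op}\le Cd$, and hence \eqref{eq:second:thm:apriori-estimates} without any logarithmic loss.
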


	To prove this proposition, we use the following Hanson--Wright inequality.
	
	\begin{lemma}[Hanson--Wright inequality]
		\label{lem:hanson-wright}
		Let $z=(z_1,\ldots,z_d)\in\R^d$ be a random vector with independent mean-zero subgaussian coordinates satisfying
		$\max_{j\le d}\|z_j\|_{\psitwo}\le K_z$.  There are constants
		$c,C>0$, depending only on $K_z$, such that for every
		$A\in\Ssym^d$ and every $u>0$,
		\[
		\P\left(|z^\top Az-\Tr A|>u\right)
		\le
		2\exp\left(
		-c\min\left\{
		\frac{u^2}{\|A\|_F^2},
		\frac{u}{\|A\|_{\op}}
		\right\}\right),
		\]
		and $\|z^\top Az-\Tr A\|_{\psione}\le C\|A\|_F$. Thus, for $W=(zz^\top-I_d)/\sqrt{d}$, we have
		\[
		\|\langle W,A\rangle\|_{\psione}
		\le \frac{C}{\sqrt d}\|A\|_F.
		\]
	\end{lemma}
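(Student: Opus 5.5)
The plan is to reduce to the case where the coordinates are centered and have finite $\psi_2$ norm and then apply the standard Hanson--Wright inequality (Rudelson--Vershynin). For the tail bound I will cite \cite{vershynin2018highdimensional} (Theorem 6.2.1), which covers independent mean-zero $K_z$-subgaussian coordinates and a general square matrix $A$. Their statement is
\[
\P\bigl(|z^\top Az-\E z^\top Az|>u\bigr)\le 2\exp\Bigl(-c\min\Bigl\{\frac{u^2}{K_z^4\|A\|_F^2},\frac{u}{K_z^2\|A\|_{\op}}\Bigr\}\Bigr).
\]
Here $\E z^\top Az=\sum_j A_{jj}\E z_j^2$. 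This equals $\Tr A$ when the coordinates have unit variance, which is the setting in which the lemma is applied ($z=x_i$). If the variances are not normalized, the centering is by $\E z^\top Az$ instead. Since $K_z$ is fixed, the factors $K_z^4$ and $K_z^2$ are absorbed into $c=c(K_z)$.

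The $\psi_1$ bound follows from the tail bound by integration. Set $Y=z^\top Az-\Tr A$ and normalize $u=s\|A\|_F$. Because $\|A\|_{\op}\le\|A\|_F$, we have $\min\{s^2,s\|A\|_F/\|A\|_{\op}\}\ge\min\{s^2,s\}$. Hence
\[
\P(|Y|>s\|A\|_F)\le 2e^{-c\min(s^2,s)}\le 2e^{c}e^{-cs},
\]
since $\min(s^2,s)\ge s-1$. This is a subexponential tail with scale $\|A\|_F/c$ and prefactor $2e^c$. The standard equivalence of tail and Orlicz-norm characterizations (\cite{vershynin2018highdimensional}, Prop.\ 2.7.1) then gives $\|Y\|_{\psione}\le C\|A\|_F$ with $C=C(K_z)$.

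The final claim is immediate. We have $\langle W,A\rangle=d^{-1/2}\langle zz^\top-I_d,A\rangle=d^{-1/2}(z^\top Az-\Tr A)$, and the $\psi_1$ norm is homogeneous, which gives the factor $C/\sqrt d$.

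The only delicate point is the dependence of constants on $K_z$, together with the identification of $\E z^\top Az$ with $\Tr A$. Both are bookkeeping once the unit-variance normalization from \eqref{eq:independent:coordinate:distribution:assumption} is used.
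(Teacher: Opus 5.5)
Your proposal is correct and follows the paper's proof essentially verbatim. Both cite Theorem~6.2.1 of Vershynin for the tail bound, combine $\|A\|_{\op}\le\|A\|_F$ with Proposition~2.7.1 to obtain the $\psi_1$ bound, and then rescale by $d^{-1/2}$. Your observation that centering at $\Tr A$ requires $\E z_j^2=1$ is correct and worth making. The lemma's hypotheses omit this condition, but it holds in every application in the paper.
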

	\begin{proof}
		The first estimate follows from the Hanson--Wright inequality (see~\cite[Theorem 6.2.1]{vershynin2018highdimensional}).
		Since $\|A\|_{\op}\leq \|A\|_F$, this estimate and the standard
		characterization of subexponential random variables~\cite[Proposition~2.7.1]{vershynin2018highdimensional} imply
		$\|z^\top A z-\Tr A\|_{\psione}\leq C\|A\|_F$. Finally,
		$\langle W,A\rangle_F=d^{-1/2}(z^\top Az-\Tr A)$, so the last
		conclusion follows.
	\end{proof}
	
	\begin{proof}[Proof of Proposition~\ref{thm:apriori-estimates}]
	Throughout, we let $D=d(d+1)/2$ denote the dimension of $\Ssym^d$, and let $c,C>0$ denote constants that only depend on $C_0, C_0'$, and $K_x$, which may change from line to line. We may assume without loss of generality that $C_0\geq 1$ and $n=\lfloor C_0d^2\rfloor$, since each estimate remains valid after decreasing the number of samples $n$. Let
		$$
		\mathsf W:=[w_1,\ldots,w_n]\in\R^{D\times n}\quad\textnormal{where}\quad w_i:=\sqrt{\frac d2}\,\operatornorm{vec}(W_i).
		$$
	        Lemma~\ref{lem:hanson-wright} shows that $\sup_{i\leq n}\sup_{u\in \R^D: \|u\|_2=1}
	        \|\langle w_i,u\rangle\|_{\psi_1}\leq C$. Thus, we may apply the restricted-isometry estimate of
	        \cite[Theorem~3.2]{adamczak-litvak-pajor-tomczak2009rip} for matrices with subexponential columns.
	        Specializing that theorem to the columns $w_i$ and taking
	        $r=1$, $K=1$, and $K_0=2$ in the notation therein gives the following
	        estimate. For any integer $1\leq m\leq \min(n,D)$, let
		$$
		\delta_m:=
		\sup_{|\mathcal I|\le m}
		\left\|
		\frac1{D}\mathsf W_{\mathcal I}^{\top}\mathsf W_{\mathcal I}
		-I_{|\mathcal I|}
		\right\|_{\op},
		$$
	        where $\mathsf W_{\mathcal I}$ denotes the submatrix of $\mathsf W$ with columns $(w_i)_{i\in \mathcal I}$. Then, for any $\theta\in (0,1)$,
		\begin{equation}\label{eq:alptj-specialized}
		\P\left(
		\delta_m\geq
		C\sqrt{\frac m{D}}
		\log\left(\frac{en}{m\sqrt{\frac{m}{D}}}\right)+\theta
		\right) \le
		\exp\left(
		-c\sqrt m\log\left(\frac{en}{m\sqrt{\frac{m}{D}}}\right)
		\right)
		+2\P\left(
		\max_{i\le n}
		\left|\frac{\|w_i\|_2^2}{D}-1\right|
		\ge\theta
		\right).
		\end{equation}
		Using this estimate, we first prove the desired estimate on $Q_{\mathcal I}$. Note that 
        \[
        \frac{\|w_i\|_2^2}{D}= \frac{\|W_i\|_F^2}{d+1}=\frac{\Tr((x_ix_i^\top-I_d)^2)}{d(d+1)}=\frac{\|x_i\|_2^4-2\|x_i\|_2^2+d}{d(d+1)}.
        \]
        By standard norm concentration of subgaussian vectors~\cite[Theorem 3.1.1]{vershynin2018highdimensional}, we have 
        \[
        \P\left(\left|\frac{\|x_i\|}{\sqrt{d}}-1\right|\geq\theta\right)\leq 2\exp(-cd\theta^2).
        \]
	        Set $t_d=(\log d)/\sqrt d$. The preceding identity shows that, whenever
	        $|\|x_i\|_2/\sqrt d-1|\leq t_d$,
	        \[
	        \left|\frac{\|w_i\|_2^2}{D}-1\right|
	        \leq C\left(t_d+\frac1d\right).
	        \]
	        Consequently, with $\theta_d:=C(t_d+d^{-1})$, a union bound gives
	        \begin{equation}\label{eq:row:norm:estimate}
	        \P\left(
		\max_{i\le n}
		\left|\frac{\|w_i\|_2^2}{D}-1\right|
		\ge\theta_d
		\right)\leq 2n\exp(-cdt_d^2)=o(1),
	        \end{equation}
	        where we used $n\leq C_0d^2$ in the last step. Thus, taking
	        $m=\lfloor C_0'd\log d\rfloor$ and $\theta=t_d$ in
	        \eqref{eq:alptj-specialized}, and noting that
	        \[
	        \frac{1}{D}\mathsf{W}_{\mathcal{I}}^{\top}\mathsf{W}_{\mathcal{I}}=\frac{d}{d+1}Q_{\mathcal{I}},
	        \qquad
	        \sqrt{\frac mD}\log\left(\frac{en}{m\sqrt{m/D}}\right)
	        \leq C\frac{(\log d)^{3/2}}{\sqrt d},
	        \]
	         the first estimate in \eqref{eq:first:thm:apriori-estimates} directly follows from~\eqref{eq:alptj-specialized} and \eqref{eq:row:norm:estimate}.

       For the second estimate in \eqref{eq:first:thm:apriori-estimates}, it follows directly from standard operator norm bounds of matrices with subgaussian entries. Indeed,~\cite[Theorem 4.4.5]{vershynin2018highdimensional} shows that for a fixed $\mathcal I$ with $|\mathcal I|\leq C_0' d \log d$, we have for each fixed $C_1\geq 1$ and large enough $d$ that
       \[
       \P\left(\left\|X_{\mathcal I}\right\|_{\op}\geq C_1\sqrt{d}\log d\right)\leq 2\exp\left(-c C_1^2 d(\log d)^2\right).
       \]
	       The number of subsets $\mathcal I\subset[n]$ with $|\mathcal I|\leq m$ satisfies $\sum_{k=0}^m\binom nk
	       \leq\exp\bigl(Cd(\log d)^2\bigr)$. Thus, taking $C_1$ large enough and a union bound concludes the proof of the second estimate in \eqref{eq:first:thm:apriori-estimates}.

      We next prove the estimate~\eqref{eq:second:thm:apriori-estimates}. First, we claim that $\|\mathsf W\|_{\op}\leq Cd$ with probability $1-o(1)$. Indeed, in \eqref{eq:alptj-specialized}, take $m=D\leq n$ (since we assumed w.l.o.g. $n\geq d^2$) and
       $\theta=1/2$. Then, since $ \sqrt D\log\left(\frac{en}{D}\right)\geq c d$, it follows from \eqref{eq:alptj-specialized} and
       \eqref{eq:row:norm:estimate} that
	       \begin{equation*}
       \sup_{|\mathcal I|\leq D}\|\mathsf W_{\mathcal I}\|_{\op}
       \leq C\sqrt D.
       \end{equation*}
        We use this estimate to control the full matrix $\mathsf W$.
       Partition $[n]$ into disjoint sets
       $\mathcal I_1,\ldots,\mathcal I_q$, each of cardinality at most $D$.
       Since $n= \lfloor C_0d^2\rfloor$ and $D=d(d+1)/2$, we have $q\leq C$. Thus, with probability $1-o(1)$,
       \begin{equation*}
       \|\mathsf W\|_{\op}\leq \sum_{j=1}^{q}\|\mathsf{W}_{\mathcal I_j}\|_{\op}\leq C \sqrt{D}.
       \end{equation*}
       Now, by the definition of $\mathsf W$, for every $R\in\Ssym^d$, $ \mathsf W^\top\operatornorm{vec}(R)
       =\sqrt{\frac d2}\,\mathcal L_XR$ holds. Since $\operatornorm{vec}(\cdot)$ preserves the Frobenius norm, the preceding bound gives that with probability $1-o(1)$,
       \[
       \|\mathcal L_XR\|_2
       \leq \sqrt{\frac2d}\,\|\mathsf W\|_{\op}\|R\|_F
       \leq C\sqrt d\,\|R\|_F,
       \]
       proving the first estimate in
       \eqref{eq:second:thm:apriori-estimates}. Finally, combining this with duality, we have for every $z\in\R^n$
       \begin{align*}
       \|\mathcal L_X^*z\|_F=\sup_{\|R\|_F\leq1}
       |\langle \mathcal L_X^*z,R\rangle_F|=\sup_{\|R\|_F\leq1}
       |\langle z,\mathcal L_XR\rangle|
       \leq C\sqrt d\,\|z\|_2.
       \end{align*}
       Since every matrix in $\Ssym^d$ has rank at most $d$, we also have
       $\|A\|_*\leq\sqrt d\,\|A\|_F$ for every $A\in\Ssym^d$. Thus
       \[
       \|\mathcal L_X^*z\|_*
       \leq\sqrt d\|\mathcal L_X^*z\|_F
       \leq Cd\|z\|_2,
       \]
       which proves the second estimate in
       \eqref{eq:second:thm:apriori-estimates}, and concludes the proof.
	\end{proof}

      Using these a priori estimates from Proposition~\ref{thm:apriori-estimates}, we first prove the bound in Theorem~\ref{thm:centered-compatibility} when $\lambda$ is \textit{sparse}, i.e. it is supported on $C_0 d\log d$ coordinates, and has no tail.
    	\begin{proposition}\label{prop:sparse-restricted-lower}
		Let $x_1,\ldots,x_n\in\R^d$ be i.i.d. random vectors with
		independent coordinates satisfying
		\eqref{eq:independent:coordinate:distribution:assumption} with $K_x>0$.
		Suppose
		$n/d^2\to\alpha\in(0,\infty)$.
		For every fixed $C_0<\infty$, there is $c=c(C_0)>0$ such that, with probability
		tending to one,
		\begin{equation}\label{eq:sparse-L-lower}
		\|\mathcal{L}_X^*\lambda\|_*\ge \frac{c\sqrt d}{(\log d)^4}\|\lambda\|_1\quad\textnormal{for all}\quad \lambda\in\R^n\quad\textnormal{s.t.}\quad |\supp \lambda|\le C_0d\log d.
		\end{equation}
	\end{proposition}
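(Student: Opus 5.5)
The plan is to lower bound the nuclear norm by duality against a well-chosen test matrix built from the data itself, using only the a priori estimates of Proposition~\ref{thm:apriori-estimates} (applied with $C_0'=C_0$). On the event where those estimates hold, fix $\lambda$ with $\mathcal I:=\supp\lambda$, $|\mathcal I|\le C_0 d\log d$. For any $v\in\R^n$ supported on $\mathcal I$, set $B:=\mathcal L_X^*v$. Then
\[
\|\mathcal L_X^*\lambda\|_*\ \ge\ \frac{\langle \mathcal L_X^*\lambda,\mathcal L_X^*v\rangle_F}{\|\mathcal L_X^*v\|_{\op}},
\qquad
\langle \mathcal L_X^*\lambda,\mathcal L_X^*v\rangle_F=d\,\lambda_{\mathcal I}^\top Q_{\mathcal I}v_{\mathcal I}.
\]
The numerator is therefore at least $d\bigl(\langle\lambda,v\rangle-\delta_d\|\lambda\|_2\|v\|_2\bigr)$, where $\delta_d:=C(\log d)^{3/2}/\sqrt d$ comes from \eqref{eq:first:thm:apriori-estimates}. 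For the denominator, write $\mathcal L_X^*v=d^{-1/2}\bigl(X_{\mathcal I}\diag(v_{\mathcal I})X_{\mathcal I}^\top-(\textstyle\sum_i v_i)I_d\bigr)$. The second estimate in \eqref{eq:first:thm:apriori-estimates} then gives
\[
\|\mathcal L_X^*v\|_{\op}\le d^{-1/2}\|v\|_\infty\bigl(Cd(\log d)^2+|\mathcal I|\bigr)\le C\sqrt d(\log d)^2\|v\|_\infty .
\]
So everything reduces to choosing $v$ with $\|v\|_\infty\le1$, $\langle\lambda,v\rangle\gtrsim\|\lambda\|_1/\mathrm{polylog}(d)$, and $\delta_d\|\lambda\|_2\|v\|_2\le\frac12\langle\lambda,v\rangle$.

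The naive choice $v=\operatorname{sign}(\lambda)$ fails. The cross term $\delta_d\|\lambda\|_2\sqrt{|\mathcal I|}$ can be as large as $(\log d)^2\|\lambda\|_2$, which exceeds $\|\lambda\|_1$ when $\lambda$ is far from flat. To handle this I would use a dyadic level decomposition.

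\emph{Step 1 (discard tiny entries).} Normalize $\|\lambda\|_\infty=1$. Entries with $|\lambda_i|<d^{-3}$ contribute at most $Cd^{-2}\log d\le\frac12\|\lambda\|_1$ to $\|\lambda\|_1$, so they can be ignored.

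\emph{Step 2 (pick a level).} The remaining entries fall into $O(\log d)$ dyadic classes $J_k=\{i:2^{-k-1}<|\lambda_i|\le2^{-k}\}$. Choose $k$ maximizing $\|\lambda_{J_k}\|_1$, so that $\|\lambda_{J_k}\|_1\ge c\|\lambda\|_1/\log d$.

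\emph{Step 3 (truncated sign vector).} Rather than the bare sign on $J_k$, take the truncated test vector $v_i=\operatorname{sign}(\lambda_i)\min(1,|\lambda_i|2^{k})$ on $\mathcal I$. This has $\|v\|_\infty\le1$ and $\langle\lambda,v\rangle\ge\frac12\|\lambda_{J_k}\|_1+\sum_{|\lambda_i|<2^{-k}}2^{k}\lambda_i^2$. Moreover the part of $\lambda$ above level $2^{-k}$ is matched exactly by $v$ at weight $1$ on entries where $\lambda$ is large.

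The main obstacle is the cross term $\delta_d\|\lambda\|_2\|v\|_2$. The dangerous case is when $\|\lambda\|_2$ is carried by a few large entries while $\|\lambda\|_1$ is carried by many small ones. I would handle it by choosing the level more carefully, by a second pigeonhole: either the large coordinates (those above the chosen level) already satisfy the bound via $v$ supported on them with $\|v\|_2\le\sqrt{|J|}$ and $\|\lambda_J\|_2\le\|\lambda_J\|_1/\sqrt{|J|}\cdot O(1)$ per block, or one can restrict $\lambda$ itself to a single flat block. For a flat block $J$ one has $\|\lambda_J\|_2\sqrt{|J|}\le2\|\lambda_J\|_1$, so the cross term within the block is only $O(\delta_d)\|\lambda_J\|_1$. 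The remaining cross interactions $\lambda_{J^c}^\top(Q_{\mathcal I}-I)v_J$ are bounded by $\delta_d\|\lambda\|_2\sqrt{|J|}$. These are controlled by comparing $\|\lambda\|_2\le\sum_k\|\lambda_{J_k}\|_2$ with the chosen block, and, if necessary, by selecting the block maximizing $\|\lambda_{J_k}\|_1$ subject to a lower bound on $|J_k|2^{-k}$ relative to heavier blocks. The generous $(\log d)^4$ loss in \eqref{eq:sparse-L-lower}, versus the $(\log d)^2$ from the operator norm, leaves two further logarithms of slack for these pigeonholes.

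Combining the steps gives
\[
\|\mathcal L_X^*\lambda\|_*\ge\frac{c\,d\,\|\lambda\|_1/\log d}{C\sqrt d(\log d)^2}\cdot\frac{1}{\log d}\ \ge\ \frac{c\sqrt d}{(\log d)^4}\|\lambda\|_1 .
\]
Since every step is deterministic on the event of Proposition~\ref{thm:apriori-estimates}, this holds uniformly over all such $\lambda$ with probability tending to one.
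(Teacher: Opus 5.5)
Your reduction is sound. Testing against $\mathcal L_X^*v$ with $\operatorname{supp}v\subset\mathcal I$, the numerator is at least $d(\langle\lambda,v\rangle-\delta_d\|\lambda\|_2\|v\|_2)$ and the denominator is at most $C\sqrt d(\log d)^2\|v\|_\infty$. The gap is the step that carries all the difficulty: choosing $v$ with $\|v\|_\infty\le1$ so that the cross term is at most $\frac12\langle\lambda,v\rangle$ while $\langle\lambda,v\rangle\gtrsim\|\lambda\|_1/(\log d)^2$. You never carry this out. The level chosen in Step~2 fails, as you concede. The remedy is only a list of options, with an unverified tally of one $\log d$ per pigeonhole, and that tally does not match what happens.

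Take $\lambda$ with one coordinate equal to $1$ and $N\asymp C_0d\log d$ coordinates equal to $\epsilon=\delta_d/(2\sqrt N)\asymp\log d/d$. Step~2 selects the small class, since its mass $\delta_d\sqrt N/2\asymp(\log d)^2$ exceeds $1$. Your $v$ is then essentially $\operatorname{sgn}(\lambda)$, and your lower bound $\langle\lambda,v\rangle-\delta_d\|\lambda\|_2\|v\|_2\le 1-\delta_d\sqrt N/4$ is negative. Worse, for every $v$ with $\|v\|_\infty\le1$,
\[
\langle\lambda,v\rangle-\delta_d\|\lambda\|_2\|v\|_2\le 1+(\epsilon\sqrt N-\delta_d)\|v_{\{2,\ldots,N+1\}}\|_2\le 1,
\]
while $\|\lambda\|_1=1+\delta_d\sqrt N/2\asymp(\log d)^2$. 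So the cross term alone consumes the entire $(\log d)^2$ of slack, even though the dyadic step costs nothing here. A scheme that first pays $\log d$ for dyadic selection and then pays separately for the cross term must show these losses never compound, and that is exactly the missing argument.

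The fix is to keep your Step~3 vector but choose its level by balancing the cross term rather than by $\ell_1$ mass. Set $v_i=\operatorname{sgn}(\lambda_i)\min(1,|\lambda_i|/s)$, where $s\in(0,\|\lambda\|_\infty)$ solves $\sum_i\min(|\lambda_i|,s)^2=4\delta_d^2\|\lambda\|_2^2$; such $s$ exists by continuity once $4\delta_d^2<1$.
\begin{enumerate}[label=\textup{(\roman*)}]
\item Since $\langle\lambda,v\rangle\ge s\|v\|_2^2$ and $s\|v\|_2=2\delta_d\|\lambda\|_2$, the cross term is at most $\frac12\langle\lambda,v\rangle$.
\item Write $\Psi_0:=\sum_i(|\lambda_i|-s)_+$, so $\Psi_0\le\langle\lambda,v\rangle$. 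Then $\|\lambda\|_2\le\Psi_0+2\delta_d\|\lambda\|_2$, so $\|\lambda\|_2\le2\Psi_0$.
\item Also $\|\lambda\|_1\le\Psi_0+2\delta_d\sqrt{|\mathcal I|}\,\|\lambda\|_2$. Combining gives $\|\lambda\|_1\le(1+4\delta_d\sqrt{|\mathcal I|})\langle\lambda,v\rangle\le C(\log d)^2\langle\lambda,v\rangle$.
\end{enumerate}
No dyadic decomposition is needed, and with your denominator bound this gives the $(\log d)^{-4}$ rate.

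The paper avoids the cross term entirely. It uses the certificate $A=d^{-1}\mathcal L_X^*y$ with $y=Q_{\mathcal I}^{-1}(\operatorname{sgn}\lambda_i)_{i\in\mathcal I}$, which interpolates the signs exactly, so $\langle A,\mathcal L_X^*\lambda\rangle=\|\lambda\|_1$. The cost reappears as $\|y\|_\infty\le2(1+\delta_d\|y\|_2)\le C(\log d)^2$, via the row bound $\|(Q_{\mathcal I}-I)e_i\|_2\le\delta_d$. This is the same factor $\delta_d\sqrt{|\mathcal I|}$ that your example shows is unavoidable.
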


\begin{proof}
	We work on the high-probability event in
	Proposition~\ref{thm:apriori-estimates}. In particular,
	\begin{equation}\label{eq:gram:estimate}
		\left\|Q_{\mathcal I}-I_{|\mathcal I|}\right\|_{\op}
		\le C\frac{(\log d)^{3/2}}{\sqrt d}
		\quad\text{for all }|\mathcal I|\le C_0d\log d.
	\end{equation}
	Fix a nonzero $\lambda\in\R^n$ satisfying
	$|\supp\lambda|\le C_0d\log d$, and set
	$\mathcal I:=\supp\lambda$. By nuclear/operator norm duality, it
	suffices to construct a matrix $A\in\Ssym^d$ such that
	$$
	\langle A,W_i\rangle=\operatorname{sgn}(\lambda_i)
	\quad\text{for every }i\in\mathcal I,
	\qquad
	\|A\|_{\op}\le \frac{C(\log d)^4}{\sqrt d}.
	$$
	Indeed, such a certificate satisfies $\left\langle A,\mathcal L_X^*\lambda\right\rangle
	=
	\left\langle A,\sum_{i\in\mathcal I}\lambda_iW_i\right\rangle
	=\|\lambda\|_1,$ which gives the desired lower bound by nuclear/operator norm duality. We construct such $A$ in the span of $(W_i)_{i\in\mathcal I}$. Let
	$$
	\varepsilon
	:=\bigl(\operatorname{sgn}(\lambda_i)\bigr)_{i\in\mathcal I}
	\in\R^{\mathcal I},
	\qquad
	y:=Q_{\mathcal I}^{-1}\varepsilon\in\R^{\mathcal I},
	\qquad
	A:=\frac1d\sum_{i\in\mathcal I}y_iW_i.
	$$
	Then, for every $j\in\mathcal I$,
	$$
	\langle A,W_j\rangle
	=\frac1d\sum_{i\in\mathcal I}y_i\langle W_i,W_j\rangle
	=(Q_{\mathcal I}y)_j
	=\varepsilon_j,
	$$
	so $A$ has the required interpolation property.

	It remains to bound $\|A\|_{\op}$. We first show that
	$\|y\|_2\le C\sqrt{d\log d}$ and
	$\|y\|_\infty\le C(\log d)^2$. Indeed,
	\eqref{eq:gram:estimate} gives
	$\|Q_{\mathcal I}^{-1}\|_{\op}\le2$ for all sufficiently large $d$, and
	therefore
	$$
	\|y\|_2
	\le\|Q_{\mathcal I}^{-1}\|_{\op}\|\varepsilon\|_2
	\le2\sqrt{|\mathcal I|}
	\le C\sqrt{d\log d}.
	$$
	For the $\ell_\infty$ estimate, \eqref{eq:gram:estimate} gives, for
	every $i\in\mathcal I$,
	$$
	\Bigg(\sum_{j\in\mathcal I, j\ne i}
	|(Q_{\mathcal I})_{ij}|^2\Bigg)^{1/2}
	\le\|(Q_{\mathcal I}-I)e_i\|_2
	\le C\frac{(\log d)^{3/2}}{\sqrt d}.
	$$
	Moreover, $(Q_{\mathcal I})_{ii}y_i
	=\varepsilon_i-
	\sum_{j\in\mathcal I, j\ne i}
	(Q_{\mathcal I})_{ij}y_j,$ and $(Q_{\mathcal I})_{ii}\ge1/2$ for all sufficiently large $d$.
	Thus, for every $i\in\mathcal I$,
	$$
	|y_i|
	\le2\Bigg(
		1+
		\Bigg(\sum_{j\in\mathcal I, j\ne i}
		|(Q_{\mathcal I})_{ij}|^2\Bigg)^{1/2}\|y\|_2
		\Bigg)
	\le C(\log d)^2,
	$$
thus $\|y\|_{\infty}\leq C (\log d)^2$. Finally, writing $D_y=\operatorname{diag}(y_i:i\in\mathcal I)$, we have
	$$
	A=\frac1{d^{3/2}}\left[
	X_{\mathcal I}D_yX_{\mathcal I}^\top
	-\left(\sum_{i\in\mathcal I}y_i\right)I_d
	\right].
	$$
	Since $\|X_{\mathcal I}\|_{\op}\le C\sqrt d\log d$ on the event from
	Proposition~\ref{thm:apriori-estimates},
	$$
	\|A\|_{\op}
	\le\frac1{d^{3/2}}\left(
	\|X_{\mathcal I}\|_{\op}^2\|D_y\|_{\op}
	+\sqrt{|\mathcal I|}\|y\|_2
	\right)
	\le\frac{C(\log d)^4}{\sqrt d}.
	$$
	Therefore, by nuclear/operator norm duality,
	$$
	\|\mathcal L_X^*\lambda\|_*
	\ge
	\frac{|\langle A,\mathcal L_X^*\lambda\rangle|}{\|A\|_{\op}}
	\ge
	\frac{c\sqrt d}{(\log d)^4}\|\lambda\|_1,
	$$
	which concludes the proof.
\end{proof}

	\subsection{Proof of Theorem~\ref{thm:centered-compatibility}}
   Having established Proposition~\ref{prop:sparse-restricted-lower}, it remains
to control the light tail after allowing a sparse correction on its
complement. Recall that a vector $t\in\R^n$ is called $K$-light if
$$
\|t\|_\infty\le (Kd)^{-1/2}\|t\|_2.
$$
For a fixed set $\mathcal I$ and a tail $t$ supported on $\mathcal I^c$, the
quantity $\inf_{\supp h\subset\mathcal I}
\|\mathcal L_X^*(h+t)\|_*$ is the nuclear norm distance from $\mathcal L_X^*t$ to the subspace spanned
by $(W_i)_{i\in\mathcal I}$. Equivalently, it is the \textit{quotient norm} of
$\mathcal L_X^*t$ modulo that subspace. The proposition below proves the
bounded-shift version needed in our argument, where the sparse correction
also satisfies $\|h\|_1\le d^{2/3}\|t\|_2$.

	\begin{proposition}[Light-tail quotient norm lower bound]\label{prop:light-tail-quotient}
		Let $x_1,\ldots,x_n\in\R^d$ be i.i.d. random vectors with
		independent coordinates satisfying
		\eqref{eq:independent:coordinate:distribution:assumption} with $K_x>0$.
		Suppose $n/d^2\to \alpha\in (0,1/2)$ as $n\to\infty$. There exist $K>0$ and $\gamma>0$, depending only on
		$\alpha$ and $K_x$, such that for every fixed $C_0>0$, with
		probability $1-o(1)$, the following holds.  For every
		$\mathcal I\subset[n]$ with $|\mathcal I|\le C_0d\log d$ and every $K$-light $t$
		supported on $\mathcal I^c$,
		\[
		\inf_{\substack{\supp h\subset \mathcal I\\ \|h\|_1\le d^{2/3}\|t\|_2}}
		\|\mathcal{L}_X^*(h+t)\|_*
		\ge \gamma d\|t\|_2 .
		\]
	\end{proposition}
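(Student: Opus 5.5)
By homogeneity we normalize $\|t\|_2=1$. The plan is a fixed-pair small-ball estimate followed by a net and union bound over the $\exp(\Theta(\alpha d^2))$ admissible pairs $(\mathcal I,t)$, as described in the overview.

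\textbf{Fixed pair.} Fix $\mathcal I$ and a $K$-light unit $t$ supported on $\mathcal I^c$. We condition on $(x_i)_{i\in\mathcal I}$, which determines the subspace $V_{\mathcal I}=\operatorname{span}(W_i)_{i\in\mathcal I}$ of dimension at most $C_0d\log d$. Since $\|A\|_*\ge\|A\|_F$ would lose a factor $\sqrt d$, we instead use $\|A\|_*\ge \|A\|_F^2/\|A\|_{\op}$. We then need two bounds, uniformly over admissible $h$:
\begin{itemize}
\item[(a)] an upper bound $\|\mathcal L_X^*(h+t)\|_{\op}\lesssim 1$. For $t$ this is a matrix concentration bound (matrix Bernstein, or a net over the sphere using lightness and the Hanson--Wright inequality of Lemma~\ref{lem:hanson-wright}). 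For $h$ we use $\|\mathcal L_X^* h\|_{\op}\le d^{-1/2}\|h\|_1\max_i\|x_ix_i^\top-I\|_{\op}\lesssim d^{2/3}$, which is too large. A better route is to apply duality directly: it suffices to find a test matrix $A$ with $\|A\|_{\op}\lesssim 1/d$, $A\perp V_{\mathcal I}$ (so $h$ is killed), and $\langle A,\mathcal L_X^*t\rangle\ge\gamma$.
\item[(b)] the natural choice $A=P_{V_{\mathcal I}^\perp}\mathcal L_X^*t/(d\cdot\text{const})$. This has $\langle A,\mathcal L_X^* t\rangle=\|P_{V^\perp}\mathcal L_X^*t\|_F^2/(cd)$, so we need $\|P_{V^\perp}\mathcal L_X^*t\|_F\gtrsim 1$ and $\|P_{V^\perp}\mathcal L_X^*t\|_{\op}\lesssim 1$. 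The projection correction lives in $V_{\mathcal I}$; Proposition~\ref{thm:apriori-estimates} and the sparse interpolation estimates of Proposition~\ref{prop:sparse-restricted-lower} bound its operator norm by $(\log d)^{O(1)}/\sqrt d\cdot\|\cdot\|_F$, so it is negligible.
\end{itemize}
The constraint $\|h\|_1\le d^{2/3}$ is then used only to make $h$ lie in a bounded set for the net.

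\textbf{Small-ball bound and union bound.} The crux is a lower bound $\|P_{V^\perp}\mathcal L_X^*t\|_F\ge\gamma$ holding with failure probability $\exp(-c'd^2)$, where $c'$ exceeds the net entropy $\approx\alpha d^2\cdot\log(\cdot)$. For this we write $\mathcal L_X^*t=\sum_{i\notin\mathcal I}t_iW_i$ and project onto $V^\perp$, which has dimension $m\approx d^2/2-O(d\log d)$. Conditional on the $x_i$ with $i\in\mathcal I$, the summands are independent but quadratic in $x_i$. To reach the scalar setting of Proposition~\ref{prop:vector-smallball}, we split randomness: conditioning further, we use a decoupling or symmetrization such as $x_i\mapsto\xi_i x_i$ with $\xi_i$ independent signs. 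This does not change $W_i$, so we use instead independent rescalings, or select a coordinate direction. The cleanest option is to treat the sum as $\sum_i w_i\xi_i$ with $\xi_i:=\sqrt d\langle W_i,u\rangle$-type scalar randomness after fixing orthogonal components.

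Concretely, we apply Proposition~\ref{prop:vector-smallball} in $\R^{m'}$, where $m'\approx(1/2-\alpha-\delta)d^2$ is obtained by projecting further onto the orthogonal complement of a random $\alpha d^2$-dimensional span. The input is the vectors $w_i=t_i\cdot(\text{fixed direction})$, with $\|w_i\|_2\lesssim\|t\|_\infty\lesssim(Kd)^{-1/2}$. Lightness is precisely what makes $\rho\lesssim K^{-1/2}\eta$ admissible. The output is a probability of the form $(C\eta)^{m'}$ for the event that the projection is smaller than $\eta\sqrt{m'}$. Choosing $\eta$ small makes $(C\eta)^{m'}\le\exp(-Ld^2)$ for any desired $L$. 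The requirement $m'>0$ is exactly what forces $\alpha<1/2$.

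The net over $(\mathcal I,t)$ at scale $\epsilon$ has $\binom{n}{C_0d\log d}\epsilon^{-n}=\exp(n\log(1/\epsilon)+o(d^2))$ elements. We then take $\eta\ll\epsilon$ and use Proposition~\ref{thm:apriori-estimates}, namely $\|\mathcal L_X^*z\|_F\le C\sqrt d\|z\|_2$, to pass from the net to all $t$. Since Frobenius perturbations of size $\epsilon\sqrt d$ exceed the target $\gamma$, we need to work at the scale $\eta\sqrt{m'}\asymp\eta d$ with normalization $\mathcal L_X^*t\sim d^{-1/2}\cdot d$. Rescaling the target to $\|\cdot\|_F\gtrsim\gamma\sqrt d$ with perturbation $\epsilon\sqrt d$ is consistent.

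\textbf{Main obstacle.} I expect the main obstacle to be the genuine reduction of the quadratic sum to a linear form $\sum w_i\xi_i$ with independent scalar $\xi_i$ and $\sum w_iw_i^\top\succeq I$ on a subspace of dimension $\ge c d^2$, uniformly in the conditioning. My first attempt would be a Gaussian-style second-moment argument: compute $\Sigma_\kappa$-covariance lower bounds on $V^\perp$ and invoke a multivariate small ball via tensorization (Rudelson--Vershynin style) on coordinates of $x_i$ themselves.
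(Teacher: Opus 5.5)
There is a genuine gap where you pass from the Frobenius norm to the nuclear norm. Choosing the test matrix orthogonal to $V_{\mathcal I}$ to kill $h$ is fine. However, the certificate $A\propto P_{V^\perp}\mathcal L_X^*t$ (equivalently $\|B\|_*\ge\|B\|_F^2/\|B\|_{\op}$) needs $\|P_{V^\perp}\mathcal L_X^*t\|_{\op}\lesssim1$ uniformly over $K$-light $t$, and that is false. Fix a unit vector $v$ and take $t_i\propto\langle x_i,v\rangle^2-1$ for $i\notin\mathcal I$. Then $\|t\|_\infty\lesssim(\log d)/d$, so $t$ is $K$-light for large $d$. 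Yet $\langle\mathcal L_X^*t,vv^\top\rangle=\|(\langle x_i,v\rangle^2-1)_{i\notin\mathcal I}\|_2/\sqrt d\asymp\sqrt d$, and projecting off the $O(d\log d)$-dimensional span $V_{\mathcal I}$ changes this only by polylogarithmic amounts. For such $t$ your certificate gives only $\|\cdot\|_*\gtrsim\sqrt d$, although the true nuclear norm is still of order $d$, carried by the bulk. A fixed-$t$ operator-norm bound (matrix Bernstein, Hanson--Wright) fails with probability of order $e^{-\Theta(d)}$, so it cannot survive a union bound over $e^{\Theta(d^2)}$ tails. Consequently no Frobenius lower bound, however strong, closes the argument. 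You must directly bound the probability that $\mathcal L_X^*(h+t)$ lands in a shifted nuclear-norm ball of radius $\gamma d$. The paper does this with Lemma~\ref{lem:product-covering}: that ball is covered by $C^{mq}$ products of Euclidean column balls of radius $\gamma$. Columnwise small-ball bounds then multiply to $(C\gamma)^{mq}$, with no operator-norm input and uniformly over measurable shifts, which is also how $h$ is absorbed.

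The second gap is the one you flag yourself. $\sum_i t_iW_i$ is a sum of independent matrices that are quadratic in $x_i$, not a form $\sum_iw_i\xi_i$ with fixed $w_i$ and independent scalars, so Proposition~\ref{prop:vector-smallball} does not apply. Sign symmetrization leaves $W_i$ unchanged, and $w_i=t_i\cdot(\text{fixed direction})$ makes $\sum_iw_iw_i^\top$ rank one. The missing idea is coordinate decoupling. Pass to an off-diagonal block $U\times I$ with $U\cap I=\varnothing$, which can only decrease the nuclear norm by \eqref{eq:submatrix:L:nuclear:norm:lower:bound}. Condition on the $U$-coordinates of the tail samples and on all head samples. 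Then column $j\in I$ equals $d^{-1/2}\sum_{i\notin\mathcal I}t_iu_ix_{ij}$, which is linear in fresh independent scalars; the columns are conditionally independent, and lightness gives $\rho\lesssim K^{-1/2}$. You still need the conditional covariance $\sum_it_i^2u_iu_i^\top$ to be bounded below on a $(1-\eps)$-fraction of directions uniformly over light $t$ (Lemma~\ref{lem:effective-rank}); the population covariance $\Sigma_\kappa$ does not supply this.

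Your counting is also off. The tail net has size $(C/\epsilon)^n$, and its radius must be finer than the small-ball radius ($\epsilon\lesssim\eta$, not $\eta\ll\epsilon$). So the small-ball exponent must exceed $n\approx\alpha d^2$, not merely be positive; your $m'\approx(1/2-\alpha)d^2$ would close only for $\alpha<1/4$. A single decoupled block has exponent $mq\le d^2/4$ and hits the same barrier. Covering all $\alpha<1/2$ requires the paper's sequential block revealing, whose total exponent is $\sum_\ell p_\ell q_\ell\approx(1-1/L)d^2/2$.
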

    Together with Propositions~\ref{thm:apriori-estimates} and~\ref{prop:sparse-restricted-lower}, this quotient norm lower bound implies Theorem~\ref{thm:centered-compatibility} by a shelling
argument (see e.g.~\cite{candes-tao2007dantzig},~\cite{bickel-ritov-tsybakov2009lasso},~\cite{vandegeer-buhlmann2009conditions}). We first give this deduction and then devote the remainder of the
section to proving the quotient norm lower bound, Proposition~\ref{prop:light-tail-quotient}.

	
\begin{proof}[Proof of Theorem~\ref{thm:centered-compatibility}]
		Throughout, let $K$ and $\gamma$ be the constants from
		Proposition~\ref{prop:light-tail-quotient}, and denote $c,C>0$ by constants that only depends on $\alpha$ and $K_x$. Also, we set
		$$
		a_d:=\frac{\sqrt d}{(\log d)^4}.
		$$
		We work deterministically the event holding with probability $1-o(1)$, and that all of the conclusions in Propositions~\ref{thm:apriori-estimates},~\ref{prop:sparse-restricted-lower}, and~\ref{prop:light-tail-quotient} hold simultaneously. In particular, for some constants $c,C>0$, 
		$$
		\|\mathcal L_X^*h\|_*\ge ca_d\|h\|_1
		\quad\text{if }|\supp h|\le5Kd\log d,
		\qquad
		\|\mathcal L_X^*z\|_*\le Cd\|z\|_2
		\quad\text{for every }z\in\R^n.
		$$
		To this end, fix a vector $\lambda\in\R^n, \lambda \neq 0$ and order its coordinates by
		non-increasing absolute value.  Starting with $\lambda$, remove the largest
		remaining coordinate until either the remaining vector is $K$-light or
		$k_*:=\lceil4Kd\log d\rceil$ coordinates have been removed.  Whenever the
		current tail is not $K$-light, removing its largest coordinate decreases
		its squared $\ell_2$-norm by at least a $1/(Kd)$ fraction.  Hence, if the
		procedure reaches $k_*$ steps, the remaining tail satisfies
		\begin{equation}\label{eq:shelling:stop}
		\|t\|_2^2
		\le\left(1-\frac1{Kd}\right)^{k_*}\|\lambda\|_2^2
		\le d^{-4}\|\lambda\|_2^2.
		\end{equation}
		Let $\mathcal I$ be the set of removed coordinates and write
		$h=\lambda_{\mathcal I}$ and $t=\lambda_{\mathcal I^c}$.  Then
		$\mathcal I$ consists of the largest coordinates of $\lambda$,
		$|\mathcal I|\le5Kd\log d$ for all large $d$, and either $t$ is
		$K$-light or $\|t\|_2\le d^{-2}\|\lambda\|_2$.

		Note that by linearity of $\mathcal{L}_X^*$ and a triangle inequality,
		\begin{equation}\label{eq:compatibility-master}
		\|\mathcal L_X^*\lambda\|_*\geq \|\mathcal L_X^* h \|_*-\|\mathcal L_X^* t\|_*
		\geq ca_d\|h\|_1-Cd\|t\|_2.
		\end{equation}
		Suppose first that $t$ is $K$-light and
		$\|h\|_1\le d^{2/3}\|t\|_2$.  Proposition~\ref{prop:light-tail-quotient}
		also gives $\|\mathcal L_X^*\lambda\|_*\ge\gamma d\|t\|_2$.  Therefore
		$$
		d\|t\|_2\le\gamma^{-1}\|\mathcal L_X^*\lambda\|_*,
		\qquad
		a_d\|h\|_1\le C\|\mathcal L_X^*\lambda\|_*,
		$$
		where the second inequality follows from \eqref{eq:compatibility-master}.
		This proves the desired bound in this case.

			In all remaining cases, either $t$ is $K$-light and
	$\|h\|_1>d^{2/3}\|t\|_2$, or $\|t|_2\leq d^{-2}\|\lambda\|_2$ holds by \eqref{eq:shelling:stop}. In the
	first case, 
    \[
    d\|t\|_2<d^{1/3}\|h\|_1=o(a_d\|h\|_1).
    \]
    In the second case, since
	$\lambda=h+t$, we have
	$\|t\|_2\le2d^{-2}\|h\|_2\le2d^{-2}\|h\|_1$, and hence again $d\|t\|_2=o(a_d\|h\|_1)$. Thus, in either case, the negative tail term in
	\eqref{eq:compatibility-master} can be absorbed into the sparse lower
	bound. After decreasing the constant if necessary, we obtain 
    \[
    \|\mathcal L_X^*\lambda\|_*
	\ge c(a_d\|h\|_1+d\|t\|_2).
    \]
    This concludes the proof.
	\end{proof}

		\subsection{Proof of the Proposition~\ref{prop:vector-smallball}}
	\label{sec:fixed-vector-smallball}
 This section proves Proposition~\ref{prop:vector-smallball}, which is used to prove
Proposition~\ref{prop:light-tail-quotient}. As discussed in Section~\ref{subsec:proof:overview:sat}, the proof
follows the Fourier-analytic method by~\cite{ball-nazarov-little-level} and its multidimensional extension by~\cite{rudelson-vershynin2015smallball}.

For a random variable $Y$, recall 
	$Q(Y,L):=\sup_t\P\{Y\in[t,t+L]\}$. We use the following
	Kolmogorov--Rogozin/Esseen concentration-function estimate
	\cite{rogozin1961estimate,esseen1968concentration}: if
	$Y_1,\ldots,Y_N$ are independent, $S_N=\sum_{j=1}^N Y_j$, and
	$0<\lambda_j\le L$, then
	\begin{equation}\label{eq:kolmogorov:rogozin}
	Q(S_N,L)\le C L
	\left(\sum_{j=1}^N\lambda_j^2
	\bigl(1-Q(Y_j,\lambda_j)\bigr)\right)^{-1/2},
	\end{equation}
    where $C>0$ is an absolute constant.
    
    Its relevance to Proposition~\ref{prop:vector-smallball} is already clear when $m=1$: in this case,
	$S=\sum_{i=1}^n w_i\xi_i$, with $\sum_iw_i^2\ge1$ and
	$\max_i|w_i|\le\rho$. We prove in Lemma~\ref{lem:bl-oned-factor} that the truncated second moment assumption~\eqref{eq:sec:mo:condition:prop:vector-smallball} implies $Q(\xi_i,1/8)\le1-c$ for all $i$. Once this estimate holds, applying~\eqref{eq:kolmogorov:rogozin} with
	$Y_i=w_i\xi_i$,
	$\lambda_i=|w_i|/8$, and $L=2\eta$ (which is an admissible choice of parameters since $\eta \gtrsim \max_i|w_i|$) gives
	$$
	\sup_z\P(|S-z|\le\eta)
	\le C\eta
	\left(\sum_iw_i^2\bigl(1-Q(\xi_i,1/8)\bigr)\right)^{-1/2}
	\le C\eta.
	$$
   Thus Proposition~\ref{prop:vector-smallball} follows directly from~\eqref{eq:kolmogorov:rogozin} when $m=1$. The next
lemma provides the Fourier estimate needed to extend this argument to
$m>1$.

	\begin{lemma}
		\label{lem:bl-oned-factor}
		Assume that the random variable $\xi$ satisfy $\E\xi=0, \Var(\xi)=1$, and for some $M>0$,
		$$
		\E\bigl[\xi^2\mathbf 1_{\{|\xi|\le M\}}\bigr]\ge \frac12.
		$$
		There are constants $c=c(M)>0$, $C=C(M)<\infty$, and
		$s_0=s_0(M)>0$ such that $Q(\xi,1/8)\le1-c$. Moreover, if
		$\phi_\xi(u)=\E e^{iu\xi}$, then, for every $0<r\le s\le s_0$,
		\begin{equation}\label{eq:fourier:estimate}
		\frac1r\int_\R |\phi_\xi(u)|^{1/r^2}
		\exp\left(-\frac{s^2u^2}{2r^2}\right)du\le C.
		\end{equation}
	\end{lemma}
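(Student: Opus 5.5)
The lemma has two parts, and I will prove them in order.

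\textbf{Concentration bound.} Suppose, for contradiction, that $\P(\xi\in[a,a+1/8])\ge 1-c$ for some $a$. Write $J=[a,a+1/8]$.

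First I locate $J$. Since $\E\xi=0$ and $\Var\xi=1$, Chebyshev gives $\P(|\xi|>2)\le 1/4$. So if $c<3/4$, the interval $J$ meets $[-2,2]$, and $|a|\le 3$.

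Next I bound the truncated variance. Let $\mu=\E[\xi\mathbf 1_{|\xi|\le M}]$ and note $|\mu|\le\sqrt{\E[\xi^2\mathbf 1_{|\xi|\le M}]}\le 1$. Assume $M\ge 1$, which we may. Then
\[
\E\bigl[\xi^2\mathbf 1_{|\xi|\le M}\bigr]\le \E\bigl[(\xi-a)^2\mathbf 1_{|\xi|\le M}\bigr]+2|a|\,|\mu|+a^2.
\]
This decomposition alone does not directly give a contradiction.

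A cleaner route is to exploit the variance directly. On $\{|\xi|\le M\}$ we split according to whether $\xi\in J$:
\[
\Var\bigl(\xi\mathbf 1_{|\xi|\le M}\bigr)\le \E\bigl[(\xi\mathbf 1_{|\xi|\le M}-a')^2\bigr]\le (1/8)^2+(M+3)^2c,
\]
where $a'$ is the midpoint of $J$. Using $\E\xi=0$ and Cauchy--Schwarz, the tail mean satisfies $|\E[\xi\mathbf 1_{|\xi|>M}]|\le \sqrt{\P(|\xi|>M)}\le\sqrt{c}$ once $J\subset[-M,M]$ (enlarge $M$ if necessary). Hence $\mu^2\le c$, and
\[
\E\bigl[\xi^2\mathbf 1_{|\xi|\le M}\bigr]\le 1/64+(M+3)^2c+c<1/2
\]
for $c$ small. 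This contradicts the hypothesis, so $Q(\xi,1/8)\le 1-c$.

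\textbf{Fourier estimate.} The key step is a quadratic bound on $|\phi_\xi|$ near the origin. I use
\[
|\phi_\xi(u)|^2=\E\cos\bigl(u(\xi-\xi')\bigr),
\]
where $\xi'$ is an independent copy, together with $1-\cos x\ge c'x^2$ for $|x|\le\pi$. Restricting to $|\xi|,|\xi'|\le M$ and $|u|\le \pi/(2M)$ gives
\[
1-|\phi_\xi(u)|^2\ge c'u^2\,\E\bigl[(\xi-\xi')^2\mathbf 1_{|\xi|,|\xi'|\le M}\bigr]\ge c_1u^2 .
\]
The final inequality expands the square: the expectation equals $2\P(|\xi|\le M)\E[\xi^2\mathbf 1_{|\xi|\le M}]-2\mu^2$. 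It is bounded below using $\E[\xi^2\mathbf 1_{|\xi|\le M}]\ge1/2$ and $\mu^2\le \P(|\xi|>M)\le 1/M^2$ (take $M\ge 2$). Consequently $|\phi_\xi(u)|\le e^{-c_2u^2}$ for $|u|\le u_0:=\pi/(2M)$.

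I then split the integral at $u_0$.

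On $|u|\le u_0$, the integrand is at most $e^{-c_2u^2/r^2}$. Its integral is $\lesssim r$, so this piece contributes $O(1)$ after the prefactor $1/r$.

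On $|u|>u_0$, I bound $|\phi_\xi|\le 1$. The Gaussian factor then gives
\[
\frac1r\int_{|u|>u_0}e^{-s^2u^2/(2r^2)}\,du\lesssim \frac1s\, e^{-s^2u_0^2/(2r^2)}.
\]
This is the main obstacle. Using $|\phi_\xi|\le1$ alone is not enough: since $r\le s$ only, the bound stays of order $1/s$ rather than $O(1)$. What rescues the argument is the power $1/r^2$. I need $\sup_{|u|\ge u_0}|\phi_\xi(u)|<1$ on a suitable range, which fails for lattice variables at multiples of $2\pi$ divided by the span. So instead I will use the averaged bound
\[
\int_{U}^{U+1}\bigl(1-|\phi_\xi(u)|^2\bigr)\,du\ge c_3 ,
\]
valid for every window of length one (large windows). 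It follows from $\E[1-\cos(u(\xi-\xi'))]$ averaged over $u$, together with $Q(\xi-\xi',\cdot)\le 1-c$ from the first part. This shows that the set where $|\phi_\xi|\ge 1-\delta$ occupies at most a small fraction of each unit window. Then $|\phi_\xi|^{1/r^2}\le(1-\delta)^{1/r^2}$ off an exceptional set of density at most $1/2$ per window. On the exceptional set I use Gaussian decay at scale $r/s$.

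Choosing $s_0$ small, summation over windows gives a bound of the form
\[
\frac1r\bigl(e^{-\delta/r^2}\cdot r/s+\text{density}\cdot r/s\bigr).
\]
The exceptional-set term is the delicate point. If the density bound does not close it, I would fall back on a Kolmogorov--Rogozin-type comparison: I would verify that the only use of this integral estimate downstream (inside the Ball--Nazarov/Brascamp--Lieb argument) is for $\xi$ replaced by sums, and absorb it via \eqref{eq:kolmogorov:rogozin}.
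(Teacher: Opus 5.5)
Your proof of $Q(\xi,1/8)\le1-c$ is correct, though organized differently from the paper's. You show that concentration on $J$ forces both $\Var(\xi\mathbf 1_{\{|\xi|\le M\}})$ and $\mu^2$ to be small, whereas the paper splits into the cases $I\subset[-1/4,1/4]$ and $I\subset[1/8,\infty)$. Your bound $|\phi_\xi(u)|\le e^{-c_2u^2}$ on $|u|\le u_0$ is also fine.

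The gap is in the region $|u|>u_0$, and it is structural rather than a matter of constants. The averaged inequality $\int_U^{U+1}(1-|\phi_\xi|^2)\,du\ge c_3$ only shows that $\{u\in[U,U+1]:|\phi_\xi(u)|\ge1-\delta\}$ has measure at most $(1-c_3)/(1-2\delta)$. That constant is bounded away from $1$, but it is neither $1/2$ nor small. On this set you only have $|\phi_\xi|^{1/r^2}\le1$. So at $s=r$ (weight $e^{-u^2/2}$), the only bound your argument gives for this piece is of order $1/r$ after the prefactor, which diverges. What the lemma needs there is $\int_{u_0<|u|<2}|\phi_\xi|^{1/r^2}\,du=O(r)$. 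This amounts to the level sets $\{1-|\phi_\xi|^2\le t\}$ having measure $O(\sqrt t)$ in each unit window, for all small $t$. A single-scale averaged bound cannot give this. Your proposed fallback, changing how the estimate is used downstream, does not prove the lemma as stated.

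The missing idea is to read the power $1/r^2$ as the characteristic function of a sum. Set $\tilde\xi=\xi-\xi'$ and $N=\lfloor(2r^2)^{-1}\rfloor$. Then $|\phi_\xi(u)|^{1/r^2}\le\phi_{\tilde\xi}(u)^N=\E e^{iuS_N}$, where $S_N$ is a sum of $N$ independent copies of $\tilde\xi$. Reduce to $s=r$ (the integrand decreases in $s$). Fubini then gives $\int\E e^{iuS_N}e^{-u^2/2}\,du=\sqrt{2\pi}\,\E e^{-S_N^2/2}\le C\,Q(S_N,1)$. Kolmogorov--Rogozin together with $Q(\tilde\xi,1/8)\le Q(\xi,1/8)\le1-c$ yields $Q(S_N,1)\le C/\sqrt N\le Cr$. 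This is the paper's argument, and it is exactly where your first part is used.

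If you prefer to keep your decomposition, the ingredient you need is the subadditivity $\sqrt{1-|\phi_\xi(u+v)|^2}\le\sqrt{1-|\phi_\xi(u)|^2}+\sqrt{1-|\phi_\xi(v)|^2}$. It follows from Minkowski applied to $|\sin((u+v)\tilde\xi/2)|\le|\sin(u\tilde\xi/2)|+|\sin(v\tilde\xi/2)|$. Combine it with your bound $1-|\phi_\xi(v)|^2\ge c_1v^2$ for $|v|\le u_0$. Then any two points of $\{1-|\phi_\xi|^2\le t\}$ at distance at most $u_0$ lie within $2\sqrt{t/c_1}$ of each other, which gives the needed $O(\sqrt t)$ level-set bound. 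A layer-cake integration then gives $\int_k^{k+1}|\phi_\xi|^{1/r^2}\,du\le Cr$ for every unit window. Summing this against the Gaussian weight yields the claimed bound.
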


	\begin{proof}
		We may assume w.l.o.g. $M\ge2$. Let $I$ be any closed interval of
length $1/8$, and set
$$
\delta=\P(\xi\notin I).
$$
We show that $\delta\geq c(M)>0$. First suppose that $I\subset[-1/4,1/4]$. Using
$\E[\xi^2\mathbf 1_{\{|\xi|>M\}}]\le1/2$, we obtain
$$
1=\E\xi^2
\le \E[\xi^2 \mathbf 1_{\{|\xi|\leq \frac{1}{4}\}}]+\E[\xi^2 \mathbf 1_{\{\frac{1}{4}<|\xi|\leq M\}}]+\frac{1}{2}\leq \frac1{16}+M^2\delta+\frac{1}{2}.
$$
Consequently, $\delta\ge 7/(16M^2)$. Otherwise, either
$I\subset[1/8,\infty)$ or $I\subset(-\infty,-1/8]$. Consider the first
case; the second follows after replacing $\xi$ by $-\xi$. Since $\P(\xi\in I)=1-\delta$, and $\xi \geq 1/8$ a.s. on the event $\{\xi \in I\}$. we have $\E[\xi\mathbf 1_{\{\xi\in I\}}]\geq (1-\delta)/8$. Moreover, since $\E[\xi]=0$, $\E[\xi\mathbf 1_{\{\xi\in I\}}]
=-\E[\xi\mathbf 1_{\{\xi\notin I\}}]$.
Therefore, Cauchy--Schwarz and $\E\xi^2=1$ give
$$
\frac{1-\delta}{8}
\le
\left|\E[\xi\mathbf 1_{\{\xi\notin I\}}]\right|
\le
\sqrt{\E\xi^2\,\P\{\xi\notin I\}}
=\sqrt{\delta}.
$$
Thus, $\delta \geq c_0>0$ for an absolute constant $c_0>0$ in this case. Hence,
$$
\P(\xi\notin I)\ge
c(M):=\min\left\{\frac7{16M^2},c_0\right\}.
$$
Taking the supremum over $I$ gives $Q(\xi,1/8)\le1-c(M)$.

We turn to the Fourier estimate~\eqref{eq:fourier:estimate}. Since the integrand decreases as $s$ increases, it suffices to prove the
estimate when $s=r$. Let $\xi'$ be an independent copy of $\xi$ and set
$\widetilde\xi=\xi-\xi'$. Note that such symmetrization is useful since
$\phi_{\widetilde\xi}(u)=|\phi_\xi(u)|^2$. Moreover, conditioning on $\xi'$, a
translate of any interval of length $1/8$ is again an interval of length
$1/8$, thus
$$
Q(\widetilde\xi,1/8)\le Q(\xi,1/8)\le1-c(M).
$$
Choose $s_0\le1/2$ and set
$N=\lfloor(2r^2)^{-1}\rfloor$. Then $N\ge1/(4r^2)$ and
$$
|\phi_\xi(u)|^{1/r^2}
\le|\phi_\xi(u)|^{2N}
=\phi_{\widetilde\xi}(u)^N=\E[e^{iuS_N}],
$$
where $S_N$ is the sum of $N$ independent copies of $\widetilde\xi$. Moreover, using Fubini's theorem, we have
$$
\int_\R \E[e^{iuS_N}]e^{-u^2/2}du=
\sqrt{2\pi}\, \E[e^{-S_N^2/2}],
$$
where we used the characteristic function of a standard gaussian random variable. Note that
\[
\E [e^{-S_N^2/2}]=\int_0^1
\P\{|S_N|<\sqrt{2\log(1/t)}\}dt\le
C Q(S_N,1)
\int_0^1\bigl(1+\sqrt{\log(1/t)}\bigr)dt\leq C' Q(S_N,1).
\]
Since $Q(\widetilde \xi, 1/8)\leq 1-c(M)$, applying \eqref{eq:kolmogorov:rogozin} with $L=1$ and
$\lambda_j=1/8$ yields $Q(S_N,1)\le\frac{C(M)}{\sqrt N}.$ Therefore,
\[
\int_{\R}|\phi_{\xi}(u)|^{1/r^2} e^{-u^2/2} du \leq \frac{C(M)}{\sqrt{N}}\leq C'(M)r.
\]
Dividing by $r$ concludes the proof.
	\end{proof}
\begin{remark}
	Using the trivial bound $|\phi_\xi(u)|\le1$, the left-hand side of
	\eqref{eq:fourier:estimate} is bounded by $C/s$. Thus, \eqref{eq:fourier:estimate} improves the trivial bound by a
	factor of $s$. Moreover, ~\eqref{eq:fourier:estimate} says that, via Fourier inversion, for the random variable
	$S_N$ appearing in the proof and an independent standard Gaussian
	random variable $G$, the density of $S_N+G$ is bounded by $Cr$. Indeed, the
	integral in \eqref{eq:fourier:estimate} is the one-dimensional factor
	that arises after Gaussian regularization in the proof of
	Lemma~\ref{lem:regularized-density} below.
\end{remark}
	Another ingredient to establish Proposition~\ref{prop:vector-smallball} for $m>1$ is the following Brascamp--Lieb inequality, which was also used in~\cite{rudelson-vershynin2015smallball}.

	\begin{theorem}[Brascamp--Lieb {\cite{brascamp-lieb1976}; see
	\cite{ball1989volumes}}]
		\label{thm:geometric-brascamp-lieb}
		Let $u_1,\ldots, u_n \in\R^m$ be a finite family of unit vectors, and let $c_1,\ldots, c_n>0$ be real number 
		satisfying 
		$$
		\sum_i c_i u_i u_i^\top=I_m.
		$$
		Then, for every nonnegative measurable functions
		$F_1,\ldots, F_n:\R\to[0,\infty]$,
		$$
		\int_{\R^m}\prod_i F_i(\langle u_i,\theta\rangle)^{c_i}\,d\theta
		\le
		\prod_i\left(\int_\R F_i(t)\,dt\right)^{c_i}.
		$$
	\end{theorem}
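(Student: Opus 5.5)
The plan is Barthe's mass-transport proof. I would transport each one-dimensional factor onto the standard Gaussian and then assemble these maps into a single injective map of $\R^m$.

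\emph{Reductions.} By homogeneity of both sides in each $F_i$ (each side scales by the factor $\lambda^{c_i}$ when $F_i$ is replaced by $\lambda F_i$), we may assume $\int_\R F_i=1$ whenever $0<\int F_i<\infty$. The degenerate cases are trivial:
\begin{itemize}
\item if some $\int F_i=0$, the left-hand side vanishes, since $\{\theta:\langle u_i,\theta\rangle\in N\}$ is Lebesgue-null for null $N$;
\item if some $\int F_i=\infty$ while none vanishes, the right-hand side is $+\infty$.
\end{itemize}
By monotone convergence and standard approximation, it then suffices to treat $F_i$ that are smooth, strictly positive probability densities.

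\emph{Transport maps.} Let $g(s)=(2\pi)^{-1/2}e^{-s^2/2}$. Let $T_i:\R\to\R$ be the increasing map pushing $F_i(t)\,dt$ forward to $g(s)\,ds$, so that
\[
F_i(t)=T_i'(t)\,g(T_i(t)),\qquad T_i'>0.
\]
Define $\Theta:\R^m\to\R^m$ by
\[
\Theta(\theta)=\sum_i c_i\,T_i(\langle u_i,\theta\rangle)\,u_i.
\]
This is the gradient of the function $\sum_i c_i\Phi_i(\langle u_i,\theta\rangle)$, where $\Phi_i'=T_i$. Its differential is
\[
D\Theta(\theta)=\sum_i c_iT_i'(\langle u_i,\theta\rangle)\,u_iu_i^\top .
\]
This is positive definite, because $\sum_i c_iu_iu_i^\top=I_m$ forces the $u_i$ to span $\R^m$ and all coefficients are positive. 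Hence the potential is strictly convex and $\Theta$ is injective.

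\emph{Two pointwise inequalities.}

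(a) \emph{Ball's determinant inequality.} For $t_i>0$,
\[
\det\Big(\sum_i c_it_iu_iu_i^\top\Big)\ \ge\ \prod_i t_i^{c_i}.
\]
To prove it, expand by Cauchy--Binet:
\[
\det\Big(\sum_i c_it_iu_iu_i^\top\Big)=\sum_{|S|=m}\Big(\prod_{i\in S}t_i\Big)\lambda_S,\qquad \lambda_S:=\Big(\prod_{i\in S}c_i\Big)\det\bigl([u_i]_{i\in S}\bigr)^2 .
\]
Applying Cauchy--Binet to $I_m$ itself gives $\sum_S\lambda_S=1$. Weighted AM--GM then yields the lower bound $\prod_i t_i^{\sum_{S\ni i}\lambda_S}$. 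Finally, $\sum_{S\ni i}\lambda_S=c_i$: this follows by differentiating the identity $\det(\sum_j c_jt_ju_ju_j^\top)=\sum_S(\prod_{i\in S}t_i)\lambda_S$ in $t_i$ at $t\equiv1$, using $u_i^\top I_m^{-1}u_i=1$.

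(b) \emph{Gaussian comparison.} For reals $a_i$, since $\sum_i c_iu_iu_i^\top=I_m$, Cauchy--Schwarz gives
\[
\Big\|\sum_i c_ia_iu_i\Big\|_2^2\le\sum_i c_ia_i^2 .
\]
Together with $\sum_i c_i=\Tr I_m=m$, this yields, at every $\theta$,
\[
\prod_i g\bigl(T_i(\langle u_i,\theta\rangle)\bigr)^{c_i}\le (2\pi)^{-m/2}e^{-\|\Theta(\theta)\|_2^2/2}=:\gamma_m(\Theta(\theta)).
\]

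\emph{Conclusion.} Combining (a) and (b) pointwise,
\[
\prod_iF_i(\langle u_i,\theta\rangle)^{c_i}
=\prod_i T_i'(\langle u_i,\theta\rangle)^{c_i}\,\prod_i g\bigl(T_i(\langle u_i,\theta\rangle)\bigr)^{c_i}
\le \det D\Theta(\theta)\;\gamma_m(\Theta(\theta)).
\]
Integrate over $\theta$. Since $\Theta$ is injective and $C^1$ with positive Jacobian, the change-of-variables formula gives
\[
\int_{\R^m}\gamma_m(\Theta(\theta))\det D\Theta(\theta)\,d\theta=\int_{\Theta(\R^m)}\gamma_m\le 1=\prod_i\Bigl(\int F_i\Bigr)^{c_i}.
\]

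\emph{Main obstacle.} The only step with real content is Ball's determinant inequality (a), specifically the identity $\sum_{S\ni i}\lambda_S=c_i$. The remaining steps are the routine approximation that justifies smoothness and positivity of the $F_i$, which are needed for the transport maps and the change of variables.
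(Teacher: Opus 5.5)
Your argument is correct. It is Barthe's mass-transport proof, and each step checks out: the Cauchy--Binet expansion with $\sum_S\lambda_S=1$ and $\sum_{S\ni i}\lambda_S=c_i$; the bound $\|\sum_i c_ia_iu_i\|_2^2\le\sum_i c_ia_i^2$ together with $\sum_i c_i=m$; injectivity of $\Theta$ from the positive definite Hessian of its potential; and the change of variables.

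The paper gives no proof of its own. It quotes the inequality from \cite{brascamp-lieb1976}, where centered Gaussians are shown to be extremal, and from \cite{ball1989volumes}, where the normalization $\sum_i c_iu_iu_i^\top=I_m$ is shown to make the Gaussian constant equal to $1$. That last computation is exactly your inequality (a): for $F_i(s)=e^{-\pi t_is^2}$ the claim reads $\det(\sum_i c_it_iu_iu_i^\top)^{-1/2}\le\prod_i t_i^{-c_i/2}$.

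So both routes share the same algebraic core, used differently:
\begin{itemize}
\item The cited route applies the determinant inequality only to Gaussians and relies on the Gaussian-extremality theorem as a black box.
\item Your transport argument applies it pointwise to the Jacobian $\det D\Theta$, so it is fully self-contained. The same construction with the roles of $F_i$ and the Gaussian interchanged also yields Barthe's reverse inequality.
\end{itemize}
The paper uses the inequality only once, in Lemma~\ref{lem:regularized-density}, so the citation is the economical choice. Your proof would make Section~\ref{sec:fixed-vector-smallball} self-contained at modest cost.

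Two small tightenings. First, the reduction to continuous, strictly positive densities is cleanest via $L^1$ and a.e.\ approximation plus Fatou's lemma, not monotone convergence alone; your remark that $\theta\mapsto\langle u_i,\theta\rangle$ pulls null sets back to null sets is what lets the a.e.\ convergence pass to $\R^m$. Second, continuity and positivity of $F_i$ already make $T_i$ a $C^1$ map with $T_i'>0$, so smoothness is unnecessary.
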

	The next lemma is the central Gaussian regularization step. 
	\begin{lemma}\label{lem:regularized-density}
		Let $\xi_1,\ldots,\xi_n$ be independent mean zero random variables with variance one, and for some $M>0$,
		$$
		\inf_{1\le i\le n}
		\E\bigl[\xi_i^2\mathbf 1_{\{|\xi_i|\le M\}}\bigr]\ge\frac12.
		$$
		There are constants $C=C(M)<\infty$ and $s_0=s_0(M)>0$ with the
		following property. Let $w_1,\ldots,w_n\in\R^m$ satisfy
		$$
		\sum_{i=1}^n w_iw_i^\top=I_m,
		\qquad
		s:=\max_{1\le i\le n}\|w_i\|_2\le s_0.
		$$
		If $G\sim N(0,I_m)$ is independent of the $\xi_i$'s, then
		$Y_s:=\sum_{i=1}^n w_i\xi_i+sG$ has a density $f_{Y_s}$ satisfying
		$\|f_{Y_s}\|_\infty\le C^m$.
	\end{lemma}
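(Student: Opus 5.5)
We bound the density of $Y_s$ through Fourier inversion and control the resulting integral with the Brascamp--Lieb inequality (Theorem~\ref{thm:geometric-brascamp-lieb}) and the one-dimensional estimate \eqref{eq:fourier:estimate} of Lemma~\ref{lem:bl-oned-factor}. Since $sG$ has an integrable characteristic function, $Y_s$ has a bounded continuous density, and
\[
f_{Y_s}(y)\le(2\pi)^{-m}\int_{\R^m}\prod_{i=1}^n|\phi_{\xi_i}(\langle w_i,\theta\rangle)|\,e^{-s^2\|\theta\|_2^2/2}\,d\theta .
\]

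We then put this integral into Brascamp--Lieb form. Discard indices with $w_i=0$, and write $u_i=w_i/\|w_i\|_2$ and $c_i=\|w_i\|_2^2$, so that $\sum_i c_iu_iu_i^\top=I_m$. Taking the trace of this identity gives $\sum_ic_i=m$. Hence
\[
e^{-s^2\|\theta\|^2/2}=\exp\Bigl(-\tfrac{s^2}{2}\sum_ic_i\langle u_i,\theta\rangle^2\Bigr)=\prod_i\Bigl(e^{-s^2\langle u_i,\theta\rangle^2/2}\Bigr)^{c_i}.
\]
Setting $F_i(t):=|\phi_{\xi_i}(\|w_i\|t)|^{1/c_i}e^{-s^2t^2/2}$, the integrand becomes $\prod_iF_i(\langle u_i,\theta\rangle)^{c_i}$. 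Theorem~\ref{thm:geometric-brascamp-lieb} therefore bounds the integral by $\prod_i(\int_\R F_i)^{c_i}$.

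Next we estimate each one-dimensional factor. With $r_i:=\|w_i\|_2\le s\le s_0$, the substitution $u=r_it$ gives
\[
\int_\R F_i=\frac1{r_i}\int_\R|\phi_{\xi_i}(u)|^{1/r_i^2}\exp\Bigl(-\frac{s^2u^2}{2r_i^2}\Bigr)du\le C(M),
\]
by \eqref{eq:fourier:estimate} applied with $r=r_i\le s$. Combining the factors, the integral is at most $\prod_iC^{c_i}=C^{\sum c_i}=C^m$. Including the prefactor $(2\pi)^{-m}$ yields $\|f_{Y_s}\|_\infty\le C^m$.

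The main subtlety is matching the Gaussian regularization scale exactly. Each factor must carry the common width $s$ against its own $r_i$, which is why \eqref{eq:fourier:estimate} was stated for all $r\le s$, and the exponents $c_i$ must sum to $m$ so that the constants combine into $C^m$. Beyond this, we only need to check measurability and finiteness so that Fubini and Fourier inversion are justified. This holds because the Gaussian factor makes the characteristic function of $Y_s$ integrable.
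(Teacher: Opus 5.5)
Your proposal is correct and follows the paper's proof essentially step for step: Fourier inversion, then the geometric Brascamp--Lieb inequality with $u_i=w_i/\|w_i\|_2$ and $c_i=\|w_i\|_2^2$ (using $\sum_i c_i=m$), then the substitution $u=r_it$ and the one-dimensional bound \eqref{eq:fourier:estimate}. As in the paper, $s_0$ must be taken no larger than the constant from Lemma~\ref{lem:bl-oned-factor}.
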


	\begin{proof}
		Take $s_0$ no larger than the constant in
		Lemma~\ref{lem:bl-oned-factor}. Discard $w_i$'s such that $w_i=0$, and set
		$r_i=\|w_i\|_2$, $u_i=w_i/r_i$, and $c_i=r_i^2$. Then
		$0<r_i\le s$ and
		$$
		\sum_i c_i u_i u_i^\top=I_m,
		\qquad
		\sum_i c_i=m,
		$$
		where the second identity follows by taking $\tr(\cdot)$. Let $\phi_i(u)=\E e^{iu \xi_i}$ and $\phi_{Y_s}(\theta)=\E e^{i\langle \theta, Y_s\rangle} $ respectively denote the
		characteristic function of $\xi_i$ and
		$Y_s$. Then,
		$$
		|\phi_{Y_s}(\theta)|
		=\prod_i F_i(\langle u_i,\theta\rangle)^{c_i},
		\qquad
		F_i(t):=|\phi_i(r_it)|^{1/c_i}e^{-s^2t^2/2},
		$$
		where we used
		$\sum_i c_i\langle u_i,\theta\rangle^2=\|\theta\|_2^2$. Thus, Theorem~\ref{thm:geometric-brascamp-lieb} yields
		$$
		\int_{\R^m}|\phi_{Y_s}(\theta)|\,d\theta
		\le\prod_i\left(\int_\R F_i(t)\,dt\right)^{c_i}.
		$$
		Since $c_i=r_i^2$, the change of variables $u=r_it$ and
		Lemma~\ref{lem:bl-oned-factor} show that
		$$
		\int_\R F_i(t)\,dt
		=\frac1{r_i}\int_\R |\phi_i(u)|^{1/r_i^2}
		\exp\left(-\frac{s^2u^2}{2r_i^2}\right)du
		\le C.
		$$
		Thus $\int_{\R^m}|\phi_{Y_s}(\theta)|\,d\theta
		\le\prod_i C^{c_i}=C^m$. Fourier inversion, with its normalization
		absorbed into $C^m$, proves $\|f_{Y_s}\|_\infty\le C^m$.
	\end{proof}

	\begin{proof}[Proof of Proposition~\ref{prop:vector-smallball}]
		Let $Y=\sum_{i=1}^n w_i\xi_i$ and
		$\Sigma=\sum_{i=1}^n w_iw_i^\top$. Since $\Sigma\succeq I_m$,
		$\Sigma^{-1/2}$ is a contraction. Define
		$\widetilde w_i=\Sigma^{-1/2}w_i$,
		$\widetilde Y=\Sigma^{-1/2}Y$, and
		$\widetilde z=\Sigma^{-1/2}z$. Then
		$$
		\sum_{i=1}^n\widetilde w_i\widetilde w_i^\top=I_m,
		\qquad
		\max_{1\le i\le n}\|\widetilde w_i\|_2\le\rho,
		$$
		and, for every $z\in\R^m$,
		$$
		\{\|Y-z\|_2\le\eta\sqrt m\}
		\subseteq
		\{\|\widetilde Y-\widetilde z\|_2\le\eta\sqrt m\}.
		$$
		It therefore suffices to prove the result under the normalization
		$\sum_iw_iw_i^\top=I_m$. For simplicity, we relabel $\widetilde w_i, \widetilde Y, \widetilde z$ as $w_i,Y, z$.

		Let $s=\max_i\|w_i\|_2$. We may assum w.l.o.g. $C\eta<1$ since the result is trivial when $C\eta \geq 1$. Since
		$\eta\ge C\rho$ and $s\le\rho$, choosing $C$ sufficiently large
		ensures $s\le s_0$, where $s_0$ is the constant in
		Lemma~\ref{lem:regularized-density}. Thus, if we let $G\sim N(0,I_m)$ be
		independent and set $Y_s=Y+sG$, Lemma~\ref{lem:regularized-density} yields that $Y_s$ has density
		$\|f_{Y_s}\|_\infty\le C_1^m$ for some $C_1=C_1(M)>0$. Combining with the standard volume estimate for the Euclidean ball
        $\operatorname{Vol}(B_2^m(\sqrt m))\le C_2^m$, it follows that for any $z\in \R^m$,
		$$
		\P\left(\|Y_s-z\|_2\le2\eta \sqrt m\right)
		\le C_1^m\operatorname{Vol}\left(B_2^m\left(2\eta\sqrt m\right)\right)
		\le(C\eta)^m.
		$$ 
		It remains to remove the Gaussian noise. Since $\E\|G\|_2^2=m$ and
		$\eta\ge Cs$, Markov's inequality gives
		$$
		\P\left(s\|G\|_2\le\eta\sqrt m\right)
		\ge1-\frac{s^2}{\eta^2}\ge\frac12.
		$$
		This event is independent of $Y$, and if
		$\|Y-z\|_2\le\eta\sqrt m$ and
		$s\|G\|_2\le\eta\sqrt m$, then
		$\|Y_s-z\|_2\le2\eta\sqrt m$. Therefore
		$$
		\P\left(\|Y-z\|_2\le\eta\sqrt m\right)
		\le2\P\left(\|Y_s-z\|_2\le2\eta\sqrt m\right)\leq (2C\eta)^m,
		$$
		which concludes the proof.
	\end{proof}

	\subsection{Deterministic light-tail argument}
    \label{subsec:deterministic}
This subsection and the next prove Proposition~\ref{prop:light-tail-quotient}. We begin with a simpler deterministic tail version, in which the $K$-light tail $t$ is fixed rather than uniform over all such tails. This argument isolates the two main ingredients of the proof of Proposition~\ref{prop:light-tail-quotient}: the vector anti-concentration estimate in Proposition~\ref{prop:vector-smallball} and the covering estimate for nuclear norm balls in Lemma~\ref{lem:product-covering} below.

	To this end, fix a deterministic set $\mathcal I\subset[n]$, a $K$-light vector $t\in \R^n$
  supported on $\mathcal I^c$, with $\|t\|_2=1$, and $h\in \R^n$ supported on $\mathcal I$. Split the coordinate set as
  $[d]=I\sqcup J$, where 
 $$
u_i:=x_{i,I}\in\R^m,
\qquad
v_i:=x_{i,J}\in\R^q, \qquad m:=|I|\asymp d,\qquad q:=|J|\asymp d.
$$
Let $P_I:\R^d\to\R^m$ and $P_J:\R^d\to\R^q$ denote the corresponding
coordinate projections. Define the $I\times J$ block restriction of
$\mathcal L_X^*$ by
	$$
	\mathcal L_{X;I,J}^*\lambda
	:=P_I(\mathcal L_X^*\lambda)P_J^\top
	=\frac1{\sqrt d}\sum_{i=1}^n \lambda_i u_iv_i^\top,
	\qquad \lambda=(\lambda_i)_{i\leq n} \in\R^n.
	$$
	We record two essential observations.
    	\begin{enumerate}[label=\textup{(\roman*)}]
        \item  By coordinate independence, the families $(u_i)_{i\le n}$ and
	$(v_i)_{i\le n}$ are independent. Thus
	$\mathcal L_{X;I,J}^*$ may be viewed as a decoupled rectangular block of
	$\mathcal L_X^*$.
        \item By nuclear/operator norm duality, we have $\|BAC\|_*\le\|B\|_{\op}\|A\|_*\|C\|_{\op}$. Combining with $\|P_I\|_{\op}\leq 1$ and $\|P_J\|_{\op}\leq 1$, we have
\begin{equation}\label{eq:submatrix:L:nuclear:norm:lower:bound}
	\|\mathcal L_X^*\lambda \|_*
	\ge \|\mathcal L_{X;I,J}^*\lambda \|_*,
	\qquad \lambda\in\R^n.
	\end{equation}
    Thus, to lower bound $\|\mathcal{L}^*_X \lambda\|_*$, it suffices to lower bound its decoupled version $\|\mathcal L_{X;I,J}^*\lambda \|_*$.
    \end{enumerate}
	Set $\lambda=h+t$. We condition on the $\sigma$-algebra generated by $(u_i)_{i\notin \mathcal{I}}$ and $(x_i)_{i\in \mathcal{I}}$, i.e.
	$$
	 \mathcal F:=
	 \sigma(u_i:i\notin\mathcal I)\vee
	 \sigma(x_i:i\in\mathcal I).
	$$
	Observe that $B:=\mathcal L_{X;I,J}^*h$ is $\mathcal F$-measurable since $h$ is supported on $\mathcal I$. On the other hand, since $t$ is supported on
$\mathcal I^c$, we have that $Y:=\mathcal L_{X;I,J}^*t
=
\frac1{\sqrt d}\sum_{i\notin\mathcal I}t_i u_iv_i^\top$, where $u_i$ have been revealed, while the coordinates in $v_i$ is independent of $\mathcal F$. Consequently,
\begin{equation}\label{eq:structure:L:given:F}
\left.
\mathcal L_{X;I,J}^*\lambda
\,\right|\,\mathcal F
\stackrel{d}{=}
\underbrace{
B
}_{\mathcal F\text{-measurable shift}}
+
\underbrace{
Y
}_{\text{matrix with independent columns}}.
\end{equation}
We then apply Proposition~\ref{prop:vector-smallball} to show that the randomness in $(v_i)_{i\notin\mathcal I}$ prevents the latter term from being nearly cancelled by an arbitrary $\mathcal F$-measurable shift. More precisely, we have the following.

\begin{lemma}
	\label{lem:fixed-tail-quotient}
	Under the same assumption as in Proposition~\ref{prop:light-tail-quotient}, there exist $K>0$ and $\gamma>0$, depending only on
		$\alpha$ and $K_x$, such that for every fixed $C_0>0$, every fixed
		$\mathcal I\subset[n]$ with $|\mathcal I|\le C_0d\log d$, and every fixed
		$K$-light vector $t$ supported on $\mathcal I^c$ with $\|t\|_2=1$, with
		probability $1-o(1)$,
	$$
	\inf_{\substack{\supp h\subset\mathcal I\\
	\|h\|_1\le d^{2/3}}}
	\|\mathcal L_{X;I,J}^*(h+t)\|_*
	\ge\gamma d
	$$
	Consequently, the same conclusion holds with $\mathcal L_X^*$ in place
	of $\mathcal L_{X;I,J}^*$.
\end{lemma}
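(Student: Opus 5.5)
The plan is to work conditionally on $\mathcal F$, using the decomposition \eqref{eq:structure:L:given:F}. Since $B=\mathcal L^*_{X;I,J}h$ is $\mathcal F$-measurable, the task is to show that the independent random matrix $Y=\frac1{\sqrt d}\sum_{i\notin\mathcal I}t_iu_iv_i^\top$ is, with overwhelming conditional probability, far in nuclear norm from every admissible shift $B$. Choose the split $[d]=I\sqcup J$ with $m=|I|=\lfloor d/2\rfloor$. The statement for $\mathcal L_X^*$ then follows from \eqref{eq:submatrix:L:nuclear:norm:lower:bound}.

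\textbf{Step 1: a good $\mathcal F$-measurable event.} First I would show that, with probability $1-o(1)$,
\[
\frac1d\sum_{i\notin\mathcal I}t_i^2u_iu_i^\top\succeq c_1 d^{-1}I_m
\qquad\text{and}\qquad
\max_i\|u_i\|_2\le 2\sqrt m .
\]
Here $\E\sum_i t_i^2u_iu_i^\top=I_m$. The light-tail condition $\|t\|_\infty^2\le (Kd)^{-1}$ means the sum has an effective sample size of at least $Kd\gg m$ once $K$ is large. Hence a matrix Bernstein or subgaussian covariance-estimation bound gives the lower bound $\tfrac12 I_m$; the constants depend on $\alpha$ only through the sizes. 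I would also require $\|u_i\|_2\|v_i\|_2\le Cd$ for $i\in\mathcal I$.

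\textbf{Step 2: column-wise anti-concentration.} Next I would apply Proposition~\ref{prop:vector-smallball} to each column separately. For a fixed column $b\in J$,
\[
Y_{\cdot b}=\sum_{i\notin\mathcal I}w_i\xi_i,\qquad w_i=\sqrt{d/c_1}\,t_iu_i/\sqrt d\ \text{(after rescaling)},\qquad \xi_i=v_{ib}.
\]
The $\xi_i$ are independent, have mean zero and variance one, and satisfy \eqref{eq:sec:mo:condition:prop:vector-smallball} for some $M=M(K_x)$. After the rescaling, $\sum_i w_iw_i^\top\succeq I_m$ and $\rho=\max_i\|w_i\|_2\lesssim \sqrt{m/(c_1Kd)}$, which is a constant that becomes as small as desired as $K$ grows. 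For a fixed small constant $\eta\ge C\rho$ (so $K$ is chosen after $\eta$), the proposition gives the small-ball bound for each column. The columns are conditionally independent, because the coordinates of the $v_i$ are independent. Taking the product over the $q$ columns therefore yields, for every fixed $Z\in\R^{m\times q}$,
\[
\P\bigl(\|Y-Z\|_F\le \eta\sqrt{c_1 mq/d}\,\big|\,\mathcal F\bigr)\le (C\eta)^{mq}=e^{-c\,d^2\log(1/C\eta)} .
\]
Note that the Frobenius scale here is $r\asymp\eta\sqrt d$.

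\textbf{Step 3: passing from Frobenius to nuclear norm by a covering argument.} This is the main obstacle. The trivial inequality $\|A\|_*\ge\|A\|_F$ only gives distance $\eta\sqrt d$, whereas we need $\gamma d$. The key point is that the event $\{\|Y-B\|_*\le\gamma d \text{ for some admissible } B\}$ forces $Y$ into
\[
\mathcal K:=\{B\}+\gamma d\,B_*,
\]
where $B_*$ is the unit nuclear-norm ball. I would then cover $\mathcal K$ by Frobenius balls of radius $r$, using two pieces.
\begin{itemize}
\item \emph{Covering the nuclear ball.} Dual Sudakov (the covering estimate for nuclear norm balls, Lemma~\ref{lem:product-covering}) gives
\[
\log N(\gamma d B_*,rB_F)\lesssim \bigl(\gamma d\cdot\E\|G\|_{\op}/r\bigr)^2\asymp \gamma^2d^3/r^2=\gamma^2d^2/\eta^2 .
\]
\item \emph{Covering the admissible shifts.} On the event of Step 1, the shifts $B$ lie in a linear space of dimension at most $|\mathcal I|\le C_0d\log d$. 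Their Frobenius norm is at most $d^{-1/2}\cdot Cd\cdot d^{2/3}$, so a net at scale $r$ has log-cardinality $O(d(\log d)^2)=o(d^2)$.
\end{itemize}
A union bound over the combined net, applied with radius $2r$ in Step 2, then bounds the conditional failure probability by
\[
\exp\bigl(C\gamma^2d^2/\eta^2+o(d^2)-c\,d^2\log(1/C\eta)\bigr).
\]
Choosing first $\eta$ small, then $K$ large so that $\eta\ge C\rho$, and finally $\gamma\ll\eta\sqrt{\log(1/\eta)}$, makes this bound $e^{-cd^2}$. Adding the $o(1)$ probability of the complement of the Step 1 event proves the lemma.
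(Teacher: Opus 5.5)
\paragraph{The gap: columnwise bounds do not give a Frobenius small-ball bound.}
Your strategy works, but Step~2 does not follow as written. Multiplying the columnwise bounds from Proposition~\ref{prop:vector-smallball} over $b\in J$ controls only the event that \emph{every} column satisfies $\|Y_{\cdot b}-Z_{\cdot b}\|_2\le\eta\sqrt{c_1m/d}$. That event says $Y$ lies in a $\|\cdot\|_{1,2}$-ball around $Z$. The Frobenius ball $\{\|Y-Z\|_F\le\eta\sqrt{c_1mq/d}\}$ is much larger, because a few columns may carry most of the deviation.

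Your Frobenius estimate is still true, but you need a tensorization step. Set $\zeta_b:=\sqrt{d/c_1}\,\|Y_{\cdot b}-Z_{\cdot b}\|_2$. Then bound $\P(\sum_b\zeta_b^2\le\eta^2mq\mid\mathcal F)\le e^{mq}\prod_b\E[e^{-\zeta_b^2/\eta^2}\mid\mathcal F]$. Integrating the columnwise bound over all scales gives $\E[e^{-\zeta_b^2/\eta^2}\mid\mathcal F]\le(C\eta)^m$, as in the tensorization lemma of \cite{rudelson-vershynin2008littlewood}. This is legitimate because the proposition holds for every $\eta\ge C\rho$.

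Your route cannot skip this step. If you instead covered by products of column balls of radius $\asymp\eta$ and used Sudakov, the entropy would be of order $\gamma^2d^3/\eta^2\gg d^2$.

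A smaller point concerns Step~1. Matrix Bernstein with fixed $K$ gives a failure probability of order $me^{-cK}$, which is not $o(1)$. Use scalar Bernstein for $\sum_it_i^2\langle u_i,v\rangle^2$ instead, whose tail is $\exp(-cKd\min\{s,s^2\})$, together with a net on $S^{m-1}$. This is the paper's ``Bernstein plus sphere-net'' step.

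\paragraph{Comparison with the paper.}
Once repaired, your argument is a genuine variant of the paper's sketch.

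The paper covers $\gamma dB_*$ directly by $\|\cdot\|_{1,2}$-balls of radius $\gamma$, using a volumetric estimate based on Urysohn's inequality (Lemma~\ref{lem:product-covering}). That lemma is not dual Sudakov. Its covering matches the product structure of the conditional law of the columns, so the columnwise bounds multiply with no tensorization. The paper then nets the heads $h$ in $\ell_1$ at radius $c\gamma\sqrt d$ and uses the row-norm bound.

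You instead cover $\gamma dB_*$ by Frobenius balls of radius $\asymp\eta\sqrt d$ via Sudakov minoration, using $\E\sup_{A\in\gamma dB_*}\langle G,A\rangle=\gamma d\,\E\|G\|_{\op}$. You also net the shifts $B$ themselves inside their span, which has dimension at most $|\mathcal I|$.

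The entropy counts coincide: both are $e^{O(d^2)}$ when $\gamma\asymp\eta$. The final union bound is therefore the same. Your version replaces the volume computation with an off-the-shelf Gaussian-width covering, at the cost of the tensorization lemma.
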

To prove this lemma, we must pass from anti-concentration for a single
column to a lower bound for the nuclear norm of the entire matrix.
Conditionally on $\mathcal F$,
Proposition~\ref{prop:vector-smallball} controls each column of $Y$
separately. On the other hand, the event $\|Y+B\|_*\le\gamma d$ means that $Y$ lies in the shifted nuclear norm ball
$$
-B+\{A\in\R^{m\times q}:\|A\|_*\le\gamma d\}.
$$
We therefore cover this set by products of Euclidean column balls. Let
$$
\|A\|_{1,2}
:=
\sup_{\|x\|_1\le1}\|Ax\|_2
=
\max_{1\le j\le q}\|A_j\|_2,\quad\textnormal{where}\quad A=\begin{bmatrix}
A_1&...&A_q
\end{bmatrix}
\in\R^{m\times q}
$$
The following covering lemma plays a crucial role for the proof of Lemma~\ref{lem:fixed-tail-quotient}, and more generally, Proposition~\ref{prop:light-tail-quotient}.
	\begin{lemma}\label{lem:product-covering}
		Fix $0<c_0<C_0<\infty$, and suppose
		$c_0d\le m,q\le C_0d$. There is a constant
		$C=C(c_0,C_0)>0$ such that
		$$
			N(\mathcal K,\|\cdot\|_{1,2},\gamma)\le C^{mq},\quad\textnormal{where}\quad \mathcal K=\{A\in\R^{m\times q}:\|A\|_*\le\gamma d\},
		$$
        and $N(\mathcal K,\|\cdot\|_{1,2},\gamma)$ is the least cardinality
		of a set $\mathcal N\subset\R^{m\times q}$ such that every
		$A\in\mathcal K$ has some $A_0\in\mathcal N$ with
		$\|A-A_0\|_{1,2}\le\gamma$. 
	\end{lemma}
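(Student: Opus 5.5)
The plan is to use a volumetric covering argument: bound the volume of a slightly enlarged nuclear norm ball from above with Urysohn's inequality, and the volume of a $\|\cdot\|_{1,2}$-ball from below exactly. Write $N:=mq$ and $B_{1,2}(r):=\{A\in\R^{m\times q}:\|A\|_{1,2}\le r\}$. We may identify $\R^{m\times q}$ with $\R^N$ carrying the Frobenius inner product. Then $B_{1,2}(r)$ is exactly the product of $q$ Euclidean balls of radius $r$ in $\R^m$, one for each column. Hence
\[
\Vol\bigl(B_{1,2}(r)\bigr)=\Vol\bigl(rB_2^m\bigr)^q, \qquad\text{so}\qquad \Vol\bigl(B_{1,2}(r)\bigr)^{1/N}\ge \frac{c\,r}{\sqrt m}.
\]

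First I will run the standard maximal-separated-set argument. Take a maximal $\gamma$-separated subset of $\mathcal K$ in $\|\cdot\|_{1,2}$. By maximality it is a $\gamma$-net. The sets $A_0+B_{1,2}(\gamma/2)$ are disjoint and contained in $\mathcal K+B_{1,2}(\gamma/2)$. This gives
\[
N(\mathcal K,\|\cdot\|_{1,2},\gamma)\le \frac{\Vol\bigl(\mathcal K+B_{1,2}(\gamma/2)\bigr)}{\Vol\bigl(B_{1,2}(\gamma/2)\bigr)}.
\]
To control the numerator, I will place the Minkowski sum inside a larger nuclear ball. Take $A\in B_{1,2}(\gamma/2)$. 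Then $\|A\|_F\le\sqrt q\,\gamma/2$, since each of the $q$ columns has norm at most $\gamma/2$. Since $\operatorname{rank}A\le\min(m,q)$, we get $\|A\|_*\le\sqrt{\min(m,q)}\,\|A\|_F\le \sqrt{mq}\,\gamma/2\le C_0 d\gamma$. Therefore $\mathcal K+B_{1,2}(\gamma/2)\subset \mathcal K':=\{\|A\|_*\le C'\gamma d\}$.

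The key step is an upper bound on $\Vol(\mathcal K')$, for which I will use Urysohn's inequality. Let $G$ be an $m\times q$ matrix with i.i.d.\ $N(0,1)$ entries. Urysohn's inequality reads
\[
\Bigl(\frac{\Vol(\mathcal K')}{\Vol(B_2^N)}\Bigr)^{1/N}\le \frac{\E\sup_{A\in\mathcal K'}\langle G,A\rangle}{\E\|G\|_F}.
\]
By nuclear/operator norm duality, $\sup_{A\in\mathcal K'}\langle G,A\rangle=C'\gamma d\,\|G\|_{\op}$. Standard bounds give $\E\|G\|_{\op}\le\sqrt m+\sqrt q\le C\sqrt d$ and $\E\|G\|_F\ge c\sqrt N\ge c d$. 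Combining these with $\Vol(B_2^N)^{1/N}\le C/\sqrt N\le C/d$ yields
\[
\Vol(\mathcal K')^{1/N}\le \frac{C\gamma d\cdot\sqrt d}{d}\cdot\frac{C}{d}=\frac{C\gamma}{\sqrt d}.
\]
The denominator satisfies $\Vol(B_{1,2}(\gamma/2))^{1/N}\ge c\gamma/\sqrt m\ge c\gamma/\sqrt d$. The ratio of the two $N$-th roots is therefore bounded by a constant $C=C(c_0,C_0)$, which gives $N(\mathcal K,\|\cdot\|_{1,2},\gamma)\le C^{mq}$.

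I expect the main obstacle to be the volume upper bound for the nuclear ball. The naive route compares Frobenius balls of radii $\gamma d$ and $\gamma$, and loses a factor $d^{mq}$. What rescues the estimate is that the nuclear ball of radius $\gamma d$ has volume radius only $\gamma/\sqrt d$, reflecting the Szarek--Tomczak-Jaegermann scaling $d^{-3/2}$ of the unit Schatten-$1$ ball. Urysohn's inequality combined with the operator norm bound $\E\|G\|_{\op}\lesssim\sqrt d$ delivers exactly this scaling. Everything else is bookkeeping of constants depending on $c_0,C_0$.
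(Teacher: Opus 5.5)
Your proposal is correct and is essentially the paper's proof: the same volumetric covering by products of column balls, the same inclusion of $\mathcal K$ plus a $\|\cdot\|_{1,2}$-ball into a nuclear ball of radius $O(\gamma d)$, and the same Urysohn bound with $\E\|G\|_{\op}\lesssim\sqrt d$, which gives volume radius $\gamma/\sqrt d$ and cancels against the product-ball volume radius $\gamma/\sqrt m$. The differences are only cosmetic. You use a maximal separated set with $\gamma/2$-balls where the paper uses a $2^{mq}$ packing factor, and you bound the nuclear norm of the column-ball product via rank and Frobenius norm where the paper sums the column norms.
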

	
	\begin{proof}
		Let $\mathcal P=\{A:\|A\|_{1,2}\le\gamma\}$, the product of $q$
		Euclidean $m$-balls of radius $\gamma$.  The standard volumetric packing
		estimate (see e.g.~\cite[Section~4.2]{vershynin2018highdimensional}) gives
		$$
		 N(\mathcal K,\mathcal P)
		 \le 2^{mq}\frac{\Vol(\mathcal K+\mathcal P)}{\Vol(\mathcal P)};
		$$
	If $A=[A_1,\ldots,A_q]\in\mathcal P$, then
$A=\sum_{j=1}^q A_je_j^\top$, and hence
$$
\|A\|_*
\le\sum_{j=1}^q\|A_je_j^\top\|_*
=\sum_{j=1}^q\|A_j\|_2
\le q\gamma.
$$
As a result, $\mathcal P\subset q\gamma B_*$, where $B_*$ denotes the nuclear norm unit ball in
$\R^{m\times q}$. Since
$\mathcal K=\gamma d B_*$ and $q\asymp d$, another application of the
triangle inequality gives
$$
\mathcal K+\mathcal P
\subset C\gamma d\,B_*,
$$
It remains to compare the volumes of this nuclear norm ball and the
product set $\mathcal P$. Let $G\in \R^{m\times q}$ with i.i.d. $N(0,1)$ entries. Urysohn's inequality (see~\cite[Corollary~1.4 and Remark~1.5]{pisier1989volume}) and
nuclear/operator norm duality give
$$
\left(
\frac{\Vol(B_*)}{\Vol(B_2^{mq})}
\right)^{1/(mq)}
\le
\frac{\E\|G\|_{\op}}{\E\|G\|_F}
\le
\frac{C}{\sqrt d},
$$
where the last inequality uses
$\E\|G\|_{\op}\le C(\sqrt m+\sqrt q)\le C\sqrt d$ (see e.g.~\cite[Theorem 4.4.5]{vershynin2018highdimensional}) and
$\E\|G\|_F\asymp\sqrt{mq}\asymp d$. We have
$\Vol(B_2^{mq})^{1/(mq)}\asymp (mq)^{-1/2}\asymp d^{-1}$ by standard volume estimate of a Euclidean ball. Therefore,
$$
\Vol(B_*)^{1/(mq)}\le Cd^{-3/2},
$$
On the other hand, $\mathcal P$ is the product of $q$ Euclidean
$m$-balls of radius $\gamma$, so
$$
\Vol(\mathcal P)^{1/(mq)}
=
\gamma\,\Vol(B_2^m)^{1/m}
\ge
c\frac{\gamma}{\sqrt m}.
$$
Substituting these into the volumetric packing bound gives
$$
N(\mathcal K,\mathcal P)
\le
2^{mq}\frac{\Vol(\mathcal K+\mathcal P)}{\Vol(\mathcal P)}\leq (C\gamma d)^{mq}\times \left(\frac{C}{\gamma d}\right)^{mq}
\le (C')^{mq},
$$
which proves the lemma.
	\end{proof}
Then, Lemma~\ref{lem:fixed-tail-quotient} follows by combining Proposition~\ref{prop:vector-smallball} and Lemma~\ref{lem:product-covering}. Since Lemma~\ref{lem:fixed-tail-quotient} is only to motivate the proof of Proposition~\ref{prop:light-tail-quotient}, we only give a brief sketch of the proof.
	\begin{proof}[Sketch of the proof of Lemma~\ref{lem:fixed-tail-quotient}]
	For $j\in J$, the $j$-th column of
	$Y=\mathcal L_{X;I,J}^*t$ is
	$$
	Y_j=\frac1{\sqrt d}\sum_{i\notin\mathcal I}t_i u_i x_{ij},
	\qquad
	\E[Y_jY_j^\top\mid\mathcal F]=\frac{1}{d}\Sigma_t,
	\qquad
	\Sigma_t:=\sum_{i\notin\mathcal I}t_i^2u_iu_i^\top.
	$$
	Since $t$ is deterministic, $\|t\|_2=1$, and
	$\|t\|_\infty^2\le(Kd)^{-1}$, Bernstein's inequality followed by a
	sphere-net argument show that provided $K$ is sufficiently large, with probability $1-o(1)$,
\begin{equation}\label{eq:deterministic:tail:covariance:lower:bound}
    \Sigma_{t}\succeq \frac{1}{2}I_m    
    \end{equation}
    We also work on the
	w.h.p. event $\max_i(\|u_i\|_2+\|v_i\|_2)\le C\sqrt d$. On this event, Proposition~\ref{prop:vector-smallball} gives, for every
	column $Y_j$ and every $\eta\ge CK^{-1/2}$,
	$$
	\sup_{z\in\R^m}
	\P\left(
	\|Y_j-z\|_2\le\eta\sqrt{m/d}
	\mathrel{}\middle|\mathrel{}\mathcal F
	\right)
	\le(C\eta)^m.
	$$
	Since $Y_j$'s are independent given $\mathcal{F}$, covering a shifted
	nuclear norm ball by Lemma~\ref{lem:product-covering} and multiplying
	the columnwise estimates therefore gives, uniformly over every
		$\mathcal F$-measurable shift $B$,
	$$
	\P\left\{
		\|Y+B\|_*\le2\gamma d
	\mathrel{}\middle|\mathrel{}\mathcal F
	\right\}
	\le(C\gamma)^{mq},
	\qquad
	\gamma\ge CK^{-1/2}.
	$$
	Finally, cover
	$\{h:\supp h\subset\mathcal I,\ \|h\|_1\le d^{2/3}\}$ by an
	$\ell_1$-net of radius $c\gamma\sqrt d$. Its cardinality is $\left(1+C\gamma^{-1}d^{1/6}\right)^{|\mathcal I|}
	=\exp(o(d^2)).$ The row-norm bound shows that replacing $h$ by its net point changes
	$\|\mathcal L_{X;I,J}^*(h+t)\|_*$ by at most $\gamma d/2$.
	A union bound therefore gives a failure probability bounded by
	\begin{equation}\label{eq:failure:probability:easy}
	o(1)+\exp(o(d^2))(C\gamma)^{mq}=o(1),
	\end{equation}
	after first choosing $\gamma>0$ sufficiently small and then $K$
	sufficiently large. Since
	$\|\mathcal L_X^*\lambda\|_*
	\ge\|\mathcal L_{X;I,J}^*\lambda\|_*$, the same conclusion holds for
	$\mathcal L_X^*$.
\end{proof}
\begin{remark}\label{rem:gaussian:simpler}
	The above argument also explains the limitation of using a single rectangular
	block $I\times J$. Since $m+q=d$, we have
	$mq\le d^2/4$, with equality for a balanced
	partition. Thus, the deterministic-tail failure probability~\eqref{eq:failure:probability:easy} contributes at best a factor
	$(C\gamma)^{d^2/4+o(d^2)}$. Making the estimate uniform over all tails
	requires a net of size $(C/\gamma)^{n+o(d^2)}$. After making the
	covariance estimate uniform (see Lemma~\ref{lem:effective-rank} below), the resulting union bound closes for
	$\alpha<1/4$, which covers the Gaussian case for which
	$\alpha_\star(3)=1/4$. To treat general $\kappa>1$, however, where
	$\alpha_\star(\kappa)$ approaches $1/2$ as $\kappa\downarrow1$, the sequential block revealing explained in the next subsection is important.
\end{remark}
	\subsection{Uniform light-tail lower bound via sequential block revealing}\label{sec:light-tail-quotient}
	This section proves Proposition~\ref{prop:light-tail-quotient} by modifying the argument for deterministic light-tail argument from the previous section. The main idea is to use multiple blocks instead of single block $I\times J$, and reveal the blocks sequentially. 
    
	 To this end, choose a number of blocks $L=L(\alpha)\ge2$ to be determined later, and partition
	$$
	 [d]=I_1\sqcup\cdots\sqcup I_L,\qquad |I_\ell|=d/L+O(1).
	$$
	For $\ell=2,\ldots,L$, set
	$$
	U_\ell:=I_1\cup\cdots\cup I_{\ell-1},\qquad
	p_\ell:=|U_\ell|,\qquad q_\ell:=|I_\ell|,
	$$
	and write
	$$
	u_{i,\ell}:=x_{i,U_\ell}\in\R^{p_\ell},\qquad
	v_{i,\ell}:=x_{i,I_\ell}\in\R^{q_\ell}.
	$$
	Define the maximum of nuclear norms
	$$
	\mathcal M_L(\lambda):=\max_{2\le \ell\le L}\|\mathcal L_{X,\ell}^*\lambda\|_*,\quad\textnormal{where}\quad \mathcal L_{X,\ell}^*\lambda
	:=\mathcal L_{X;U_\ell,I_\ell}^*\lambda
	=\frac1{\sqrt d}\sum_{i=1}^n \lambda_i u_{i,\ell}v_{i,\ell}^\top\in \R^{p_\ell\times q_\ell}.
	$$
   Because $U_\ell$ and $I_\ell$ are disjoint,
$\mathcal L_{X,\ell}^*\lambda$ is the $U_\ell\times I_\ell$ coordinate submatrix
of $\mathcal L_X^*\lambda$. Therefore, as in~\eqref{eq:submatrix:L:nuclear:norm:lower:bound},
\begin{equation}
\label{eq:block-dominated}
\|\mathcal L_X^*\lambda\|_*
\ge \mathcal M_L(\lambda),
\qquad
\lambda\in\R^n.
\end{equation}
It is thus enough to prove
$\mathcal M_L(h+t)\gtrsim d\|t\|_2$ uniformly over the heads and tails
appearing in Proposition~\ref{prop:light-tail-quotient}. The total number of entries across $U_\ell\times I_\ell$ is
$$
D_L
:=
\sum_{\ell=2}^L p_\ell q_\ell
=
\sum_{1\le r<\ell\le L}|I_r||I_\ell|
=
\left(\frac{1-1/L}{2}+o(1)\right)d^2.
$$
For $L=2$, we have $D_2=(1/4+o(1))d^2$, which is precisely the
exponent in the failure probability bound~\eqref{eq:failure:probability:easy}.
The sequential block revealing argument below replaces $D_2$ by $D_L$, up to a negligible loss. Since
$D_L=((1-1/L)/2+o(1))d^2$, choosing $L=L(\alpha)$ sufficiently large
yields the full range $\alpha<1/2$.
Figure~\ref{fig:block-map-schematic} illustrates these blocks.
	\begin{figure}[t]
		\centering
		\begin{tikzpicture}[x=1.35cm,y=1cm,font=\scriptsize]
			\fill[black!10] (1,0) rectangle (2,-1);
			\fill[black!16] (2,0) rectangle (3,-2);
			\fill[black!22] (3,0) rectangle (4,-3);
			\fill[black!28] (4,0) rectangle (5,-4);
			\fill[black!6] (0,-1) rectangle (1,-2);
			\fill[black!6] (0,-2) rectangle (2,-3);
			\fill[black!6] (0,-3) rectangle (3,-4);
			\fill[black!6] (0,-4) rectangle (4,-5);
			\fill[black] (0,0) rectangle (1,-1);
			\fill[black] (1,-1) rectangle (2,-2);
			\fill[black] (2,-2) rectangle (3,-3);
			\fill[black] (3,-3) rectangle (4,-4);
			\fill[black] (4,-4) rectangle (5,-5);
			\draw[step=1,black!45,thin] (0,0) grid (5,-5);
			\draw[black,thick] (0,0) rectangle (5,-5);
			\draw[black,line width=1.1pt] (1,0) rectangle (2,-1);
			\draw[black,line width=1.1pt] (2,0) rectangle (3,-2);
			\draw[black,line width=1.1pt] (3,0) rectangle (4,-3);
			\draw[black,line width=1.1pt] (4,0) rectangle (5,-4);
			\draw[black,line width=1.8pt] (1,-1) -- (2,-1);
			\draw[black,line width=1.8pt] (2,-2) -- (3,-2);
			\draw[black,line width=1.8pt] (3,-3) -- (4,-3);
			\draw[black,line width=1.8pt] (4,-4) -- (5,-4);
			\node at (0.5,0.35) {$I_1$};
			\node at (1.5,0.35) {$I_2$};
			\node at (2.5,0.35) {$I_3$};
			\node at (3.5,0.35) {$I_4$};
			\node at (4.5,0.35) {$I_5$};
			\node at (-0.35,-0.5) {$I_1$};
			\node at (-0.35,-1.5) {$I_2$};
			\node at (-0.35,-2.5) {$I_3$};
			\node at (-0.35,-3.5) {$I_4$};
			\node at (-0.35,-4.5) {$I_5$};
			\node at (1.5,-0.5) {$\boldsymbol{\mathcal L_{X,2}^*\lambda}$};
			\node at (2.5,-1.0) {$\boldsymbol{\mathcal L_{X,3}^*\lambda}$};
			\node at (3.5,-1.5) {$\boldsymbol{\mathcal L_{X,4}^*\lambda}$};
			\node at (4.5,-2.0) {$\boldsymbol{\mathcal L_{X,5}^*\lambda}$};
		\end{tikzpicture}
		\caption{Sequential blocks for $L=5$.  At stage $\ell$, the shaded rectangle
			$\mathcal L_{X,\ell}^*\lambda$ uses the previously exposed rows
			$U_\ell=I_1\cup\cdots\cup I_{\ell-1}$ and the fresh columns $I_\ell$.}
		\label{fig:block-map-schematic}
	\end{figure}
	
For $\mathcal I\subset[n]$, consider a decomposition $\lambda=h+t$, where
$h$ is supported on $\mathcal I$ and $t$ is supported on $\mathcal I^c$.
Define the filtration $(\mathcal F_{\ell-1}^{\mathcal I})_{2\le\ell\le L}$
by
$$
\mathcal F_{\ell-1}^{\mathcal I}
:=
\sigma\bigl(u_{i,\ell}:i\notin\mathcal I\bigr)
\vee
\sigma\bigl(x_i:i\in\mathcal I\bigr),
\qquad \ell=2,\ldots,L.
$$
Observe that since $h$ is supported on $\mathcal I$,
$\mathcal L_{X,\ell}^*h$ is
$\sigma(x_i:i\in\mathcal I)$-measurable, and hence
$\mathcal F_{\ell-1}^{\mathcal I}$-measurable. Moreover, the vectors
$u_{i,\ell}$ are $\mathcal F_{\ell-1}^{\mathcal I}$-measurable, whereas
the family $(v_{i,\ell})_{i\notin\mathcal I}$ is independent of
$\mathcal F_{\ell-1}^{\mathcal I}$. Thus, conditionally on
$\mathcal F_{\ell-1}^{\mathcal I}$,
$\mathcal L_{X,\ell}^*\lambda$ has the same structure as in
\eqref{eq:structure:L:given:F}. In particular, the matrix
$\mathcal L_{X,\ell}^*t$ has conditionally independent columns
$$
Y_{\ell,j}
=
\frac1{\sqrt d}\sum_{i\notin\mathcal I}t_i u_{i,\ell}x_{ij},
\qquad j\in I_\ell,
$$
with
$$
\Cov\left(Y_{\ell,j}\mid\mathcal F_{\ell-1}^{\mathcal I}\right)
=
\frac1d\Sigma_{t,\ell},
\qquad
\Sigma_{t,\ell}
:=
\sum_{i\notin\mathcal I}
t_i^2u_{i,\ell}u_{i,\ell}^\top.
$$
We will apply Proposition~\ref{prop:vector-smallball} to these fresh
columns $(Y_{\ell,j})_{j\in I_{\ell}}$ conditional on $\mathcal{F}_{\ell-1}^{\mathcal I}$.

Compared with the deterministic light-tail argument in
Section~\ref{subsec:deterministic}, the new difficulty is to control
$\Sigma_{t,\ell}$ uniformly over light tails $t$. A uniform bound
$\Sigma_{t,\ell}\succeq cI_{p_\ell}$ is generally false and is also
stronger than needed. It suffices to show that $\Sigma_{t,\ell}$ is
bounded below by a positive constant on a subspace of dimension at least
$(1-\eps)p_\ell$ for any fixed $\eps>0$. The following lemma provides precisely this
effective-rank estimate, uniformly over all $K$-light tails $t\in \R^n$, i.e. $\|t\|_{\infty}\leq (Kd)^{-1/2}\|t\|_2$.
    
	
	\begin{lemma}\label{lem:effective-rank}
		Suppose $c_0p\le d\le C_0p$ and $n\le C_0d^2$ hold for some $0<c_0<C_0<\infty$.  Let $z_1,\ldots,z_n\in\R^p$ be
		i.i.d. random vectors whose coordinates are independent, mean zero, variance
		one, common fourth moment $\kappa$, and satisfy
		$\max_{i\le n}\max_{j\le p}\|z_{ij}\|_{\psi_2}\le K_z$.  
        
        Then, there exists an absolute constant $\mu>0$ such that the following holds. For any $\eps \in (0,1)$, there
		exists $K_0>0$ depending only on
		$c_0,C_0,\eps,K_z$, such that for every $K\ge K_0$, with probability $1-o(1)$, the following holds: for all $t\in \R^n$ with $\|t\|_{\infty}\leq (Kd)^{-1/2}$ and $ \|t\|_2=1$, the matrix
		$$
		\Sigma_t:=\sum_{i=1}^{n}t_i^2z_i z_i^\top
		$$
		has at most $\lfloor \eps p\rfloor$ eigenvalues below $\mu$.  Equivalently,
		there is a subspace $\mathsf V\equiv \mathsf V_t \subset\R^p$ such that
		$$
		\dim \mathsf V\ge (1-\eps)p,
		\qquad
		P_{\mathsf V}\Sigma_tP_{\mathsf V}\succeq \mu P_{\mathsf V}, 
		$$
        where $P_{\mathsf V}\in \R^{p\times p}$ denotes the projection matrix onto $\mathsf V$.
	\end{lemma}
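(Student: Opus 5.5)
Since $\Sigma_t$ has fewer than $\lfloor\eps p\rfloor+1$ eigenvalues below $\mu$ if and only if $\langle\Sigma_t,P\rangle\ge \mu\cdot\mathrm{rank}(P)$ fails for no large projection, we reduce to a lower bound on traces against projections. If $\Sigma_t$ had $k:=\lfloor\eps p\rfloor+1$ eigenvalues below $\mu$, there would be a rank-$k$ orthogonal projection $P$ with
\[
\Tr(P\Sigma_t)=\sum_i t_i^2\|Pz_i\|_2^2<\mu k.
\]
So it suffices to show that, with probability $1-o(1)$, simultaneously for all $K$-light unit $t$ and all rank-$k$ projections $P$, we have $\sum_i t_i^2\|Pz_i\|_2^2\ge \mu k$.

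\textbf{Fixed pair $(t,P)$.} For fixed $P$, $\|Pz_i\|_2^2$ has mean $k$. By Hanson--Wright (Lemma~\ref{lem:hanson-wright}) its fluctuations are of order $\sqrt k$, so with an absolute $c>0$,
\[
\P\bigl(\|Pz_i\|_2^2\ge k/2\bigr)\ge 1-C/k\ge \tfrac12 .
\]
To avoid heavy tails, bound from below by the truncated variables $Y_i:=\min(\|Pz_i\|_2^2/k,1/2)$, which are independent, take values in $[0,1/2]$, and have mean at least $1/4$. Then $\sum_i t_i^2 Y_i$ has mean at least $1/4$. Since the weights satisfy $t_i^2\le 1/(Kd)$ and $\sum_i t_i^2=1$, a Chernoff/Bernstein lower-tail bound for a weighted sum of independent bounded nonnegative variables gives
\[
\P\Bigl(\sum_i t_i^2Y_i\le \tfrac18\Bigr)\le \exp\Bigl(-c/\max_i t_i^2\Bigr)\le \exp(-cKd).
\]
This uses only the lower tail, which for nonnegative variables needs only second moments. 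We then take $\mu=1/16$, leaving room for discretization.

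\textbf{Union bound: the main obstacle.} The net over rank-$k$ projections (the Grassmannian) has size $(C/\delta)^{kp}\le \exp(Cp^2\log(1/\delta))$. A net over $K$-light unit $t$ in $\R^n$ would have size $\exp(Cn)=\exp(Cd^2)$. Both are far larger than $\exp(cKd)$, so the fixed-pair bound alone does not close. I would fix this in two ways.

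First, avoid netting over $t$. Exploit monotonicity: it suffices to show that for each $P$ in the net, at most $Kd/8$ indices $i$ are ``bad'' (those with $\|Pz_i\|_2^2<k/2$). Given this, for any light $t$ the bad indices carry total mass $\sum_{i\,\mathrm{bad}}t_i^2\le (Kd/8)\cdot(Kd)^{-1}=1/8$. Hence
\[
\sum_i t_i^2\|Pz_i\|_2^2\ge \tfrac78\cdot\tfrac{k}{2},
\]
uniformly in $t$, with no net over $t$ needed.

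Second, strengthen the bad-index estimate for fixed $P$. Using the exponential tail, $\P(\text{$i$ is bad})\le 2e^{-ck}$. Then
\[
\P(\#\text{bad}\ge Kd/8)\le \binom{n}{Kd/8}e^{-ckKd/8}\le \exp\bigl(CKd\log d-c\eps Kd^2\bigr).
\]
This beats the Grassmannian net size $\exp(Cp^2\log(1/\delta))$ once $K\ge K_0(\eps,c_0,C_0,K_z)$.

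\textbf{Passing from the net to all $P$.} For $P'$ $\delta$-close to a net point $P$ in operator norm,
\[
\bigl|\|P'z_i\|_2-\|Pz_i\|_2\bigr|\le \delta\|z_i\|_2\le C\delta\sqrt d,
\]
using $\max_i\|z_i\|_2\le C\sqrt d$ with high probability. This perturbation is comparable to $\sqrt k\asymp\sqrt{\eps d}$ if $\delta$ is small depending on $\eps$. So we define ``good'' at the net point with a margin, $\|Pz_i\|_2^2\ge k/2$, and then conclude $\|P'z_i\|_2^2\ge k/8$ for all nearby $P'$.

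The delicate point is this net step. The Grassmannian net costs $\exp(Cp^2\log(1/\delta))$, and it must be dominated by $\exp(c\eps Kd^2)$. This holds by taking $K_0$ large depending on $\eps$ and $\delta$, which yields an absolute $\mu$ (e.g.\ $\mu=1/16$ after normalizing by $k$) and $K_0$ depending only on the stated parameters.
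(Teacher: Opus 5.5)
Your proof is correct, but it gets uniformity over subspaces by a different route than the paper.

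\textbf{Shared mechanism.} Both arguments avoid a net over $t$ in the same way. You fix a subspace, count the indices $i$ with $\|Pz_i\|_2^2<\dim/2$, and use lightness ($t_i^2\le (Kd)^{-1}$) to show these indices carry at most a constant fraction of the mass $\sum_i t_i^2=1$.

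\textbf{How the paper gets uniformity.} The paper works on the single event from Proposition~\ref{thm:apriori-estimates}, applied in dimension $p$, on which $\sum_i\langle z_iz_i^\top-I_p,A\rangle^2\le Cp^2\|A\|_F^2$ for all $A$. Testing $A=P_{\mathsf H}/\sqrt{\dim\mathsf H}$ gives $\sum_i(\|P_{\mathsf H}z_i\|_2^2-\dim\mathsf H)^2\le Cp^2\dim\mathsf H$ simultaneously for every subspace $\mathsf H$. Markov's inequality then bounds the bad count by $C_2d$ with $C_2\asymp 1/\eps$. The rest is deterministic: no net, no union bound, no perturbation step, and no use of $\max_i\|z_i\|_2$.

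\textbf{How your route differs.} You use:
\begin{itemize}
\item a per-subspace exponential tail $\P(\text{$i$ bad})\le 2e^{-ck}$ from Hanson--Wright;
\item a binomial count bound $\exp(CKd\log d-c\eps Kd^2)$ for more than $Kd/8$ bad indices;
\item a union bound over a $\delta$-net of rank-$k$ projections with $\delta\asymp\sqrt{\eps}$;
\item a perturbation step controlled by $\max_i\|z_i\|_2\lesssim\sqrt d$.
\end{itemize}
This is longer, but self-contained: it needs only Hanson--Wright, norm concentration and a standard Grassmannian net, not the restricted-isometry machinery behind Proposition~\ref{thm:apriori-estimates}. The cost is that your $K_0$ must also pay for the entropy of the net, roughly $K_0\gtrsim\log(1/\eps)/\eps$ with your crude net size. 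This only changes the dependence of $K_0$ on $\eps$, so it is harmless for the lemma.

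\textbf{Two small remarks.}
\begin{itemize}
\item Your opening ``if and only if'' is not literally true. Having at most $\lfloor\eps p\rfloor$ eigenvalues below $\mu$ does not force $\Tr(P\Sigma_t)\ge\mu k$ for every rank-$k$ projection. You only use the valid direction, so nothing breaks.
\item The fixed-pair Chernoff paragraph is superseded by your bad-index count and can be dropped.
\end{itemize}
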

\begin{proof}
	Since $n\le C_0d^2\le C_0^3p^2$, Proposition~
\ref{thm:apriori-estimates} applies to
$Z=[z_1,\ldots,z_n]\in\R^{p\times n}$ in dimension $p$, with
$K_x=K_z$. In particular, we have on this event
	$$
	\Lambda:=
	\sup_{\|A\|_F\le1}
	\left(
	\sum_{i=1}^n
	\langle z_iz_i^\top-I_p,A\rangle^2
	\right)^{1/2}
	\le C_1p.
	$$
	We prove the conclusion deterministically on this event.

	Let $\mathsf H\subset\R^p$ be any subspace with
	$\dim(\mathsf H)=\lfloor\eps p\rfloor$, and let $P_{\mathsf H}$ be
	its orthogonal projection. Since
	$\|P_{\mathsf H}\|_F^2=\dim(\mathsf H)$, applying the definition of
	$\Lambda$ to
	$A=P_{\mathsf H}/\sqrt{\dim(\mathsf H)}$ gives
	$$
	\sum_{i=1}^n
	\left(\|P_{\mathsf H}z_i\|_2^2-\dim(\mathsf H)\right)^2
	\le \Lambda^2\dim(\mathsf H).
	$$
    Thus, by Markov's inequality
	$$
	\left|\left\{
	i\le n:\|P_{\mathsf H}z_i\|_2^2
	\le\frac{\dim(\mathsf H)}2
	\right\}\right|
	\le
	\frac{4\Lambda^2}{\dim(\mathsf H)}
	\le C_2d,
	$$
	where $C_2=C_2(c_0,C_0,\eps,K_z)$. Set $K_0=2C_2$.

	We now use lightness. Let $t\in \R^n$ such that $\|t\|_{\infty}\leq (Kd)^{-1/2}$ and $\|t\|_2=1$. By the inequality above, for any subspace $\mathsf H$ with $\dim(\mathsf H)=\lfloor \eps p\rfloor$, we have
	$$
	\sum_{i:
	\|P_{\mathsf H}z_i\|_2^2\le\dim(\mathsf H)/2}
	t_i^2
	\le C_2d\|t\|_\infty^2
	\le\frac{C_2}{K}\leq \frac12.
	$$
	Since $\|t\|_2=1$, the remaining coordinates at least half of the mass. Therefore,
	$$
	\begin{aligned}
	\Tr(P_{\mathsf H}\Sigma_tP_{\mathsf H})
	=
	\sum_{i=1}^{n}
	t_i^2\|P_{\mathsf H}z_i\|_2^2 \ge
	\frac{\dim(\mathsf H)}2
	\sum_{i:
	\|P_{\mathsf H}z_i\|_2^2>\dim(\mathsf H)/2}
	t_i^2
	\ge\frac{\dim(\mathsf H)}4.
	\end{aligned}
	$$
	Set $\mu=1/8$. If $\Sigma_t$ had at least
	$\lfloor\eps p\rfloor$ eigenvalues below $\mu$, let $\mathsf H$ be
	the span of that many corresponding eigenvectors. Then
	$$
	\Tr(P_{\mathsf H}\Sigma_tP_{\mathsf H})
	<\mu\dim(\mathsf H)
	=\frac{\dim(\mathsf H)}8,
	$$
	contradicting the preceding lower bound. Hence the span $\mathsf V$
	of the eigenvectors whose eigenvalues are at least $\mu$ satisfies $\dim\mathsf V\ge(1-\eps)p$ and $P_{\mathsf V}\Sigma_tP_{\mathsf V}\succeq\mu P_{\mathsf V}$, which concludes the proof.
\end{proof}

\begin{corollary}
	\label{cor:block-covariance-event}
Assume $(x_i)_{i\leq n}$ satisfy the hypotheses of
	Proposition~\ref{prop:light-tail-quotient}, and $n/d^2 \to \alpha\in (0,\infty)$. 
    
    There exists an absolute constant $\mu>0$ such that the following holds. For every fixed $L\geq 2$ and $\eps\in(0,1)$, there exist $
	K_0=K_0(\alpha,K_x,L,\eps)>0$ and $C=C(K_x,L)>0$ such that for every $K\ge K_0$,
	with probability $1-o(1)$, the following holds: for every
	$\ell=2,\ldots,L$, every $\mathcal I\subset[n]$, and every
	$t\in\R^n$ supported on $\mathcal I^c$ satisfying
	$\|t\|_2=1$ and $\|t\|_\infty\le(Kd)^{-1/2}$, let
	$\mathsf V_{t,\ell}$ be the span of the eigenvectors of
	$\Sigma_{t,\ell}$ whose eigenvalues are at least $\mu$, where
	\[
    \Sigma_{t,\ell}\equiv\sum_{i\notin \mathcal{I}} t_i^2 u_{i,\ell} u_{i,\ell}^\top,\qquad m_{t,\ell}:=\dim\mathsf V_{t,\ell}.
    \]
    Then for every such $\mathcal{I}, t$, the $\mathcal{F}_{\ell-1}^{\mathcal I}$-measurable event
	\begin{equation}\label{eq:def:good:event:ell}
	\mathcal G_\ell^{\mathcal I}(t)
	:=
	\left\{
	\max_{i\le n}\|u_{i,\ell}\|_2^2\le Cd,
	\quad
	m_{t,\ell}\ge(1-\eps)p_\ell
	\right\}
	\end{equation}
	occurs, and $P_{\mathsf V_{t,\ell}}\Sigma_{t,\ell}P_{\mathsf V_{t,\ell}}
	\succeq\mu P_{\mathsf V_{t,\ell}}$ holds.
\end{corollary}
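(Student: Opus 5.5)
The plan is to apply Lemma~\ref{lem:effective-rank} once for each block $\ell=2,\ldots,L$, with $z_i:=u_{i,\ell}\in\R^{p_\ell}$. Since $L$ is fixed, a union bound over the $L-1$ blocks is free. For each $\ell$, the vectors $u_{i,\ell}=x_{i,U_\ell}$ are i.i.d. across $i$ and have independent coordinates with mean zero, variance one, fourth moment $\kappa$ and subgaussian norm at most $K_x$. The dimension $p_\ell=(\ell-1)d/L+O(1)$ satisfies $d/L\lesssim p_\ell\le d$, so $c_0p_\ell\le d\le C_0p_\ell$ holds with $c_0=1$ and $C_0=2L$. We also have $n\le C_0 d^2$ for large $d$ once $C_0$ is enlarged depending on $\alpha$. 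With these choices, Lemma~\ref{lem:effective-rank} supplies an absolute $\mu$ (namely $1/8$) and a threshold $K_0^{(\ell)}$ depending on $\alpha,K_x,L,\eps$. We then set $K_0:=\max_\ell K_0^{(\ell)}$.

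The next step is to handle the dependence on $\mathcal I$ uniformly. The key observation is that the lemma's conclusion holds simultaneously for \emph{all} $t\in\R^n$ with $\|t\|_2=1$ and $\|t\|_\infty\le (Kd)^{-1/2}$. A tail $t$ supported on $\mathcal I^c$ is one such vector, and for it $\Sigma_{t,\ell}=\sum_{i\notin\mathcal I}t_i^2u_{i,\ell}u_{i,\ell}^\top=\sum_{i=1}^n t_i^2 u_{i,\ell}u_{i,\ell}^\top$, because $t_i=0$ on $\mathcal I$. So no union over $\mathcal I$ is needed: the single event from Lemma~\ref{lem:effective-rank} for block $\ell$ already gives $m_{t,\ell}\ge(1-\eps)p_\ell$ together with $P_{\mathsf V_{t,\ell}}\Sigma_{t,\ell}P_{\mathsf V_{t,\ell}}\succeq\mu P_{\mathsf V_{t,\ell}}$ for every admissible pair $(\mathcal I,t)$. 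The counting needs one small check: ``at most $\lfloor\eps p_\ell\rfloor$ eigenvalues below $\mu$'' gives $m_{t,\ell}\ge p_\ell-\lfloor \eps p_\ell\rfloor\ge(1-\eps)p_\ell$.

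For the norm condition, note that $\|u_{i,\ell}\|_2\le\|x_i\|_2$. Subgaussian norm concentration (\cite[Theorem 3.1.1]{vershynin2018highdimensional}) gives $\P(\|x_i\|_2^2>Cd)\le 2e^{-cd}$, and a union bound over $n\lesssim d^2$ indices shows $\max_i\|u_{i,\ell}\|_2^2\le Cd$ for all $\ell$ with probability $1-o(1)$. Here $C$ depends only on $K_x$ (and trivially on $L$).

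Measurability also has to be confirmed: both $\max_i\|u_{i,\ell}\|_2$ and $\Sigma_{t,\ell}$ are functions of the $u_{i,\ell}$. One might worry that $\max_i$ includes $i\in\mathcal I$, but those $x_i$ are $\mathcal F^{\mathcal I}_{\ell-1}$-measurable as well, so $\mathcal G_\ell^{\mathcal I}(t)$ is $\mathcal F^{\mathcal I}_{\ell-1}$-measurable. Intersecting the $L$ high-probability events finishes the proof. I expect no real obstacle. The only points needing care are the observation that uniformity over $\mathcal I$ comes for free from the uniformity over all light $t$ in Lemma~\ref{lem:effective-rank}, and the correct tracking of constants for the smallest block $p_2\approx d/L$.
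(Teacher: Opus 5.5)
Your proposal is correct and follows essentially the same route as the paper. Both arguments apply Lemma~\ref{lem:effective-rank} blockwise with $z_i=u_{i,\ell}$, bound $\max_i\|u_{i,\ell}\|_2^2$ by subgaussian norm concentration plus a union bound over $n=O(d^2)$ rows, intersect over the fixed number of blocks, and check $\mathcal F_{\ell-1}^{\mathcal I}$-measurability. You also make explicit that uniformity over $\mathcal I$ is automatic, since $t_i=0$ on $\mathcal I$ makes $\Sigma_{t,\ell}$ coincide with the lemma's $\Sigma_t$; the paper uses this only implicitly.
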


\begin{proof}
	For each $\ell=2,\ldots,L$, let $\mathcal E_\ell$ be the event on
	which $\max_{i\le n}\|u_{i,\ell}\|_2^2\le Cd$ and
	$m_{t,\ell}\ge(1-\eps)p_\ell$ simultaneously for every
	$\mathcal I\subset[n]$ and every $t$ supported on
	$\mathcal I^c$ with
	$\|t\|_2=1$ and $\|t\|_\infty\le(Kd)^{-1/2}$.

	Since $p_\ell\asymp_Ld$, $n/d^2\to\alpha$, and the coordinates of
	$u_{i,\ell}$ satisfy the same assumptions as those of $x_i$,
	Lemma~\ref{lem:effective-rank}, applied with $p=p_\ell$ and
	$z_i=u_{i,\ell}$, gives
	$m_{t,\ell}\ge(1-\eps)p_\ell$ simultaneously for all such $t$ with
	probability $1-o(1)$, after choosing
	$K_0=K_0(\alpha,K_x,L,\eps)$ sufficiently large. Moreover, standard
	subgaussian norm concentration
	\cite[Theorem~3.1.1]{vershynin2018highdimensional} gives
	$$
	\P\left(\max_{i\le n}\|u_{i,\ell}\|_2^2>Cd\right)
	\le 2ne^{-cd}=o(1).
	$$
	Thus $\P(\mathcal E_\ell)=1-o(1)$. Since $L$ is fixed,
	$\P(\bigcap_{\ell=2}^L\mathcal E_\ell)\to1$. On this intersection,
	every event $\mathcal G_\ell^{\mathcal I}(t)$ in the statement
	occurs simultaneously. The covariance lower bound follows directly
	from the definition of $\mathsf V_{t,\ell}$.

	Finally, for fixed $\mathcal I,t$, all the vectors $u_{i,\ell}$ are
	$\mathcal F_{\ell-1}^{\mathcal I}$-measurable: those with
	$i\notin\mathcal I$ are included, while $x_i$ is revealed for $i\in\mathcal I$.
	Hence $\Sigma_{t,\ell}$, $\mathsf V_{t,\ell}$, and
	$\mathcal G_\ell^{\mathcal I}(t)$ are measurable with respect to
	$\mathcal F_{\ell-1}^{\mathcal I}$.
\end{proof}

The next lemma bounds the small-ball probability when one block is revealed.
	
	\begin{lemma}
	\label{lem:one-stage}
	Assume $(x_i)_{i\le n}$ satisfy the hypotheses of
	Proposition~\ref{prop:light-tail-quotient}. Fix $L\ge2$ and
	$\eps\in(0,1)$. There exists
	$C=C(L,\eps,K_x)>0$ such that the following holds.

	Fix $K>0$. Consider $\ell\in\{2,\ldots,L\}$, $\mathcal I\subset[n]$, and
	$t\in\R^n$ supported on $\mathcal I^c$ with $\|t\|_2=1$ and $\|t\|_{\infty}\leq (Kd)^{-1/2}$. Then, for every
		$\gamma\ge C K^{-1/2}$, and $\mathcal F_{\ell-1}^{\mathcal I}$-measurable $B_\ell\in\R^{p_\ell\times q_\ell}$,
	$$
	\mathbf 1_{\mathcal G_\ell^{\mathcal I}(t)}
	\P\left(
		\|\mathcal L_{X,\ell}^*t+B_\ell\|_*\le\gamma d
	\,\middle|\,
	\mathcal F_{\ell-1}^{\mathcal I}
	\right)
	\le
	\mathbf 1_{\mathcal G_\ell^{\mathcal I}(t)}
	(C \gamma)^{m_{t,\ell}q_\ell}.
	$$
\end{lemma}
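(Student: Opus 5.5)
We work on $\mathcal G_\ell^{\mathcal I}(t)$ and condition on $\mathcal F_{\ell-1}^{\mathcal I}$. Then $B_\ell$, the vectors $u_{i,\ell}$, $\Sigma_{t,\ell}$ and $\mathsf V:=\mathsf V_{t,\ell}$ are all frozen. The columns $Y_{\ell,j}=d^{-1/2}\sum_{i\notin\mathcal I}t_iu_{i,\ell}x_{ij}$, $j\in I_\ell$, of $\mathcal L_{X,\ell}^*t$ are conditionally independent, because they involve disjoint families $(x_{ij})_{i\notin\mathcal I}$ that are independent of $\mathcal F_{\ell-1}^{\mathcal I}$.

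We first project onto $\mathsf V$. Since $\|P_{\mathsf V}A\|_*\le\|A\|_*$, the event $\|\mathcal L_{X,\ell}^*t+B_\ell\|_*\le\gamma d$ implies $\|P_{\mathsf V}\mathcal L_{X,\ell}^*t+P_{\mathsf V}B_\ell\|_*\le\gamma d$. Identify $\mathsf V$ with $\R^{m}$, where $m=m_{t,\ell}$, via an orthonormal basis. The projected matrix then lies in $\R^{m\times q_\ell}$ with $m\asymp_L d$ and $q_\ell\asymp_L d$ (using $m\ge(1-\eps)p_\ell$).

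Next we apply Lemma~\ref{lem:product-covering} to $\mathcal K=\{A\in\R^{m\times q_\ell}:\|A\|_*\le\gamma d\}$. It gives a $\|\cdot\|_{1,2}$-net $\mathcal N$ at scale $\gamma$ with $|\mathcal N|\le C^{mq_\ell}$. If the projected matrix equals $-P_{\mathsf V}B_\ell+A$ with $A\in\mathcal K$, pick $A_0\in\mathcal N$ within $\gamma$ of $A$. Then every column satisfies $\|P_{\mathsf V}Y_{\ell,j}-z_j\|_2\le\gamma$, where $z_j$ is the $j$th column of $-P_{\mathsf V}B_\ell+A_0$, a deterministic vector given $\mathcal F_{\ell-1}^{\mathcal I}$. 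A union bound over $\mathcal N$ together with conditional independence of the columns reduces the claim to showing
\[
\sup_{z\in\R^m}\P\bigl(\|P_{\mathsf V}Y_{\ell,j}-z\|_2\le\gamma\mid\mathcal F_{\ell-1}^{\mathcal I}\bigr)\le(C\gamma)^m .
\]
Multiplying over the $q_\ell$ columns and by $|\mathcal N|$ then gives $(C'\gamma)^{mq_\ell}$.

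For the per-column bound we apply Proposition~\ref{prop:vector-smallball}. Set $\xi_i=x_{ij}$; these are independent, mean zero, variance one, and uniformly subgaussian. Subgaussianity gives $\E[\xi_i^2\mathbf 1_{\{|\xi_i|>M\}}]\le 1/2$ for some $M=M(K_x)$, so condition~\eqref{eq:sec:mo:condition:prop:vector-smallball} holds. Take weights $w_i=t_iP_{\mathsf V}u_{i,\ell}/\sqrt{\mu}$. Then $\sum_iw_iw_i^\top=\mu^{-1}P_{\mathsf V}\Sigma_{t,\ell}P_{\mathsf V}\succeq I_{\mathsf V}$. On $\mathcal G_\ell^{\mathcal I}(t)$ we also have $\|w_i\|_2\le\mu^{-1/2}\|t\|_\infty\sqrt{Cd}\le C'K^{-1/2}=:\rho$.

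Now $\|P_{\mathsf V}Y_{\ell,j}-z\|_2\le\gamma$ means $\|\sum_i w_i\xi_i-z'\|_2\le\gamma\sqrt{d/\mu}$, and this radius is $\eta\sqrt m$ with $\eta=\gamma\sqrt{d/(\mu m)}\asymp_{L,\eps}\gamma$. The proposition requires $\eta\ge C\rho$, which holds precisely when $\gamma\ge CK^{-1/2}$; this explains the hypothesis in the lemma. Under that condition it yields $(C\eta)^m\le(C''\gamma)^m$.

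The main point to check is the scale bookkeeping: the $d^{-1/2}$ normalization of $Y$, the ratio $d/m$, which is bounded because $m\ge(1-\eps)p_\ell\gtrsim d/L$, and the requirement that the net radius $\gamma$ matches the column radius. Measurability of the net relative to $\mathcal F_{\ell-1}^{\mathcal I}$ is harmless, since the net is deterministic and $B_\ell$ is conditioned on.
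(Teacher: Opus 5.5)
Your proposal is correct and follows the paper's proof essentially step for step. You project onto $\mathsf V_{t,\ell}$, which does not increase the nuclear norm, and cover the nuclear-norm ball of radius $\gamma d$ by $C^{m_{t,\ell}q_\ell}$ balls in $\|\cdot\|_{1,2}$ via Lemma~\ref{lem:product-covering}. You then use conditional independence of the columns given $\mathcal F_{\ell-1}^{\mathcal I}$, and bound each column's small-ball probability by Proposition~\ref{prop:vector-smallball} with $w_i=\mu^{-1/2}t_iP_{\mathsf V}u_{i,\ell}$ and $\eta=\gamma\sqrt{d/(\mu m_{t,\ell})}\asymp\gamma$. One small correction: the comparability $m_{t,\ell}\asymp d$ (used both in the covering lemma and for $\eta\asymp\gamma$) depends on $\eps$ as well as $L$, which is consistent with $C=C(L,\eps,K_x)$.
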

	
	\begin{proof}
	Throughout, we denote $C>0$ by a constant that depends only on $L,\eps,K_x$. Also, we fix $\ell, \mathcal{I}$, and $t$ as in the statement, and write $\mathsf V\equiv \mathsf V_{t,\ell}, \Sigma\equiv \Sigma_{t,\ell}$ and $m\equiv m_{t,\ell}.$ for simplicity. Note that we have $P_{\mathsf V}\Sigma P_{\mathsf V}\succeq \mu P_{\mathsf V}$ by definition of $\mathsf{V}\equiv \mathsf{V}_{t,\ell}$, and $m\geq (1-\eps)p_{\ell}$ holds on the event $\mathcal{G}_{\ell}^{\mathcal I}(t)$.

Since left multiplication by $P_{\mathsf V}$ does not increase the
	nuclear norm,
	$$
		\|\mathcal L_{X,\ell}^*t+B_\ell\|_*\le\gamma d
	\quad\Longrightarrow\quad
		P_{\mathsf V}\mathcal L_{X,\ell}^*t \in-P_{\mathsf V}B_\ell+
	\left\{
	A\in\mathsf V\otimes\R^{q_\ell}:\|A\|_*\le\gamma d
	\right\}.
	$$
	The columns of $P_{\mathsf V}\mathcal L_{X,\ell}^*t$ are
	$$
	Y_j
	=
	\frac1{\sqrt d}\sum_{i\notin\mathcal I}
	t_iP_{\mathsf V}u_{i,\ell}x_{ij},
	\qquad j\in I_\ell,
	$$
	  which are conditionally independent given $\mathcal{F}_{\ell-1}^{\mathcal I}$. Since $m,q_\ell\asymp_{L,\eps}d$, Lemma~\ref{lem:product-covering},
after identifying $\mathsf V$ isometrically with $\R^m$, covers
$$
\left\{
A\in\mathsf V\otimes\R^{q_\ell}:\|A\|_*\le\gamma d
\right\}
$$
by at most $C^{mq_\ell}$ balls of radius $\gamma$ in
$\|\cdot\|_{1,2}$. Indeed, this identification preserves singular
values and Euclidean norms of the columns, and thus both $\|\cdot\|_*$ and
$\|\cdot\|_{1,2}$. Conditional independence of the columns then gives
	\begin{equation}\label{eq:one-stage-covering}
	\begin{aligned}
	\mathbf 1_{\mathcal G_\ell^{\mathcal I}(t)}
	\P\left(
		\|\mathcal L_{X,\ell}^*t+B_\ell\|_*\le\gamma d
	\,\middle|\,
	\mathcal F_{\ell-1}^{\mathcal I}
	\right)\le
	\mathbf 1_{\mathcal G_\ell^{\mathcal I}(t)}
	C^{mq_\ell}
	\prod_{j\in I_\ell}\left\{
	\sup_{z\in\mathsf V}
	\P\left(
	\|Y_j-z\|_2\le\gamma
	\,\middle|\,
	\mathcal F_{\ell-1}^{\mathcal I}
	\right)\right\}.
	\end{aligned}
	\end{equation}
    We next bound each supremum in \eqref{eq:one-stage-covering} using
Proposition~\ref{prop:vector-smallball} by checking its assumptions. Set
$$
w_i:=\mu^{-1/2}t_iP_{\mathsf V}u_{i,\ell},
\qquad i\notin\mathcal I.
$$
By the definition of $\mathsf V$ and the event
$\mathcal G_\ell^{\mathcal I}(t)$ in~\eqref{eq:def:good:event:ell},
$$
\sum_{i\notin\mathcal I}w_iw_i^\top
=
\mu^{-1}P_{\mathsf V}\Sigma_{t,\ell}P_{\mathsf V}
\succeq P_{\mathsf V},
\qquad
\max_{i\notin\mathcal I}\|w_i\|_2
\le CK^{-1/2},
$$
where the last inequality uses $\|t\|_\infty\le(Kd)^{-1/2}$ and
$\max_i\|u_{i,\ell}\|_2\le C\sqrt d$.

Now, choose a Euclidean isometry $U:\mathsf V\to\R^m$ and set
$\widetilde w_i:=Uw_i$. Then,
$$
\sum_{i\notin\mathcal I}\widetilde w_i\widetilde w_i^\top
\succeq I_m,
\qquad
\max_{i\notin\mathcal I}\|\widetilde w_i\|_2
\le CK^{-1/2},
\qquad
UY_j
=
\sqrt{\frac{\mu}{d}}
\sum_{i\notin\mathcal I}\widetilde w_i x_{ij},\quad j\in I_{\ell}.
$$
Since we assumed that $(x_i)_{i\leq n}$ has independent coordinates, $(x_{ij})_{i\notin\mathcal I, j\in I_{\ell}}$ is independent of $\mathcal F_{\ell-1}^{\mathcal I}$. Also, since $\E[x_{ij}]=0$, $\Var(x_{ij})=1$ with $\sup_{i,j} \|x_{ij}\|_{\psi_2}\leq K_x$, we have for some $M=M(K_x)>0$,
$$
\inf_{i\notin \mathcal I,j\in I_{\ell}}
\E\left[
x_{ij}^2\mathbf 1_{\{|x_{ij}|\le M\}}
\,\Big|\, \mathcal{F}_{\ell-1}^{\mathcal I}\right]
\ge\frac12.
$$
Note that  on the $\mathcal{F}_{\ell-1}^{\mathcal I}$-measurable event $\mathcal G_\ell^{\mathcal I}(t)$, we have
$m\asymp_{L,\eps}d$, thus $\eta:=\gamma\sqrt{\frac{d}{\mu m}}$ satisfies $\eta\asymp_{L,\eps}\gamma$. Therefore, by taking $C$ large enough in the assumption
$\gamma\ge CK^{-1/2}$, all the hypotheses in Proposition~\ref{prop:vector-smallball} are satisfied for $(UY_j)_{j\in I_{\ell}}\mid \mathcal{F}_{\ell-1}^{\mathcal I}$ on the event $\mathcal{G}_\ell^{\mathcal I}(t)$. Therefore, Proposition~\ref{prop:vector-smallball} yields  
$$
\sup_{z\in\mathsf V}
\P\left(
\|Y_j-z\|_2\le\gamma
\,\middle|\,
\mathcal F_{\ell-1}^{\mathcal I}
\right)\mathbf 1_{\mathcal G_\ell^{\mathcal I}(t)}=\sup_{z'\in\R^m}
\P\left(
\|UY_j-z'\|_2\le\gamma
\,\middle|\,
\mathcal F_{\ell-1}^{\mathcal I}
\right)\mathbf 1_{\mathcal G_\ell^{\mathcal I}(t)}
\le(C\gamma)^m.
$$
Substituting this bound into \eqref{eq:one-stage-covering} concludes the proof.
\end{proof}
	\begin{proof}[Proof of Proposition~\ref{prop:light-tail-quotient}]
	Fix $C_0>0$. Since $\alpha<1/2$, we can choose $L=L(\alpha)\ge2$ large enough and $\eps=\eps(\alpha)\in(0,1)$ small enough so that  
    \[
    \alpha<(1-\eps)\frac{1-1/L}{2}.
    \]
    We will apply Corollary~\ref{cor:block-covariance-event} and
		Lemma~\ref{lem:one-stage} to this choice of $L$ and $\eps$. We let $K_0=K_0(\alpha,K_x)>0$ denote the constant appearing in Corollary~\ref{cor:block-covariance-event}, and let $C=C(\alpha,K_x)>0$ denote the constant appearing in Lemma~\ref{lem:one-stage}. At the end of the proof, we will choose $K\ge K_0$ and $\gamma>0$ so that $C\gamma<1$ and $2\gamma\ge CK^{-1/2}$. With abuse of notation, we write $C>0$ as a constant depending only on $\alpha$ and $K_x$.

	Let
	$\mathcal{G}_{\circ}$ be the event on which all of the following hold:
\begin{enumerate}[label=\textup{(\roman*)}]
	\item By \eqref{eq:block-dominated},
	$\mathcal M_L(\lambda)\le\|\mathcal L_X^*\lambda\|_*$.
	Proposition~\ref{thm:apriori-estimates} therefore gives
	$$
	\mathcal M_L(\lambda)\le Cd\|\lambda\|_2,
	\qquad \lambda\in\R^n.
	$$

		\item By norm concentration of subgaussian vectors
\cite[Theorem~3.1.1]{vershynin2018highdimensional} and a union bound over
$n=O(d^2)$ samples, with probability $1-o(1)$,
we have $\max_{i\le n}\|x_i\|_2^2\le2d$. Since
$\mathcal L_X^*e_i=(x_ix_i^\top-I_d)/\sqrt d$, linearity and the triangle
inequality give, uniformly over $\lambda\in\R^n$,
$$
\mathcal M_L(\lambda)
\le\|\mathcal L_X^*\lambda\|_*
\le
\|\lambda\|_1
\max_{i\le n}\left\{d^{-1/2}\|x_ix_i^\top-I_d\|_*\right\}
\le3\sqrt d\,\|\lambda\|_1.
$$

	\item For every $2\le\ell\le L$, $\mathcal I\subset[n]$, and
$t\in\R^n$ supported on $\mathcal I^c$ with $\|t\|_2=1$ and
$\|t\|_\infty\le(Kd)^{-1/2}$, consider the event
$\mathcal G_\ell^{\mathcal I}(t)$ defined in
\eqref{eq:def:good:event:ell}. Corollary~\ref{cor:block-covariance-event}
guarantees that, with probability $1-o(1)$, all the events $\mathcal G_\ell^{\mathcal I}(t)$ occur
simultaneously. In particular,
$$
m_{t,\ell}\ge(1-\eps)p_\ell,
\qquad
P_{\mathsf V_{t,\ell}}\Sigma_{t,\ell}P_{\mathsf V_{t,\ell}}
\succeq\mu P_{\mathsf V_{t,\ell}}.
$$
\end{enumerate}
Thus, $\mathcal{G}_{\circ}$ holds with probability $1-o(1)$. The properties $(i), (ii)$ are used in netting argument below.

	We first prove a probability bound for a fixed admissible pair
$(h,t)$ on the event $\mathcal{G}_{\circ}$. Note that by homogeneity of $\mathcal{L}^*_X$, it suffices to consider tails $t\in\R^n$ with
$\|t\|_2=1$. Here, we call a pair
$(h,t)\in\R^n\times\R^n$ admissible if
$$
|\supp h|\le C_0d\log d,\qquad
	\supp h\bigcap\supp t=\varnothing,\qquad
\|t\|_2=1,\qquad
\|t\|_\infty\le(Kd)^{-1/2},\qquad
\|h\|_1\le d^{2/3}.
$$
 Set $\mathcal I:=\supp h$. Then $t$ is supported on
$\mathcal I^c$ and $|\mathcal I|\le C_0d\log d$. Define the event
$$
\mathcal H_\ell(h,t)
:=
\left\{
\|\mathcal L_{X,\ell}^*t+B_\ell(h)\|_*
\le2\gamma d
\right\},\quad\textnormal{where}\quad B_\ell(h):=\mathcal L_{X,\ell}^*h.
$$
Since $h$ is supported on $\mathcal I$, $B_\ell(h)$ is
	$\mathcal F_{\ell-1}^{\mathcal I}$-measurable. Moreover, $\mathcal{H}_{r}(h,t)$ is $\mathcal{F}_{\ell-1}^{\mathcal I}$-measurable for $r<\ell$. Thus, by Lemma~\ref{lem:one-stage} and the tower property,
$$
\begin{aligned}
\P\left(
\bigcap_{r=2}^{\ell}
	\bigl(\mathcal G_r^{\mathcal I}(t)\bigcap\mathcal H_r(h,t)\bigr)
\right)
&=
\E\left[
\mathbf 1_{\bigcap_{r=2}^{\ell-1}
	(\mathcal G_r^{\mathcal I}(t)\bigcap\mathcal H_r(h,t))}
\mathbf 1_{\mathcal G_\ell^{\mathcal I}(t)}
\P\left(\mathcal H_\ell(h,t)\mid
\mathcal F_{\ell-1}^{\mathcal I}\right)
\right] \\
&\le
(C\gamma)^{(1-\eps)p_\ell q_\ell}
\P\left(
\bigcap_{r=2}^{\ell-1}
	\bigl(\mathcal G_r^{\mathcal I}(t)\bigcap\mathcal H_r(h,t)\bigr)
\right),
\end{aligned}
$$
where we used that $m_{t,\ell}\ge(1-\eps)p_\ell$ on
the $\mathcal{F}_{\ell-1}^{\mathcal I}$-measurable event $\mathcal G_\ell^{\mathcal I}(t)$.
Iterating over $\ell=2,\ldots,L$,
\begin{equation}\label{eq:fixed-admissible-block-bound}
\P\left(
	\mathcal{G}_{\circ}\bigcap \bigcap_{\ell=2}^L
\mathcal H_\ell(h,t)
\right)\leq \P\left(
\bigcap_{\ell=2}^L
\bigl(\mathcal G_\ell^{\mathcal I}(t)
	\bigcap\mathcal H_\ell(h,t)\bigr)
\right)
\le
(C\gamma)^{(1-\eps)D_L},
\end{equation}
where we recall $D_L\equiv \sum_{\ell=2}^{L}p_{\ell}q_{\ell}$.

We now pass the estimate~\eqref{eq:fixed-admissible-block-bound} for fixed pair $(h,t)$ to a uniform estimate using nets. Define
$$
\mathcal E_{\rm bad}
:=
\left\{
\mathcal M_L(h+t)<\gamma d
\text{ for some admissible pair }(h,t)
\right\}.
$$
By \eqref{eq:block-dominated}, it suffices to prove
$\P(\mathcal E_{\rm bad})=o(1)$. Since
$\P(\mathcal G_\circ^c)=o(1)$, it suffices to show
	$\P(\mathcal E_{\rm bad}\bigcap\mathcal G_\circ)=o(1)$. Increase $C\ge1$, if necessary, so that properties \textup{(i)}, \textup{(ii)} of $\mathcal G_\circ$ hold with the same constant. For
every $\mathcal I\subset[n]$ with
$|\mathcal I|\le C_0d\log d$, choose an $\ell_2$-net
$\mathcal N_{\rm tail}(\mathcal I)$ of $\left\{
t\in\R^{\mathcal I^c}:
\|t\|_2=1,\ 
\|t\|_\infty\le(Kd)^{-1/2}
\right\}$ with radius $\gamma/(16C)$ and centers in the same set. Also choose an
$\ell_1$-net $\mathcal N_{\rm head}(\mathcal I)$ of
$\{h\in\R^{\mathcal I}:\|h\|_1\le d^{2/3}\}$ with radius
$\gamma\sqrt d/(16C)$. Standard volume estimates
\cite[Section~4.2.1]{vershynin2018highdimensional} give, for a possibly larger constant $C$,
\begin{equation}\label{eq:net:bound:head:tail}
|\mathcal N_{\rm tail}(\mathcal I)|\le(C/\gamma)^n,
\qquad
|\mathcal N_{\rm head}(\mathcal I)|
\le(1+C\gamma^{-1}d^{1/6})^{|\mathcal I|}=\exp(o(d^2)).
\end{equation}
	Suppose $\mathcal E_{\rm bad}\bigcap\mathcal G_\circ$ occurs, and let
$(h,t)$ be an admissible pair such that $\mathcal{M}_L(h+t)<\gamma d$. Set
$\mathcal I=\supp h$ and choose the closest $(h_0,t_0)\in \mathcal{N}_{\rm head}(\mathcal I)\times \mathcal{N}_{\rm tail}(\mathcal I)$ in $\ell_1\times \ell_2$ sense.
On $\mathcal G_\circ$, properties \textup{(i)} and \textup{(ii)} give
$$
\mathcal M_L(t-t_0)\le\frac{\gamma d}{16},
\qquad
\mathcal M_L(h-h_0)\le\frac{\gamma d}{16}.
$$
Consequently, by a triangle inequality,
$$
\mathcal M_L(h_0+t_0)
\le
\mathcal M_L(h+t)
+\mathcal M_L(h-h_0)
+\mathcal M_L(t-t_0)
<2\gamma d.
$$
Thus $\mathcal H_\ell(h_0,t_0)$ occurs for every
$\ell=2,\ldots,L$. By construction, $(h_0,t_0)$ is admissible, thus
\eqref{eq:fixed-admissible-block-bound} gives
$$
\P\left(
	\mathcal G_\circ\bigcap
\bigcap_{\ell=2}^L\mathcal H_\ell(h_0,t_0)
\right)
\le
(C\gamma)^{(1-\eps)D_L}.
$$
	There are at most $\sum_{k\le \lfloor C_0d\log d\rfloor}\binom nk =
	\exp(o(d^2))$ choices of $\mathcal I$, so combining with \eqref{eq:net:bound:head:tail}, a union bound yields
$$
	\P\left(\mathcal E_{\rm bad}\bigcap \mathcal{G}_\circ \right)
\le
\exp(o(d^2))
\left(\frac C\gamma\right)^n
(C\gamma)^{(1-\eps)D_L}.
$$
Using $n/d^2\to\alpha$ and
$D_L/d^2\to(1-1/L)/2$, the exponent in the right-hand side is at most
$$
d^2\left(\left(
(1-\eps)\frac{1-1/L}{2}-\alpha
\right)\log\gamma+O(1)\right).
$$
This tends to $-\infty$ as $\gamma\downarrow0$. Choose $\gamma>0$
sufficiently small, and then choose $K\ge K_0$ sufficiently large that
$2\gamma\ge CK^{-1/2}$. Hence
	$\P(\mathcal E_{\rm bad}\bigcap \mathcal{G}_\circ)=o(1)$, completing the proof.
	\end{proof}

\section{Proof of the least-squares universality theorem}
\label{sec:shared-row-universality-proof}

This section provides the technical results used in the least-squares
universality argument. We first prove
Propositions~\ref{prop:quadratic-one-row-response}
and~\ref{prop:one-row-covariance-comparison}, which complete the proof of
Theorem~\ref{thm:positive-barrier-row-universality} given in
Section~\ref{subsec:proof:overview:unsat}. We then prove
Proposition~\ref{thm:approx-tensorization-feature-regularity} and
Proposition~\ref{thm:approx-fit}, the latter providing the approximate fit
used in the SAT argument. Finally, we prove
Lemma~\ref{lem:feature-ridge-removal}, which is used to remove the ridge
regularization in
Corollary~\ref{cor:matrix-scale-universality}~\textup{(ii)}.

\subsection{Leave-one-out localization and feature comparison}
\label{subsec:row-replacement-proof}

We now prove
Propositions~\ref{prop:quadratic-one-row-response}
and~\ref{prop:one-row-covariance-comparison}. Throughout this subsection,
we use the leave-one-out notation introduced in
Section~\ref{subsubsec:proof-positive-barrier-row-universality}, including
the objective $F^{(i)}$, its minimizer
$(\theta^{(i)},b^{(i)})$, and its Hessian $H^{(i)}$.

\begin{lemma}
	\label{lem:soft-leave-one-out-hessian-bounds}
	For every $i\in[n]$ and
	$u=(u^\theta,u^b)\in\mathsf V\oplus\R$,
	\begin{equation}
		\label{eq:generic-hessian-lower}
		\langle u,H^{(i)}u\rangle
		\ge
		2\beta\left(
		\lambda_\Sigma\|u^\theta\|_2^2+(u^b)^2
		\right)
		+\gamma\|u^\theta\|_{\mathcal B,\theta^{(i)}}^2.
	\end{equation}
	In particular, writing
	\[
		(H^{(i)})^{-1}
		=
		\begin{pmatrix}
			Q^{(i)} & v^{(i)}\\
			(v^{(i)})^* & c^{(i)}
		\end{pmatrix}
	\]
	with respect to $\mathsf V\oplus\R$, where
	$Q^{(i)}:\mathsf V\to\mathsf V$, $v^{(i)}\in\mathsf V$, and
	$c^{(i)}\in\R$, we have
	\begin{equation}
		\label{eq:soft-inverse-block-bounds}
		\|Q^{(i)}\|_{\op}
		\le C\beta^{-1}\lambda_\Sigma^{-1},
		\qquad
		\|v^{(i)}\|_2
		\le C\beta^{-1}\lambda_\Sigma^{-1/2},
		\qquad
		|c^{(i)}|
		\le C\beta^{-1},
	\end{equation}
	for a universal constant $C<\infty$.
\end{lemma}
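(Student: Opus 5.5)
The plan is to compute the Hessian of $F^{(i)}$ directly, term by term, at $(\theta^{(i)},b^{(i)})$, and then invert the resulting lower bound.

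\textbf{Hessian lower bound.} The objective $F^{(i)}$ in \eqref{eq:def:objective:omit:i} is a sum of three pieces.
\begin{itemize}
\item The empirical squared loss $\frac1{2n}\sum_{j\ne i}(\langle Z_j,\theta\rangle-b)^2$. Its second derivative in direction $u$ is $\frac1n\sum_{j\ne i}(\langle Z_j,u^\theta\rangle-u^b)^2\ge0$, so it can be dropped.
\item The ridge term $\beta(\lambda_\Sigma\|\theta\|_2^2+b^2)$. Its second derivative is exactly $2\beta(\lambda_\Sigma\|u^\theta\|_2^2+(u^b)^2)$.
\item The barrier term $\gamma\mathcal B(\theta)$. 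By the definition of the local seminorm in Definition~\ref{def:self-concordance}, its second derivative is $\gamma\|u^\theta\|^2_{\mathcal B,\theta^{(i)}}$, since $u^\theta\in\mathsf V=\mathsf V_{\mathcal D}$.
\end{itemize}
Summing the three contributions gives \eqref{eq:generic-hessian-lower}. This step is routine; the only point to check is that $\theta^{(i)}\in\mathcal D$, so that $\mathcal B$ is $C^2$ there. That holds because $\mathcal B$ blows up at the relative boundary, and strong convexity forces the minimizer to exist in the interior.

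\textbf{Block bounds.} Drop the nonnegative barrier contribution and keep only the ridge part: $H^{(i)}\succeq 2\beta D$, where $D:=\lambda_\Sigma P_{\mathsf V}\oplus 1$ on $\mathsf V\oplus\R$. Operator monotonicity of the inverse then gives $(H^{(i)})^{-1}\preceq (2\beta)^{-1}D^{-1}$. Conjugating by $D^{1/2}$ yields
\[
0\preceq D^{1/2}(H^{(i)})^{-1}D^{1/2}\preceq (2\beta)^{-1}I,
\]
so this positive semidefinite operator $M$ has operator norm at most $(2\beta)^{-1}$. Each block of $M$ therefore has norm at most $(2\beta)^{-1}$. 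These blocks are exactly
\[
\lambda_\Sigma Q^{(i)},\qquad \lambda_\Sigma^{1/2}v^{(i)},\qquad c^{(i)}.
\]
Rescaling gives the three bounds in \eqref{eq:soft-inverse-block-bounds} with $C=1/2$.

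The off-diagonal block follows from the general fact that a block of a matrix has norm at most the norm of the whole matrix. Alternatively, apply the Cauchy--Schwarz inequality for PSD matrices, $|\langle x,My\rangle|^2\le\langle x,Mx\rangle\langle y,My\rangle$, with $x\in\mathsf V$ and $y=(0,1)$.

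There is no real obstacle in this proof. The only care needed is to treat the inverse as an operator on $\mathsf V\oplus\R$, with $P_{\mathsf V}$ acting as the identity there, rather than on all of $\R^N\oplus\R$.
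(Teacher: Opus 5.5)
Your proposal is correct and follows essentially the same route as the paper: drop the positive semidefinite data-fit Hessian to obtain \eqref{eq:generic-hessian-lower}, then invert $H^{(i)}\succeq 2\beta(\lambda_\Sigma I_{\mathsf V}\oplus 1)$ in the Loewner order. The only cosmetic difference is that you read off all three block bounds at once from $D^{1/2}(H^{(i)})^{-1}D^{1/2}\preceq(2\beta)^{-1}I$. The paper instead reads off the diagonal blocks directly and bounds $v^{(i)}$ via the PSD Cauchy--Schwarz inequality $|\langle v^{(i)},u^\theta\rangle|^2\le\langle u^\theta,Q^{(i)}u^\theta\rangle\, c^{(i)}$, which you also mention as an alternative.
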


\begin{proof}
Recall that $H^{(i)}=\mathrm D^2 F^{(i)}(\theta^{(i)}, b^{(i)})$. Since the Hessian of the least-squares term \[
\frac1{2n}\bigg(
		\sum_{j<i}\bigl(\langle W_j,\theta\rangle-b\bigr)^2
		+\sum_{j>i}\bigl(\langle G_j,\theta\rangle-b\bigr)^2
		\bigg)
        \]
        in the definition of $F^{(i)}$ in~\eqref{eq:def:objective:omit:i} is positive semidefinite, omitting this term from $H^{(i)}$ gives
	\[
		\langle u,H^{(i)}u\rangle
		\ge
		2\beta\left(
		\lambda_\Sigma\|u^\theta\|_2^2+(u^b)^2
		\right)
		+\gamma\,
		\mathrm D^2\mathcal B(\theta^{(i)})
		[u^\theta,u^\theta],
	\]
	which is \eqref{eq:generic-hessian-lower}. In particular, $H^{(i)}
		\succeq
		2\beta
		\begin{pmatrix}
			\lambda_\Sigma I_{\mathsf V}&0\\
			0&1
		\end{pmatrix},$ and hence
	\[
		(H^{(i)})^{-1}
		\preceq
		\frac1{2\beta}
		\begin{pmatrix}
			\lambda_\Sigma^{-1}I_{\mathsf V}&0\\
			0&1
		\end{pmatrix}.
	\]
	Thus, $	\|Q^{(i)}\|_{\op}
		\le\frac{1}{2\beta\lambda_\Sigma}$ and $0\le c^{(i)}\le\frac{1}{2\beta}$ hold. Finally, positivity of $(H^{(i)})^{-1}$ gives
	\[
		|\langle v^{(i)},u^\theta\rangle|^2
		\le
		\langle u^\theta,Q^{(i)}u^\theta\rangle\,c^{(i)}
		\le
		\frac{\|u^\theta\|_2^2}{4\beta^2\lambda_\Sigma}.
	\]
	Taking the supremum over $\|u^\theta\|_2=1$ proves the bound on
	$\|v^{(i)}\|_2$ and completes the proof.
\end{proof}
Recall from \eqref{eq:insertion-optimizer-update} that $u_i(z)\equiv (\theta_i(z),b_i(z))-(\theta^{(i)},b^{(i)})$ denotes
the change from the leave-one-out optimizer to the optimizer obtained
after inserting $z$. Define its unconstrained quadratic approximation
$\widehat u_i(z)=(\widehat u_i^{\theta}(z),\widehat u_i^b(z)) \in\mathsf V\oplus\R$ by
\[
	\widehat u_i(z)
	:=
	\operatorname*{arg\,min}_{u\in\mathsf V\oplus\R}
	\left\{
	\frac12\langle u,H^{(i)}u\rangle
	+\frac{\bigl(r_i(z)+\langle a(z),u\rangle\bigr)^2}{2n}
	\right\}
	=
	-\frac{r_i(z)}{n+\ell_i(z)}(H^{(i)})^{-1}a(z).
\]

\begin{lemma}
	\label{lem:self-concordant-insertion-control}
	Let $\mathcal B$ be a $\vartheta$-self-concordant barrier for some $\vartheta\geq 1$. There are universal constants $C,n_0>0$ such that the following
	holds. Fix $i\in[n]$ and condition on $\mathcal F^{(i)}$. For every
	$z\in\R^N$ satisfying
	\[
		n\gamma\ge n_0,
		\qquad
		|r_i(z)|\le(n\gamma)^{1/4},
	\]
	we have
	\[
		\theta^{(i)}+\widehat u_i^\theta(z)\in\mathcal D,
		\qquad
		\max_{u\in\{u_i(z),\widehat u_i(z)\}}
		\|u^\theta\|_{\mathcal B,\theta^{(i)}}^2
		\le
		C\frac{r_i(z)^2}{n\gamma},
	\]
	and
	\[
		\max_{u\in\{u_i(z),\widehat u_i(z)\}}
		|R_i(u)|
		\le
		C\gamma^{-1/2}n^{-3/2}|r_i(z)|^3.
	\]
\end{lemma}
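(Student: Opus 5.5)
The argument treats $\widehat u_i(z)$ and $u_i(z)$ separately, comparing each to the local norm of the barrier at $\theta^{(i)}$. We write $r=r_i(z)$, $a=a(z)$, $H=H^{(i)}$ and $\ell=\ell_i(z)$.

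\emph{Step 1: the quadratic minimizer.} From the explicit formula $\widehat u=-\frac{r}{n+\ell}H^{-1}a$ we get
\[
\langle \widehat u,H\widehat u\rangle=\frac{r^2\ell}{(n+\ell)^2}\le \frac{r^2}{n+\ell}\cdot\frac{\ell}{n+\ell}\le \frac{r^2}{n}.
\]
The Hessian lower bound \eqref{eq:generic-hessian-lower} gives $\gamma\|\widehat u^\theta\|_{\mathcal B,\theta^{(i)}}^2\le\langle\widehat u,H\widehat u\rangle$. Therefore $\|\widehat u^\theta\|_{\mathcal B,\theta^{(i)}}^2\le r^2/(n\gamma)\le (n\gamma)^{-1/2}$. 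Once $n_0$ is large, this is at most $\rho_0^2$. Lemma~\ref{lem:self-concordance-admissibility-consequences}(ii) then gives feasibility, $\theta^{(i)}+\widehat u^\theta\in\mathcal D$, and the remainder bound
\[
|R_i(\widehat u)|\le C\gamma\|\widehat u^\theta\|^3_{\mathcal B,\theta^{(i)}}\le C\gamma\,|r|^3(n\gamma)^{-3/2}=C\gamma^{-1/2}n^{-3/2}|r|^3.
\]

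\emph{Step 2: the true minimizer $u=u_i(z)$.} This is the harder part, because we do not know a priori that $u$ is local. The plan is to use the first-order optimality of the leave-one-out problem. Write $F^{(i)}=Q+\gamma\mathcal B$, where $Q$ is the quadratic part. Since $(\theta^{(i)},b^{(i)})$ is optimal and $\mathsf V\oplus\R$ is the full direction space, the Bregman divergence satisfies
\[
F^{(i)}(\theta^{(i)}+u^\theta,b^{(i)}+u^b)-m^{(i)}=D_{F^{(i)}}(\cdot,\cdot)\ge \gamma D_{\mathcal B}(\theta^{(i)}+u^\theta,\theta^{(i)}).
\]
Here we drop the nonnegative quadratic Bregman term of $Q$. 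The left side is at most $\Delta_i(z)\le r^2/(2n)$ by \eqref{eq:elementary-insertion-bound}, since the inserted loss is nonnegative. Hence
\[
D_{\mathcal B}(\theta^{(i)}+u^\theta,\theta^{(i)})\le \frac{r^2}{2n\gamma}\le \frac{(n\gamma)^{-1/2}}{2}\le c_0
\]
for $n\gamma\ge n_0$. Lemma~\ref{lem:self-concordance-admissibility-consequences}(i) then yields $\|u^\theta\|_{\mathcal B,\theta^{(i)}}\le\rho_0$, together with
\[
\|u^\theta\|_{\mathcal B,\theta^{(i)}}^2\le c^{-1}D_{\mathcal B}\le C\,\frac{r^2}{n\gamma}.
\]

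\emph{Step 3: the remainder for $u$.} Now that $\|u^\theta\|_{\mathcal B,\theta^{(i)}}\le\rho_0$, part (ii) of the same lemma gives $|R_i(u)|\le C\gamma\|u^\theta\|^3_{\mathcal B,\theta^{(i)}}$. This is bounded exactly as in Step 1.

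The one step that needs care is Step 2: it must start from the optimal-value bound $\Delta_i\le r^2/2n$ and not from a priori size bounds on $u$. The Bregman identity is legitimate because $F^{(i)}$ has vanishing derivative at its interior minimizer along $\mathsf V\oplus\R$. All constants are universal, and $n_0$ is chosen so that $(n\gamma)^{-1/2}\le\min(\rho_0^2,2c_0)$.
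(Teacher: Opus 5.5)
Your proposal is correct and follows essentially the same route as the paper. The Hessian lower bound \eqref{eq:generic-hessian-lower} applied to the explicit value $r^2\ell/(n+\ell)^2$ controls $\widehat u_i^\theta(z)$; the Bregman comparison (dropping the convex quadratic part and using $\Delta_i(z)\le r_i(z)^2/(2n)$) localizes $u_i^\theta(z)$; and Lemma~\ref{lem:self-concordance-admissibility-consequences} then gives feasibility, the local-norm bounds and the cubic remainder. The only difference is cosmetic: the paper uses $(n+\ell)^2\ge 4n\ell$ to obtain $r^2/(4n)$ where you have $r^2/n$, which is immaterial.
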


\begin{proof}
	Using \eqref{eq:generic-hessian-lower} with $u=\widehat u_i(z)$ gives
	\[
		\gamma
		\|\widehat u_i^\theta(z)\|_{\mathcal B,\theta^{(i)}}^2\leq \big\langle \widehat{u}_i(z), H^{(i)}\widehat{u}_i(z)\big\rangle=\frac{\ell_i(z)}{(n+\ell_i(z))^2} r_i(z)^2
		\le
		\frac{r_i(z)^2}{4n}.
	\]
By optimality of $(\theta_i(z),b_i(z))$, and using $(\theta^{(i)},b^{(i)})$ as a competitor, we have
\[
	F^{(i)}(\theta_i(z),b_i(z))
	+\frac{\bigl(b_i(z)-\langle z,\theta_i(z)\rangle\bigr)^2}{2n}
	\le
	F^{(i)}(\theta^{(i)},b^{(i)})
	+\frac{r_i(z)^2}{2n}.
\]
Recall the definition of Bregman divergence in \eqref{eq:bregman-divergence-def}. Since $\mathrm{D}F^{(i)}(\theta^{(i)},b^{(i)})[u]=0$ for $u\in \mathsf{V}\oplus \R$ by first order optimiality, we thus have
\[
		\gamma D_{\mathcal B}
	\bigl(\theta^{(i)}+u_i^\theta(z),\theta^{(i)}\bigr)
	\le D_{F^{(i)}}\bigl(
	(\theta_i(z),b_i(z)),(\theta^{(i)},b^{(i)})
	\bigr)
	\le
	\frac{r_i(z)^2}{2n},
\]
where the first inequality holds since $(\theta,b)\mapsto F^{(i)}(\theta,b)-\gamma \mathcal{B}(\theta)$ is convex, and Bregman divergences are additive and nonnegative. Therefore,
\[
	\|\widehat u_i^\theta(z)\|_{\mathcal B,\theta^{(i)}}^2
	\le
	\frac{r_i(z)^2}{4n\gamma},
	\qquad
	D_{\mathcal B}
	\bigl(\theta^{(i)}+u_i^\theta(z),\theta^{(i)}\bigr)
	\le
	\frac{r_i(z)^2}{2n\gamma}.
\]
If $|r_i(z)|\le(n\gamma)^{1/4}$, both right-hand sides are at most
$C(n\gamma)^{-1/2}$. Thus, for $n\gamma\ge n_0$ with $n_0$
sufficiently large,
Lemma~\ref{lem:self-concordance-admissibility-consequences} gives
\[
	\theta^{(i)}+\widehat u_i^\theta(z)\in\mathcal D,
	\qquad
	\max_{u\in\{u_i(z),\widehat u_i(z)\}}
	\|u^\theta\|_{\mathcal B,\theta^{(i)}}^2
	\le
	C\frac{r_i(z)^2}{n\gamma}.
\]
Applying Lemma~\ref{lem:self-concordance-admissibility-consequences}~\textup{(ii)} for $u\in\{u_i(z),\widehat u_i(z)\}$, it follows that
\[
	|R_i(u)|
	\le
	C\gamma\|u^\theta\|_{\mathcal B,\theta^{(i)}}^3
	\le
	C\gamma^{-1/2}n^{-3/2}|r_i(z)|^3,\qquad u\in\{u_i(z),\widehat u_i(z)\}.
\]
where the first inequality holds by the expression of $R_i(u)$ in \eqref{eq:definition-R-i}. This concludes the proof.
\end{proof}
\begin{proof}[Proof of Proposition~\ref{prop:quadratic-one-row-response}]
	Fix $i\in[n]$. By definition of $\Lambda_{\rm ref}$, we can choose $\theta_{\rm ref}\in\mathcal D$ such that
	\[
		\lambda_\Sigma\|\theta_{\rm ref}\|_2^2
		+\gamma\mathcal B(\theta_{\rm ref})
		\le
		\frac{\Lambda_{\rm ref}}{1+\beta^{-1}}.
	\]
	Since $(\theta^{(i)},b^{(i)})$ minimizes $F^{(i)}$, comparison with
	$(\theta_{\rm ref},0)$ gives
	\begin{equation}\label{eq:deterministic:bound:S:i}
		\beta S^{(i)}
		\le
		\frac1{2n}\left(
		\sum_{j<i}\langle W_j,\theta_{\rm ref}\rangle^2
		+\sum_{j>i}\langle G_j,\theta_{\rm ref}\rangle^2
		\right)
		+\beta\lambda_\Sigma\|\theta_{\rm ref}\|_2^2
		+\gamma\mathcal B(\theta_{\rm ref}).
	\end{equation}
    Using the elementary inequality $(n^{-1}\sum_{j\neq i} a_j)^2\leq n^{-1}\sum_{j\neq i} a_j^2$, the second moment of the first term in the right-hand side is at most
\begin{align*}
	\frac1{4n}\left(
	\sum_{j<i}\E\langle W_j,\theta_{\rm ref}\rangle^4
	+\sum_{j>i}\E\langle G_j,\theta_{\rm ref}\rangle^4
	\right)
	\le
	C\lambda_\Sigma^2\|\theta_{\rm ref}\|_2^4,
\end{align*}
The last inequality follows from $\fone$, which is assumed for $W_i$; the Gaussian features, $G_j$'s, satisfy $\fone$ with
$K_{\rm feat}=3^{1/4}$ by \eqref{eq:gaussian-feature-regularity}.  Squaring
\eqref{eq:deterministic:bound:S:i}, taking expectations, and using
$(a+b)^2\le2(a^2+b^2)$ now gives
\[
	\beta^2 \E(S^{(i)})^2
	\le
	C(1+\beta^2)
	\left(
	\lambda_\Sigma\|\theta_{\rm ref}\|_2^2
	+\gamma\mathcal B(\theta_{\rm ref})
	\right)^2.
\]
Dividing by $\beta^2$, using
$1+\beta^{-2}\le(1+\beta^{-1})^2$, and recalling the choice of
$\theta_{\rm ref}$ yields $\E(S^{(i)})^2\le C\Lambda_{\rm ref}^2$, which proves \eqref{eq:integrated-leave-one-out-scale}.

	We next condition on $\mathcal F^{(i)}$ and let
	$Z_i\in\{W_i,G_i\}$. Then $Z_i$ is independent of
	$\mathcal F^{(i)}$, whereas
	$(\theta^{(i)},b^{(i)})$ is fixed under the conditioning. By
	Minkowski's inequality in $L^4$, 
	\begin{align*}
		\bigl(\E_i|r_i(Z_i)|^4\bigr)^{1/4}
		\le
		|b^{(i)}|
		+\bigl(\E_i|\langle Z_i,\theta^{(i)}\rangle|^4\bigr)^{1/4}\le
		|b^{(i)}|
		+C\sqrt{\lambda_\Sigma}\|\theta^{(i)}\|_2
		\le
		C(S^{(i)})^{1/2},
	\end{align*}
    where the second inequality uses $\fone$ and the last inequality holds by Cauchy Schwartz. This proves~\eqref{eq:integrated-residual-moments}.

It remains to prove \eqref{eq:soft-one-row-update}. Assume that
$n\gamma\ge n_0$, where $n_0$ is as in
Lemma~\ref{lem:self-concordant-insertion-control}, and abbreviate
\[
	r_i:=r_i(Z_i),\qquad
	\ell_i:=\ell_i(Z_i),\qquad
	\Delta_i:=\Delta_i(Z_i),\qquad
	\widehat u_i:=\widehat u_i(Z_i),\qquad
	u_i:=u_i(Z_i).
\]
On the event $\{|r_i|\le(n\gamma)^{1/4}\}$,
Lemma~\ref{lem:self-concordant-insertion-control} shows that
$\theta^{(i)}+\widehat u_i^\theta\in\mathcal D$ and $|R_i(u)|
	\le
	C\gamma^{-1/2}n^{-3/2}|r_i|^3$ for both $u\in \{u_i,\widehat u_i\}$. Using $u_i$ as a competitor for the quadratic problem and
$\widehat u_i$ as a competitor for the exact problem gives
\[
	R_i(u_i)
	\le
	\Delta_i-\frac{r_i^2}{2(n+\ell_i)}
	\le
	R_i(\widehat u_i).
\]
Thus,
\[
	\E_i\left[
	\left|
	\Delta_i-\frac{r_i^2}{2(n+\ell_i)}
	\right|
	\mathbf 1_{\{|r_i|\le(n\gamma)^{1/4}\}}
	\right]
	\le
	C\gamma^{-1/2}n^{-3/2}\E_i|r_i|^3
	\le
	C\gamma^{-1/2}n^{-3/2}(S^{(i)})^{3/2},
\]
where the last inequality follows from Hölder's inequality and
\eqref{eq:integrated-residual-moments}. On the complementary event,
\eqref{eq:elementary-insertion-bound} gives
$\Delta_i\in[0,r_i^2/(2n)]$. Since $\ell_i\ge0$, we also have
$r_i^2/[2(n+\ell_i)]\in[0,r_i^2/(2n)]$. Consequently,
\[
	\E_i\left[
	\left|
	\Delta_i-\frac{r_i^2}{2(n+\ell_i)}
	\right|
	\mathbf 1_{\{|r_i|>(n\gamma)^{1/4}\}}
	\right]
	\le
	\frac{\E_i r_i^4}{2n(n\gamma)^{1/2}}
	\le
	C\gamma^{-1/2}n^{-3/2}(S^{(i)})^2.
\]
Combining the two displays and using
$(S^{(i)})^{3/2}\le1+(S^{(i)})^2$ proves
\eqref{eq:soft-one-row-update}.
\end{proof}

\begin{proof}[Proof of
	Proposition~\ref{prop:one-row-covariance-comparison}]
	For $Z_i\in\{W_i,G_i\}$, the inverse-Hessian block decomposition in Lemma~\ref{lem:soft-leave-one-out-hessian-bounds} gives
	\[
	\ell_i(Z_i)
	=\langle P_{\mathsf V}Z_i,Q^{(i)}P_{\mathsf V}Z_i\rangle
	-2\langle P_{\mathsf V}Z_i,v^{(i)}\rangle+c^{(i)}.
	\]
  Since $W_i$ and $G_i$ are independent of $\mathcal F^{(i)}$, centered,
and have covariance $\Sigma$,
\[
	\E_i\ell_i(W_i)
	=\E_i\ell_i(G_i)
	=\operatorname{tr}
	\bigl(Q^{(i)}P_{\mathsf V}\Sigma P_{\mathsf V}\bigr)
	+c^{(i)}
	=:\bar\ell_i.
\]
In particular, $\bar{\ell}_i\geq 0$.

Let $\Var_i$ denote conditional variance given $\mathcal F^{(i)}$.
By \eqref{eq:soft-inverse-block-bounds},
	$\|Q^{(i)}\|_{F}\le C\beta^{-1}\lambda_{\Sigma}^{-1}\sqrt{\dim\mathsf V}$
	and $\|v^{(i)}\|_2\le C\beta^{-1}\lambda_{\Sigma}^{-1/2}$.
Applying $\ftwo$ with $T=P_{\mathsf V}Q^{(i)}P_{\mathsf V}$ for $W_i$,
and using \eqref{eq:gaussian-feature-regularity} for $G_i$, gives, for either $Z_i\in\{W_i,G_i\}$,
\[
	\Var_i\left(
	\langle P_{\mathsf V}Z_i,Q^{(i)}P_{\mathsf V}Z_i\rangle
	\right)
	\le
	C(\tau^2+\lambda_\Sigma^2)\|Q^{(i)}\|_F^2
	\le
	C\beta^{-2}(\dim\mathsf V)
	\left(\frac{\tau}{\lambda_\Sigma}+1\right)^2.
\]
Moreover, 
\[
	\Var_i\left(\langle P_{\mathsf V}Z_i,v^{(i)}\rangle\right)= \langle v^{(i)}, \Sigma v^{(i)}\rangle \leq \lambda_{\Sigma}\|v^{(i)}\|_2^2\le C\beta^{-2}.
\]
Therefore, 
	\[
	\E_i\left[|\ell_i(Z_i)-\bar\ell_i|^2\right]
	=
	\Var_i(\ell_i(Z_i))
	\le
	C\beta^{-2}(\dim\mathsf V)
	\left(\frac{\tau}{\lambda_\Sigma}+1\right)^2.
\]
Finally, for $Z_i\in\{W_i,G_i\}$,
\[
	\E_i\left[r_i(Z_i)^2\right]
	\equiv \E_i\left[\big(b^{(i)}- \langle Z_i,\theta^{(i)}\rangle\big)^2\right]=
	(b^{(i)})^2
	+\langle\theta^{(i)},\Sigma\theta^{(i)}\rangle.
\]
The two conditional expectations therefore agree, and hence $\E_i\frac{r_i(W_i)^2}{2(n+\bar{\ell}_i)}=\E_i\frac{r_i(G_i)^2}{2(n+\bar{\ell}_i)}$.
\end{proof}

\subsection{Quadratic-feature regularity and approximate fitting}
\label{subsec:approx-tensorization-regularity}
This section proves Proposition~\ref{thm:approx-tensorization-feature-regularity} and Proposition~\ref{thm:approx-fit}. 

Approximate tensorization of variance,
\eqref{eq:approximate-variance-tensorization}, is the Poincar\'{e} inequality
for the Glauber dynamics. It is well-known that Poinca\'{e} inequalities imply $L^p$ moment estimates; see e.g.~\cite[Section 3]{adamczak2022modified}. Using standard moment estimates, we prove the quadratic-feature regularity, Proposition~\ref{thm:approx-tensorization-feature-regularity}.

\begin{proof}[Proof of
	Proposition~\ref{thm:approx-tensorization-feature-regularity}]
	Throughout, let $C_1, C_1'>0$ denote constants depending only on
	$C,M$, and $\kappa$, whose value may change from line to line. For
	each $j\in[d]$, let $x^{[j]}$ be obtained by conditionally resampling
	the $j$th coordinate of $x$. Then $x^{[j]}\stackrel{d}{=}x$, and
	conditional exchangeability gives
	$\E(f(x)-f(x^{[j]}))^2
	=2\E\Var(f(x)\mid x_{-j})$. Hence, approximation tensorixation of variance~\eqref{eq:approximate-variance-tensorization} is equivalent to
	\begin{equation}
		\label{eq:conditional-resampling-poincare}
		\Var(f(x))
		\le
		\frac C2\sum_{j=1}^d
		\E\bigg[\Big(f(x)-f\big(x^{[j]}\big)\Big)^2\bigg]
	\end{equation}
	for every $f$ such that $\E[f(x)^2]<\infty$. We first claim the following consequence of ~\eqref{eq:conditional-resampling-poincare}: for $p\in \{2,4,8\}$, and any function $g$ such that $\E[|g(x)|^p]<\infty$, let
\[
	Z\equiv Z_g:=g(x),
	\qquad
	Z_j\equiv Z_{g,j}:=g\bigl(x^{[j]}\bigr),
	\qquad
	\mathfrak D_p(g)
	:=
	\bigg(
	\sum_{j=1}^d\|Z-Z_j\|_{L^p}^2
	\bigg)^{1/2}.
\]
Then, for $p\in \{2,4,8\}$,
\begin{equation}
	\label{eq:resampling-lp-moment-bound}
	\|Z-\E Z\|_{L^p}
	\le C_1\mathfrak D_p(g).
\end{equation}
We first prove \eqref{eq:resampling-lp-moment-bound}.  Since replacing $g$ by $g-\E Z$ does not change $Z-Z_j$, we may assume w.l.o.g. that $\E Z=0$.  For $p=2$, \eqref{eq:resampling-lp-moment-bound} is exactly~\eqref{eq:conditional-resampling-poincare} with $C_1=\sqrt{C/2}$.

Now take $p\in\{4,8\}$. We will apply
\eqref{eq:conditional-resampling-poincare} to $f=|g|^{p/2}$. The elementary
inequality
\[
	\bigl||u|^{p/2}-|v|^{p/2}\bigr|
	\le
	\frac{p}{2}|u-v|\bigl(|u|^{p/2-1}+|v|^{p/2-1}\bigr)\leq \frac{p}{\sqrt{2}}|u-v|\bigl(|u|^{p-2}+|v|^{p-2}\bigr)^{1/2}
\]
gives
\begin{align*}
	\sum_{j=1}^d
	\E\bigg[\left(|Z|^{p/2}-|Z_j|^{p/2}\right)^2\bigg]
	&\le
	C_p\sum_{j=1}^d
	\E\Big[
	|Z-Z_j|^2
	\big(|Z|^{p-2}+|Z_j|^{p-2}\big)
	\Big]
    \leq C_p' \mathfrak{D}_p(g)^2 \|Z\|_{L^p}^{p-2},
\end{align*}
where we used H\"{o}lder's inequality  $\E[|X|^2|Y|^{p-2}]\leq \|X\|_{L^p}^2 \|Y\|_{L^p}^{p-2}$ and $\|Z_j\|_{L^p}=\|Z\|_{L^p}$ in the last step. Applying \eqref{eq:conditional-resampling-poincare} to
$f=|g|^{p/2}$ and using the preceding estimate, we obtain
\begin{equation}
	\label{eq:resampling-lp-recursion}
\|Z\|_{L^p}^p
	=
	\Var(|Z|^{p/2})
	+\|Z\|_{L^{p/2}}^p
	\le
	C_1\mathfrak D_p(g)^2\|Z\|_{L^p}^{p-2}
	+\|Z\|_{L^{p/2}}^p.
\end{equation}
	For $p=4$, the $p=2$ estimate and
	$\mathfrak D_2(g)\le\mathfrak D_4(g)$ give
	\[
		\|Z\|_{L^4}^4
		\le
		C_1\mathfrak D_4(g)^2\|Z\|_{L^4}^2
		+C_1\mathfrak D_4(g)^4.
	\]
	Applying Young's inequality to the first term on the right gives $	C_1\mathfrak D_4(g)^2\|Z\|_{L^4}^2
	\le
	\frac12\|Z\|_{L^4}^4
	+C_1'\mathfrak D_4(g)^4$, thus we conclude
$\|Z\|_{L^4}\le C_1\mathfrak D_4(g)$.  Taking $p=8$ in
	\eqref{eq:resampling-lp-recursion}, and using this $L^4$ estimate and
	$\mathfrak D_4(g)\le\mathfrak D_8(g)$, gives
	\[
		\|Z\|_{L^8}^8
		\le
		C_1\mathfrak D_8(g)^2\|Z\|_{L^8}^6
		+C_1\mathfrak D_8(g)^8.
	\]
	Another application of Young's inequality yields
	$\|Z\|_{L^8}\le C_1\mathfrak D_8(g)$. This finishes the proof of~\eqref{eq:resampling-lp-moment-bound}.

We next verify $\fone$. For $a\in\R^d$, set
$g_a(x):=\langle a,x\rangle$, and write
$\widetilde x_j$ for the resampled $j$'th coordinate of $x^{[j]}$. Since
\[
	g_a(x)-g_a\bigl(x^{[j]}\bigr)
	=
	a_j(x_j-\widetilde x_j),
	\qquad
	\|x_j-\widetilde x_j\|_{L^8}\le2M^{1/8},
\]
we have $\mathfrak D_8(g_a)\le C_1\|a\|_2$. Moreover,
$\E g_a(x)=0$. Applying
\eqref{eq:resampling-lp-moment-bound} to $g_a$ and $p=8$ therefore gives
\begin{equation}
	\label{eq:approx-tensorization-linear-l8}
	\|\langle a,x\rangle\|_{L^8}
	\le C_1\|a\|_2.
\end{equation}
For $A\in\Ssym^d$, set $g_A(x):=x^\top Ax-\Tr A$. Then
$\E g_A(x)=0$ and
\[
	g_A(x)-g_A\bigl(x^{[j]}\bigr)
	=
	(x_j-\widetilde x_j)
	\bigg(
	2\sum_{\ell\ne j}A_{j\ell}x_\ell
	+A_{jj}(x_j+\widetilde x_j)
	\bigg).
\]
Thus, H\"older's inequality,
\eqref{eq:approx-tensorization-linear-l8}, and the eighth-moment
assumption imply
\[
	\left\|g_A(x)-g_A\bigl(x^{[j]}\bigr)\right\|_{L^4}
	\le
	C_1\bigg(\sum_{\ell=1}^dA_{j\ell}^2\bigg)^{1/2}.
\]
Hence, $\mathfrak D_4(g_A)\le C_1\|A\|_F$. Applying \eqref{eq:resampling-lp-moment-bound} to $g_A$ and $p=4$ therefore gives
\begin{equation}
	\label{eq:approx-tensorization-quadratic-l4}
	\|x^\top Ax-\Tr A\|_{L^4}
	\le C_1\|A\|_F.
\end{equation}
Since
$\langle W,A\rangle_F=(x^\top Ax-\Tr A)/\sqrt d$ and
$\|\Sigma_\kappa\|_{\op}\asymp_\kappa d^{-1}$, this proves $\fone$.

Finally, we verify $\ftwo$. Let $(S_k)_k$ be an orthonormal basis of $(\Ssym^d, \langle \cdot,\cdot\rangle)$. For any linear map
$B:\Ssym^d\to\Ssym^d$,
\[
	\|BW\|_F^2
	=
	\sum_k\langle W,B^*S_k\rangle^2,
\]
where $B^*: \Ssym^d\to \Ssym^d$ denotes the adjoint of $B$. Therefore, Minkowski's inequality and
\eqref{eq:approx-tensorization-quadratic-l4} give

\begin{equation}
	\label{eq:projected-quadratic-feature-fourth-moment}
	\bigl(\E\|BW\|_F^4\bigr)^{1/2}
	\le \sum_k\|\langle W,B^*S_k\rangle^2\|_{L^2}=
	\sum_k\|\langle W,B^*S_k\rangle\|_{L^4}^2
	\le
	\frac{C_1}{d}\|B\|_F^2.
\end{equation}
For each $j\in[d]$, set
\[
	W^{[j]}
	:=
	\frac{x^{[j]}\bigl(x^{[j]}\bigr)^\top-I_d}{\sqrt d}.
\]
Let $T:\Ssym^d\to\Ssym^d$ be self-adjoint, and let $\Pi_j$ be the
Frobenius-orthogonal projection onto matrices supported on row or
column $j$. Since
$\Pi_j(W-W^{[j]})=W-W^{[j]}$, we have
\[
	\left\langle
	W-W^{[j]},\Pi_jT(W+W^{[j]})
	\right\rangle
	=
	\langle W,TW\rangle
	-\langle W^{[j]},TW^{[j]}\rangle.
\]
Applying \eqref{eq:conditional-resampling-poincare} to
$x\mapsto\langle W,TW\rangle$ therefore gives
\[
	\Var\bigl(\langle W,TW\rangle_F\bigr)
	\le
	\frac C2\sum_{j=1}^d
	\E\left[
	\left\langle
	W-W^{[j]},\Pi_jT(W+W^{[j]})
	\right\rangle^2
	\right].
\]
A direct calculation using the eighth-moment assumption, together with
$W^{[j]}\stackrel d=W$ and
\eqref{eq:projected-quadratic-feature-fourth-moment}, gives
\[
	\E\|W-W^{[j]}\|_F^4\le C_1,
	\qquad
	\E\|\Pi_jT(W+W^{[j]})\|_F^4
	\le\frac{C_1}{d^2}\|\Pi_jT\|_F^4.
\]
Thus, Cauchy Schwarz and
$\sum_j\Pi_j\preceq2I_{\Ssym^d}$ yield
\[
	\Var\bigl(\langle W,TW\rangle_F\bigr)
	\le\frac{C_1}{d}\|T\|_F^2.
\]
Hence $\ftwo$ holds with
$\tau_d\le C_{1}d^{-1/2}$.
\end{proof}

We finish this subsection by deriving the approximate fit used in the SAT
argument.

\begin{proof}[Proof of Proposition~\ref{thm:approx-fit}]
	Write $W(X)=(W_i)_{i\le n}$, where $W_i:=\frac{x_ix_i^\top-I_d}{\sqrt d}$. Since $x_i$'s have independent coordinates satisfying~\eqref{eq:independent:coordinate:distribution:assumption}, we have $\Cov(\operatornorm{vec}W_i)=\Sigma_\kappa$. Moreover,
	product measures satisfy approximate tensorization of variance~\eqref{eq:approximate-variance-tensorization} with $C=1$, and the uniform subgaussian bound gives a uniform
	eighth-moment bound. Therefore,
	Proposition~\ref{thm:approx-tensorization-feature-regularity} gives a
	dimension-free regularity constant and $\tau_d=O(d^{-1/2})$.

	Let $G=(G_i)_{i\le n}$ be i.i.d. Gaussian matrices with covariance $\Sigma_{\kappa}$, and set
	$\lambda_d:=\|\Sigma_\kappa\|_{\op}$, so that
	$\lambda_d\asymp_\kappa d^{-1}$.
	Proposition~\ref{thm:gaussian-master}~\textup{(i)} gives constants
	$0<m_0<1<M_0<\infty$ such that, with probability tending to one,
	there exists $R_G\in\mathcal R_d$ satisfying
	\[
		m_0I_d\preceq R_G\preceq M_0I_d,
		\qquad
		\mathcal A_{G,\kappa}R_G=0.
	\]
	Set $m:=m_0/2$ and $M:=2M_0$. $\mathcal A_{G,\kappa}R_G=0$ implies that $\langle G_i, R_G\rangle$ is constant over $i$, so write its common value as
	\[
		b_G:=\langle \overline{G},R_G\rangle_F,\quad\textnormal{where}\quad \overline{G}:=\frac{1}{n}\sum_{i=1}^{n}G_i.
	\]
    Since the upper-triangular entries of $G_i$ are independent with $N(0,(\kappa-1)/d)$ on the diagonal and $N(0,1/d)$ on the off-diagonal~\eqref{eq:def:gaussian:features}, $\overline{G}$ have independent upper-triangular entries with variance $O(1/\sqrt{nd})$. Thus, standard operator norm bounds~\cite[Theorem 4.4.5]{vershynin2018highdimensional} gives $\|\overline G\|_{\op}=O_\P(d^{-1})$ for $n\asymp d^2$. Since
	$R_G\succeq0$ and $\Tr R_G=d$,
	\[
		|b_G|
		=
		|\langle\overline G,R_G\rangle_F|
		\le d\|\overline G\|_{\op}
		=O_\P(1).
	\]
Since $R_G\preceq M_0I_d$ and $\Tr R_G=d$, $\|R_G\|_F^2\leq M_0 d$. Thus, $\lambda_d\|R_G\|_F^2=O_\P(1)$, since
$\lambda_d\asymp_\kappa d^{-1}$. Take
\[
\beta_d=(\log d/d)^{1/9}.
\]
Using $\tau_d=O(d^{-1/2})$,
Corollary~\ref{cor:matrix-scale-universality}\textup{(i)}, applied to
$\mathcal R_d^{(m,M)}$, gives
\[
\begin{aligned}
\left|
\Phi_{W(X)}(\mathcal R_d^{(m,M)};\beta_d)
-\Phi_G(\mathcal R_d^{(m,M)};\beta_d)
\right|
=
O_\P\left(
\beta_d^{-2}
\left[d^{-1/2}
+\left(\frac{\log d}{d}\right)^{1/3}\right]
\right)=O_\P(\beta_d).
\end{aligned}
\]
	The Gaussian fit lies in $\mathcal R_d^{(m,M)}$, and its empirical loss
	is zero. Testing the Gaussian ridge objective at $(R_G,b_G)$ therefore 
	gives $\Phi_G(\mathcal R_d^{(m,M)};\beta_d)
		=O_\P(\beta_d).$ Consequently,
	\begin{equation}
		\label{eq:coordinate-small-ridge-proxy}
		\Phi_{W(X)}(\mathcal R_d^{(m,M)};\beta_d)
		=O_\P(\beta_d).
	\end{equation}
	By continuity, this infimum is unchanged on
	$\overline{\mathcal R_d^{(m,M)}}$, where the overline denotes closure.
	Let $(\widehat R_d,\widehat b_d)$ be a minimizer on
	$\overline{\mathcal R_d^{(m,M)}}\times\R$. In particular, $mI_d\preceq\widehat R_d\preceq MI_d$. Since minimizing over the intercept centers the row values, while the
	ridge terms are nonnegative,
	\[
		\frac1{2n}\|\mathcal A_X\widehat R_d\|_2^2
		\le
		\Phi_{W(X)}(\mathcal R_d^{(m,M)};\beta_d).
	\]
	Hence \eqref{eq:coordinate-small-ridge-proxy} gives
	\[
		\frac1n\|\mathcal A_X\widehat R_d\|_2^2
		=
		O_\P(\beta_d)
		=
		o_\P\left(
		\left(\frac{\log d}{d}\right)^{1/10}
		\right).
	\]
    It remains to prove the $\ell_{\infty}$ bound. For each $i$, let $(R^{(i)},b^{(i)})$ minimize the same ridge objective
	over $\overline{\mathcal R_d^{(m,M)}}\times\R$, but with the $i$th loss
	term omitted and the normalization $1/(2n)$ unchanged. Its objective
	value is at most the full objective evaluated at
	$(\widehat R_d,\widehat b_d)$. Thus
	\eqref{eq:coordinate-small-ridge-proxy} gives, simultaneously for all
	$i$,
	\[
		\beta_d(b^{(i)})^2
		\le
		\Phi_{W(X)}(\mathcal R_d^{(m,M)};\beta_d)
		=O_\P(\beta_d),
	\]
	and therefore $\max_{i\le n}|b^{(i)}|=O_\P(1)$. Comparing the full and leave-one-out optimality inequalities gives
	\[
		|\langle W_i,\widehat R_d\rangle_F-\widehat b_d|
		\le
		|\langle W_i,R^{(i)}\rangle_F-b^{(i)}|.
	\]
	Let
	\[
		\mathcal F_X^{(i)}:=\sigma(x_j:j\ne i).
	\]
	Conditionally on $\mathcal F_X^{(i)}$, the pair
	$(R^{(i)},b^{(i)})$ is fixed and independent of $x_i$. On
	$\{|b^{(i)}|\le B\}$, the bounds
	\[
		\|R^{(i)}\|_F\le M\sqrt d,
		\qquad
		\|R^{(i)}\|_{\op}\le M,
	\]
	together with Hanson--Wright inequality in Lemma~\ref{lem:hanson-wright}, give, for $s\ge1$,
	\[
		\P\left\{
		|\langle W_i,R^{(i)}\rangle_F-b^{(i)}|>s,\
		|b^{(i)}|\le B
		\,\middle|\,
		\mathcal F_X^{(i)}
		\right\}
		\le Ce^{-cs}.
	\]
	The constants are uniform in $i$, since the coordinates of $x_i$ are
	independent, have variance one, and satisfy the common subgaussian
	bound $K_x$.

	Choose $B$ so that $\max_i|b^{(i)}|\le B$ with arbitrarily high
	probability, and then take $s=C_0\log d$. A union bound over
	$n=O(d^2)$ rows gives
	\[
		\max_{i\le n}
		|\langle W_i,\widehat R_d\rangle_F-\widehat b_d|
		=O_\P(\log d).
	\]
	Finally,
	\[
		\mathcal A_X\widehat R_d
		=
		P_{\one^\perp}
		\bigl(
		\langle W_i,\widehat R_d\rangle_F-\widehat b_d
		\bigr)_{i\le n},
	\]
	and $\|P_{\one^\perp}v\|_\infty\le2\|v\|_\infty$. Hence
	\[
		\|\mathcal A_X\widehat R_d\|_\infty
		=
		O_\P(\log d)
		=
		o_\P((\log d)^2),
	\]
	which completes the proof.
\end{proof}

\subsection{Removal of ridge regularization}
\label{subsec:feature-ridge-removal-proof}
This section proves Lemma~\ref{lem:feature-ridge-removal} using a Rademacher
complexity argument.

\begin{lemma}
	\label{lem:feature-weighted-operator-control}
	Assume the hypotheses of
	Corollary~\ref{cor:matrix-scale-universality}~\textup{(ii)}, and let
	$\varepsilon_1,\ldots,\varepsilon_n$ be independent Rademacher signs.
	Then
	\begin{equation}
		\label{eq:feature-signed-operator-control}
		\E\left\|\frac1n\sum_{i=1}^n\varepsilon_i W_i\right\|_{\op}
		\le C\frac{\sqrt{\log d}}d.
	\end{equation}
\end{lemma}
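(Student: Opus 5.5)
The plan is to combine the noncommutative Khintchine inequality with a crude second-moment bound on $\sum_i W_i^2$, using only condition $\mathrm{(F1)}$ and the fact that $\|\Sigma_\kappa\|_{\op}\asymp_\kappa d^{-1}$.

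\textbf{Step 1 (Khintchine).} Condition on $W_1,\ldots,W_n$. For fixed symmetric matrices, the noncommutative Khintchine inequality gives
\[
\E_\varepsilon\Big\|\sum_{i=1}^n\varepsilon_iW_i\Big\|_{\op}\le C\sqrt{\log d}\,\Big\|\sum_{i=1}^nW_i^2\Big\|_{\op}^{1/2}.
\]
We then take expectations over the $W_i$ and apply Jensen's inequality:
\[
\E\Big\|\sum_{i=1}^n\varepsilon_iW_i\Big\|_{\op}\le C\sqrt{\log d}\,\Big(\E\Big\|\sum_{i=1}^n W_i^2\Big\|_{\op}\Big)^{1/2}.
\]
Since $n\asymp d^2$, the claim \eqref{eq:feature-signed-operator-control} reduces to showing $\E\|\sum_iW_i^2\|_{\op}\le Cn$. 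Indeed, this gives $n^{-1}\sqrt{\log d}\sqrt{Cn}\asymp\sqrt{\log d}/d$.

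\textbf{Step 2 (the mean).} We split
\[
\E\Big\|\sum_iW_i^2\Big\|_{\op}\le n\|\E W^2\|_{\op}+\E\Big\|\sum_i(W_i^2-\E W^2)\Big\|_{\op}.
\]
For a unit vector $v$, we have $\langle v,W^2v\rangle=\|Wv\|_2^2=\sum_j\langle W,\tfrac12(ve_j^\top+e_jv^\top)\rangle^2$. Each term has expectation at most $\lambda_d\|\tfrac12(ve_j^\top+e_jv^\top)\|_F^2\le\lambda_d$, where $\lambda_d:=\|\Sigma_\kappa\|_{\op}$. Summing over $j$ gives $\|\E W^2\|_{\op}\le d\lambda_d=O_\kappa(1)$. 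This uses only the covariance $\Sigma_\kappa$.

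\textbf{Step 3 (the fluctuation).} We bound the operator norm by the Frobenius norm and use independence and centering:
\[
\E\Big\|\sum_i(W_i^2-\E W^2)\Big\|_F\le\Big(n\,\E\|W^2\|_F^2\Big)^{1/2}\le\big(n\,\E\|W\|_F^4\big)^{1/2}.
\]
To control $\E\|W\|_F^4$, expand in an orthonormal basis $(S_k)$ of $\Ssym^d$, so that $\|W\|_F^2=\sum_k\langle W,S_k\rangle^2$. By Minkowski's inequality in $L^2$ and $\mathrm{(F1)}$,
\[
\big(\E\|W\|_F^4\big)^{1/2}\le\sum_k\|\langle W,S_k\rangle\|_{L^4}^2\le K_{\rm feat}^2\lambda_d\cdot\tfrac{d(d+1)}2\le Cd.
\]
This is the same computation as \eqref{eq:projected-quadratic-feature-fourth-moment}. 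Hence the fluctuation term is at most $C\sqrt n\,d\asymp n$.

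Combining the three steps gives $\E\|\sum_iW_i^2\|_{\op}\le Cn$, which proves the lemma with $C=C(\alpha,\kappa,K_{\rm feat})$. Condition $\mathrm{(F2)}$ and the bound on $\tau_d$ are not needed here.

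The step needing the most care is Step 3. The Frobenius bound is lossy, so one must check that it is still of order $n$: it works only because $n\asymp d^2$ makes $\sqrt n\,d$ comparable to $n$. If that balance failed, I would instead apply matrix Rosenthal/Khintchine to the centered sum $\sum_i(W_i^2-\E W^2)$.
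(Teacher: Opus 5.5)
Your proof is correct, and it takes a genuinely different route from the paper's.

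\textbf{The paper's route.} The paper never conditions on the features. It truncates each summand at $\|W_i\|_F\le L_d:=d/(\log d)^{7/4}$. It then applies matrix Bernstein to the bounded, centered matrices $\varepsilon_iW_i\mathbf 1_{\{\|W_i\|_F\le L_d\}}$; the deterministic variance proxy is bounded by $n\|\E W^2\|_{\op}\le C_\kappa n$, which is the same computation as your Step~2. The discarded part is bounded by $\E\bigl[\|W\|_F\mathbf 1_{\{\|W\|_F>L_d\}}\bigr]\lesssim\Var(\|W\|_F^2)/L_d^3$. Here $\Var(\|W\|_F^2)\le\tau_d^2\dim\Ssym^d$ comes from $\mathrm{(F2)}$ with $T=I$, and this step is where the decay $\tau_d\le C_0(\log d)^{-3}$ is used.

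\textbf{Your route.} You exploit the Rademacher structure directly. Conditionally on the $W_i$, noncommutative Khintchine needs no boundedness of the summands, so no truncation is required. The only input about the features is then $\E\|\sum_iW_i^2\|_{\op}\lesssim n$. You get this from the covariance together with $\E\|W\|_F^4\lesssim d^2$, which $\mathrm{(F1)}$ supplies.

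\textbf{What each buys.} Your argument is shorter and works under weaker hypotheses: neither $\mathrm{(F2)}$ nor any rate on $\tau_d$ is used. It also applies verbatim to the Gaussian array, which satisfies $\mathrm{(F1)}$ by \eqref{eq:gaussian-feature-regularity}. The paper's Bernstein route only ever sees the deterministic proxy $\|\sum_i\E\widetilde W_i^2\|_{\op}$ and the tail of $\|W\|_F$, so it never has to control the fluctuation of $\sum_iW_i^2$. Your route must control that fluctuation, and you do so with the lossy Frobenius estimate $\sqrt n\,d$. As you note, this is harmless only because $n\gtrsim d^2$, and in the lemma's regime it costs nothing.

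Corollary~\ref{cor:matrix-scale-universality}\textup{(ii)} needs $\tau_d\le C_0(\log d)^{-3}$ elsewhere anyway, so the difference does not affect the main results. Your version does show that $\mathrm{(F2)}$ is dispensable for this particular lemma.
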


\begin{proof}
	Set $Y=\|W\|_F^2$. Covariance matching and $\mathrm{(F2)}$ give
	\[
		\E Y=\Tr\Sigma_\kappa=O_\kappa(d),
		\qquad
		\Var(Y)
		\le\tau_d^2\dim(\Ssym^d)
		\le C\frac{d^2}{(\log d)^6}.
	\]
	Moreover, for $v\in S^{d-1}$ and
	$S_{v,j}=(ve_j^\top+e_jv^\top)/2$,
	\[
		v^\top\E W^2v
		=
		\sum_{j=1}^d\E\langle W,S_{v,j}\rangle_F^2
		\le
		\|\Sigma_\kappa\|_{\op}
		\sum_{j=1}^d\|S_{v,j}\|_F^2
		\le C_\kappa,
	\]
	where we used
	$\sum_j\|S_{v,j}\|_F^2=(d+1)/2$ and
	$\|\Sigma_\kappa\|_{\op}=O_\kappa(d^{-1})$. Thus
	$\|\E W^2\|_{\op}\le C_\kappa$. Set 
	\[
		\widetilde W_i
		:=
		\varepsilon_iW_i
		\mathbf 1_{\{\|W_i\|_F\le L_d\}}, \qquad L_d:=\frac{d}{(\log d)^{7/4}}
	\]
	These matrices are independent and centered, with
	$\|\widetilde W_i\|_{\op}\le L_d$ and
	\[
		\left\|\sum_{i=1}^n\E\widetilde W_i^2\right\|_{\op}
		\le C_\kappa n.
	\]
	Matrix Bernstein inequality~\cite[Theorem~5.4.1]{vershynin2018highdimensional}, followed by
	integration of its tail bound, yields
	\[
		\E\left\|\frac1n\sum_{i=1}^n\widetilde W_i\right\|_{\op}
		\le
		C\left(
		\sqrt{\frac{\log d}{n}}+\frac{L_d\log d}{n}
		\right)
		\le C\frac{\sqrt{\log d}}d.
	\]
	Since $\E Y=o(L_d^2)$, on $\{Y>L_d^2\}$ and for all large $d$, $	\sqrt Y\le C\frac{(Y-\E Y)^2}{L_d^3}.$ Consequently,
	\[
		\E\!\left[
		\|W\|_F\mathbf 1_{\{\|W\|_F>L_d\}}
		\right]
		\le
		C\frac{\Var(Y)}{L_d^3}
		\le\frac{C}{d(\log d)^{3/4}}.
	\]
	Decomposing each $W_i$ into its truncated and tail parts and using the
	triangle inequality now gives
	\[
		\E\left\|\frac1n\sum_{i=1}^n\varepsilon_iW_i\right\|_{\op}
		\le
		C\frac{\sqrt{\log d}}d
		+\frac{C}{d(\log d)^{3/4}}
		\le C\frac{\sqrt{\log d}}d,
	\]
	which proves \eqref{eq:feature-signed-operator-control}.
\end{proof}

\begin{proof}[Proof of Lemma~\ref{lem:feature-ridge-removal}]
	Write $\lambda_d=\|\Sigma_\kappa\|_{\op}$.  The proof of
	Lemma~\ref{lem:feature-weighted-operator-control} applies separately to
	the feature and matching Gaussian arrays; for the latter,
	$\tau_d=\sqrt2\lambda_d=O(d^{-1})$.  Hence, for
	$Z\in\{W,G\}$ and independent Rademacher signs,
	\begin{equation}
		\label{eq:feature-common-signed-operator-control}
		\E\left\|\frac1n\sum_{i=1}^n\varepsilon_iZ_i\right\|_{\op}
		\le C\frac{\sqrt{\log d}}d.
	\end{equation}
	For $R\in\mathcal R_d$ and $b\in\R$, define the population affine
	scale
	\[
		\sigma_{R,b}^2
		:=\E\bigl(\langle Z,R\rangle_F-b\bigr)^2
		=\mathcal C_\kappa(R,R)+b^2.
	\]
	This quantity is the same for $Z=W$ and $Z=G$.  Writing
	$R_{\diag}$ for the diagonal part of $R$ and
	$R_{\off}:=R-R_{\diag}$, we have
	\begin{equation}
		\label{eq:feature-covariance-coercivity}
		\mathcal C_\kappa(R,R)
		=\frac1d\left(
		2\|R_{\off}\|_F^2
		+(\kappa-1)\|R_{\diag}\|_F^2
		\right)
		\ge \frac{\min\{2,\kappa-1\}}d\|R\|_F^2.
	\end{equation}
	The explicit form of $\Sigma_\kappa$ gives
	$\lambda_d\asymp_\kappa d^{-1}$.  Therefore $\mathrm{(F1)}$ and
	\eqref{eq:feature-covariance-coercivity} give
	\[
		\|\langle Z,R\rangle_F-b\|_{L^4}
		\le K_{\rm feat}\sqrt{\lambda_d}\|R\|_F+|b|
		\le C\sigma_{R,b}.
	\]
	Fix a constant $A>0$, to be chosen below, and consider the (pointwise separable) class of functions
	\[
	\mathcal F_{Z,A}
		:=
		\left\{
		z\mapsto\frac{\langle z,R\rangle_F-b}{\sigma_{R,b}}:
		R\in\mathcal R_d,\ b\in\R,\ 
		\sigma_{R,b}^2\ge A\log d
		\right\}.
	\]
    Every
	$f\in\mathcal F_{Z,A}$ satisfies
	$\E f(Z)^2=1$ and $\E f(Z)^4\le C_4$, for a constant $C_4$
	independent of $d$.  Paley--Zygmund, applied to $f(Z)^2$, therefore
	gives
	\[
		\inf_{f\in\mathcal F_{Z,A}}
		\P\{|f(Z)|\ge1/2\}
		\ge \frac{9}{16C_4}=:q_0>0.
	\]
	Put $u_0=1/2$, so the small-ball function of the class satisfies
	$Q_{\mathcal F_{Z,A}}(u_0)\ge q_0$.

	Let $\overline\varepsilon=n^{-1}\sum_i\varepsilon_i$.  Since
	$\|R\|_*=\Tr R=d$ on $\mathcal R_d$, nuclear/operator duality and
	\eqref{eq:feature-common-signed-operator-control} give
	\begin{align*}
		\E\sup_{f\in\mathcal F_{Z,A}}
		\left|\frac1n\sum_{i=1}^n\varepsilon_i f(Z_i)\right|
		&\le
		\frac d{\sqrt{A\log d}}\,
		\E\left\|\frac1n\sum_{i=1}^n\varepsilon_iZ_i\right\|_{\op}
		+\E|\overline\varepsilon| \\
		&\le \frac C{\sqrt A}+\frac1{\sqrt n}.
	\end{align*}
	The left-hand side is by definition the \emph{Rademacher complexity}
	$R_n(\mathcal F_{Z,A})$ of the function class. 

    Now that we have a Rademacher complexity bound, we can obtain a lower bound on the squared loss for all sufficiently large $A$ by applying small-ball estimates from the literature. Following the notation of \cite[Theorem~2.1]{koltchinskii-mendelson2013smallest}, we do the following.
    Set $\tau=u_0/2$.  Choose $A$ so large
	that $C/\sqrt A\le\tau q_0/32$, and then take $d$ large enough that
	$n^{-1/2}\le\tau q_0/32$.  It follows that
	\[
		R_n(\mathcal F_{Z,A})
		\le\frac{\tau q_0}{16}
		\le\frac{\tau Q_{\mathcal F_{Z,A}}(2\tau)}{16}.
	\]
	This is exactly the assumption of the small-ball estimate of Koltchinskii and Mendelson
	\cite[Theorem~2.1]{koltchinskii-mendelson2013smallest}, which now gives, with
	probability at least $1-2\exp(-q_0^2n/8)$, the uniform lower bound
	\[
		\inf_{f\in\mathcal F_{Z,A}}\frac1n\sum_{i=1}^nf(Z_i)^2
		\ge\frac{\tau^2q_0}{2}.
	\]
	Thus, with $c_{\rm sb}:=\tau^2q_0/2>0$,
	\begin{equation}
		\label{eq:feature-affine-small-ball-localization}
		\frac1n\sum_{i=1}^n
		\bigl(\langle Z_i,R\rangle_F-b\bigr)^2
		\ge c_{\rm sb}\sigma_{R,b}^2
	\end{equation}
	simultaneously for all $R\in\mathcal R_d$ and $b\in\R$ satisfying
	$\sigma_{R,b}^2\ge A\log d$.  A union bound makes this event
	simultaneous for $Z=W,G$, with probability tending to one.

	It remains to apply this estimate only to the unregularized minimizers.
	Fix $B_{\rm I}>\kappa-1$.  Condition $\mathrm{(F1)}$ gives a
	uniform fourth-moment bound for $\langle Z_i,I_d\rangle_F$, while
	$\E\langle Z_i,I_d\rangle_F^2=\kappa-1$.  Chebyshev's inequality
	therefore gives, simultaneously for $Z=W,G$,
	\[
		\frac1n\sum_{i=1}^n\langle Z_i,I_d\rangle_F^2\le B_{\rm I}
	\]
	with probability tending to one.

	Work on the intersection of these events.  For $Z\in\{W,G\}$, let
	$(R_\star,b_\star)\in\mathcal R_d\times\R$ minimize the
	unregularized affine loss.  Such a minimizer exists because the bias can
	first be minimized explicitly and $\mathcal R_d$ is compact.  Comparison
	with $(I_d,0)$ gives
	\[
		\frac1n\sum_{i=1}^n
		\bigl(\langle Z_i,R_\star\rangle_F-b_\star\bigr)^2
		\le B_{\rm I}.
	\]
	If $\sigma_{R_\star,b_\star}^2\ge A\log d$, then
	\eqref{eq:feature-affine-small-ball-localization} would make the
	left-hand side at least $c_{\rm sb}A\log d>B_{\rm I}$ for all large
	$d$.  Hence
	$\sigma_{R_\star,b_\star}^2<A\log d$.  Using
	\eqref{eq:feature-covariance-coercivity} once more,
	\begin{equation}
		\label{eq:feature-minimizer-affine-localization}
		\lambda_d\|R_\star\|_F^2+b_\star^2
		\le C\log d.
	\end{equation}

	The domain $\mathcal R_d^\circ$ is dense in $\mathcal R_d$, and the
	objective defining $\Phi_Z(\mathcal R_d^\circ;\beta)$ is continuous on
	$\mathcal R_d$.
	Thus its infimum is unchanged on passing to the closure and may be tested
	at $(R_\star,b_\star)$.  This gives
	$\Phi_Z(\mathcal R_d^\circ;\beta)-\Phi_Z(\mathcal R_d^\circ)
	\le C\beta\log d$, while the reverse inequality
	follows because the ridge terms are nonnegative.  The argument holds for $Z\in\{W,G\}$ and every $\beta>0$, completing the
	proof.
\end{proof}

\section{The Gaussian feature model}
\label{sec:gaussian-anisotropy-threshold}

This section proves Proposition~\ref{thm:gaussian-master}, the Gaussian analog of the ellipsoid fitting. 
The proof is based on Gaussian process techniques and convex geometry. First, Lemma~\ref{lem:projected-anisotropic-gaussian-representation}
rewrites the problem in terms of an i.i.d. Gaussian matrix
acting on a rescaled positive-semidefinite cone. The Gaussian min--max theorem 
then reduces the analysis to an ``auxiliary'' problem which is much easier to analyze. Proposition~\ref{prop:shifted-cone-projections} then evaluates the auxiliary problem: using the semicircle law for GOE matrices, it reduces to a scalar variational problem which is explicitly solved in Section~\ref{subsec:auxiliary}.  For our exact fitting results, we need the existence of a well-conditioned fit in the Gaussian analog, which is shown in the last subsection.

Throughout, we let $N_d:=d(d+1)/2$ be the dimension of $\Ssym^d$.
For $R\in\Ssym^d$, let $R_{\diag}$ be its
diagonal part and $R_{\off}:=R-R_{\diag}$.  For $\kappa>1$, set
$$
c_\kappa:=\frac{\kappa-3}{2},
\qquad
T_\kappa R
:=R_{\off}+\sqrt{\frac{\kappa-1}{2}}\,R_{\diag},
\qquad
\mathbb S_\kappa^d:=T_\kappa(\Ssym_+^d).
$$
Thus $\mathbb S_\kappa^d$ is the image of the positive-semidefinite cone
under the diagonal rescaling determined by $\kappa$.  Since $T_\kappa$
is invertible, $\mathbb S_\kappa^d$ is a closed convex cone. With this notation we have
\begin{equation}\label{eq:gaussian-section-covariance}
	\mathcal C_\kappa(R,S)
	:=
	\frac1d\left(
	2\Tr(RS)+(\kappa-3)
	\langle R_{\diag},S_{\diag}\rangle_F
	\right)
	=\frac2d\langle T_\kappa R,T_\kappa S\rangle_F .
\end{equation}

Let $\mathsf G_d$ be a GOE matrix normalized so its diagonal entries are i.i.d. $N(0,1)$, its
upper-triangular off-diagonal entries are i.i.d. $N(0,1/2)$, and these
entries are independent.  Equivalently,
$\langle\mathsf G_d,R\rangle_F\sim N(0,\|R\|_F^2)$ for every deterministic
$R\in\Ssym^d$.

\paragraph{Dual cone and Moreau decomposition.} The case that $\kappa = 3$ is special and easier to analyze, because $T_{3} = I$ and $c_{\kappa} = 0$. For general $\kappa$, it is helpful to observe a related duality from convex geometry between the cases $\kappa < 3$ and $\kappa > 3$.
For a nonempty closed convex set $\mathcal K\subset\R^N$, write
$\Pi_{\mathcal K}(z):=\operatorname*{argmin}_{y\in\mathcal K}\|z-y\|_2$
for the Euclidean projection onto $\mathcal K$.  For a convex cone
$\mathcal C\subset\R^N$, define its dual cone and polar cone by
$$
\mathcal C^*
:=\{u\in\R^N:\langle u,y\rangle\ge0
\text{ for every }y\in\mathcal C\},
\qquad
\mathcal C^\circ
:=\{u\in\R^N:\langle u,y\rangle\le0
\text{ for every }y\in\mathcal C\}.
$$
Thus $\mathcal C^\circ=-\mathcal C^*$.

\begin{lemma}[Moreau decomposition for closed convex cones
	{\cite[Section~6.3]{bauschke2017convex}}]
	\label{lem:moreau-cone-decomposition}
	Let $\mathcal C\subset\R^N$ be a closed convex cone.  Then, for every
	$z\in\R^N$,
	\begin{equation}
		\label{eq:moreau-cone-decomposition}
		z=\Pi_{\mathcal C}(z)+\Pi_{\mathcal C^\circ}(z),
		\qquad
		\left\langle
		\Pi_{\mathcal C}(z),\Pi_{\mathcal C^\circ}(z)
		\right\rangle=0.
	\end{equation}
	Consequently,
$\|z\|_2^2
		=\|\Pi_{\mathcal C}(z)\|_2^2
		+\|\Pi_{\mathcal C^\circ}(z)\|_2^2$
	and
	\begin{equation}
		\label{eq:cone-projection-support}
		\sup_{\substack{y\in\mathcal C\\ \|y\|_2\le1}}
		\langle z,y\rangle
		=\|\Pi_{\mathcal C}(z)\|_2.
	\end{equation}
\end{lemma}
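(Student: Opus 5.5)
The plan is to prove the three claims of Lemma~\ref{lem:moreau-cone-decomposition} from the variational characterization of Euclidean projection onto a closed convex set. For closed convex $\mathcal K$, $p=\Pi_{\mathcal K}(z)$ exists, is unique, and is characterized by
\[
\langle z-p,y-p\rangle\le 0\quad\text{for all } y\in\mathcal K .
\]
I will specialize this to the cone $\mathcal C$, identify $z-p$ as the projection onto $\mathcal C^\circ$, and then read off the Pythagorean identity and the support-function formula.

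\textbf{Step 1: cone form of the optimality condition.} Set $p:=\Pi_{\mathcal C}(z)$ and $q:=z-p$. Since $\mathcal C$ is a cone containing $0$, both $y=0$ and $y=2p$ lie in $\mathcal C$. Testing the inequality with these gives $\langle q,p\rangle\ge0$ and $\langle q,p\rangle\le0$, so $\langle q,p\rangle=0$. The inequality then reduces to $\langle q,y\rangle\le0$ for every $y\in\mathcal C$, which says $q\in\mathcal C^\circ$.

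\textbf{Step 2: $q$ is the projection onto $\mathcal C^\circ$.} The polar $\mathcal C^\circ$ is a closed convex cone, so it suffices to check the same optimality condition there. For $u\in\mathcal C^\circ$ we have
\[
\langle z-q,u-q\rangle=\langle p,u\rangle-\langle p,q\rangle=\langle p,u\rangle\le0,
\]
using $p\in\mathcal C$ and $u\in\mathcal C^\circ$. Hence $q=\Pi_{\mathcal C^\circ}(z)$, which gives \eqref{eq:moreau-cone-decomposition}. The norm identity $\|z\|_2^2=\|p\|_2^2+\|q\|_2^2$ then follows from orthogonality.

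\textbf{Step 3: the support formula \eqref{eq:cone-projection-support}.} For $y\in\mathcal C$ with $\|y\|_2\le1$,
\[
\langle z,y\rangle=\langle p,y\rangle+\langle q,y\rangle\le\langle p,y\rangle\le\|p\|_2,
\]
since $\langle q,y\rangle\le 0$ and by Cauchy--Schwarz. For equality: if $p\ne0$, take $y=p/\|p\|_2\in\mathcal C$, which gives $\langle z,y\rangle=\|p\|_2^2/\|p\|_2=\|p\|_2$ because $\langle q,p\rangle=0$. If $p=0$, take $y=0$, which gives $0=\|p\|_2$.

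No step here is a real obstacle. The only delicate point is the cone trick in Step~1, testing with $y=0$ and $y=2p$, which turns the variational inequality into orthogonality plus polar membership. Existence, uniqueness and the characterization of projections onto closed convex sets are standard, and I would cite \cite{bauschke2017convex} for them.
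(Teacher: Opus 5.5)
Your proposal is correct and matches the paper: the paper cites \cite{bauschke2017convex} for the decomposition (which your Steps~1--2 derive from the projection characterization) and proves the support formula \eqref{eq:cone-projection-support} exactly as in your Step~3. That argument drops $\langle \Pi_{\mathcal C^\circ}(z),y\rangle\le 0$, applies Cauchy--Schwarz, and attains the bound at $y\propto\Pi_{\mathcal C}(z)$.
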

The last conclusion above follows by writing
$
\langle z,y\rangle
= 
\langle \Pi_{\mathcal C}(z) + \Pi_{\mathcal C^\circ}(z),y\rangle \le \langle \Pi_{\mathcal C}(z), y \rangle$ and using Cauchy Schwarz to show the maximizing $y$ is proportional to $\Pi_{\mathcal C}(z)$. 

We can now define the key involution $\kappa\mapsto\kappa^\vee$ that exchanges
$1<\kappa<3$ with $\kappa>3$.  Its fixed point is $\kappa=3$, which corresponds to the fact that
$\mathbb S_3^d=\Ssym_+^d$ is self-dual.

\begin{lemma}
	\label{lem:rescaled-psd-cone-duality}
	For $\kappa>1$, define $\kappa^\vee:=1+4/(\kappa-1)$.
	Then
	$$
	(\kappa^\vee)^\vee=\kappa,
	\qquad
	T_{\kappa^\vee}=T_\kappa^{-1},
	\qquad
	c_{\kappa^\vee}=-\frac{c_\kappa}{1+c_\kappa}.
	$$
	Moreover,
	$
	(\mathbb S_\kappa^d)^*=\mathbb S_{\kappa^\vee}^d$ and
	$(\mathbb S_\kappa^d)^\circ=-\mathbb S_{\kappa^\vee}^d.
	$
\end{lemma}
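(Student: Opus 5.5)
The plan is to verify the three scalar identities directly, then obtain the duality statement from the self-duality of $\Ssym_+^d$ and the fact that $T_\kappa$ is self-adjoint.

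\emph{Scalar identities.} Write $s_\kappa:=\sqrt{(\kappa-1)/2}$, so that $T_\kappa$ acts as the identity on off-diagonal entries and as multiplication by $s_\kappa$ on the diagonal. Since $\kappa^\vee-1=4/(\kappa-1)$, we get
\[
(\kappa^\vee)^\vee=1+\frac{4}{\kappa^\vee-1}=1+(\kappa-1)=\kappa.
\]
We also get $s_{\kappa^\vee}=\sqrt{2/(\kappa-1)}=1/s_\kappa$, and hence $T_{\kappa^\vee}=T_\kappa^{-1}$. For the constant, note $1+c_\kappa=(\kappa-1)/2$. Then
\[
c_{\kappa^\vee}=\frac{\kappa^\vee-3}{2}=\frac{2}{\kappa-1}-1=\frac{3-\kappa}{\kappa-1}=-\frac{c_\kappa}{1+c_\kappa}.
\]

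\emph{Duality.} The key observation is that $T_\kappa$ is self-adjoint with respect to the Frobenius inner product: it scales the diagonal and off-diagonal coordinates separately, and these are Frobenius-orthogonal. Hence, for any $U\in\Ssym^d$,
\[
U\in(T_\kappa\Ssym_+^d)^*\iff \langle U,T_\kappa P\rangle_F\ge0\ \ \forall P\succeq0 \iff \langle T_\kappa U,P\rangle_F\ge0\ \ \forall P\succeq0 \iff T_\kappa U\in(\Ssym_+^d)^*=\Ssym_+^d.
\]
The last equality is the self-duality of the PSD cone (Fejér). The chain therefore gives $U\in T_\kappa^{-1}\Ssym_+^d=T_{\kappa^\vee}\Ssym_+^d=\mathbb S_{\kappa^\vee}^d$. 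The polar statement then follows from $\mathcal C^\circ=-\mathcal C^*$.

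There is no real obstacle here. The only point needing care is that the dual cone is taken inside $\Ssym^d$ with the Frobenius inner product. This is consistent with the identification through $\operatorname{vec}$, which is an isometry, so the dual cone is the same whether computed in $\Ssym^d$ or in $\R^{N_d}$.
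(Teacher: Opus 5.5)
Your proposal is correct and follows essentially the same route as the paper. You verify the scalar identities by direct computation, then obtain $(\mathbb S_\kappa^d)^*=\mathbb S_{\kappa^\vee}^d$ from the Frobenius self-adjointness of $T_\kappa$, the self-duality of $\Ssym_+^d$, and $T_\kappa^{-1}=T_{\kappa^\vee}$; the polar statement then follows from $\mathcal C^\circ=-\mathcal C^*$.
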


\begin{proof}
	The first three identities follow directly from the definitions.  Since
	$T_\kappa$ is self-adjoint for the Frobenius inner product and
	$\Ssym_+^d$ is self-dual,
	\begin{align*}
		Y\in(\mathbb S_\kappa^d)^*
		&\Longleftrightarrow
		\langle T_\kappa Y,R\rangle_F\ge0
		\quad\text{for every }R\succeq0\\
		&\Longleftrightarrow T_\kappa Y\succeq0
		\Longleftrightarrow
		Y\in\mathbb S_{\kappa^\vee}^d.
	\end{align*}
	This proves the dual- and polar-cone identities.
\end{proof}

\subsection{CGMT reduction to a shifted cone projection}

We recall (a special case of) the nonasymptotic CGMT \cite{gordon1988milman,thrampoulidis-oymak-hassibi2015regularized}. The comparison relates the ``primary optimization'' $\Phi$ involving a Gaussian random matrix $\mathsf A$ to the ``auxiliary optimization'' $\phi$ which is much easier to directly analyze.

\begin{theorem}[Convex Gaussian min--max theorem (CGMT)
	{\cite[Theorem~3\textup{(i)--(ii)}]{thrampoulidis-oymak-hassibi2015regularized}}]
	\label{thm:cgmt-specialized}
	Let $K\subset\R^D$ be nonempty, compact, and convex.  Let
	$\mathsf A\in\R^{(n-1)\times D}$, $g\in\R^{n-1}$, and $h\in\R^D$ be
	independent and have i.i.d. standard Gaussian entries.  Define
	$$
	\Phi(K):=\inf_{x\in K}\|\mathsf A x\|_2,
	\qquad
	\phi(K):=\inf_{x\in K}
	\bigl(\|g\|_2\|x\|_2-\langle h,x\rangle\bigr)_+ .
	$$
	Then, for every $c\in\R$,
	$$
	\Pp\{\Phi(K)<c\}\le2\Pp\{\phi(K)\le c\},
	\qquad
	\Pp\{\Phi(K)>c\}\le2\Pp\{\phi(K)\ge c\}.
	$$
\end{theorem}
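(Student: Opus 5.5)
The plan is to derive the statement from Gordon's Gaussian comparison inequality. It is run once in the min--max direction for the lower tail, and once in the max--min direction, after a minimax swap, for the upper tail.

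\textbf{Variational form.} First I would rewrite both quantities as bilinear min--max problems over $K\times B_2^{n-1}$. Since $\|\mathsf A x\|_2=\sup_{\|u\|_2\le1}\langle u,\mathsf A x\rangle$, we have $\Phi(K)=\inf_{x\in K}\sup_{\|u\|_2\le1}\langle u,\mathsf Ax\rangle$. For the auxiliary side, consider
\[
\inf_{x\in K}\sup_{\|u\|_2\le1}\bigl(\|x\|_2\langle g,u\rangle-\|u\|_2\langle h,x\rangle\bigr).
\]
For fixed $x$ and $\|u\|_2=r$, the supremum is $r(\|x\|_2\|g\|_2-\langle h,x\rangle)$. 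Maximizing over $r\in[0,1]$ gives exactly $(\|g\|_2\|x\|_2-\langle h,x\rangle)_+$, so this auxiliary problem is $\phi(K)$. Because $h\overset{d}{=}-h$, the sign in front of $\langle h,x\rangle$ does not matter in law.

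\textbf{Lower tail.} I would apply Gordon's theorem to the two centered Gaussian processes
\[
X_{x,u}=\langle u,\mathsf Ax\rangle+\|x\|_2\|u\|_2\,\gamma,
\qquad
Y_{x,u}=\|x\|_2\langle g,u\rangle+\|u\|_2\langle h,x\rangle,
\]
indexed by the compact set $K\times B_2^{n-1}$, where $\gamma\sim N(0,1)$ is independent of everything else. A direct covariance computation should show that $\E X_{x,u}^2=\E Y_{x,u}^2$. It should also show that $\E X_{x,u}X_{x',u'}-\E Y_{x,u}Y_{x',u'}=(\|x\|\|x'\|-\langle x,x'\rangle)(\|u\|\|u'\|-\langle u,u'\rangle)\ge0$, with equality whenever $x=x'$. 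These are exactly Gordon's hypotheses. Compactness and continuity make the $\min$/$\max$ well defined, so Gordon's theorem gives
\[
\P\Bigl(\min_x\max_u X_{x,u}\ge c\Bigr)\ge\P\Bigl(\min_x\max_u Y_{x,u}\ge c\Bigr).
\]
To remove $\gamma$, I would condition on $\gamma\le0$, which has probability $1/2$. On that event $\min\max X\le\Phi(K)$. Running the complementary form of the inequality then yields $\P(\Phi(K)<c)\le2\P(\phi(K)\le c)$. This part uses no convexity.

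\textbf{Upper tail.} Here I would use convexity. Both $K$ and $B_2^{n-1}$ are convex and compact, and $\langle u,\mathsf Ax\rangle$ is bilinear, so Sion's minimax theorem gives $\Phi(K)=\sup_u\inf_x\langle u,\mathsf Ax\rangle$. Applying the previous step to the negated processes, with the roles of $x$ and $u$ exchanged, gives $\P(\Phi(K)>c)\le2\P(\tilde\phi\ge c)$, where $\tilde\phi=\sup_u\inf_x Y_{x,u}$. Since $\sup\inf\le\inf\sup$, we have $\tilde\phi\le\phi(K)$ pointwise, and hence $\P(\tilde\phi\ge c)\le\P(\phi(K)\ge c)$.

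\textbf{Main obstacle.} I expect the main difficulty to be the bookkeeping in the Gordon step: verifying the covariance inequalities with the correct signs, and keeping track of the factor $2$ coming from the auxiliary $\gamma$. Alternatively, one could simply cite \cite[Theorem~3]{thrampoulidis-oymak-hassibi2015regularized} with $\psi\equiv0$ and $S_u=B_2^{n-1}$, after checking that the reduction to $\phi(K)$ above matches their auxiliary objective.
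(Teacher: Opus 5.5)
Your proposal is correct. It is essentially the standard Gordon-comparison proof behind \cite[Theorem~3]{thrampoulidis-oymak-hassibi2015regularized}: the auxiliary $\gamma$, with the factor $2$ coming from $\P(\gamma\le0)=1/2$, handles the lower tail, and Sion on the primary side together with $\sup\inf\le\inf\sup$ on the auxiliary side handles the upper tail, while the paper simply cites that theorem with $\psi\equiv0$ and $S_u=B_2^{n-1}$ (your closing alternative), using the same identity $\max_{\|u\|_2\le1}\bigl(\|x\|_2\langle g,u\rangle-\|u\|_2\langle h,x\rangle\bigr)=\bigl(\|g\|_2\|x\|_2-\langle h,x\rangle\bigr)_+$. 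The only step you leave implicit is transferring Gordon's inequality from finite index sets to $K\times B_2^{n-1}$ via finite nets and almost-sure continuity of the processes, which is routine.
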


\begin{lemma}[Projected Gaussian representation]
	\label{lem:projected-anisotropic-gaussian-representation}
	Let $\mathsf A\in\R^{(n-1)\times N_d}$ have i.i.d. standard Gaussian entries, acting
	on $\Ssym^d$ with the Frobenius inner product.  Then, as processes indexed
	by $R\in\Ssym^d$,
	\begin{equation}
		\label{eq:projected-gaussian-A-representation}
		\mathcal A_{G,\kappa}(R) \stackrel{d}{=} \sqrt{\frac2d}\,\mathsf A T_\kappa R .
	\end{equation}
\end{lemma}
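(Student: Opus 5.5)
We first reduce the identity to a statement about covariances of centered Gaussian processes. Both sides of \eqref{eq:projected-gaussian-A-representation} are linear in $R$ and jointly Gaussian in the underlying randomness. Both take values in a space of dimension $n-1$: on the left we identify $\one^\perp$ with $\R^{n-1}$ through a fixed orthonormal basis $q_1,\ldots,q_{n-1}$. Hence it suffices to show that the two processes have the same covariance, namely that for all $R,S\in\Ssym^d$ and all $a,b\le n-1$,
\[
\E\bigl[\langle q_a,\mathcal A_{G,\kappa}R\rangle\langle q_b,\mathcal A_{G,\kappa}S\rangle\bigr]
=\frac2d\,\E\bigl[(\mathsf A T_\kappa R)_a(\mathsf A T_\kappa S)_b\bigr].
\]

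On the right-hand side, let $\mathsf a_a\in\Ssym^d$ denote the rows of $\mathsf A$. These are i.i.d. standard Gaussian vectors in $(\Ssym^d,\langle\cdot,\cdot\rangle_F)$ after identification through $\operatorname{vec}$. Therefore
\[
\E[\langle \mathsf a_a,T_\kappa R\rangle\langle \mathsf a_b,T_\kappa S\rangle]=\delta_{ab}\langle T_\kappa R,T_\kappa S\rangle_F,
\]
and the right-hand side equals $\delta_{ab}\,\tfrac2d\langle T_\kappa R,T_\kappa S\rangle_F=\delta_{ab}\,\mathcal C_\kappa(R,S)$ by \eqref{eq:gaussian-section-covariance}.

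On the left-hand side, write $\langle q_a,\mathcal A_{G,\kappa}R\rangle=\sum_i (q_a)_i\langle G_i,R\rangle$, using $P_{\one^\perp}q_a=q_a$. Since the $G_i$ are independent with $\E\langle G_i,R\rangle\langle G_i,S\rangle=\mathcal C_\kappa(R,S)$ (the defining property of \eqref{eq:def:gaussian:features}), the covariance is $\sum_i (q_a)_i(q_b)_i\,\mathcal C_\kappa(R,S)=\delta_{ab}\mathcal C_\kappa(R,S)$ by orthonormality. The two covariances therefore agree, and equality in law of the processes follows.

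The only step requiring care is the bookkeeping. First, the identity \eqref{eq:gaussian-section-covariance}, $\mathcal C_\kappa=\tfrac2d\langle T_\kappa\cdot,T_\kappa\cdot\rangle_F$, must be checked. Off-diagonal parts contribute $\tfrac2d\langle R_{\off},S_{\off}\rangle$. Diagonal parts contribute $\tfrac1d(2+\kappa-3)\langle R_{\diag},S_{\diag}\rangle=\tfrac2d\cdot\tfrac{\kappa-1}{2}\langle R_{\diag},S_{\diag}\rangle$, which matches the $\sqrt{(\kappa-1)/2}$ scaling in $T_\kappa$. Second, the statement "$\mathcal A_{G,\kappa}\stackrel d= \ldots$" must be interpreted via the isometry $\one^\perp\cong\R^{n-1}$. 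This representation is exactly what the CGMT comparison in Theorem~\ref{thm:cgmt-specialized} will be applied to, and it does not affect norms.
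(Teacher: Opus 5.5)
Your proof is correct and follows essentially the same route as the paper: fix an orthonormal identification of $\one^\perp$ with $\R^{n-1}$, observe that both sides are centered Gaussian processes linear in $R$, and match their covariances using $\mathcal C_\kappa(R,S)=\frac2d\langle T_\kappa R,T_\kappa S\rangle_F$. You are more explicit than the paper in computing the cross-covariances between different coordinates $a\ne b$ and in verifying that identity, but the argument is the same.
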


\begin{proof}
	By \eqref{eq:gaussian-section-covariance}, one row of the process on the
	right has covariance
	$$
	\E\left[
	\left(\sqrt{\frac2d}\langle\mathsf G_d,T_\kappa R\rangle_F\right)
	\left(\sqrt{\frac2d}\langle\mathsf G_d,T_\kappa S\rangle_F\right)
	\right]
	=\frac2d\langle T_\kappa R,T_\kappa S\rangle_F
	=\mathcal C_\kappa(R,S).
	$$
	Here $\mathsf G_d$ has the GOE normalization fixed above.  Since the
	Gaussian features $G_i$ are i.i.d., an orthogonal identification of $\one^\perp$
	with $\R^{n-1}$ turns their projection by $P_{\one^\perp}$ into
	$n-1$ independent rows with this covariance.
\end{proof}

We use the preceding representation to apply
Theorem~\ref{thm:cgmt-specialized}. Let
$\mathsf A\in\R^{(n-1)\times N_d}$ have i.i.d. standard Gaussian entries,
let $g\sim N(0,I_{n-1})$, and let $\mathsf G_d$ have the GOE law above,
with all three objects mutually independent. Define the normalized primary
and signed auxiliary values by
\begin{align*}
	\Phi_d
	&:=\frac1{d^{3/2}}\inf_{R\in\mathcal R_d}
	\|\mathsf A T_\kappa R\|_2,\\
	\widehat{\phi}_d
	&:=\frac1{d^{3/2}}\inf_{R\in\mathcal R_d}
	\left\{
	\|g\|_2\|T_\kappa R\|_F
	-\langle\mathsf G_d,T_\kappa R\rangle_F
	\right\}.
\end{align*}
Both values are normalized to remain of constant order when $n\asymp d^2$. The hat
records that this is the CGMT auxiliary prediction \emph{before taking its positive
part}; this distinction is important in the SAT phase. Formally:
\begin{lemma}
For every
$c\in\R$,
\begin{equation}\label{eq:cgmt-comparisons}
	\Pp\{\Phi_d<c\}
	\le2\Pp\{(\widehat{\phi}_d)_+\le c\},\qquad
	\Pp\{\Phi_d>c\}
	\le2\Pp\{(\widehat{\phi}_d)_+\ge c\}.
\end{equation}
\end{lemma}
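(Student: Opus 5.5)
This is a direct instantiation of Theorem~\ref{thm:cgmt-specialized}, followed by a rescaling. The plan has three steps.

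First, set $D=N_d$ and apply the CGMT to the compact convex set
\[
K:=T_\kappa(\mathcal R_d)\subset\Ssym^d\cong\R^{N_d}.
\]
This set is nonempty, convex and compact, because $\mathcal R_d$ is the trace-$d$ slice of the PSD cone (bounded, since $\|R\|_F\le\Tr R$) and $T_\kappa$ is an invertible linear map.

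Second, identify $\Ssym^d$ isometrically with $\R^{N_d}$ via $\operatorname{vec}$. Under this identification the Gaussian vector $h$ in Theorem~\ref{thm:cgmt-specialized} corresponds to $\mathsf G_d$. Indeed, $\langle\mathsf G_d,Y\rangle_F\sim N(0,\|Y\|_F^2)$ for every deterministic $Y$, so $\operatorname{vec}^{-1}$ of a standard Gaussian vector has the GOE law fixed above. Hence
\[
\langle h,x\rangle\stackrel{d}{=}\langle\mathsf G_d,Y\rangle_F \quad\text{jointly in } Y=T_\kappa R\in K.
\]
Because the three objects are independent, $(\mathsf A,g,\mathsf G_d)$ has the same joint law as the triple in the CGMT. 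Therefore
\[
\Phi(K)=d^{3/2}\Phi_d,\qquad \phi(K)=\inf_{R\in\mathcal R_d}\bigl(\|g\|_2\|T_\kappa R\|_F-\langle\mathsf G_d,T_\kappa R\rangle_F\bigr)_+ \quad\text{in law}.
\]

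Third, commute the positive part with the infimum. The map $t\mapsto t_+$ is nondecreasing and continuous, so
\[
\inf_x (f(x))_+=\Bigl(\inf_x f(x)\Bigr)_+ .
\]
It follows that $\phi(K)=d^{3/2}(\widehat\phi_d)_+$. Now apply the two CGMT inequalities with threshold $d^{3/2}c$ in place of $c$ and divide by $d^{3/2}>0$; this yields \eqref{eq:cgmt-comparisons} for every $c\in\R$.

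The main point needing care is the identification of $h$ with $\mathsf G_d$. One must check that the vec isometry (off-diagonal coordinates scaled by $\sqrt2$) sends the GOE normalization chosen here, diagonal $N(0,1)$ and off-diagonal $N(0,1/2)$, exactly to a standard Gaussian vector. Since $\operatorname{vec}(\mathsf G_d)$ has $\sqrt2\cdot N(0,1/2)=N(0,1)$ off-diagonal coordinates, this holds. Everything else is bookkeeping.
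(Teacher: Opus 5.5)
Your proposal is correct and follows the paper's proof: apply Theorem~\ref{thm:cgmt-specialized} to the compact convex set $K=T_\kappa\mathcal R_d$ under the Frobenius isometry $\Ssym^d\cong\R^{N_d}$, identify $h$ with $\mathsf G_d$, and rescale by $d^{3/2}$. The paper also invokes $-\mathsf G_d\stackrel d=\mathsf G_d$, which is only a sign check that your direct identification already covers, and your interchange $\inf(\cdot)_+=(\inf\cdot)_+$ spells out a step the paper leaves implicit.
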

\begin{proof}
Apply Theorem~\ref{thm:cgmt-specialized} to
$K=T_\kappa\mathcal R_d$, identify $\R^{N_d}$ isometrically with
$\Ssym^d$, and use
$-\mathsf G_d\stackrel d=\mathsf G_d$.
\end{proof}

It remains to evaluate the signed auxiliary value $\widehat{\phi}_d$;
the first step is the following lemma, which rewrites it in terms of cone projections. For
$\zeta\in\R$, set
$$
\mathsf G_{d,\kappa}(\zeta)
:=\mathsf G_d+\zeta\sqrt{2d/(\kappa-1)}\,I_d.
$$
\begin{lemma}
	\label{lem:signed-auxiliary-projection-crossing}
	One has
	\begin{equation}\label{eq:signed-auxiliary-projection-crossing}
		\widehat{\phi}_d
		=
		\sup\left\{\zeta:
		\left\|\Pi_{\mathbb S_\kappa^d}
		\bigl(\mathsf G_{d,\kappa}(\zeta)\bigr)\right\|_F
		\le\|g\|_2
		\right\}.
	\end{equation}
\end{lemma}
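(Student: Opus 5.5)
The plan is to change variables $S:=T_\kappa R$, so the trace constraint becomes linear in $S$ and the objective becomes a difference of a norm and an inner product. On $\mathcal R_d$, $\Tr R=d$ means $\langle I_d,R\rangle_F=d$. Since $T_\kappa$ acts as $\sqrt{(\kappa-1)/2}$ on diagonals and $T_\kappa I_d=\sqrt{(\kappa-1)/2}\,I_d$, we have
\[
\langle I_d,R\rangle_F=\sqrt{\tfrac{2}{\kappa-1}}\,\langle I_d,S\rangle_F .
\]
Thus $T_\kappa\mathcal R_d=\{S\in\mathbb S^d_\kappa:\langle I_d,S\rangle_F=d\sqrt{(\kappa-1)/2}\}$. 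Writing $a:=\|g\|_2$, the quantity $d^{3/2}\widehat\phi_d$ is the infimum of $f(S):=a\|S\|_F-\langle\mathsf G_d,S\rangle_F$ over this slice of the cone.

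\textbf{Step 1 (threshold characterization).} I would characterize $\widehat\phi_d$ as a threshold in a scalar parameter. For $\zeta\in\R$, the shift gives
\[
\langle \mathsf G_{d,\kappa}(\zeta),S\rangle_F=\langle\mathsf G_d,S\rangle_F+\zeta\sqrt{\tfrac{2d}{\kappa-1}}\langle I_d,S\rangle_F=\langle\mathsf G_d,S\rangle_F+\zeta d^{3/2}
\]
on the slice. Hence $f(S)-\zeta d^{3/2}=a\|S\|_F-\langle\mathsf G_{d,\kappa}(\zeta),S\rangle_F$ there. It follows that $\widehat\phi_d\ge\zeta$ if and only if $a\|S\|_F\ge\langle \mathsf G_{d,\kappa}(\zeta),S\rangle_F$ for all $S$ in the slice. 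By homogeneity, and since every nonzero $S\in\mathbb S^d_\kappa$ has $\langle I_d,S\rangle_F>0$ (as $T_\kappa^{-1}S\succeq0$ is nonzero and the trace is preserved up to a positive factor), this is equivalent to the same inequality for all $S\in\mathbb S^d_\kappa$. By \eqref{eq:cone-projection-support} of Lemma~\ref{lem:moreau-cone-decomposition}, this in turn says exactly that $\|\Pi_{\mathbb S^d_\kappa}(\mathsf G_{d,\kappa}(\zeta))\|_F\le a$.

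\textbf{Step 2 (conclude the identity).} The previous step gives $\{\zeta:\widehat\phi_d\ge\zeta\}=\{\zeta:\|\Pi(\mathsf G_{d,\kappa}(\zeta))\|_F\le a\}$. The left set is $(-\infty,\widehat\phi_d]$ because the infimum is finite: $\mathcal R_d$ is compact. Taking suprema yields \eqref{eq:signed-auxiliary-projection-crossing}.

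As a consistency check, $\zeta\mapsto\|\Pi(\mathsf G_{d,\kappa}(\zeta))\|_F$ should be nondecreasing. It is the support function of $\mathsf G_{d,\kappa}(\zeta)$ over the cone's unit ball, and $\langle I_d,S\rangle_F\ge0$ on the cone, so this holds, though it is not logically needed.

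\textbf{Main obstacle.} The main care point is the homogeneity step, namely the claim that the slice generates the whole cone. It rests on the positivity of $\langle I_d,S\rangle_F$ on $\mathbb S^d_\kappa\setminus\{0\}$, which I verify via $\langle I_d,T_\kappa R\rangle_F=\sqrt{(\kappa-1)/2}\,\Tr R>0$ for nonzero $R\succeq0$. The case $S=0$ is trivial.
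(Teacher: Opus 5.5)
Your proposal is correct and follows the paper's own argument: you rescale from the trace slice to the whole cone by homogeneity, which uses positivity of $\langle I_d,T_\kappa R\rangle_F$. You convert $\zeta\sqrt d\,\Tr R$ into the $I_d$-shift defining $\mathsf G_{d,\kappa}(\zeta)$ via $\Tr R=\sqrt{2/(\kappa-1)}\,\langle I_d,T_\kappa R\rangle_F$, and you finish with the support identity \eqref{eq:cone-projection-support}. The only difference is that you substitute $S=T_\kappa R$ before the homogeneity step rather than after, which is immaterial.
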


\begin{proof}
	For every $\zeta\in\R$, positive homogeneity gives
	$$
	\zeta\le\widehat{\phi}_d
	\quad\Longleftrightarrow\quad
	\|g\|_2\|T_\kappa R\|_F
	-\langle\mathsf G_d,T_\kappa R\rangle_F
	\ge\zeta\sqrt d\,\Tr R
	\qquad\text{for every }R\succeq0.
	$$
	Indeed, restricting to $\Tr R=d$ gives one implication, while every
	nonzero $R\succeq0$ can be rescaled to have trace $d$, giving the other.
	
	If $Y=T_\kappa R$, then
	$\Tr R=\langle T_\kappa^{-1}I_d,Y\rangle_F
	=\sqrt{2/(\kappa-1)}\langle I_d,Y\rangle_F$.  The preceding condition is
	therefore equivalent to
	$$
	\langle \mathsf G_{d,\kappa}(\zeta),Y\rangle_F
	\le\|g\|_2\|Y\|_F
	\qquad\text{for every }Y\in\mathbb S_\kappa^d.
	$$
	By the projection formula \eqref{eq:cone-projection-support}, this holds
	if and only if
	$$
	\left\|\Pi_{\mathbb S_\kappa^d}
	\bigl(\mathsf G_{d,\kappa}(\zeta)\bigr)\right\|_F
	\le\|g\|_2.
	$$
	Taking the supremum over such $\zeta$ proves
	\eqref{eq:signed-auxiliary-projection-crossing}.
\end{proof}

Thus the Gaussian optimization has been reduced to computing the size of a
shifted GOE projection onto $\mathbb S_\kappa^d$. This is the task of
the next two subsections.

\subsection{Analysis of the scalar variational problem}
\label{subsec:auxiliary}
By \eqref{eq:signed-auxiliary-projection-crossing}, we need to determine the
asymptotic size of
$\Pi_{\mathbb S_\kappa^d}(\mathsf G_{d,\kappa}(\zeta))$. 
In this section, we study the scalar quantity $\chi_{\kappa}(\zeta)$ which will determine its limit, as well as resulting scalar variational problem described in the introduction. See Figure~\ref{fig:gaussian-scalar-level-crossings} for an illustration of the variational problem when $\kappa = 3$. The rigorous connection to \eqref{eq:signed-auxiliary-projection-crossing} is proved in the next section. 

Recall that $\nu$ is the
semicircle law on $[-\sqrt2,\sqrt2]$, normalized by
$\int x^2\,d\nu(x)=1/2$; it arises as the (appropriately normalized) limiting distribution of the spectrum of a GOE matrix $\mathsf{G}_d$ \cite{anderson-guionnet-zeitouni2010random}.
For $\omega\in\R$, define
$$
m(\omega):=\int (x-\omega)_+\,d\nu(x),\qquad
s(\omega):=\int (x-\omega)_+^2\,d\nu(x),\qquad
\mathcal E_\kappa(\omega)
:=s(\omega)+c_\kappa m(\omega)^2.
$$
The formula for the threshold can be defined abstractly as the solution of a variational problem:
$$
\alpha_\star(\kappa)
:=
\sup_{\substack{f\in L^2(\nu),\ f\ge0\\ \int f\,d\nu=1}}
\frac{\left(\int xf(x)\,d\nu(x)\right)^2}
{\int f(x)^2\,d\nu(x)+c_\kappa}.
$$
Its denominator is strictly positive: by Cauchy Schwarz,
$\int f^2\,d\nu+c_\kappa\ge(\int f\,d\nu)^2+(\kappa-3)/2
=(\kappa-1)/2$.
For $\zeta\in\R$, let $\omega_{\kappa,\zeta}$ denote the unique solution
\begin{equation}\label{eq:shifted-omega-eqn}
	\omega_{\kappa,\zeta}+\zeta
	=c_\kappa m(\omega_{\kappa,\zeta}),
\end{equation}
whose existence and uniqueness are proved below, and set
$$
\chi_\kappa(\zeta)
:=\mathcal E_\kappa(\omega_{\kappa,\zeta}),
\qquad
\omega_\kappa:=\omega_{\kappa,0}.
$$
Thus $\omega_\kappa=c_\kappa m(\omega_\kappa)$ and
$\chi_\kappa(0)=s(\omega_\kappa)+c_\kappa m(\omega_\kappa)^2$.

The following lemma shows how the variational problem is solved; it will be reused later, so we state it for a general probability measure $\mu$. 
\begin{lemma}
	\label{lem:positive-cone-variational}
	Let $\mu$ be a probability measure on $\R$ with finite second moment,
	let $c>-1$, and let $\zeta\in\R$.  Define
	$$
	m_\mu(\omega):=\int(x-\omega)_+\,d\mu(x),
	\qquad
	s_\mu(\omega):=\int(x-\omega)_+^2\,d\mu(x).
	$$
	There is a unique solution $\omega$ of
	$\omega+\zeta=c\,m_\mu(\omega)$.  For this solution,
	$$
	\sup_{\substack{f\in L^2(\mu),\ f\ge0\\f\not\equiv0}}
	\frac{\left(\int(x+\zeta)f(x)\,d\mu(x)\right)_+^2}
	{\int f(x)^2\,d\mu(x)
		+c\left(\int f\,d\mu\right)^2}
	=
	s_\mu(\omega)+c\,m_\mu(\omega)^2.
	$$
	If the right-hand side is positive, the supremum is attained by every
	positive multiple of $f(x)=(x-\omega)_+$.
\end{lemma}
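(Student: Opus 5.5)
We first establish existence and uniqueness of the root, and then compute the supremum by Lagrangian/Cauchy--Schwarz duality.

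\emph{Existence and uniqueness.} Set $\psi(\omega):=\omega+\zeta-c\,m_\mu(\omega)$. The map $m_\mu$ is convex and nonincreasing, with $m_\mu'(\omega)=-\mu((\omega,\infty))\in[-1,0]$. Since $\mu$ has finite first moment, $m_\mu(\omega)\to0$ as $\omega\to\infty$ and $m_\mu(\omega)=\int x\,d\mu-\omega+o(1)$ as $\omega\to-\infty$. The derivative is $\psi'(\omega)=1+c\,\mu((\omega,\infty))$. If $c\ge0$ this is at least $1$. If $-1<c<0$ it is at least $1+c>0$. Hence $\psi$ is strictly increasing with slope bounded below by $\min(1,1+c)>0$, so $\psi(\pm\infty)=\pm\infty$, and there is a unique root $\omega$.

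\emph{Upper bound.} Fix $f\ge0$ with $f\not\equiv0$, and write $a:=\int f\,d\mu$ and $g:=(x-\omega)_+$. Using the root equation,
\[
\int(x+\zeta)f\,d\mu=\int(x-\omega)f\,d\mu+(\omega+\zeta)a\le\int g f\,d\mu+c\,m_\mu(\omega)\,a .
\]
The inequality holds because $(x-\omega)f\le(x-\omega)_+f$ when $f\ge0$. The right-hand side is the bilinear form $\langle g,f\rangle_{L^2(\mu)}+c\,\bigl(\int g\,d\mu\bigr)\bigl(\int f\,d\mu\bigr)$. Denote this form by $B(g,f)$; the quadratic form in the denominator is $B(f,f)$. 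The form $B$ is positive definite on $L^2(\mu)$ since $c>-1$: for $c\ge0$ this is clear, and for $-1<c<0$ one uses $(\int f)^2\le\int f^2$, so $B(f,f)\ge(1+c)\|f\|^2$. Cauchy--Schwarz for $B$ then gives $B(g,f)^2\le B(g,g)\,B(f,f)$. When $\int(x+\zeta)f\,d\mu>0$ we combine the two displays; when it is $\le0$ the numerator vanishes. In either case the ratio is at most $B(g,g)=s_\mu(\omega)+c\,m_\mu(\omega)^2$.

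\emph{Attainment.} Take $f=g$, provided $g\not\equiv0$ in $L^2(\mu)$. Then $\int(x+\zeta)g\,d\mu=B(g,g)$ by the identity above, since the inequality is an equality for $f=g$. So the ratio equals $B(g,g)$, and the supremum equals the right-hand side. Positive multiples of $g$ give the same ratio by homogeneity.

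If instead $g\equiv0$, i.e.\ $\mu((\omega,\infty))=0$, then $B(g,g)=0$. We must show the supremum is also $0$, i.e.\ that $\int(x+\zeta)f\,d\mu\le0$ for every $f\ge0$. Here $m_\mu(\omega)=0$, so the root equation gives $\zeta=-\omega$. Then $x+\zeta=x-\omega\le0$ $\mu$-a.e., and every numerator is zero. This also matches the claim that attainment is asserted only when the right-hand side is positive.

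The main obstacle is the positive definiteness of $B$ when $c<0$, together with the degenerate case; both are handled above.
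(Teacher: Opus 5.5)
Your proof is correct and follows essentially the same route as the paper. Both arguments use strict monotonicity of $\omega\mapsto\omega+\zeta-c\,m_\mu(\omega)$ with slope at least $\min\{1,1+c\}$, the pointwise bound $x+\zeta\le(x-\omega)_++c\,m_\mu(\omega)$, Cauchy--Schwarz for the inner product $\int uv\,d\mu+c\int u\,d\mu\int v\,d\mu$, equality at $f=(x-\omega)_+$, and the same handling of the degenerate case. One minor point: when $\mu$ has atoms, $-\mu((\omega,\infty))$ is only the right derivative of $m_\mu$. The slope bound is therefore better stated via difference quotients, using that $m_\mu$ is nonincreasing and $1$-Lipschitz, as the paper does.
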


\begin{proof}
	Since $m_\mu$ is decreasing and $1$-Lipschitz, the difference quotients
	of $H(\omega):=\omega+\zeta-c\,m_\mu(\omega)$ are bounded below by
	$\min\{1,1+c\}>0$, so $H$ is strictly increasing.  Moreover,
	$m_\mu(\omega)\to0$ as $\omega\to\infty$, while
	$m_\mu(\omega)=-\omega+\int x\,d\mu+o(1)$ as $\omega\to-\infty$.
	Thus $H(\omega)\to\pm\infty$ as $\omega\to\pm\infty$, proving
	existence and uniqueness.
	
	Put $p(x)=(x-\omega)_+$ and $\overline p=\int p\,d\mu$.  The quadratic
	form
	\begin{align*}
		Q_c(u,v)&:=\int uv\,d\mu
		+c\left(\int u\,d\mu\right)\left(\int v\,d\mu\right),\\
		Q_c(u,u)&=\int\left(u-\int u\,d\mu\right)^2d\mu
		+(1+c)\left(\int u\,d\mu\right)^2
	\end{align*}
	is an inner product.  The defining equation for $\omega$ gives the
	pointwise bound $x+\zeta\le p(x)+c\overline p$.
	Therefore, for every $f\ge0$,
	$$
	\left(\int(x+\zeta)f\,d\mu\right)_+
	\le |Q_c(p,f)|
	\le Q_c(p,p)^{1/2}Q_c(f,f)^{1/2}.
	$$
	This proves the upper bound.  Moreover,
	$$
	\int(x+\zeta)p\,d\mu
	=\int p^2\,d\mu+(\omega+\zeta)\overline p
	=s_\mu(\omega)+c\,m_\mu(\omega)^2
	=Q_c(p,p),
	$$
	so equality holds at $f=p$ whenever $p\not\equiv0$.  If
	$p\equiv0$, then $x+\zeta\le0$ almost surely and both sides vanish.
\end{proof}
We continue with a couple of useful technical lemmas.
\begin{lemma}[Stability of the scalar variational problem]
\label{lem:positive-cone-variational-stability}
Let $\mu_d,\mu$ be probability measures on $\R$ with finite second moments,
and suppose that $m_{\mu_d}\to m_\mu$ and $s_{\mu_d}\to s_\mu$
locally uniformly.  Fix $c>-1$ and $\zeta\in\R$, and let $\omega_d,\omega$
be the unique solutions of
$\omega_d+\zeta=c\,m_{\mu_d}(\omega_d)$ and
$\omega+\zeta=c\,m_\mu(\omega)$, respectively.  Then
\[
\omega_d\to\omega,
\qquad
s_{\mu_d}(\omega_d)+c\,m_{\mu_d}(\omega_d)^2
\to
s_\mu(\omega)+c\,m_\mu(\omega)^2.
\]
The same conclusion holds in probability if $\mu_d$ are random and the
local uniform convergence holds in probability.
\end{lemma}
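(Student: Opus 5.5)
The proof will use the strictly increasing map $H_d(\omega):=\omega+\zeta-c\,m_{\mu_d}(\omega)$. From the proof of Lemma~\ref{lem:positive-cone-variational}, $m_{\mu_d}$ is nonincreasing and $1$-Lipschitz. Hence the difference quotients of $H_d$ are bounded below by $\kappa_0:=\min\{1,1+c\}>0$, uniformly in $d$, and the same holds for $H(\omega):=\omega+\zeta-c\,m_\mu(\omega)$. This uniform strict monotonicity is the only structural input needed.

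\emph{Convergence of the roots.} Fix $\delta>0$. Because $H$ is strictly increasing with $H(\omega)=0$, we have $H(\omega+\delta)\ge\kappa_0\delta>0$ and $H(\omega-\delta)\le-\kappa_0\delta<0$. Pointwise convergence $m_{\mu_d}\to m_\mu$ at the two points $\omega\pm\delta$ gives $H_d(\omega+\delta)>0>H_d(\omega-\delta)$ for all large $d$. Monotonicity of $H_d$ then forces its unique root to satisfy $\omega_d\in(\omega-\delta,\omega+\delta)$. Alternatively, one can argue directly:
\[
\kappa_0|\omega_d-\omega|\le|H_d(\omega_d)-H_d(\omega)|=|H_d(\omega)|=|c|\,|m_{\mu_d}(\omega)-m_\mu(\omega)|\to0 .
\]
This gives $\omega_d\to\omega$, with the explicit rate $|\omega_d-\omega|\le \kappa_0^{-1}|c|\,|m_{\mu_d}(\omega)-m_\mu(\omega)|$.

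\emph{Convergence of the values.} Since $\omega_d$ eventually lies in the compact interval $[\omega-1,\omega+1]$, local uniform convergence on that interval gives $m_{\mu_d}(\omega_d)-m_\mu(\omega_d)\to0$ and $s_{\mu_d}(\omega_d)-s_\mu(\omega_d)\to0$. The limits $m_\mu$ and $s_\mu$ are continuous ($m_\mu$ is $1$-Lipschitz, and $s_\mu$ is continuous by dominated convergence using the finite second moment). Therefore $m_\mu(\omega_d)\to m_\mu(\omega)$ and $s_\mu(\omega_d)\to s_\mu(\omega)$. Combining these by the triangle inequality, and using continuity of $(s,m)\mapsto s+cm^2$, yields the claimed convergence of $s_{\mu_d}(\omega_d)+c\,m_{\mu_d}(\omega_d)^2$.

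\emph{Random case.} I will use the subsequence characterization of convergence in probability. Fix a compact $K\ni\omega$ with $\omega$ in its interior. The hypothesis gives $\sup_K|m_{\mu_d}-m_\mu|+\sup_K|s_{\mu_d}-s_\mu|\pto0$. Every subsequence has a further subsequence along which this holds almost surely. By a diagonal argument over an exhausting sequence of compacts, one may also take the convergence to be locally uniform almost surely. On that almost sure event the deterministic argument applies pathwise, so the conclusions hold almost surely along the sub-subsequence, and hence in probability. The root bound above already shows that uniform convergence on a fixed neighborhood of $\omega$ suffices, so the diagonal step is only for convenience.

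The only point needing care is that the root $\omega_d$ moves with $d$. This is handled by first localizing $\omega_d$ to a compact set before invoking local uniform convergence. Uniform-in-$d$ strict monotonicity, which relies on $c>-1$, makes the localization immediate, so I do not expect a genuine obstacle.
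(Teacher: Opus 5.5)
Your proposal is correct and follows the paper's proof essentially verbatim: the same map $H_d(t)=t+\zeta-c\,m_{\mu_d}(t)$, the same monotonicity constant $\min\{1,1+c\}$, the same key bound $\min\{1,1+c\}\,|\omega_d-\omega|\le|c|\,|m_{\mu_d}(\omega)-m_\mu(\omega)|$, and then local uniform convergence for the values. The only difference is that you spell out details the paper leaves implicit, namely the continuity of $m_\mu,s_\mu$, localizing $\omega_d$ to a compact set, and the subsequence argument for the random case.
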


\begin{proof}
Let $H_d(t):=t+\zeta-c\,m_{\mu_d}(t)$ and
$\delta:=\min\{1,1+c\}>0$.  As in the preceding proof,
$H_d(t)-H_d(s)\ge\delta(t-s)$ for $t\ge s$.  Hence
\[
\delta|\omega_d-\omega|
\le |H_d(\omega)|
=|c|\,|m_{\mu_d}(\omega)-m_\mu(\omega)|
\to0.
\]
The local uniform convergence of $m_{\mu_d}$ and $s_{\mu_d}$ then gives
the convergence of the displayed variational values.  The same argument
applies in probability.
\end{proof}

\begin{lemma}
\label{lem:shifted-duality}
For $\kappa>1$ and $\zeta\in\R$, let $\kappa^\vee$ be as in
Lemma~\ref{lem:rescaled-psd-cone-duality} and set
\begin{equation}
\label{eq:dual-shift-parameter}
\zeta^\vee:=-\frac{2\zeta}{\kappa-1}.
\end{equation}
Then
$\omega_{\kappa^\vee,\zeta^\vee}=-\omega_{\kappa,\zeta}$,
\begin{equation}\label{eq:shifted-alpha-involution}
\chi_\kappa(\zeta)+\chi_{\kappa^\vee}(\zeta^\vee)
=\frac12+\frac{2\zeta^2}{\kappa-1},
\end{equation}
and $-\mathsf G_{d,\kappa}(\zeta)
\stackrel d=
\mathsf G_{d,\kappa^\vee}(\zeta^\vee)$.

\end{lemma}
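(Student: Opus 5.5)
The proof is a direct computation resting on the symmetry of the semicircle law $\nu$. I will first record two reflection identities for $m$ and $s$. Then I will check that $-\omega_{\kappa,\zeta}$ solves the defining equation for $(\kappa^\vee,\zeta^\vee)$, derive \eqref{eq:shifted-alpha-involution} by algebra, and match the shifts for the distributional identity. Throughout, write $c=c_\kappa$, $\omega=\omega_{\kappa,\zeta}$ and $m=m(\omega)$. Recall $1+c=(\kappa-1)/2$ and $c_{\kappa^\vee}=-c/(1+c)$ from Lemma~\ref{lem:rescaled-psd-cone-duality}.

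\emph{Reflection identities.} From $(x-\omega)_+-(\omega-x)_+=x-\omega$, $\int x\,d\nu=0$ and the symmetry $\nu(dx)=\nu(-dx)$, we get $m(-\omega)=\int(\omega-x)_+\,d\nu=m(\omega)+\omega$. Similarly, $(x-\omega)_+^2+(\omega-x)_+^2=(x-\omega)^2$ together with $\int x^2\,d\nu=1/2$ gives $s(\omega)+s(-\omega)=\tfrac12+\omega^2$.

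\emph{The shifted fixed point.} I claim $\omega'=-\omega$ satisfies $\omega'+\zeta^\vee=c_{\kappa^\vee}m(\omega')$. Substituting the reflection identity, this reads
\[
-\omega-\frac{2\zeta}{\kappa-1}=-\frac{c}{1+c}\bigl(m+\omega\bigr).
\]
Multiplying by $-(1+c)$ and using $2/(\kappa-1)=1/(1+c)$ turns it into $(1+c)\omega+\zeta=cm+c\omega$, which is exactly $\omega+\zeta=cm$. Uniqueness in Lemma~\ref{lem:positive-cone-variational} (valid since $c_{\kappa^\vee}>-1$) then gives $\omega_{\kappa^\vee,\zeta^\vee}=-\omega$.

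\emph{The sum identity.} By the reflection identities,
\[
\chi_\kappa(\zeta)+\chi_{\kappa^\vee}(\zeta^\vee)=\tfrac12+\omega^2+cm^2-\tfrac{c}{1+c}(m+\omega)^2 .
\]
To avoid dividing by $c$, which may vanish, I substitute $m+\omega=(1+c)m-\zeta$, a consequence of $cm=\omega+\zeta$. Expanding the square gives
\[
\tfrac12+\omega^2-c^2m^2+2cm\zeta-\tfrac{c}{1+c}\zeta^2 .
\]
Using $c^2m^2=(\omega+\zeta)^2$ and $2cm\zeta=2\zeta(\omega+\zeta)$, the $\omega$-terms collapse to $\zeta^2$. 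The total is $\tfrac12+\zeta^2/(1+c)=\tfrac12+2\zeta^2/(\kappa-1)$.

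\emph{Distributional identity.} Since $-\mathsf G_d\stackrel d=\mathsf G_d$, it suffices to match the deterministic shifts. Because $\kappa^\vee-1=4/(\kappa-1)$,
\[
\zeta^\vee\sqrt{2d/(\kappa^\vee-1)}=-\frac{2\zeta}{\kappa-1}\sqrt{\frac{d(\kappa-1)}{2}}=-\zeta\sqrt{\frac{2d}{\kappa-1}} .
\]
This is the shift of $-\mathsf G_{d,\kappa}(\zeta)$, as required.

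The only step needing care is the algebra in the sum identity, specifically avoiding division by $c$. There is no genuine obstacle.
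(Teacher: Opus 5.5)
Your proof is correct and follows the same route as the paper. Both use the reflection identities $m(-\omega)=m(\omega)+\omega$ and $s(\omega)+s(-\omega)=\tfrac12+\omega^2$, check that $-\omega$ solves the dual fixed-point equation and invoke uniqueness, derive the sum identity by algebra, and match the deterministic shifts using $-\mathsf G_d\stackrel d=\mathsf G_d$; you simply write out the expansion (without dividing by $c_\kappa$) that the paper states in one line.
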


\begin{proof}
Write $\omega=\omega_{\kappa,\zeta}$.  By
Lemma~\ref{lem:rescaled-psd-cone-duality},
$c_{\kappa^\vee}=-c_\kappa/(1+c_\kappa)$.
Symmetry of $\nu$ gives
$$
m(-\omega)=m(\omega)+\omega,\qquad
s(-\omega)+s(\omega)=\frac12+\omega^2.
$$
Using $\omega+\zeta=c_\kappa m(\omega)$, one obtains
$$
-\omega+\zeta^\vee
=-\omega-\frac{\zeta}{1+c_\kappa}
=c_{\kappa^\vee}\bigl(m(\omega)+\omega\bigr)
=c_{\kappa^\vee}m(-\omega).
$$
Thus uniqueness in \eqref{eq:shifted-omega-eqn} (from Lemma~\ref{lem:positive-cone-variational}) gives
$\omega_{\kappa^\vee,\zeta^\vee}=-\omega$.  Therefore
\begin{align*}
\chi_\kappa(\zeta)+\chi_{\kappa^\vee}(\zeta^\vee)
&=s(\omega)+s(-\omega)+c_\kappa m(\omega)^2
+c_{\kappa^\vee}\bigl(m(\omega)+\omega\bigr)^2\\
&=\frac12+\frac{\zeta^2}{1+c_\kappa}
=\frac12+\frac{2\zeta^2}{\kappa-1},
\end{align*}
which proves \eqref{eq:shifted-alpha-involution}.  Finally, since
$\kappa^\vee-1=4/(\kappa-1)$,
\[
\zeta^\vee\sqrt{\frac{2d}{\kappa^\vee-1}}
=-\zeta\sqrt{\frac{2d}{\kappa-1}}.
\]
Together with $-\mathsf G_d\stackrel d=\mathsf G_d$, this proves the last claim.
\end{proof}

The following proposition collects the key facts about the scalar variational problem. 
\begin{proposition}
	\label{prop:scalar-gaussian-energy-curve}
	For every $\kappa>1$, the solution in
	\eqref{eq:shifted-omega-eqn} exists and is
	unique for every $\zeta\in\R$. The map
	$\zeta\mapsto\chi_\kappa(\zeta)$ is nondecreasing and is strictly
	increasing wherever it is positive. Moreover:
	\begin{enumerate}[label=\textup{(\roman*)}]
		\item For every $\alpha>0$, there is a unique
		$\omega_{\alpha,\kappa}<\sqrt2$ satisfying
		\begin{equation}\label{eq:energy-curve-inverse}
			\mathcal E_\kappa(\omega_{\alpha,\kappa})=\alpha.
		\end{equation}
		Define $\zeta_{\alpha,\kappa}
		:=c_\kappa m(\omega_{\alpha,\kappa})-\omega_{\alpha,\kappa}$.
		Then $\chi_\kappa(\zeta_{\alpha,\kappa})=\alpha$ and
		\begin{equation}\label{eq:energy-positive-iff-above-threshold}
			\zeta_{\alpha,\kappa}>0
			\quad\Longleftrightarrow\quad
			\alpha>\alpha_\star(\kappa).
		\end{equation}
		
		\item For every $\zeta\in\R$,
		$$
		\sup_{\substack{f\in L^2(\nu),\ f\ge0\\f\not\equiv0}}
		\frac{\left(\int(x+\zeta)f(x)\,d\nu(x)\right)_+^2}
		{\int f(x)^2\,d\nu(x)
			+c_\kappa\left(\int f\,d\nu\right)^2}
		=\chi_\kappa(\zeta).
		$$
		In particular, $\alpha_\star(\kappa)=\chi_\kappa(0)$.
		
		\item For all $\kappa > 1$ we have
		$0\le\alpha_\star(\kappa)\le1/2$, with
		$\alpha_\star(\kappa)\to0$ as $\kappa\to\infty$ and
		$\alpha_\star(\kappa)\to1/2$ as $\kappa\to 1$.
	\end{enumerate}
\end{proposition}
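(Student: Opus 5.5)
We derive everything from Lemma~\ref{lem:positive-cone-variational} with $\mu=\nu$ and $c=c_\kappa>-1$, which holds since $\kappa>1$. Existence and uniqueness of $\omega_{\kappa,\zeta}$ is the first assertion of that lemma. Part~(ii) is its variational identity. Specializing to $\zeta=0$ and using the normalization $\int f\,d\nu=1$ (the ratio is scale invariant, so normalizing loses nothing) gives the definition of $\alpha_\star(\kappa)$. The $(\cdot)_+$ is harmless: restricting to $f\ge0$ with $\int xf\,d\nu\ge0$ does not change a supremum that is nonnegative. Hence $\alpha_\star(\kappa)=\chi_\kappa(0)$.

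\textbf{Monotonicity of $\chi_\kappa$.} For fixed $f\ge0$, the numerator $(\int(x+\zeta)f\,d\nu)_+^2$ is nondecreasing in $\zeta$, so the supremum in (ii) is nondecreasing. For strict increase where $\chi_\kappa(\zeta)>0$, take the maximizer $p=(x-\omega_{\kappa,\zeta})_+\not\equiv0$ at $\zeta$. Then at $\zeta'>\zeta$ the numerator strictly increases, since $\int p\,d\nu>0$, while the denominator is fixed. Thus $\chi_\kappa(\zeta')>\chi_\kappa(\zeta)$.

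\textbf{Part (i).} On $(-\infty,\sqrt2)$, $\mathcal E_\kappa(\omega)=Q_{c}(p_\omega,p_\omega)$ with $p_\omega=(x-\omega)_+$. We show $\mathcal E_\kappa$ is continuous and strictly decreasing, with limit $0$ at $\sqrt2$ and $+\infty$ at $-\infty$. For monotonicity: if $\omega<\omega'$, then $p_\omega=p_{\omega'}+h$ with $h\ge0$ nonzero. Also $Q_c(p_{\omega'},h)\ge0$, because $c\ge-1$ and, writing $a=\int p_{\omega'}$ and $b=\int h$, we have $\int p_{\omega'}h\ge a\cdot(\omega'-\omega)\ge$ the relevant bound. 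This step needs care when $c<0$. If it proves fiddly, we instead differentiate: $\mathcal E_\kappa'(\omega)=-2m(\omega)-2c\,m(\omega)\,\nu((\omega,\infty))=-2m(\omega)\bigl(1+c\,\nu((\omega,\infty))\bigr)<0$, since $c>-1$ and $\nu((\omega,\infty))\le1$. This derivative route is clean and is the one we use. Unique invertibility on $(-\infty,\sqrt2)$ then gives $\omega_{\alpha,\kappa}$. Setting $\zeta=\zeta_{\alpha,\kappa}$, the equation $\omega+\zeta=c_\kappa m(\omega)$ holds by construction, so uniqueness gives $\omega_{\kappa,\zeta}=\omega_{\alpha,\kappa}$ and $\chi_\kappa(\zeta_{\alpha,\kappa})=\alpha$. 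For \eqref{eq:energy-positive-iff-above-threshold}, note that $H(\omega)=\omega-c_\kappa m(\omega)$ is strictly increasing with root $\omega_\kappa$. So $\zeta_{\alpha,\kappa}>0$ iff $\omega_{\alpha,\kappa}<\omega_\kappa$, which holds iff $\alpha=\mathcal E_\kappa(\omega_{\alpha,\kappa})>\mathcal E_\kappa(\omega_\kappa)=\alpha_\star(\kappa)$ by strict decrease. We must also check $\omega_\kappa<\sqrt2$. This holds because $\omega_\kappa\ge\sqrt2$ would force $m=0$ and hence $\omega_\kappa=0$, a contradiction.

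\textbf{Part (iii).} Nonnegativity is immediate. For the upper bound we use Lemma~\ref{lem:shifted-duality} at $\zeta=0$: $\chi_\kappa(0)+\chi_{\kappa^\vee}(0)=1/2$, with both terms nonnegative, so $\alpha_\star(\kappa)\le1/2$. The limits come from this duality. As $\kappa\to1$ we have $\kappa^\vee\to\infty$, so it suffices to show $\alpha_\star(\kappa)\to0$ as $\kappa\to\infty$. From the variational form, the numerator is at most $(\int xf)^2\le\int x^2 f^2\,d\nu\cdot1$, but more simply $\int xf\le\sqrt2$ since $\int f=1$. The denominator is at least $(\kappa-1)/2$, so $\alpha_\star(\kappa)\le4/(\kappa-1)\to0$. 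The only delicate point is the monotonicity of $\mathcal E_\kappa$ for $c_\kappa<0$, which the derivative formula resolves.
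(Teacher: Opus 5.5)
There is a gap in your identification $\alpha_\star(\kappa)=\chi_\kappa(0)$. The definition \eqref{eq:alpha-star-def} has numerator $(\int xf\,d\nu)^2$ with no positive part, whereas the supremum in (ii) has $(\int xf\,d\nu)_+^2$. Since $a^2\ge a_+^2$, comparing the two only gives $\alpha_\star(\kappa)\ge\chi_\kappa(0)$. Your remark that restricting to $\int xf\,d\nu\ge0$ leaves the nonnegative $(\cdot)_+$ supremum unchanged is true, but it does not address the reverse inequality. It does not rule out a profile with $\int xf\,d\nu<0$ and large $(\int xf\,d\nu)^2$ making $\alpha_\star(\kappa)$ strictly larger. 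For a general measure $\mu$ in Lemma~\ref{lem:positive-cone-variational} (e.g.\ $\mu=\delta_{-1}$) the two suprema really do differ, so some property of $\nu$ must enter.

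The missing ingredient is the symmetry of $\nu$. For $\widetilde f(x):=f(-x)$ one has $\widetilde f\ge0$, $\int\widetilde f\,d\nu=\int f\,d\nu$, $\int\widetilde f^2\,d\nu=\int f^2\,d\nu$, and $\int x\widetilde f\,d\nu=-\int xf\,d\nu$. So every profile with negative first moment is matched by one with positive first moment and the same ratio; this is the step the paper uses. Everything that relies on $\alpha_\star(\kappa)=\chi_\kappa(0)$ needs this one-line fix: the equivalence \eqref{eq:energy-positive-iff-above-threshold}, the bound $\alpha_\star(\kappa)\le1/2$, and the $\kappa\to1$ limit in (iii).

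The rest is correct. In part (i), your final derivative computation of $\mathcal E_\kappa'$ is the paper's argument. You should delete the unfinished $Q_c(p_{\omega'},h)\ge0$ sketch before it. You should also record $m(\omega)=-\omega$ and $s(\omega)=\omega^2+1/2$ for $\omega\le-\sqrt2$, which give $\mathcal E_\kappa(\omega)\to\infty$ as $\omega\to-\infty$.

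Your monotonicity proof for $\chi_\kappa$ takes a different route from the paper. You read it off the variational formula (ii), which is monotone in $\zeta$ for each fixed $f\ge0$. You then get strict increase by testing the maximizer $(x-\omega_{\kappa,\zeta})_+$ at $\zeta'>\zeta$. The paper instead differentiates $F_\kappa(\omega_{\kappa,\zeta})=-\zeta$ to obtain $\chi_\kappa'(\zeta)=2m(\omega_{\kappa,\zeta})$. Your argument avoids implicit differentiation and works for any $\mu$. The paper's explicit derivative, however, is reused in Lemma~\ref{lem:gaussian-energy-quadratic-onset}.

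Finally, your bound $\alpha_\star(\kappa)\le4/(\kappa-1)$ uses $\int f^2\,d\nu\ge(\int f\,d\nu)^2$ in the denominator. It holds for all $\kappa>1$, works directly from the definition, and is slightly sharper than the paper's $4/(\kappa-3)$.
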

\begin{proof}
	Applying Lemma~\ref{lem:positive-cone-variational} with
	$\mu=\nu$ and $c=c_\kappa$ proves existence and uniqueness in
	\eqref{eq:shifted-omega-eqn}, as well as the variational identity in
	part~\textup{(ii)}.  At $\zeta=0$, symmetry of $\nu$ allows a profile
	with negative $\int xf\,d\nu$ to be reflected without changing the
	denominator.  Homogeneity then permits the normalization
	$\int f\,d\nu=1$, and hence
	$\alpha_\star(\kappa)=\chi_\kappa(0)$.
	
	Since the semicircle law has no atoms, $m$ is continuously differentiable,
	with $m'(\omega)=-\nu((\omega,\infty))$.  Let
	$F_\kappa(\omega)=\omega-c_\kappa m(\omega)$.  Then
	$$
	F_\kappa'(\omega)
	=1+c_\kappa\nu((\omega,\infty))
	\ge
	\begin{cases}
		1, & \kappa\ge3,\\
		(\kappa-1)/2, & 1<\kappa<3.
	\end{cases}
	$$
	In particular, $F_\kappa'(\omega)>0$.
	
	Also, $s'(\omega)=-2m(\omega)$, so
	$\mathcal E_\kappa'(\omega)=-2m(\omega)F_\kappa'(\omega)$.
	Differentiating $F_\kappa(\omega_{\kappa,\zeta})=-\zeta$ therefore gives
	$$
	\chi_\kappa'(\zeta)=2m(\omega_{\kappa,\zeta}).
	$$
	This derivative is nonnegative, and is strictly positive exactly when
	$\omega_{\kappa,\zeta}<\sqrt2$, equivalently when
	$\chi_\kappa(\zeta)>0$.  This proves the asserted monotonicity.
	Moreover, $\mathcal E_\kappa(\omega)\to0$ as
	$\omega\uparrow\sqrt2$.  For $\omega\le-\sqrt2$, symmetry and the
	variance normalization give
	$$
	m(\omega)=-\omega,\qquad
	s(\omega)=\omega^2+\frac12,\qquad
	\mathcal E_\kappa(\omega)
	=\frac{\kappa-1}{2}\omega^2+\frac12,
	$$
	which tends to infinity as $\omega\to-\infty$.  This proves existence and
	uniqueness in \eqref{eq:energy-curve-inverse}.
	
	The definition of $\zeta_{\alpha,\kappa}$ gives
	$\omega_{\alpha,\kappa}+\zeta_{\alpha,\kappa}
	=c_\kappa m(\omega_{\alpha,\kappa})$, so uniqueness in
	\eqref{eq:shifted-omega-eqn} gives
	$\omega_{\kappa,\zeta_{\alpha,\kappa}}
	=\omega_{\alpha,\kappa}$, and hence
	$\chi_\kappa(\zeta_{\alpha,\kappa})=\alpha$.
	Since $\mathcal E_\kappa$ is strictly decreasing on
	$(-\infty,\sqrt2)$,
	$\alpha>\alpha_\star(\kappa)=\chi_\kappa(0)$ if and only if
	$\omega_{\alpha,\kappa}<\omega_\kappa$.
	But $\zeta_{\alpha,\kappa}=-F_\kappa(\omega_{\alpha,\kappa})$, while
	$F_\kappa$ is strictly increasing and $F_\kappa(\omega_\kappa)=0$.
	This proves \eqref{eq:energy-positive-iff-above-threshold}.

    We now prove part (iii) based on Lemma~\ref{lem:shifted-duality}.
	Taking $\zeta=0$ in \eqref{eq:shifted-alpha-involution} gives
	$\chi_\kappa(0)+\chi_{\kappa^\vee}(0)=1/2$.  By part~\textup{(ii)},
	$\chi_\kappa(0)=\alpha_\star(\kappa)$ and
	$\chi_{\kappa^\vee}(0)=\alpha_\star(\kappa^\vee)$, hence
	\[ \alpha_\star(\kappa) + \alpha_\star(\kappa^\vee)=1/2. \]
	Part \textup{(ii)} shows that both terms are nonnegative, proving
	$0\le\alpha_\star(\kappa)\le1/2$. For $\kappa>3$, every admissible profile in the definition of
	$\alpha_\star(\kappa)$ satisfies
	$|\int xf\,d\nu|\le\sqrt2\int f\,d\nu=\sqrt2$ and
	$\int f^2\,d\nu+c_\kappa\ge c_\kappa=(\kappa-3)/2$.
	Hence $\alpha_\star(\kappa)\le 2/c_\kappa=4/(\kappa-3)\to0$ as
	$\kappa\to\infty$. In the limit $\kappa\downarrow1$,
	we have $\kappa^\vee\to\infty$ so 
	$\alpha_\star(\kappa)=1/2-\alpha_\star(\kappa^\vee) \to 1/2$.
\end{proof}

\begin{figure}[tbp]
\centering

\definecolor{curveblue}{RGB}{38,114,180}
\definecolor{beloworange}{RGB}{213,94,0}
\definecolor{abovegreen}{RGB}{0,145,92}

\begin{tikzpicture}[font=\footnotesize,>=Latex]


\node[font=\small\bfseries] at (2.55,6.55) {(a) GOE cutoffs};
\node[font=\small\bfseries] at (7.10,6.55) {(b) CGMT spectral profiles};
\node[font=\small\bfseries] at (12.65,6.55)
  {(c) Level crossings $\chi_3(\zeta)=\alpha$};

\node[beloworange,anchor=east,font=\scriptsize\bfseries] at (0.72,5.20)
  {$\alpha_-<\alpha_\star$};
\node[beloworange,anchor=east,font=\scriptsize] at (0.72,4.92)
  {$\omega_->0$};

\node[curveblue,anchor=east,font=\scriptsize\bfseries] at (0.72,3.30)
  {$\alpha=\alpha_\star$};
\node[curveblue,anchor=east,font=\scriptsize] at (0.72,3.02)
  {$\omega=0$};

\node[abovegreen,anchor=east,font=\scriptsize\bfseries] at (0.72,1.40)
  {$\alpha_+>\alpha_\star$};
\node[abovegreen,anchor=east,font=\scriptsize] at (0.72,1.12)
  {$\omega_+<0$};


\begin{scope}[shift={(2.55,4.77)},x=1.02cm,y=0.72cm]
  \begin{scope}
    \clip
      (-1.414,0)
      plot[domain=-1.414:1.414,samples=100,smooth,variable=\x]
        (\x,{0.80*sqrt(max(0,2-\x*\x))})
      -- (1.414,0) -- cycle;
    \fill[black!14] (-1.60,0) rectangle (0.333,1.30);
    \fill[beloworange!18] (0.333,0) rectangle (1.60,1.30);
  \end{scope}

  \draw[black!48,thin]
    plot[domain=-1.414:1.414,samples=100,smooth,variable=\x]
      (\x,{0.80*sqrt(max(0,2-\x*\x))});

  \draw[->] (-1.58,0) -- (1.63,0) node[right] {$x$};
  \draw[beloworange,thick,dashed] (0.333,0) -- (0.333,1.15);
  \node[beloworange,above] at (0.333,1.10) {$\omega_-$};
\end{scope}

\begin{scope}[shift={(2.55,2.87)},x=1.02cm,y=0.72cm]
  \begin{scope}
    \clip
      (-1.414,0)
      plot[domain=-1.414:1.414,samples=100,smooth,variable=\x]
        (\x,{0.80*sqrt(max(0,2-\x*\x))})
      -- (1.414,0) -- cycle;
    \fill[black!14] (-1.60,0) rectangle (0,1.30);
    \fill[curveblue!15] (0,0) rectangle (1.60,1.30);
  \end{scope}

  \draw[black!48,thin]
    plot[domain=-1.414:1.414,samples=100,smooth,variable=\x]
      (\x,{0.80*sqrt(max(0,2-\x*\x))});

  \draw[->] (-1.58,0) -- (1.63,0) node[right] {$x$};
  \draw[curveblue,thick,dashed] (0,0) -- (0,1.15);
  \node[black,below=1pt] at (0,0) {$0$};
\end{scope}

\begin{scope}[shift={(2.55,0.97)},x=1.02cm,y=0.72cm]
  \begin{scope}
    \clip
      (-1.414,0)
      plot[domain=-1.414:1.414,samples=100,smooth,variable=\x]
        (\x,{0.80*sqrt(max(0,2-\x*\x))})
      -- (1.414,0) -- cycle;
    \fill[black!14] (-1.60,0) rectangle (-0.322,1.30);
    \fill[abovegreen!18] (-0.322,0) rectangle (1.60,1.30);
  \end{scope}

  \draw[black!48,thin]
    plot[domain=-1.414:1.414,samples=100,smooth,variable=\x]
      (\x,{0.80*sqrt(max(0,2-\x*\x))});

  \draw[->] (-1.58,0) -- (1.63,0) node[right] {$x$};
  \draw[abovegreen,thick,dashed] (-0.322,0) -- (-0.322,1.15);
  \node[abovegreen,above] at (-0.322,1.10) {$\omega_+$};
\end{scope}


\begin{scope}[shift={(5.55,4.76)},x=0.43cm,y=5.35cm]

  \fill[beloworange!10]
    plot[domain=0:6.824,samples=120,variable=\l]
      (\l,{
        0.1584483/3.14159265
        *sqrt(max(0,2-(0.333+0.1584483*\l)^2))
      })
    -- (6.824,0) -- (0,0) -- cycle;

  \draw[->] (0,0) -- (7.25,0) node[right] {$\lambda$};
  \draw[black!35,thin,->] (0,0) -- (0,0.300);

  \draw[beloworange!80!black,very thick,dashed,smooth]
    plot[domain=0:6.824,samples=120,variable=\l]
      (\l,{
        0.1584483/3.14159265
        *sqrt(max(0,2-(0.333+0.1584483*\l)^2))
      });

  \draw[white,line width=3.8pt,
        -{Latex[length=2.0mm,width=1.8mm]}]
    (0,0.002) -- (0,0.215);
  \draw[beloworange!80!black,very thick,dashed,
        -{Latex[length=1.7mm,width=1.5mm]}]
    (0,0.002) -- (0,0.215);
  \node[beloworange!80!black,font=\scriptsize,anchor=west]
    at (0.16,0.195) {$\approx 0.65\,\delta_0$};
\end{scope}

\begin{scope}[shift={(5.55,2.86)},x=0.43cm,y=5.35cm]

  \fill[curveblue!10]
    plot[domain=0:4.71239,samples=120,variable=\l]
      (\l,{
        0.3001054/3.14159265
        *sqrt(max(0,2-(0.3001054*\l)^2))
      })
    -- (4.71239,0) -- (0,0) -- cycle;

  \draw[->] (0,0) -- (7.25,0) node[right] {$\lambda$};
  \draw[black!35,thin,->] (0,0) -- (0,0.300);

  \draw[curveblue,very thick,smooth]
    plot[domain=0:4.71239,samples=120,variable=\l]
      (\l,{
        0.3001054/3.14159265
        *sqrt(max(0,2-(0.3001054*\l)^2))
      });

  \draw[white,line width=3.8pt,
        -{Latex[length=2.0mm,width=1.8mm]}]
    (0,0.002) -- (0,0.1654);
  \draw[curveblue!80!black,very thick,
        -{Latex[length=1.7mm,width=1.5mm]}]
    (0,0.002) -- (0,0.1654);
  \node[curveblue!80!black,font=\scriptsize,anchor=west]
    at (0.20,0.172) {$0.5\,\delta_0$};
\end{scope}

\begin{scope}[shift={(5.55,0.96)},x=0.43cm,y=5.35cm]

  \fill[abovegreen!10]
    plot[domain=0:3.5847,samples=120,variable=\l]
      (\l,{
        0.4843412/3.14159265
        *sqrt(max(0,2-(-0.322+0.4843412*\l)^2))
      })
    -- (3.5847,0) -- (0,0) -- cycle;

  \draw[->] (0,0) -- (7.25,0) node[right] {$\lambda$};
  \draw[black!35,thin,->] (0,0) -- (0,0.300);
  \node[black!55,font=\scriptsize,anchor=north]
    at (0,-0.012) {$0$};

  \draw[abovegreen,very thick,smooth]
    plot[domain=0:3.5847,samples=120,variable=\l]
      (\l,{
        0.4843412/3.14159265
        *sqrt(max(0,2-(-0.322+0.4843412*\l)^2))
      });

  \draw[white,line width=3.8pt,
        -{Latex[length=2.0mm,width=1.8mm]}]
    (0,0.002) -- (0,0.1191);
  \draw[abovegreen!75!black,very thick,
        -{Latex[length=1.7mm,width=1.5mm]}]
    (0,0.002) -- (0,0.1191);
  \node[abovegreen!75!black,font=\scriptsize,anchor=west]
    at (0.16,0.101) {$\approx 0.36\,\delta_0$};
\end{scope}


\begin{scope}[shift={(12.55,0.80)},x=2.75cm,y=4.85cm]

  \draw[->] (-0.94,0) -- (0.78,0) node[right] {$\zeta$};
  \draw[->] (-0.90,-0.015) -- (-0.90,1.04) node[above] {$\alpha$};

  \draw (-0.5,0.015) -- (-0.5,-0.015)
    node[below=2pt] {$-\tfrac12$};
  \draw (0,0.015) -- (0,-0.015)
    node[below=2pt] {$0$};
  \draw (0.5,0.015) -- (0.5,-0.015)
    node[below=2pt] {$\tfrac12$};

  \draw[beloworange,dashed]
    (-0.90,0.10) -- (0.39,0.10);
  \draw[curveblue,dashed]
    (-0.90,0.25) -- (0.39,0.25);
  \draw[abovegreen,dashed]
    (-0.90,0.50) -- (0.39,0.50);

  \node[beloworange,anchor=west,font=\scriptsize]
    at (0.41,0.10) {$\alpha_- = 0.1$};
  \node[curveblue,anchor=west,font=\scriptsize]
    at (0.41,0.25) {$\alpha_\star=0.25$};
  \node[abovegreen,anchor=west,font=\scriptsize]
    at (0.41,0.50) {$\alpha_+ = 0.5$};

  \draw[black!75,very thick,smooth]
    plot coordinates {
      (-0.90,0.00771) (-0.80,0.01426) (-0.70,0.02403)
      (-0.60,0.03776) (-0.50,0.05626) (-0.40,0.08035)
      (-0.30,0.11089) (-0.20,0.14876) (-0.10,0.19483)
      ( 0.00,0.25000) ( 0.10,0.31517) ( 0.20,0.39124)
      ( 0.30,0.47911) ( 0.40,0.57965) ( 0.50,0.69374)
      ( 0.60,0.82224) ( 0.70,0.96597)
    };
  \node[black!75,anchor=east]
    at (0.68,1.00) {$\chi_3(\zeta)$};

  \draw[beloworange,densely dashed]
    (-0.333,0) -- (-0.333,0.10);
  \draw[curveblue,densely dashed]
    (0,0) -- (0,0.25);
  \draw[abovegreen,densely dashed]
    (0.322,0) -- (0.322,0.50);

  \fill[beloworange] (-0.333,0.10) circle[radius=1.5pt];
  \fill[curveblue]   (0,0.25)      circle[radius=1.5pt];
  \fill[abovegreen]  (0.322,0.50)  circle[radius=1.5pt];

  \node[beloworange,anchor=north]
    at (-0.333,-0.035) {$\zeta_-$};
  \node[abovegreen,anchor=north]
    at (0.322,-0.035) {$\zeta_+$};

\end{scope}

\end{tikzpicture}
\caption{ Spectral interpretation of the CGMT calculation for $\kappa=3$. Panel~\textup{(a)} shows the GOE semicircle law and the cutoff $\omega=-\zeta$: the gray region $x\le\omega$ is clipped to zero, while $x>\omega$ is retained. Panel~\textup{(b)} shows the resulting trace-normalized spectral profiles; the curves are the continuous parts and the spikes are the atoms at zero. In the SAT phase $\alpha_- < \alpha_\star$, the orange profile is the continuation of the signed CGMT auxiliary optimizer, but our exact fitting proof instead uses the well-conditioned fit constructed in Lemma~\ref{lem:gaussian-bounded-witness}. Panel~\textup{(c)} shows how the level crossings $\chi_3(\zeta)=\alpha$ select the corresponding values of $\zeta = -\omega$. }	\label{fig:gaussian-scalar-level-crossings}
\end{figure}

\begin{remark}[Spectral profiles at $\kappa=3$ and the replica prediction]
\label{rem:replica-spectral-correspondence}
For $\kappa=3$, the spectral meaning of the scalar optimizer is especially
transparent; see Figure~\ref{fig:gaussian-scalar-level-crossings}.  Since
$T_3=I$, the cone projection is simply projection onto $\Ssym_+^d$.  Writing
\[
    \mathsf G_d
    =U\diag(\sqrt d\,g_1,\ldots,\sqrt d\,g_d)U^\top,
\]
we have
\[
    \Pi_{\Ssym_+^d}
    \bigl(\mathsf G_d+\zeta\sqrt d\,I_d\bigr)
    =
    U\diag\bigl(\sqrt d\,(g_i+\zeta)_+\bigr)U^\top .
\]
Thus the projection shifts the GOE spectrum by $\zeta$ and clips at zero.
After trace normalization, its eigenvalues are asymptotically described by
\[
    f_\zeta(x)
    :=\frac{(x+\zeta)_+}{m(-\zeta)}
    =\frac{(x-\omega)_+}{m(\omega)},
    \qquad \omega=-\zeta.
\]
In particular, if $X\sim\nu$, the corresponding limiting spectral law is
the law of $f_\zeta(X)$: the mass $X\le\omega$ is placed at zero, while the
remaining part of the semicircle is shifted and rescaled onto the positive
axis.  Notice that $\omega<0$ causes no conflict with positive
semidefiniteness: it only means that some negative GOE eigenvalues survive
the cutoff, and they are mapped to positive eigenvalues of the projected
matrix.

At the transition $\alpha_\star(3) = 1/4$, we have $\zeta=\omega=0$ and
\[
    f_{\star,3}(x)=\frac{x_+}{m(0)}.
\]
Hence the critical spectral law consists of an atom of mass $1/2$ at zero
and the rescaled positive half of the semicircle law.  Since
$m(0)=2\sqrt2/(3\pi)$, this is
\[
    \frac12\delta_0
    +\frac{4}{9\pi^3}\sqrt{9\pi^2-4\lambda^2}\,
    \mathbf 1_{\{0<\lambda<3\pi/2\}}\,d\lambda,
\]
which matches the critical law $\mu_c$ predicted by Maillard and
Kunisky~\cite{maillard-kunisky2023replica}.

For the UNSAT phase $\alpha>\alpha_\star(3)=1/4$, the crossing equation
\[
    \chi_3(\zeta_{\alpha,3})=\alpha
\]
selects $\zeta_{\alpha,3}>0$, and the profile above is genuinely the
trace-normalized optimizer of the CGMT auxiliary problem.  The same profile
continues algebraically to the SAT phase $\alpha<\alpha_\star(3)$, as illustrated in Figure~\ref{fig:gaussian-scalar-level-crossings}, but this continuation
has zero eigenvalues so it is not well-conditioned. Instead, the exact SAT proof
 uses Lemma~\ref{lem:gaussian-bounded-witness} to obtain a well-conditioned fit in the interior of the PSD cone.  Note that there are many ellipsoids which fit the data well in the SAT phase; see
\cite{maillard-kunisky2023replica} for more discussion and replica predictions for ``typical'' ellipsoid fits.
\end{remark}

We now compute the expansion about the critical threshold that was stated in the introduction. This shows that the limiting squared loss has continuous first derivative and discontinuous second derivative, i.e., it has a second-order phase transition.
\begin{lemma}
	\label{lem:gaussian-energy-quadratic-onset}
	Fix $\kappa>1$. As
	$\alpha\downarrow\alpha_\star(\kappa)$ from above,
	\begin{equation}
		\label{eq:gaussian-energy-quadratic-onset-proof}
		e_\star(\alpha,\kappa)
		=
		\frac{(\alpha-\alpha_\star(\kappa))^2}
		{2\alpha_\star(\kappa)m(\omega_\kappa)^2}
		+o\!\left((\alpha-\alpha_\star(\kappa))^2\right).
	\end{equation}
\end{lemma}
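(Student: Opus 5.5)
The idea is to rewrite everything in terms of $\zeta_{\alpha,\kappa}$, using the function $\chi_\kappa$ from Proposition~\ref{prop:scalar-gaussian-energy-curve}. By definition of $\zeta_{\alpha,\kappa}=c_\kappa m(\omega_{\alpha,\kappa})-\omega_{\alpha,\kappa}$, formula \eqref{eq:optimal-fitting-error-main} reads
\[
e_\star(\alpha,\kappa)=\frac{2}{\alpha}\,(\zeta_{\alpha,\kappa})_+^2 .
\]
Moreover $\chi_\kappa(\zeta_{\alpha,\kappa})=\alpha$, and $\zeta_{\alpha,\kappa}>0$ exactly when $\alpha>\alpha_\star(\kappa)$. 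Thus it suffices to find the first-order behaviour of the inverse function $\alpha\mapsto\zeta_{\alpha,\kappa}$ near $\zeta=0$, where $\chi_\kappa(0)=\alpha_\star(\kappa)$.

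\textbf{Step 1: derivative of $\chi_\kappa$ at $\zeta=0$.} The proof of Proposition~\ref{prop:scalar-gaussian-energy-curve} gives $\chi_\kappa'(\zeta)=2m(\omega_{\kappa,\zeta})$. We check that $\zeta\mapsto\omega_{\kappa,\zeta}$ is $C^1$. It is the inverse of $-F_\kappa$, where $F_\kappa'=1+c_\kappa\nu((\omega,\infty))$ is continuous (since $\nu$ has no atoms) and bounded below by a positive constant. Hence $\chi_\kappa$ is $C^1$ with
\[
\chi_\kappa'(0)=2m(\omega_\kappa).
\]
This is strictly positive because $\omega_\kappa<\sqrt2$. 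Indeed, if $\omega_\kappa\ge\sqrt2$ then $m(\omega_\kappa)=0$, which forces $\omega_\kappa=c_\kappa m(\omega_\kappa)=0$, a contradiction.

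\textbf{Step 2: expand $\zeta_{\alpha,\kappa}$.} By the inverse function theorem applied to the $C^1$ map $\chi_\kappa$ near $0$,
\[
\zeta_{\alpha,\kappa}=\frac{\alpha-\alpha_\star(\kappa)}{2m(\omega_\kappa)}+o\bigl(\alpha-\alpha_\star(\kappa)\bigr).
\]
For $\alpha>\alpha_\star(\kappa)$ this quantity is positive, so the positive part in the formula for $e_\star$ is inactive.

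\textbf{Step 3: substitute.} Squaring the expansion and using $2/\alpha=2/\alpha_\star(\kappa)+O(\alpha-\alpha_\star(\kappa))$ gives
\[
e_\star(\alpha,\kappa)=\frac{2}{\alpha_\star(\kappa)}\cdot\frac{(\alpha-\alpha_\star(\kappa))^2}{4m(\omega_\kappa)^2}+o\bigl((\alpha-\alpha_\star(\kappa))^2\bigr),
\]
which is exactly \eqref{eq:gaussian-energy-quadratic-onset-proof}. This requires $\alpha_\star(\kappa)>0$, which holds because $\chi_\kappa(0)=\mathcal E_\kappa(\omega_\kappa)>0$ for $\omega_\kappa<\sqrt2$.

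The main point needing care is Step 1: the $C^1$ regularity and positivity of the derivative at $0$, in particular excluding $\omega_\kappa=\sqrt2$. The remaining steps are routine.
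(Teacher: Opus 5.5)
Your proposal is correct and follows the paper's own proof: rewrite $e_\star(\alpha,\kappa)=2\zeta_{\alpha,\kappa}^2/\alpha$ for $\alpha>\alpha_\star(\kappa)$, use $\chi_\kappa(\zeta_{\alpha,\kappa})=\alpha$ together with $\chi_\kappa'(0)=2m(\omega_\kappa)>0$, and invert via the inverse function theorem. You also fill in three details the paper leaves implicit: the $C^1$ regularity of $\zeta\mapsto\omega_{\kappa,\zeta}$, the fact that $\omega_\kappa<\sqrt2$, and the positivity $\alpha_\star(\kappa)>0$.
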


\begin{proof}
	By Proposition~\ref{prop:scalar-gaussian-energy-curve},
	$\chi_\kappa(0)=\alpha_\star(\kappa)$,
	$\chi_\kappa(\zeta_{\alpha,\kappa})=\alpha$, and
	$\chi_\kappa'(0)=2m(\omega_\kappa)>0$.
	The inverse function theorem therefore gives, as
	$\alpha\downarrow\alpha_\star(\kappa)$,
	\[
	\zeta_{\alpha,\kappa}
	=
	\frac{\alpha-\alpha_\star(\kappa)}{2m(\omega_\kappa)}
	+o\!\left(\alpha-\alpha_\star(\kappa)\right).
	\]
	Since $e_\star(\alpha,\kappa)=2\zeta_{\alpha,\kappa}^2/\alpha$ for
	$\alpha>\alpha_\star(\kappa)$, and
	$\alpha\to\alpha_\star(\kappa)$, the claimed expansion follows.
\end{proof}

\subsection{Evaluation of the shifted projection}
We now evaluate the shifted projection in
\eqref{eq:signed-auxiliary-projection-crossing}, connecting it back to the scalar $\chi_{\kappa}(\zeta)$ from the previous section. The following elementary lemma is useful because the GOE eigenvector matrix is Haar and independent of the eigenvalues.

\begin{lemma}
\label{lem:haar-diagonal-averaging}
Let $U$ be Haar distributed on $O(d)$ and let
$D=\diag(\rho_1,\ldots,\rho_d)$ for a deterministic
$\rho\in\R^d$.  Write
\[
\bar\rho_d:=\frac1d\sum_{i=1}^d\rho_i.
\]
Then, for every fixed $C<\infty$, uniformly over
$\|\rho\|_\infty\le C$,
\begin{equation}
\label{eq:haar-diagonal-averaging}
\frac1d
\left\|(UDU^\top)_{\diag}-\bar\rho_d I_d\right\|_F^2
=o_\Pp(1).
\end{equation}
In particular,
\[
\frac1d\left\|(UDU^\top)_{\diag}\right\|_F^2
=\bar\rho_d^{\,2}+o_\Pp(1).
\]
\end{lemma}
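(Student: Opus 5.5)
The plan is a second-moment computation: compute the expectation of the left-hand side of \eqref{eq:haar-diagonal-averaging} exactly and show it is $O(1/d)$ uniformly over $\|\rho\|_\infty\le C$. Markov's inequality then gives the $o_\Pp(1)$ claim. Write $M=UDU^\top$, so that $M_{kk}=\sum_{i=1}^d\rho_i U_{ki}^2$. Each row $(U_{k1},\ldots,U_{kd})$ is uniform on $S^{d-1}$, so $\E U_{ki}^2=1/d$ and $\E M_{kk}=\bar\rho_d$. Hence
\[
\E\frac1d\left\|M_{\diag}-\bar\rho_d I_d\right\|_F^2=\frac1d\sum_{k=1}^d\Var(M_{kk})=\Var(M_{11}),
\]
using exchangeability of the rows.

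To bound $\Var(M_{11})$, set $u=(U_{11},\ldots,U_{1d})$, uniform on the sphere. We need the standard spherical moments
\[
\E u_i^4=\frac{3}{d(d+2)},\qquad \E u_i^2u_j^2=\frac{1}{d(d+2)}\quad(i\ne j).
\]
These give
\[
\E M_{11}^2=\frac{1}{d(d+2)}\Big(2\sum_i\rho_i^2+\big(\textstyle\sum_i\rho_i\big)^2\Big),
\]
and therefore
\[
\Var(M_{11})=\frac{2\sum_i\rho_i^2}{d(d+2)}+\bar\rho_d^{\,2}\Big(\frac{d}{d+2}-1\Big).
\]
In absolute value this is at most $2C^2/(d+2)+2C^2/(d+2)=O(1/d)$, uniformly over $\|\rho\|_\infty\le C$. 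Markov's inequality now yields \eqref{eq:haar-diagonal-averaging}.

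For the ``in particular'' statement, expand
\[
\frac1d\|M_{\diag}\|_F^2=\frac1d\|M_{\diag}-\bar\rho_d I_d\|_F^2+\frac{2\bar\rho_d}{d}\Tr\big(M_{\diag}-\bar\rho_d I_d\big)+\bar\rho_d^{\,2}.
\]
The middle term vanishes identically, because $\Tr M_{\diag}=\Tr M=\sum_i\rho_i=d\bar\rho_d$. The first term is $o_\Pp(1)$ by \eqref{eq:haar-diagonal-averaging}.

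There is no real obstacle here. The only input that needs care is the fourth-moment formula for the uniform sphere. If a self-contained derivation is wanted, write $u=g/\|g\|_2$ with $g$ standard Gaussian and use the independence of $\|g\|_2$ and $g/\|g\|_2$ together with $\E\|g\|_2^4=d(d+2)$.
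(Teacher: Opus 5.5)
Your proposal is correct and follows essentially the same route as the paper: compute the expected normalized squared deviation of the diagonal entries exactly using the spherical fourth moments (your variance formula simplifies to the paper's $\frac{2}{d+2}\cdot\frac1d\sum_i(\rho_i-\bar\rho_d)^2$), bound it by $O(C^2/d)$ uniformly, and apply Markov's inequality. For the ``in particular'' claim you use the trace identity $\sum_j (UDU^\top)_{jj}=d\bar\rho_d$ to kill the cross term, exactly as the paper does.
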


\begin{proof}
Write
$q_j=(UDU^\top)_{jj}=\sum_iU_{ji}^2\rho_i$.
Since each row of $U$ is uniform on $S^{d-1}$, for $i\ne k$ we have
\[
\E U_{ji}^2=\frac1d,\qquad
\E U_{ji}^4=\frac{3}{d(d+2)},\qquad
\E[U_{ji}^2U_{jk}^2]=\frac1{d(d+2)}.
\]
Expanding $q_j^2$ using these identities gives
\[
\E\left[\frac1d\sum_{j=1}^d(q_j-\bar\rho_d)^2\right]
=
\frac{2}{d+2}
\left(
\frac1d\sum_{i=1}^d(\rho_i-\bar\rho_d)^2
\right).
\]
If $\|\rho\|_\infty\le C$, the right-hand side is at most
$2C^2/(d+2)$, so Markov's inequality proves
\eqref{eq:haar-diagonal-averaging} uniformly over the stated class.
Finally, $\sum_jq_j=\Tr D=d\bar\rho_d$, and hence
\[
\frac1d\sum_jq_j^2
=
\bar\rho_d^{\,2}
+\frac1d\sum_j(q_j-\bar\rho_d)^2,
\]
which gives the last assertion.
\end{proof}

\begin{lemma}[Spectral comparison value]
\label{lem:spectral-comparison-value}
Fix $\kappa\ge3$ and $\zeta\in\R$. Take the eigendecomposition $\mathsf G_d=U\diag(\sqrt d\,g_i)U^\top$ so
$\lambda_i(\mathsf G_d)=\sqrt d\,g_i$ and define
\[
V_d:=
\sup_{R\succeq0,\ R\ne0}
\frac{\bigl(\langle\mathsf G_d,R\rangle_F
+\zeta\sqrt d\,\Tr R\bigr)_+}
{\bigl(\|R\|_F^2+c_\kappa(\Tr R)^2/d\bigr)^{1/2}}.
\]
Let $\nu_d=d^{-1}\sum_i\delta_{g_i}$,
$m_d(\omega)=\int(x-\omega)_+\,d\nu_d(x)$, and
$s_d(\omega)=\int(x-\omega)_+^2\,d\nu_d(x)$, and let $\omega_d$ be the
unique solution of $\omega_d+\zeta=c_\kappa m_d(\omega_d)$.  Then
\begin{equation}
    \label{eq:comparison-value-limit}
    d^{-2}V_d^2
    \pto\chi_\kappa(\zeta)
\end{equation}
Moreover,
$d^{-2}V_d^2=s_d(\omega_d)+c_\kappa m_d(\omega_d)^2$, and whenever this
value is positive, an optimizer is given by
$R=U\diag(\sqrt d\,\rho_i)U^\top$ with
$\rho_i=(g_i-\omega_d)_+$.
\end{lemma}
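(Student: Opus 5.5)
Since $\kappa\ge3$ we have $c_\kappa\ge0$, so the denominator in $V_d$ is a norm squared on $\Ssym^d$. The plan is to reduce $V_d$ to the finite-measure case of Lemma~\ref{lem:positive-cone-variational} with $\mu=\nu_d$, and then pass to the limit with Lemma~\ref{lem:positive-cone-variational-stability}.

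\textbf{Step 1 (reduction to diagonal $R$ in the GOE eigenbasis).} Write $\mathsf G_d=U\Lambda U^\top$ with $\Lambda=\diag(\sqrt d\,g_i)$, and set $\tilde R=U^\top RU\succeq0$. The numerator depends on $\tilde R$ only through $\langle\Lambda,\tilde R\rangle$ and $\Tr\tilde R$, so it depends only on the diagonal of $\tilde R$. Replacing $\tilde R$ by its diagonal part $\tilde R_{\diag}$ has three effects:
\begin{itemize}
\item it keeps $\tilde R\succeq0$, since the diagonal of a PSD matrix is nonnegative;
\item it leaves $\Tr\tilde R$ and the numerator unchanged;
\item it does not increase $\|\tilde R\|_F$.
\end{itemize}
Because $c_\kappa\ge0$, the denominator does not increase either, so the supremum is attained among $R=U\diag(\sqrt d\,\rho_i)U^\top$ with $\rho_i\ge0$. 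This is exactly where $\kappa\ge3$ is used: for $c_\kappa<0$, shrinking $\|R\|_F$ while keeping the trace fixed would decrease the denominator in the wrong way.

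\textbf{Step 2 (identification with the scalar problem).} For $R=U\diag(\sqrt d\,\rho_i)U^\top$, let $f$ be the function on the atoms of $\nu_d$ with $f(g_i)=\rho_i$ (handling ties in the $g_i$ by averaging, which does not hurt, or noting they a.s.\ do not occur). Then
\begin{align*}
\langle\mathsf G_d,R\rangle_F+\zeta\sqrt d\,\Tr R&=d^2\int(x+\zeta)f\,d\nu_d,\\
\|R\|_F^2+\frac{c_\kappa(\Tr R)^2}{d}&=d^2\Big(\int f^2\,d\nu_d+c_\kappa\Big(\int f\,d\nu_d\Big)^2\Big).
\end{align*}
Hence $d^{-2}V_d^2$ equals the supremum in Lemma~\ref{lem:positive-cone-variational} with $\mu=\nu_d$ and $c=c_\kappa>-1$. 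That lemma gives $d^{-2}V_d^2=s_d(\omega_d)+c_\kappa m_d(\omega_d)^2$, with $\omega_d$ the unique root. When this value is positive, the same lemma shows the optimizer is $f=(x-\omega_d)_+$, i.e.\ $\rho_i=(g_i-\omega_d)_+$.

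\textbf{Step 3 (limit).} By the semicircle law, $\nu_d\to\nu$ weakly in probability. The GOE operator norm bound gives $\max_i|g_i|\le 2$ with probability tending to one, so all $\nu_d$ are supported in a fixed compact set. On such supports, $x\mapsto(x-\omega)_+$ and $x\mapsto(x-\omega)_+^2$ are equicontinuous and uniformly bounded for $\omega$ in compacts. Therefore $m_d\to m$ and $s_d\to s$ locally uniformly in probability.

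Lemma~\ref{lem:positive-cone-variational-stability} then gives $s_d(\omega_d)+c_\kappa m_d(\omega_d)^2\pto s(\omega_{\kappa,\zeta})+c_\kappa m(\omega_{\kappa,\zeta})^2=\chi_\kappa(\zeta)$, which is \eqref{eq:comparison-value-limit}.

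The main step to check carefully is Step 1, specifically that diagonal truncation preserves positivity and cannot increase the denominator, which needs $c_\kappa\ge0$. The remaining steps are routine: the local uniform convergence in Step 3 follows from bounded support plus weak convergence.
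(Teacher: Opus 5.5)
Your proof is correct and has the same outline as the paper's. You reduce to matrices diagonal in the eigenbasis of $\mathsf G_d$, rewrite $d^{-2}V_d^2$ as the scalar problem with $\mu=\nu_d$, invoke Lemma~\ref{lem:positive-cone-variational}, and pass to the limit using local uniform convergence of $m_d,s_d$ together with Lemma~\ref{lem:positive-cone-variational-stability}.

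The one genuine difference is the reduction step. The paper uses von Neumann's trace inequality: it replaces $R$ by the matrix with the same spectrum aligned with $\mathsf G_d$. This does not decrease the numerator and leaves $\|R\|_F$ and $\Tr R$, hence the denominator, exactly unchanged. You instead pinch $U^\top RU$ to its diagonal, which fixes the numerator and the trace and only lowers $\|R\|_F$. Your route is slightly more elementary, since it needs no rearrangement inequality; the paper's keeps the whole spectrum of $R$ fixed.

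One correction to your commentary: Step~1 does not use $c_\kappa\ge0$. The $c_\kappa$-term depends only on $(\Tr R)^2$, which pinching preserves, so the denominator weakly decreases for every $c_\kappa>-1$. It also stays positive, because $(\Tr R)^2\le d\|R\|_F^2$. Lowering the denominator while fixing the numerator can only increase the ratio, so nothing goes ``the wrong way'' when $c_\kappa<0$. The hypothesis $\kappa\ge3$ matters only downstream, in Proposition~\ref{prop:shifted-cone-projections}. There the comparison $\|T_\kappa R\|_F^2=\|R\|_F^2+c_\kappa\|R_{\diag}\|_F^2\ge\|R\|_F^2+c_\kappa(\Tr R)^2/d$ requires $c_\kappa\ge0$.
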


\begin{proof}
By the semicircle law and the spectral-norm bound for Gaussian Wigner
matrices, the empirical law of the $g_i$'s converges almost surely to $\nu$,
and $\max_i|g_i|=\sqrt2+o_\Pp(1)$; see, e.g.,
\cite[Chapter~2]{anderson-guionnet-zeitouni2010random}.
For symmetric matrices $A,B$, von Neumann's trace inequality
\cite{bhatia2013matrix} gives
$\Tr(AB)\le\sum_i\lambda_i(A)\lambda_i(B)$ when the eigenvalues are
ordered decreasingly, with equality when their eigenbases are aligned.
We may therefore take $R$ diagonal in the eigenbasis of $\mathsf G_d$.
By homogeneity, writing $r_i=\sqrt d\,\rho_i$ gives
\begin{equation}
    \label{eq:projection-finite-var}
    d^{-2}V_d^2
    =
    \sup_{\substack{\rho_i\ge0\\\rho\ne0}}
    \frac{\left(d^{-1}\sum_i(g_i+\zeta)\rho_i\right)_+^2}
    {d^{-1}\sum_i\rho_i^2
    +c_\kappa(d^{-1}\sum_i\rho_i)^2}.
\end{equation}
Lemma~\ref{lem:positive-cone-variational} now gives
$d^{-2}V_d^2=s_d(\omega_d)+c_\kappa m_d(\omega_d)^2$ and, whenever this
value is positive, the stated optimizer.  Semicircle convergence and the
spectral-norm bound give local uniform convergence of $m_d,s_d$ to $m,s$.
Hence Lemma~\ref{lem:positive-cone-variational-stability} gives
$\omega_d\to\omega_{\kappa,\zeta}$ and
\eqref{eq:comparison-value-limit}.
\end{proof}

\begin{proposition}[Shifted cone-projection limit]
    \label{prop:shifted-cone-projections}
    For every fixed $\kappa>1$ and $\zeta\in\R$,
    \[
    \frac1{d^2}
    \left\|\Pi_{\mathbb S_\kappa^d}
    \bigl(\mathsf G_{d,\kappa}(\zeta)\bigr)\right\|_F^2
    \pto \chi_\kappa(\zeta)
    \]
    In particular, at $\zeta=0$,
    $d^{-2}\|\Pi_{\mathbb S_\kappa^d}(\mathsf G_d)\|_F^2
    \pto\alpha_\star(\kappa)$.
\end{proposition}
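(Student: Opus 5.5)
The plan is to convert the projection norm into the variational quantity $V_d$ of Lemma~\ref{lem:spectral-comparison-value}, treat $\kappa\ge3$ directly, and get $1<\kappa<3$ from the duality of Lemma~\ref{lem:shifted-duality}.

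\emph{Step 1: reduction to $V_d$.} By \eqref{eq:cone-projection-support},
\[
\|\Pi_{\mathbb S_\kappa^d}(\mathsf G_{d,\kappa}(\zeta))\|_F=\sup_{Y\in\mathbb S_\kappa^d,\ \|Y\|_F\le1}\langle\mathsf G_{d,\kappa}(\zeta),Y\rangle_F .
\]
Substitute $Y=T_\kappa R$ with $R\succeq0$. As in the proof of Lemma~\ref{lem:signed-auxiliary-projection-crossing}, $\langle I_d,T_\kappa R\rangle_F=\sqrt{(\kappa-1)/2}\,\Tr R$, so the shift contributes exactly $\zeta\sqrt d\,\Tr R$. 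The norm becomes
\[
\|T_\kappa R\|_F^2=\|R_{\off}\|_F^2+\tfrac{\kappa-1}{2}\|R_{\diag}\|_F^2=\|R\|_F^2+c_\kappa\|R_{\diag}\|_F^2 .
\]
The numerator is therefore $\langle\mathsf G_d,T_\kappa R\rangle_F+\zeta\sqrt d\Tr R$. This differs from $V_d$ in two ways: $\mathsf G_d$ is paired with $T_\kappa R$ rather than $R$, and the denominator contains $\|R_{\diag}\|_F^2$ rather than $(\Tr R)^2/d$.

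The first discrepancy disappears after a change of variables in the Gaussian. Note $\langle\mathsf G_d,T_\kappa R\rangle_F=\langle T_\kappa\mathsf G_d,R\rangle_F$. Then $T_\kappa\mathsf G_d$ has off-diagonal entries of variance $1/2$ and diagonal entries of variance $(\kappa-1)/2$. This is GOE plus an independent diagonal Gaussian with entrywise variance $(\kappa-3)/2$ when $\kappa\ge3$. So I will rather work directly with the projection as the sup over $R\succeq0$ of
\[
\frac{\bigl(\langle\mathsf G_d',R\rangle+\zeta\sqrt d\Tr R\bigr)_+}{\bigl(\|R\|_F^2+c_\kappa\|R_{\diag}\|_F^2\bigr)^{1/2}},
\qquad \mathsf G_d'=T_\kappa\mathsf G_d .
\]

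\emph{Step 2: upper bound for $\kappa\ge3$.} Since $c_\kappa\ge0$ and $\|R_{\diag}\|_F^2\ge(\Tr R)^2/d$ by Cauchy--Schwarz, the denominator dominates the one in $V_d$. Write $\mathsf G_d'=\mathsf G_d''+D$, with $\mathsf G_d''$ a GOE matrix and $D$ diagonal Gaussian with variance $c_\kappa$ per entry. Then
\[
\langle D,R\rangle\le\|D\|_F\|R_{\diag}\|_F .
\]
The worry is that this term may be of the same order as the signal, since $\|D\|_F\asymp\sqrt d$ while $\|R_{\diag}\|_F$ can be as large as about $d$. The clean route is to complete the square. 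The combination $c_\kappa\|R_{\diag}\|^2-\langle D,R_{\diag}\rangle$ inside a variational formula is handled by duality:
\[
\sup_R\frac{\langle \mathsf G'',R\rangle+\langle D,R\rangle}{\sqrt{\|R\|^2+c\|R_{\diag}\|^2}}
\]
equals the norm of the projection in a different metric. I expect the main obstacle to be showing that $D$ contributes only through its mean. The plan is to control it via Lemma~\ref{lem:haar-diagonal-averaging}. The near-optimal $R$ are asymptotically spectral functions of $\mathsf G''$, whose diagonals are close to $\bar\rho I$. For such $R$ one has $\langle D,R\rangle=\bar\rho\Tr D+o(d^{2})$ and $\Tr D=O(\sqrt d)$, which is negligible against the $d^2$ scale. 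To make this rigorous, I would first prove the lower bound by plugging in the explicit optimizer from Lemma~\ref{lem:spectral-comparison-value}. That optimizer has bounded spectrum, so Lemma~\ref{lem:haar-diagonal-averaging} gives $\|R_{\diag}\|_F^2/d^2\approx\bar\rho^2=(\Tr R)^2/d^3$, and the denominators match asymptotically. For the upper bound, I would use the fact that the cone projection is $1$-Lipschitz in $\mathsf G'$, together with concentration of the norm around its mean. I would then compare expectations by a Gaussian interpolation in the variance of $D$, or alternatively bound via $V_d$ with $\mathsf G'$ in place of $\mathsf G_d$. The semicircle law persists after adding the diagonal $D$ of norm $O(1)$ relative to $\sqrt d$ per eigenvalue scale, by rank and norm perturbation: $\|D\|_{\op}=O(\sqrt{\log d})\ll\sqrt d$.

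\emph{Step 3: $1<\kappa<3$ via Moreau.} For $1<\kappa<3$, use Lemma~\ref{lem:moreau-cone-decomposition} with Lemma~\ref{lem:rescaled-psd-cone-duality}:
\[
\|\Pi_{\mathbb S_\kappa^d}(Z)\|_F^2=\|Z\|_F^2-\|\Pi_{\mathbb S_{\kappa^\vee}^d}(-Z)\|_F^2 .
\]
Lemma~\ref{lem:shifted-duality} gives $-\mathsf G_{d,\kappa}(\zeta)\stackrel d=\mathsf G_{d,\kappa^\vee}(\zeta^\vee)$, with $\kappa^\vee>3$. Also
\[
d^{-2}\|\mathsf G_{d,\kappa}(\zeta)\|_F^2\pto \tfrac12+\tfrac{2\zeta^2}{\kappa-1}
\]
by a direct computation: GOE gives $1/2$, the shift gives $2\zeta^2/(\kappa-1)$, and the cross term is $O(d^{-1/2})$. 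Then \eqref{eq:shifted-alpha-involution} yields the limit $\chi_\kappa(\zeta)$.
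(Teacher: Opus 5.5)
Your proposal is correct and has the same architecture as the paper's proof. You rewrite the projection norm via \eqref{eq:cone-projection-support} as a supremum over $R\succeq0$, dominate it by $V_d$ using $c_\kappa\ge0$ and $\|R_{\diag}\|_F^2\ge(\Tr R)^2/d$, and get the matching lower bound by evaluating at the spectral optimizer of Lemma~\ref{lem:spectral-comparison-value} together with Lemma~\ref{lem:haar-diagonal-averaging}. You then handle $1<\kappa<3$ through Moreau's identity and Lemma~\ref{lem:shifted-duality}, exactly as the paper does.

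The one thing to fix is Step~2, where the ``main obstacle'' you identify does not exist. You do not need the distributional splitting $\mathsf G''+D$, because
$\langle\mathsf G_d,T_\kappa R\rangle_F-\langle\mathsf G_d,R\rangle_F=(\sqrt{(\kappa-1)/2}-1)\langle(\mathsf G_d)_{\diag},R_{\diag}\rangle_F$
and $\|R_{\diag}\|_F\le C\|T_\kappa R\|_F$. After dividing by the denominator, this term is at most $C\|(\mathsf G_d)_{\diag}\|_F=O_\P(\sqrt d)$. That is negligible on the scale $d$ where $V_d=O_\P(d)$ lives. Your comparison of $\|D\|_F\|R_{\diag}\|_F$ with the signal overlooked that a large $\|R_{\diag}\|_F$ also inflates the denominator. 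This one-line bound is exactly the paper's upper bound.

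Your fallback of applying the spectral comparison to $T_\kappa\mathsf G_d$ also works. By Weyl, its eigenvalues differ from those of $\mathsf G_d$ by at most $C\max_j|(\mathsf G_d)_{jj}|=O_\P(\sqrt{\log d})$. So the rescaled empirical law and spectral edge are unchanged in the limit, and Lemma~\ref{lem:positive-cone-variational-stability} applies.

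The Lipschitz-concentration and variance-interpolation route is unnecessary. The heuristic that all near-optimizers are spectral functions of the GOE part would need its own stability proof if you relied on it for the upper bound.
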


\begin{proof}
We start with the case $\kappa\ge3$.
The cone-projection identity \eqref{eq:cone-projection-support} gives
\[
\left\|\Pi_{\mathbb S_\kappa^d}
\bigl(\mathsf G_{d,\kappa}(\zeta)\bigr)\right\|_F
=
\sup_{\substack{R\succeq0\\R\ne0}}
\frac{\bigl(\langle\mathsf G_d,T_\kappa R\rangle_F
+\zeta\sqrt d\,\Tr R\bigr)_+}{\|T_\kappa R\|_F}.
\]
Here $c_\kappa\ge0$.  Let $V_d$ and the associated notation be as in
Lemma~\ref{lem:spectral-comparison-value}.  The diagonal rescaling in the
numerator is lower order, since
\[
|\langle\mathsf G_d,T_\kappa R\rangle_F
-\langle\mathsf G_d,R\rangle_F|
\le
C(\kappa)\|(\mathsf G_d)_{\diag}\|_F\|T_\kappa R\|_F
=
O_\Pp(\sqrt d)\|T_\kappa R\|_F.
\]
Moreover,
$\|T_\kappa R\|_F^2
=\|R\|_F^2+c_\kappa\|R_{\diag}\|_F^2
\ge\|R\|_F^2+c_\kappa(\Tr R)^2/d$
for $R\succeq0$.  Hence, uniformly over $R\succeq0$,
$\|\Pi_{\mathbb S_\kappa^d}(\mathsf G_{d,\kappa}(\zeta))\|_F
\le V_d+O_\Pp(\sqrt d)$.
Also $V_d\le\|\mathsf G_d+\zeta\sqrt d\,I_d\|_F=O_\Pp(d)$, so
\begin{equation}
    \label{eq:shifted-projection-comparison-upper}
    d^{-2}
    \left\|\Pi_{\mathbb S_\kappa^d}
    \bigl(\mathsf G_{d,\kappa}(\zeta)\bigr)\right\|_F^2
    \le d^{-2}V_d^2+o_\Pp(1).
\end{equation}
Together with \eqref{eq:comparison-value-limit}, this gives the upper bound.

For the matching lower bound, suppose
$\chi_\kappa(\zeta)>0$, since otherwise there is nothing to prove.
Then $d^{-2}V_d^2>0$ with probability tending to one, so the optimizer from Lemma~\ref{lem:spectral-comparison-value} is $R_d=U\diag(\sqrt d\,\rho_i)U^\top$ with $\rho_i=(g_i-\omega_d)_+$ satisfying
$\|\rho\|_\infty=O_\Pp(1)$.  Conditional on the eigenvalues,
Lemma~\ref{lem:haar-diagonal-averaging} gives
\[
d^{-2}\|R_d\|_F^2=s_d(\omega_d),
\qquad
d^{-2}\|(R_d)_{\diag}\|_F^2
=m_d(\omega_d)^2+o_\Pp(1).
\]
Thus
$d^{-2}\|T_\kappa R_d\|_F^2=d^{-2}V_d^2+o_\Pp(1)$.
Also, using the defining equation for $\omega_d$,
\[
d^{-2}\bigl(\langle\mathsf G_d,T_\kappa R_d\rangle_F
+\zeta\sqrt d\,\Tr R_d\bigr)
=
d^{-2}V_d^2+o_\Pp(1).
\]
Since $V_d/d\pto\sqrt{\chi_\kappa(\zeta)}>0$,
evaluating the variational representation at $R_d$ gives
\[
\frac1d\left\|\Pi_{\mathbb S_\kappa^d}
\bigl(\mathsf G_{d,\kappa}(\zeta)\bigr)\right\|_F
\ge \frac{V_d}{d}+o_\Pp(1).
\]
Together with the upper bound above and \eqref{eq:comparison-value-limit},
this proves the result for $\kappa\ge3$.

\paragraph{Case $1<\kappa<3$.}
Set
\[
\kappa^\vee:=1+\frac{4}{\kappa-1}>3,
\qquad
\zeta^\vee:=-\frac{2\zeta}{\kappa-1}.
\]
Lemma~\ref{lem:rescaled-psd-cone-duality} gives
$(\mathbb S_\kappa^d)^\circ=-\mathbb S_{\kappa^\vee}^d$,
while Lemma~\ref{lem:shifted-duality} gives
\[
-\mathsf G_{d,\kappa}(\zeta)
\stackrel d=
\mathsf G_{d,\kappa^\vee}(\zeta^\vee).
\]
Since $\Pi_{-\mathcal C}(Z)=-\Pi_{\mathcal C}(-Z)$, Moreau's norm identity
 gives
\[
\left\|\Pi_{\mathbb S_\kappa^d}
\bigl(\mathsf G_{d,\kappa}(\zeta)\bigr)\right\|_F^2
+
\left\|\Pi_{\mathbb S_{\kappa^\vee}^d}
\bigl(-\mathsf G_{d,\kappa}(\zeta)\bigr)\right\|_F^2
=
\|\mathsf G_{d,\kappa}(\zeta)\|_F^2.
\]
By the duality identity above and the already proved
$\kappa^\vee>3$ case, the second term divided by $d^2$ converges to
$\chi_{\kappa^\vee}(\zeta^\vee)$.  The law of large numbers gives
\[
d^{-2}\|\mathsf G_{d,\kappa}(\zeta)\|_F^2
\to
\frac12+\frac{2\zeta^2}{\kappa-1}.
\]
Hence
\[
d^{-2}
\left\|\Pi_{\mathbb S_\kappa^d}
\bigl(\mathsf G_{d,\kappa}(\zeta)\bigr)\right\|_F^2
\to
\frac12+\frac{2\zeta^2}{\kappa-1}
-\chi_{\kappa^\vee}(\zeta^\vee)
=
\chi_\kappa(\zeta),
\]
where the last equality is the scalar identity in
Lemma~\ref{lem:shifted-duality}.  This completes the proof.
\end{proof}

\paragraph{Gaussian optimization limits}
The shifted-projection formula now identifies the signed auxiliary value,
and the CGMT comparison transfers this limit to the primary optimization.

\begin{lemma}
	\label{lem:gaussian-auxiliary-limit}
	If $n/d^2\to\alpha\in(0,\infty)$, then
	$$
	\widehat{\phi}_d\pto\zeta_{\alpha,\kappa},
	\qquad
	\Phi_d\pto\bigl(\zeta_{\alpha,\kappa}\bigr)_+.
	$$
\end{lemma}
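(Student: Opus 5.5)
The plan is to combine the crossing representation of Lemma~\ref{lem:signed-auxiliary-projection-crossing} with the shifted-projection limit of Proposition~\ref{prop:shifted-cone-projections} to get the limit of $\widehat\phi_d$. The CGMT comparison \eqref{eq:cgmt-comparisons} then transfers it to $\Phi_d$. One normalization issue comes first. Lemma~\ref{lem:signed-auxiliary-projection-crossing} is written with the shift $\zeta\sqrt{2d/(\kappa-1)}$, while $\widehat\phi_d$ carries the factor $d^{-3/2}$. Since $\|g\|_2/d=\sqrt{(n-1)/d^2}\to\sqrt\alpha$ almost surely, the crossing condition
\[
d^{-1}\bigl\|\Pi_{\mathbb S_\kappa^d}(\mathsf G_{d,\kappa}(\zeta))\bigr\|_F\le d^{-1}\|g\|_2
\]
becomes, in the limit, $\sqrt{\chi_\kappa(\zeta)}\le\sqrt\alpha$.

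I would first check that the scaling of $\zeta$ in $\mathsf G_{d,\kappa}(\zeta)$ matches the scaling in $\chi_\kappa$ (up to the constant fixed in the definitions). Then I would fix $\delta>0$ and set $\zeta_\pm:=\zeta_{\alpha,\kappa}\pm\delta$. By Proposition~\ref{prop:scalar-gaussian-energy-curve}, $\chi_\kappa(\zeta_{\alpha,\kappa})=\alpha>0$, and $\chi_\kappa$ is strictly increasing wherever it is positive. Hence $\chi_\kappa(\zeta_+)>\alpha$ and $\chi_\kappa(\zeta_-)<\alpha$. For $\zeta_-$ this strict inequality holds either because $\chi_\kappa(\zeta_-)$ is positive and below $\alpha$ by strict monotonicity, or because it is zero.

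Next I would apply Proposition~\ref{prop:shifted-cone-projections} at the two fixed points $\zeta_\pm$. With probability tending to one, the projection norm at $\zeta_+$ exceeds $\|g\|_2$ and the projection norm at $\zeta_-$ is below it. I also need that $\zeta\mapsto\|\Pi_{\mathbb S_\kappa^d}(\mathsf G_{d,\kappa}(\zeta))\|_F$ is nondecreasing, so that the set in \eqref{eq:signed-auxiliary-projection-crossing} is an interval $(-\infty,\widehat\phi_d]$. Monotonicity follows from the support-function formula \eqref{eq:cone-projection-support}: the shift adds $\zeta\cdot\mathrm{const}\cdot\langle I_d,Y\rangle_F$, and $\langle I_d,Y\rangle_F=\sqrt{(\kappa-1)/2}\,\Tr R\ge0$ for $Y=T_\kappa R\in\mathbb S_\kappa^d$, so the supremum is nondecreasing in $\zeta$. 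With monotonicity in hand, $\zeta_-\le\widehat\phi_d\le\zeta_+$ with probability tending to one, and since $\delta$ is arbitrary, $\widehat\phi_d\pto\zeta_{\alpha,\kappa}$.

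For $\Phi_d$, the map $x\mapsto x_+$ is continuous, so $(\widehat\phi_d)_+\pto(\zeta_{\alpha,\kappa})_+$. Taking $c=(\zeta_{\alpha,\kappa})_+-\delta$ and $c=(\zeta_{\alpha,\kappa})_++\delta$ in the two inequalities of \eqref{eq:cgmt-comparisons}, each right-hand side tends to zero. This gives $\Phi_d\pto(\zeta_{\alpha,\kappa})_+$. (The left tail is trivial when $(\zeta_{\alpha,\kappa})_+=0$, since $\Phi_d\ge0$.)

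I expect the main obstacle to be the scaling bookkeeping: confirming that the threshold quantity in the crossing is exactly $\alpha$, meaning the $d^{-3/2}$ normalization together with $\|g\|_2\sim d\sqrt\alpha$ and the $\sqrt{2/d}$ factor from Lemma~\ref{lem:projected-anisotropic-gaussian-representation}. I would resolve it by rederiving the crossing with explicit constants before invoking Proposition~\ref{prop:shifted-cone-projections}.
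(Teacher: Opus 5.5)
Your proposal is correct and follows the paper's proof essentially step for step. You bracket $\zeta_{\alpha,\kappa}$ by $\zeta_{\alpha,\kappa}\pm\delta$ using strict monotonicity of $\chi_\kappa$, apply Proposition~\ref{prop:shifted-cone-projections} at these two fixed points together with $\|g\|_2/d\to\sqrt\alpha$, read off $\widehat\phi_d$ from the crossing formula, and transfer the limit to $\Phi_d$ via \eqref{eq:cgmt-comparisons}.

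Your explicit monotonicity check is harmless but not needed, since the proof of Lemma~\ref{lem:signed-auxiliary-projection-crossing} already shows the crossing set is exactly $(-\infty,\widehat\phi_d]$. The scaling worry is likewise moot: that lemma already fixes the normalization, and the $\sqrt{2/d}$ factor enters only later, in the proof of Proposition~\ref{thm:gaussian-master}.
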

\begin{proof}
	Set $\zeta_*=\zeta_{\alpha,\kappa}$.
	By Proposition~\ref{prop:scalar-gaussian-energy-curve},
	$\chi_\kappa(\zeta_*)=\alpha>0$, and $\chi_\kappa$ is strictly increasing
	wherever it is positive. Thus, for every $\eps>0$,
	$\chi_\kappa(\zeta_*-\eps)<\alpha<\chi_\kappa(\zeta_*+\eps)$.
        Proposition~\ref{prop:shifted-cone-projections} and
	$\|g\|_2/d\pto\sqrt\alpha$ therefore imply, with
	probability tending to one,
	$$
	\left\|\Pi_{\mathbb S_\kappa^d}
	\bigl(\mathsf G_{d,\kappa}(\zeta_*-\eps)\bigr)\right\|_F
	<\|g\|_2
	<
	\left\|\Pi_{\mathbb S_\kappa^d}
	\bigl(\mathsf G_{d,\kappa}(\zeta_*+\eps)\bigr)\right\|_F.
	$$
	The crossing formula
	\eqref{eq:signed-auxiliary-projection-crossing} therefore yields
	$\zeta_*-\eps\le\widehat{\phi}_d<\zeta_*+\eps$ with probability tending
	to one. This proves the convergence of the
	signed auxiliary value. Since the CGMT auxiliary value is
	$(\widehat{\phi}_d)_+$, the two comparisons in
	\eqref{eq:cgmt-comparisons} then give the convergence of
	$\Phi_d$: apply the upper-tail inequality above
	$(\zeta_*)_++\eps$, and, when $(\zeta_*)_+>0$, the lower-tail inequality
	below $(\zeta_*)_+-\eps$.  If $(\zeta_*)_+=0$, the lower bound is automatic.
\end{proof}

\subsection{Well-conditioned fit}

In the SAT phase there are many shape matrices which fit the data.  For the
exact fitting result we need the Gaussian analogue to have a solution with uniformly bounded
condition number, which the following lemma guarantees exists.

\begin{lemma}[Well-conditioned exact fit below the threshold]
    \label{lem:gaussian-bounded-witness}
    Suppose that $n/d^2\to\alpha$ with
    $0<\alpha<\alpha_\star(\kappa)$.  Then there are constants
    $0<m<M<\infty$, depending only on $\kappa$ and the distance from
    $\alpha$ to $\alpha_\star(\kappa)$, such that
    \[
    \Pp\left\{
        \exists R:\ 
        mI_d\preceq R\preceq MI_d,\quad
        \Tr R=d,\quad
        \mathcal A_{G,\kappa}R=0
    \right\}\to1 .
    \]
\end{lemma}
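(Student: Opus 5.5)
The plan is to avoid CGMT here and instead use the exact conic kinematics of a uniformly random subspace, since CGMT only gives $\Phi_d\to0$ and not an exact zero. By Lemma~\ref{lem:projected-anisotropic-gaussian-representation}, $\ker(\mathcal A_{G,\kappa})\stackrel d=T_\kappa^{-1}\ker(\mathsf A)$, where $\ker(\mathsf A)$ is a uniformly (Haar) distributed subspace of $\Ssym^d$ of codimension $n-1$. For $0<\delta<1$, define the closed convex cone
\[
\mathcal C_\delta:=\{R\in\Ssym^d:\ \lambda_{\min}(R)\ge\delta\,\lambda_{\max}(R)\}.
\]
Every nonzero $R\in\mathcal C_\delta$ is positive definite, and after normalizing $\Tr R=d$ it satisfies $\delta I_d\preceq R\preceq \delta^{-1}I_d$. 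So it suffices to show that $\ker(\mathsf A)\cap T_\kappa\mathcal C_\delta\neq\{0\}$ with probability tending to one, for some fixed $\delta=\delta(\kappa,\alpha)$; the conclusion then holds with $m=\delta$, $M=\delta^{-1}$. By the approximate kinematic formula of \cite{almt2014living}, a Haar subspace of codimension $n-1$ meets a closed convex cone $\mathcal K$ nontrivially with probability $1-o(1)$ as soon as $n-1\le \delta(\mathcal K)-\eps d^2$ for a fixed $\eps>0$. Here $\delta(\mathcal K)=\E\|\Pi_{\mathcal K}(\mathsf G_d)\|_F^2$ is the statistical dimension, computed with the standard-Gaussian (GOE-normalized) vector in $\Ssym^d$, and the error term in \cite{almt2014living} is of order $\sqrt{N_d}\asymp d$. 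Since $n/d^2\to\alpha<\alpha_\star(\kappa)$, the task reduces to proving
\[
\liminf_{d\to\infty}\ d^{-2}\,\E\bigl\|\Pi_{T_\kappa\mathcal C_\delta}(\mathsf G_d)\bigr\|_F^2\ \ge\ \alpha_\star(\kappa)-\eta(\delta),\qquad \eta(\delta)\to0\text{ as }\delta\to0.
\]
Having this, I fix $\delta$ with $\eta(\delta)<(\alpha_\star(\kappa)-\alpha)/2$.

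For the lower bound I use the support-function identity \eqref{eq:cone-projection-support}:
\[
\|\Pi_{T_\kappa\mathcal C_\delta}(\mathsf G_d)\|_F\ \ge\ \frac{\langle \mathsf G_d,T_\kappa R\rangle_F}{\|T_\kappa R\|_F}
\]
for any single test matrix $R\in\mathcal C_\delta$. Diagonalize $\mathsf G_d=U\diag(\sqrt d\,g_i)U^\top$, with $U$ Haar and independent of the $g_i$. Take $\omega_\kappa$ from Proposition~\ref{prop:scalar-gaussian-energy-curve}, so that by Lemma~\ref{lem:positive-cone-variational} (with $\zeta=0$) the profile $p(x)=(x-\omega_\kappa)_+$ attains $\alpha_\star(\kappa)=\chi_\kappa(0)$. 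Define the regularized, capped profile $p_\delta(x):=\min\{p(x),\,a\}+\delta a$, where $a:=\sqrt2+1$. Then the test matrix $R=U\diag(\sqrt d\,p_\delta(g_i))U^\top$ lies in $\mathcal C_{\delta'}$ with $\delta'=\delta/(1+\delta)$. The cap is inactive with high probability, because $\max_i|g_i|\to\sqrt2$.

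Next I evaluate the ratio exactly as in the lower-bound half of Proposition~\ref{prop:shifted-cone-projections}, noting that this argument never used $\kappa\ge3$. The replacement $\langle\mathsf G_d,T_\kappa R\rangle_F\leftrightarrow\langle\mathsf G_d,R\rangle_F$ costs $O_\P(\sqrt d)\|T_\kappa R\|_F$. The numerator equals $d^{2}\int x\,p_\delta\,d\nu_d$. Lemma~\ref{lem:haar-diagonal-averaging} gives
\[
d^{-2}\|T_\kappa R\|_F^2=\int p_\delta^2\,d\nu_d+c_\kappa\Bigl(\int p_\delta\,d\nu_d\Bigr)^2+o_\P(1).
\]
By semicircle convergence, the squared ratio divided by $d^2$ converges in probability to
\[
\frac{\bigl(\int x p_\delta\,d\nu\bigr)_+^2}{\int p_\delta^2\,d\nu+c_\kappa(\int p_\delta\,d\nu)^2}.
\]
This is continuous in $\delta$ and tends to $\chi_\kappa(0)=\alpha_\star(\kappa)$ as $\delta\to0$, using $\int x\,d\nu=0$ and positivity of the denominator from $c_\kappa>-1$. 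Convergence in probability together with nonnegativity then gives the claimed $\liminf$ of expectations, by Fatou.

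The main obstacle I expect is the precise form of the kinematic input. I need the version of \cite{almt2014living} for a Haar subspace against a general closed convex cone, with deviation $o(d^2)$ in dimension $N_d\asymp d^2$; the explicit bound there is of order $\sqrt{N_d}\log$, which suffices. I also need to check that $T_\kappa\mathcal C_\delta$ is closed and convex, which holds because $T_\kappa$ is linear and invertible, and that $T_\kappa^{-1}$ maps a nonzero kernel point back to a nonzero element of $\mathcal C_\delta$. If citing \cite{almt2014living} is problematic, the fallback is Gordon's escape-through-the-mesh applied to the polar cone: $\ker(\mathsf A)\cap T_\kappa\mathcal C_\delta=\{0\}$ forces, by Farkas, a nonzero $\mathsf A^\top y$ in the polar cone, and this is excluded w.h.p. once the Gaussian width of the polar cone (which squares to about $N_d-\delta(T_\kappa\mathcal C_\delta)$) is below $\sqrt{N_d-n}$ with margin; that is the same inequality as above.
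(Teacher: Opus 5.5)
Your route is sound and genuinely different from the paper's, but one constant needs fixing first. The cap $a=\sqrt2+1$ is not always inactive. One has $\omega_\kappa\in(-\sqrt2,\sqrt2)$, and $\omega_\kappa\to-\sqrt2$ as $\kappa\downarrow1$; numerically $\omega_\kappa<-1$ once $\kappa\lesssim1.03$. In that regime $p(x)=(x-\omega_\kappa)_+$ reaches $\sqrt2-\omega_\kappa>\sqrt2+1$ on a set of positive $\nu$-mass. Your $\delta\to0$ limit is then the ratio at $\min\{p,a\}$, which is strictly below $\alpha_\star(\kappa)$, because Cauchy--Schwarz in the proof of Lemma~\ref{lem:positive-cone-variational} is strict for profiles not proportional to $p$. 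So the argument breaks for $\alpha$ near $\alpha_\star(\kappa)$ when $\kappa$ is close to $1$.

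Taking any $a>2\sqrt2$ (e.g.\ $a=2\sqrt2+1$) repairs this, and the rest goes through:
\begin{itemize}
\item $\mathcal C_\delta$ is a closed convex cone whose nonzero elements are positive definite and satisfy $\delta I_d\preceq R\preceq\delta^{-1}I_d$ after trace normalization.
\item The test-matrix evaluation via Lemma~\ref{lem:haar-diagonal-averaging} indeed does not need $\kappa\ge3$.
\item The approximate kinematic formula of \cite{almt2014living}, with width $O(\sqrt{N_d})$, closes the argument.
\end{itemize}

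The paper uses essentially the same test matrix: the profile $p+\ell$ in the GOE eigenbasis, trace-normalized. It also uses the same spectral computation. The difference is that it transfers this through the CGMT on the compact convex set $\mathcal K_{m,M}=\{mI_d\preceq R\preceq MI_d,\ \Tr R=d\}$ rather than through conic kinematics. Because $\|g\|_2\|T_\kappa R_q\|_F-\langle\mathsf G_d,T_\kappa R_q\rangle_F<0$ w.h.p., the auxiliary value $\phi_d^{m,M}$, being a positive part, is \emph{exactly} zero w.h.p. Then
\[
\Pp\{\Phi_d^{m,M}>c\}\le2\Pp\{\phi_d^{m,M}\ge c\}\le2\Pp\{\phi_d^{m,M}>0\}
\]
uniformly in $c>0$. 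Letting $c\downarrow0$ gives $\Pp\{\Phi_d^{m,M}>0\}\to0$, and compactness produces an exact zero.

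So your opening premise, that CGMT only yields $\Phi_d\to0$, is inaccurate. Applied on a set where the signed auxiliary objective goes negative, CGMT does give exact feasibility.

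The trade-off is as follows. The paper stays inside the CGMT machinery already set up for Proposition~\ref{thm:gaussian-master} and works directly with high-probability events, using $\|g\|_2/d\to\sqrt\alpha$. Your version imports the kinematic formula and must pass to an expectation (statistical-dimension) lower bound via Fatou. In exchange, it encodes positivity and conditioning in a single scale-invariant cone with no trace normalization. It also reduces everything to the deterministic inequality $n-1\le\delta(T_\kappa\mathcal C_\delta)-o(d^2)$, and it gives a quantitative failure probability directly from the kinematic bound.
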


\begin{proof}
Let $p(x)=(x-\omega_\kappa)_+$.  By
Proposition~\ref{prop:scalar-gaussian-energy-curve} and the equality case in
Lemma~\ref{lem:positive-cone-variational},
\[
\frac{\int xp\,d\nu}
{\left(\int p^2\,d\nu+c_\kappa(\int p\,d\nu)^2\right)^{1/2}}
=\sqrt{\alpha_\star(\kappa)}.
\]
Since $\alpha<\alpha_\star(\kappa)$, continuity allows us to choose
$\ell>0$ sufficiently small so that, for $q:=p+\ell$,
\[
\frac{\int xq\,d\nu}
{\left(\int q^2\,d\nu+c_\kappa(\int q\,d\nu)^2\right)^{1/2}}
>\sqrt\alpha.
\]
Write
$\mathsf G_d=U\diag(\sqrt d\,g_i)U^\top$ and set
$\bar q_d=d^{-1}\sum_iq(g_i)$ and
\[
R_q:=U\diag\left(\frac{q(g_i)}{\bar q_d}\right)U^\top.
\]
Fix $C>\sqrt2$.  By the empirical semicircle law and GOE spectral-norm
convergence, with probability tending to one,
$\bar q_d\to\int q\,d\nu$ and $\max_i|g_i|\le C$.  On this event
$\Tr R_q=d$ and, for deterministic constants $0<m<M<\infty$,
$mI_d\preceq R_q\preceq MI_d$.
Moreover, the empirical semicircle law and
Lemma~\ref{lem:haar-diagonal-averaging} give
\[
\frac1d
\frac{\langle\mathsf G_d,T_\kappa R_q\rangle_F}
{\|T_\kappa R_q\|_F}
\to
\frac{\int xq\,d\nu}
{\left(\int q^2\,d\nu+c_\kappa(\int q\,d\nu)^2\right)^{1/2}}
>\sqrt\alpha.
\]
Let $\mathcal K_{m,M}:=\{R:mI_d\preceq R\preceq MI_d,\ \Tr R=d\}$.
Since $R_q\in\mathcal K_{m,M}$ and $\|g\|_2/d\to\sqrt\alpha$, the preceding
limit implies, with probability tending to one that
$\|g\|_2\|T_\kappa R_q\|_F
-\langle\mathsf G_d,T_\kappa R_q\rangle_F<0$.
Hence the CGMT auxiliary value
\[
\phi_d^{m,M}:=
\inf_{R\in\mathcal K_{m,M}}
\left(
\|g\|_2\|T_\kappa R\|_F
-\langle\mathsf G_d,T_\kappa R\rangle_F
\right)_+
\]
is equal to zero with probability tending to one.

Now set
$\Phi_d^{m,M}:=\inf_{R\in\mathcal K_{m,M}}\|\mathsf A T_\kappa R\|_2$.
Since $T_\kappa\mathcal K_{m,M}$ is compact and convex, the CGMT gives,
for every $c>0$,
\[
\Pp\{\Phi_d^{m,M}>c\}
\le2\Pp\{\phi_d^{m,M}\ge c\}.
\]
Letting $c\downarrow0$ shows that
$\Pp\{\Phi_d^{m,M}>0\}\to0$.  Therefore, with probability tending to one,
compactness of $\mathcal K_{m,M}$ yields some
$R\in\mathcal K_{m,M}$ with $\mathsf A T_\kappa R=0$.
By \eqref{eq:projected-gaussian-A-representation},
$\mathcal A_{G,\kappa}R=0$, as claimed.
\end{proof}

Now we can easily conclude the proof of Proposition~\ref{thm:gaussian-master} from the introduction.

\begin{proof}[Proof of Proposition~\ref{thm:gaussian-master}]
Part~\textup{(i)} is Lemma~\ref{lem:gaussian-bounded-witness}.
For part~\textup{(ii)}, Lemma~\ref{lem:gaussian-auxiliary-limit} gives
$\Phi_d\pto(\zeta_{\alpha,\kappa})_+$.  By
\eqref{eq:projected-gaussian-A-representation},
\[
\Gamma_{G,\kappa}
\stackrel d=
\frac{2d^2}{n}\Phi_d^2.
\]
Since $d^2/n\to1/\alpha$, the continuous mapping theorem gives the claimed
limit.  If $\alpha>\alpha_\star(\kappa)$, then
$e_\star(\alpha,\kappa)>0$ by
\eqref{eq:energy-positive-iff-above-threshold}; taking
$c_0=e_\star(\alpha,\kappa)/2$ proves the final assertion.
\end{proof}

	\appendix 
	\section{Extra corollaries, obstructions, and numerics}
	\label{sec:extras}
	
	This appendix records an abstract consequence of the universality theorem,
	several obstructions to exact ellipsoid fitting, and numerical details that
	are not used in the proof of the phase transition.

\subsection{Dual certificates and interpretation}
\label{sec:unsat-dual-certificate}

Recall that
\[
	\mathcal R_d=\{R\succeq0:\Tr R=d\},
	\qquad
	\Gamma_X=\inf_{R\in\mathcal R_d}\frac1n\|\mathcal A_XR\|_2^2,
\]
where $\mathcal A_X=P_{\one^\perp}\mathcal L_X$.  We briefly record the
dual interpretation of this optimization problem in the SAT and UNSAT regimes.

The boundary of the PSD cone has a simple geometric interpretation.
If $Q\succeq0$ is singular and $K=\ker Q$, then $Q$ is positive definite on
$K^\perp$, while the set $\{x:x^\top Qx\le1\}$ is unrestricted in the
directions of $K$.  Equivalently, it is the product of a bounded ellipsoid in
$K^\perp$ with the linear space $K$.  Thus boundary points correspond to
 cylindrical degenerations of ellipsoids.  Positive
definiteness is precisely what distinguishes the genuine ellipsoid supplied
by the SAT theorem from such a degenerate fit.

\paragraph{UNSAT and the dual certificate.}
In the UNSAT regime, the positive least-squares energy has an exact dual
interpretation in terms of sample reweighting, which we record below.

\begin{proposition}[Dual certificate identity]
	\label{prop:exact-unsat-dual-energy}
	For every deterministic sample $X$,
	\begin{equation}
		\label{eq:exact-dual-energy-identity}
		\max_{\substack{\lambda\in\one^\perp\\ \|\lambda\|_2\le1}}
		\lambda_{\min}(\mathcal A_X^*\lambda)
		=
		\frac{\sqrt{n\Gamma_X}}{d}.
	\end{equation}
	In particular, $\Gamma_X>0$ if and only if there exists
	$\lambda\in\one^\perp$ such that
	\[
		\mathcal A_X^*\lambda
		=
		\frac1{\sqrt d}\sum_{i=1}^n\lambda_i x_i x_i^\top \succ 0.
	\]
\end{proposition}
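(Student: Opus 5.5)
This is a finite-dimensional conic duality computation. First I will rewrite $\Gamma_X$ as a norm minimization, then dualize it with the minimax theorem. As in Lemma~\ref{lem:range-duality}, I regard $\mathcal A_X$ as a map $\Ssym^d\to\one^\perp$, so that its adjoint is $\mathcal A_X^*\lambda=\mathcal L_X^*\lambda=d^{-1/2}\sum_i\lambda_ix_ix_i^\top$ for $\lambda\in\one^\perp$. Since the map $\|\cdot\|_2$ on $\one^\perp$ is dual to itself,
\[
\sqrt{n\Gamma_X}=\min_{R\in\mathcal R_d}\|\mathcal A_XR\|_2=\min_{R\in\mathcal R_d}\ \max_{\lambda\in\one^\perp,\ \|\lambda\|_2\le1}\langle \mathcal A_X^*\lambda,R\rangle_F .
\]
The set $\mathcal R_d$ is compact and convex, the dual unit ball is compact and convex, and the pairing is bilinear. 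Sion's (or von Neumann's) minimax theorem therefore lets me exchange min and max. This gives
\[
\sqrt{n\Gamma_X}=\max_{\lambda\in\one^\perp,\ \|\lambda\|_2\le1}\ \min_{R\succeq0,\ \Tr R=d}\langle\mathcal A_X^*\lambda,R\rangle_F .
\]

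The inner minimum is the elementary fact that, over the spectrahedron $\{R\succeq0,\Tr R=d\}$, the linear functional $R\mapsto\langle S,R\rangle_F$ attains its minimum $d\,\lambda_{\min}(S)$ at $R=d\,vv^\top$, where $v$ is a bottom unit eigenvector of $S$. The lower bound $\langle S,R\rangle_F\ge\lambda_{\min}(S)\Tr R$ holds for every $R\succeq0$. Dividing by $d$ gives \eqref{eq:exact-dual-energy-identity}. One point needs care: the left side uses $\lambda_{\min}$ rather than its positive part. Taking $\lambda=0$ shows the left side is nonnegative, which is consistent with $\Gamma_X\ge0$, so no truncation is lost.

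For the "in particular" statement, I combine the identity with homogeneity. If $\Gamma_X>0$, the maximizing $\lambda$ satisfies $\lambda_{\min}(\mathcal A_X^*\lambda)>0$, that is, $\mathcal A_X^*\lambda\succ0$. Conversely, any $\lambda\in\one^\perp$ with $\mathcal A_X^*\lambda\succ0$ can be rescaled to unit norm and remains feasible with positive objective, so $\Gamma_X>0$. For the displayed formula of $\mathcal A_X^*\lambda$, I use $\sum_i\lambda_i=0$: the identity terms in $W_i$ then cancel, exactly as noted before Lemma~\ref{lem:range-duality}.

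I do not expect a real obstacle here. The only points to check are compactness, which justifies the minimax exchange without a Slater-type argument, and correct bookkeeping of the adjoint restricted to $\one^\perp$.
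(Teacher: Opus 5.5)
Your proof is correct, but it reaches the identity by a different route than the paper.

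\textbf{Your route.} You write $\sqrt{n\Gamma_X}=\min_{R\in\mathcal R_d}\max_{\lambda\in\one^\perp,\,\|\lambda\|_2\le1}\langle\mathcal A_X^*\lambda,R\rangle_F$ and exchange min and max by von Neumann's minimax theorem. This is legitimate because $\mathcal R_d$ and the dual unit ball are both compact and convex and the pairing is bilinear. The inner minimum $d\,\lambda_{\min}(\mathcal A_X^*\lambda)$ over the spectrahedron then gives the identity in one stroke. There is no case split, and attainment of the maximum comes for free.

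\textbf{The paper's route.} The paper argues by hand. Weak duality gives $d\,\lambda_{\min}(\mathcal A_X^*\lambda)\le\langle\lambda,\mathcal A_XR\rangle\le\|\mathcal A_XR\|_2$. The case $\Gamma_X=0$ is settled by taking $\lambda=0$. When $\Gamma_X>0$, it takes the nearest point $y_\star=\mathcal A_XR_\star$ of the convex set $\mathcal A_X(\mathcal R_d)$ to the origin. The variational inequality $\langle y_\star,y-y_\star\rangle\ge0$ then shows that $\lambda_\star=y_\star/\|y_\star\|_2$ attains the bound.

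\textbf{Comparison.} The paper's argument is more elementary, since it uses only the projection characterization rather than a general minimax theorem. It is also constructive: it identifies the optimal certificate as the normalized optimal residual, which the paper uses in the remark right after the proposition. Your approach can recover this. At a saddle point $(R_\star,\lambda_\star)$, the vector $\lambda_\star$ maximizes $\langle\lambda,\mathcal A_XR_\star\rangle$ over the unit ball, so $\lambda_\star=\mathcal A_XR_\star/\|\mathcal A_XR_\star\|_2$ whenever $\Gamma_X>0$. You would need to state that observation explicitly if you want the explicit certificate.
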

\begin{proof}
    This is a special case of general SDP duality, see \cite{boyd-vandenberghe2004convex}, but we can also prove it directly.
	We first prove the weak-duality bound.  For any
	$\lambda\in\one^\perp$ with $\|\lambda\|_2\le1$ and any
	$R\in\mathcal R_d$,
	\[
		d\,\lambda_{\min}(\mathcal A_X^*\lambda)
		\le
		\langle\mathcal A_X^*\lambda,R\rangle
		=
		\langle\lambda,\mathcal A_XR\rangle
		\le
		\|\mathcal A_XR\|_2.
	\]
	Taking the infimum over $R\in\mathcal R_d$ gives
	\[
		\lambda_{\min}(\mathcal A_X^*\lambda)
		\le
		\frac{\sqrt{n\Gamma_X}}{d},
	\]
    for any such $\lambda$.

	It remains to show that this bound is attained.  If $\Gamma_X=0$, then
	$\lambda=0$ gives equality.  Suppose therefore that $\Gamma_X>0$.  Let
	\[
		C_X:=\mathcal A_X(\mathcal R_d),
	\]
	let $R_\star$ minimize the least-squares objective, and set
	$y_\star:=\mathcal A_XR_\star$.  Then
	$\|y_\star\|_2=\sqrt{n\Gamma_X}$ and
	$y_\star=\arg\min_{y\in C_X}\|y\|_2^2$.
	By first-order optimality of
	$y_\star=\arg\min_{y\in C_X}\|y\|_2^2$,
	\[
		\langle y_\star,y-y_\star\rangle\ge0
		\qquad\forall y\in C_X.
	\]
	Therefore, with $y=\mathcal A_XR$ and
	$\lambda_\star:=y_\star/\|y_\star\|_2$,
	\[
		\langle\lambda_\star,\mathcal A_XR\rangle
		\ge
		\|y_\star\|_2
		=
		\sqrt{n\Gamma_X}
		\qquad\forall R\in\mathcal R_d.
	\]
	Equivalently,
	$\langle\mathcal A_X^*\lambda_\star,R\rangle
	\ge\sqrt{n\Gamma_X}$ for every $R\succeq0$ with $\Tr R=d$.  Since
	\[
		\min_{\substack{R\succeq0\\\Tr R=d}}
		\langle M,R\rangle
		=
		d\,\lambda_{\min}(M),
	\]
	it follows that
	\[
		\lambda_{\min}(\mathcal A_X^*\lambda_\star)
		\ge
		\frac{\sqrt{n\Gamma_X}}{d}.
	\]
	Combining this with the weak-duality bound proves
	\eqref{eq:exact-dual-energy-identity}.
\end{proof}

Thus, when $\Gamma_X>0$, the normalized optimal residual
$\lambda_\star=\mathcal A_XR_\star/\|\mathcal A_XR_\star\|_2$ itself gives a
strict positive-definite separating certificate.  Combining the above proposition with
Theorem~\ref{thm:phase-transition}, we find
\[
	\max_{\substack{\lambda\in\one^\perp\\ \|\lambda\|_2\le1}}
	\lambda_{\min}(\mathcal A_X^*\lambda)
	\pto
	\sqrt{\alpha\,e_\star(\alpha,\kappa)}.
\]

	\subsection{A general-domain universality consequence}
	\label{subsec:abstract-barrier-removal}

	Section~\ref{sec:self-concordant-ridge-universality} separates the two
	differential conditions defining a $\vartheta$-self-concordant barrier.
	The fixed-$\gamma$ row comparison follows from the normalized local
	condition $\mathrm{(SC)}$.
	Lemma~\ref{lem:general-barrier-removal-sandwich} uses the
	barrier-parameter bound $(\mathrm{BP}_\vartheta)$ to remove the auxiliary
	penalty.  Combining those two results gives the following statement for a general domain.

	\begin{proposition}
		\label{prop:constrained-ridge-comparison}
		Fix $N,n\ge1$, $K_{\rm feat}<\infty$, $\beta>0$, and
		$\vartheta\ge1$.  Let
		$W\in\R^N$ be a $(K_{\rm feat},\tau)$-regular feature with
		covariance matrix $\Sigma$, put
		$\lambda_{\Sigma}=\|\Sigma\|_{\op}>0$, and let $G$ be the matching centered
		Gaussian feature.  Let $W_1,\ldots,W_n$ and $G_1,\ldots,G_n$ be
		independent arrays of iid copies of $W$ and $G$, respectively.
		There are constants $C<\infty$ and $c_0>0$, depending only on
		$K_{\rm feat}$, with the following property.

		Let
		$\mathcal D\subset\R^N$ be nonempty, bounded, convex, and relatively
		open in its affine hull, and let
		$\mathcal B:\mathcal D\to[0,\infty)$ be a
		$\vartheta$-self-concordant barrier.  Fix $(\theta_{\rm ref},b_{\rm ref})\in\mathcal D\times\R$, and set
		\[
			K_{\rm ref}:=
			1+\lambda_{\Sigma}\|\theta_{\rm ref}\|_2^2+b_{\rm ref}^2
			+\mathcal B(\theta_{\rm ref}).
		\]
		Put
		\[
			\mathfrak L_n:=1+\vartheta\log n
		\]
		and
		\[
			\Delta_n:=
			(1+\beta^{-1})^2K_{\rm ref}^2\left[
			\frac{\sqrt{\dim\mathsf V_{\mathcal D}}(\tau/\lambda_{\Sigma}+1)}{n}
			+\left(\frac{\mathfrak L_n}{n}\right)^{1/3}
			\right]
			+\frac{(1+\beta)K_{\rm ref}}{n}.
		\]
		If $\mathfrak L_n\le c_0n$, then
		\begin{equation}
			\label{eq:ridge-universality-expectation-theta-barrier}
			\left|
			\E\Phi_W(\mathcal D;\beta)
			-\E\Phi_G(\mathcal D;\beta)
			\right|\le C\Delta_n.
		\end{equation}
		Moreover, for every $t>0$,
		\begin{equation}
			\label{eq:ridge-universality-high-prob-theta-barrier}
			\Pp\left\{
			\left|\Phi_W(\mathcal D;\beta)-\Phi_G(\mathcal D;\beta)\right|
			>C\Delta_n+t
			\right\}
			\le\frac{C(1+\beta^{-1})^2K_{\rm ref}^2}{nt^2}.
		\end{equation}
	\end{proposition}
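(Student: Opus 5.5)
The plan is to combine Theorem~\ref{thm:positive-barrier-row-universality} with the barrier-removal sandwich \eqref{eq:general-barrier-removal-comparison}, as in the schematic computation following Lemma~\ref{lem:general-barrier-removal-sandwich}, but now keeping track of the reference point explicitly.

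\textbf{Step 1: reduce to a barrier-regularized comparison.} Apply Lemma~\ref{lem:general-barrier-removal-sandwich} for $Z=W$ and $Z=G$ at the reference point $(\theta_{\rm ref},b_{\rm ref})$. This gives
\[
\bigl|\Phi_W(\mathcal D;\beta)-\Phi_G(\mathcal D;\beta)\bigr|\le \bigl|\Phi_W(\mathcal D,\mathcal B;\beta,\gamma)-\Phi_G(\mathcal D,\mathcal B;\beta,\gamma)\bigr|+\varepsilon\sum_{Z}F_Z(\theta_{\rm ref},b_{\rm ref};\beta)+2\gamma\mathcal B(\theta_{\rm ref})+2\gamma\vartheta\log(1/\varepsilon).
\]
Taking expectations, I bound $\E F_Z(\theta_{\rm ref},b_{\rm ref};\beta)$ by covariance matching:
\[
\tfrac12\bigl(\langle\theta_{\rm ref},\Sigma\theta_{\rm ref}\rangle+b_{\rm ref}^2\bigr)+\beta(\lambda_\Sigma\|\theta_{\rm ref}\|_2^2+b_{\rm ref}^2)\le(1+\beta)K_{\rm ref}.
\]
I then choose $\varepsilon=n^{-1}$, so this term contributes $(1+\beta)K_{\rm ref}/n$, which is exactly the last summand of $\Delta_n$.

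\textbf{Step 2: apply the expectation bound with a suitable $\gamma$.} Next I apply \eqref{eq:feature-universality-expectation}. Evaluating the infimum in \eqref{eq:intrinsic-reference-scale} at $\theta_{\rm ref}$ and using $\gamma\le1$ gives $\Lambda_{\rm ref}\le(1+\beta^{-1})K_{\rm ref}$. The total error is then
\[
C(1+\beta^{-1})^2K_{\rm ref}^2\Bigl[(n\gamma)^{-1/2}+\tfrac{\sqrt{\dim\mathsf V}(\tau/\lambda_\Sigma+1)}{n}\Bigr]+2\gamma K_{\rm ref}+2\gamma\vartheta\log n+\tfrac{(1+\beta)K_{\rm ref}}n.
\]
Choose $\gamma=c\,\mathfrak L_n^{-2/3}n^{-1/3}$, which balances $(n\gamma)^{-1/2}$ against $\gamma\mathfrak L_n$; both become $(\mathfrak L_n/n)^{1/3}$ up to constants. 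The barrier term $\gamma(K_{\rm ref}+\vartheta\log n)\le\gamma K_{\rm ref}\mathfrak L_n$ is absorbed since $K_{\rm ref}\le K_{\rm ref}^2$ and $(1+\beta^{-1})\ge1$.

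\textbf{Step 3: check the side conditions.} Two constraints must be verified. First, $n\gamma\ge n_0$ is equivalent to $(n/\mathfrak L_n)^{2/3}\ge n_0/c$, which is where $\mathfrak L_n\le c_0n$ with $c_0$ small enters. Second, $\gamma\le1$ follows from $\mathfrak L_n\ge1$. This proves \eqref{eq:ridge-universality-expectation-theta-barrier}.

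\textbf{Step 4: high-probability bound.} The deterministic sandwich handles everything except the random $F_Z(\theta_{\rm ref},b_{\rm ref};\beta)$ term and the barrier comparison. For the barrier comparison I use \eqref{eq:feature-universality-high-probability} with $t/2$, giving probability $\le C\Lambda_{\rm ref}^2/(nt^2)$. For the term $\varepsilon F_Z$, Markov's inequality is too weak for the stated $1/(nt^2)$ rate. Instead I bound its deviation from its mean by Chebyshev. By (F1), $\Var\bigl(\tfrac1n\sum_i(b_{\rm ref}-\langle Z_i,\theta_{\rm ref}\rangle)^2\bigr)\le CK_{\rm ref}^2/n$, so $\varepsilon F_Z$ exceeds its mean plus $t/4$ with probability at most $CK_{\rm ref}^2/(n^3t^2)$, which is negligible. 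The mean itself is already included in $\Delta_n$.

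The main obstacle is purely bookkeeping: confirming that every leftover factor (in particular $\gamma\mathcal B(\theta_{\rm ref})$ and the $(1+\beta)$ from the ridge at the reference point) is dominated by the stated $\Delta_n$ with constants depending only on $K_{\rm feat}$.
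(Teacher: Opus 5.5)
Your proposal is correct and follows the paper's proof essentially step for step. It uses the same anchor-point sandwich with $\varepsilon=n^{-1}$ and the same choice $\gamma\asymp n^{-1/3}\mathfrak L_n^{-2/3}$, with $c_0$ small enough that $n\gamma\ge n_0$. It also uses the bound $\Lambda_{\rm ref}\le(1+\beta^{-1})K_{\rm ref}$ from $\gamma\le1$, and Chebyshev via (F1) for the random anchor term $\varepsilon F_Z(\theta_{\rm ref},b_{\rm ref};\beta)$.

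One small point of wording: for the expectation bound, take expectations in the one-sided sandwich for each $Z$ separately, as the paper does. The barrier-regularized theorem controls $|\E\Phi_W(\mathcal D,\mathcal B;\beta,\gamma)-\E\Phi_G(\mathcal D,\mathcal B;\beta,\gamma)|$, not the expectation of the absolute difference. That said, the variance bound would also handle the latter, at a cost dominated by $\Delta_n$.
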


	\begin{proof}[Proof of Proposition~\ref{prop:constrained-ridge-comparison}]
		Let $n_0$ be the constant in
		Theorem~\ref{thm:positive-barrier-row-universality}.  Choose
		$c_0\le n_0^{-3/2}$, and set
		$\varepsilon=n^{-1}$ and
		$\gamma=n^{-1/3}\mathfrak L_n^{-2/3}$.  The hypothesis on
		$\mathfrak L_n$ gives $n\gamma\ge n_0$, while $\gamma\le1$.
		The intrinsic scale in
		Theorem~\ref{thm:positive-barrier-row-universality} satisfies
		\[
			\Lambda_{\rm ref}
			\le
			(1+\beta^{-1})
			\left[
			1+\lambda_{\Sigma}\|\theta_{\rm ref}\|_2^2
			+\gamma\mathcal B(\theta_{\rm ref})
			\right]
			\le (1+\beta^{-1})K_{\rm ref}.
		\]
		Thus that theorem applies with this upper bound.

		Lemma~\ref{lem:general-barrier-removal-sandwich}, applied with
		the anchor $(\theta_{\rm ref},b_{\rm ref})$, gives, for $Z=W,G$,
		\begin{equation}
			\label{eq:barrier-ridge-only-sandwich}
			0\le
			\Phi_Z(\mathcal D,\mathcal B;\beta,\gamma)
			-\Phi_Z(\mathcal D;\beta)
			\le
			\varepsilon F_Z(\theta_{\rm ref},b_{\rm ref};\beta)
			+\gamma\mathcal B(\theta_{\rm ref})
			+\gamma\vartheta\log(1/\varepsilon).
		\end{equation}

		Centering and covariance matching imply
		$\E F_Z(\theta_{\rm ref},b_{\rm ref};\beta)
		\le C(1+\beta)K_{\rm ref}$.
		Our choice of $\gamma$ balances the fixed-$\gamma$ row-comparison
		error with the barrier-removal cost:
		\[
		\frac{(1+\beta^{-1})^2K_{\rm ref}^2}{\sqrt{n\gamma}}
		+\gamma\mathcal B(\theta_{\rm ref})+\gamma\vartheta\log n
		\le C(1+\beta^{-1})^2K_{\rm ref}^2
		\left(\frac{\mathfrak L_n}{n}\right)^{1/3}.
		\]
		Taking expectations in \eqref{eq:barrier-ridge-only-sandwich} and using
		\eqref{eq:feature-universality-expectation} proves
		\eqref{eq:ridge-universality-expectation-theta-barrier}.

		For the probability bound, combine the same sandwich with
		\eqref{eq:feature-universality-high-probability}.  Condition
		$\mathrm{(F1)}$ and the Gaussian fourth-moment bound give
		$\Var(F_Z(\theta_{\rm ref},b_{\rm ref};\beta))
		\le CK_{\rm ref}^2/n$ for $Z=W,G$.  Chebyshev's inequality
		therefore controls the random anchor
		term in \eqref{eq:barrier-ridge-only-sandwich} by
		$C(1+\beta^{-1})^2K_{\rm ref}^2/(nt^2)$; the deterministic terms
		are bounded by $C\Delta_n$.  This proves
		\eqref{eq:ridge-universality-high-prob-theta-barrier}.
	\end{proof}	
	\subsection{Why approximate fitting does not imply exact fitting}
	\label{subsec:exact-fitting-obstructions}
	
 Recall that Remark~\ref{lem:zero-inflated-sphere-goe-matching-unsat} rules out a
	covariance-only extension of exact fitting.  The following constructions
	show that the distinction persists under the abstract regular-feature
	assumptions (cf. Definition~\ref{def:regular-feature-row}) and even for a nonatomic isotropic law with uniformly
	subgaussian law.

	\emph{Feature regularity is not enough.}
	Example~\ref{lem:zero-inflated-sphere-goe-matching-unsat} is not a regular
	feature.  We now add rare copies of
	$I_d/d$ and $-I_d/d$ to a Gaussian background while preserving both the
	target covariance and the regular-feature estimates.  Since every
	trace-normalized matrix pairs with these two atoms as $1$ and $-1$, the
	occurrence of both signs rules out exact centered fitting.

	\begin{lemma}
		\label{lem:rare-trace-atoms-obstruction}
		Fix $\kappa>1$. There is a sequence of random features
		$W_d\in\Ssym^d$ with $\E[W_d]=0$ and $\operatorname{Cov}(W_d)=\Sigma_\kappa$ such that
		$W_d$ is $(K_{\rm feat},\tau_d)$-regular with
		$K_{\rm feat}=O_\kappa(1)$ and $\tau_d=O_\kappa(d^{-1/2})$.  Nevertheless,
		if $W_1,\ldots,W_n$ are iid copies and $n/d^2\to\alpha>0$, then with
		probability tending to one,
		\begin{equation}
			\label{eq:rare-trace-atoms-lower-bound}
			\inf_{R\in\mathcal R_d}\frac1{\sqrt n}\|\mathcal A_WR\|_2
			\ge c_\alpha d^{-1/2}
		\end{equation}
		for some $c_\alpha>0$.  In particular, with probability tending to one
		there is no $R\in\mathcal R_d$ for which $\mathcal A_WR=0$.
	\end{lemma}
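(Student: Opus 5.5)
We construct $W_d$ as a mixture. With a small probability $p=p_d$ take $W_d=\pm I_d/d\cdot s_d$ (a symmetric sign, suitably scaled); otherwise take a Gaussian $G'$ whose covariance is chosen to correct the covariance. Concretely, let $\epsilon_d$ be a Rademacher sign, $B\sim\mathrm{Ber}(p)$, and set
\[
W_d=(1-B)\,G'+B\,\epsilon_d\, a\, I_d/d,
\]
with $a>0$ a scale, $G'\sim N(0,\Sigma')$ independent. Then $\E W_d=0$ and
\[
\operatorname{Cov}(W_d)=(1-p)\Sigma'+p a^2 d^{-2}\,\mathrm{vec}(I_d)\mathrm{vec}(I_d)^\top.
\]
We therefore need $\Sigma'=(1-p)^{-1}\bigl(\Sigma_\kappa-pa^2d^{-2}\,\mathrm{vec}(I)\mathrm{vec}(I)^\top\bigr)\succeq0$. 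Since $\langle I,\Sigma_\kappa I\rangle=\mathcal C_\kappa(I,I)=\kappa-1$, while $\mathrm{vec}(I)$ has squared norm $d$, the rank-one subtraction has size $pa^2/d$ in the direction $I/\sqrt d$, against $\Sigma_\kappa$'s value $(\kappa-1)/d$ there. It is admissible whenever $pa^2<\kappa-1$, and $\Sigma'\preceq C\Sigma_\kappa$, $\Sigma'\succeq c\Sigma_\kappa$ hold after adjusting. We choose $a=1$ (so that $\langle W,R\rangle=\pm1$ for trace-$d$ $R$) and $p=c_1/d^2$ with $c_1$ fixed; this makes $np\to c_1\alpha$, and we can pick $c_1$ large so both signs appear with probability close to one. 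To get probability tending to one, take instead $p=\omega_d/d^2$ with $\omega_d\to\infty$ slowly (e.g. $\log d$). Then $pa^2\to0$, and admissibility is trivial.

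Regularity is checked next. For (F1), write $\langle W,\theta\rangle$ as a mixture. The Gaussian part gives $\|\cdot\|_{L^4}^2\le\sqrt3\,\theta^\top\Sigma'\theta\le C\lambda_\Sigma\|\theta\|^2$. The atom part contributes $p\,\langle I/d,\theta\rangle^4\le p\,\|\theta\|^4/d^2$, whose square root is $\sqrt p\,\|\theta\|^2/d=o(\|\theta\|^2/d)$. For (F2), the variance of $W^\top TW$ is bounded by the Gaussian contribution $2\|\Sigma'^{1/2}T\Sigma'^{1/2}\|_F^2\le C d^{-2}\|T\|_F^2$ plus mixture terms. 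These are at most
\[
p\,\bigl(\mathrm{vec}(I)^\top T\,\mathrm{vec}(I)\bigr)^2/d^4\le p\|T\|_F^2 d^2/d^4=p\|T\|_F^2/d^2,
\]
plus cross terms between the two means of size $\le p(\E_{G'}G'^\top TG'+d^{-2}|\mathrm{vec}(I)^\top T\,\mathrm{vec}(I)|)^2\le Cp\,\|T\|_F^2$, using $|\Tr(\Sigma'T)|\le\|\Sigma'\|_F\|T\|_F\le Cd^{-1/2}\|T\|_F$ and $|\mathrm{vec}(I)^\top T\,\mathrm{vec}(I)|\le d\|T\|_F$. With $p=\omega_d/d^2$, every term is $O(d^{-1})\|T\|_F^2$, so $\tau_d=O(d^{-1/2})$. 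The cross-term bookkeeping is the only fiddly step.

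For the lower bound, on the event that both an index $i_+$ with $W_{i_+}=I/d$ and an index $i_-$ with $W_{i_-}=-I/d$ occur, every $R\in\mathcal R_d$ has $\langle W_{i_\pm},R\rangle=\pm1$. This event has probability $\ge1-2(1-p/2)^n\to1$ since $np\to\infty$. For any centering $b$, one of the two residuals $\langle W_{i_\pm},R\rangle-b$ has absolute value $\ge1$. Hence
\[
\|\mathcal A_WR\|_2^2=\min_b\sum_i(\langle W_i,R\rangle-b)^2\ge1,
\]
so $n^{-1/2}\|\mathcal A_WR\|_2\ge n^{-1/2}\ge c_\alpha d^{-1}$. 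This falls short of the claimed $d^{-1/2}$. To reach $c_\alpha d^{-1/2}$ we need $\gtrsim n/d$ atoms of each sign, i.e. $p\asymp 1/d$. This still yields $pa^2\to0$ and atom contributions to (F2) of order $p\|T\|_F^2/d^2\cdot$const $+\,p\cdot\|T\|_F^2$ (cross term). The latter is $O(d^{-1})\|T\|_F^2$, giving exactly $\tau_d=O(d^{-1/2})$. So we take $p=1/d$. By Chernoff, with high probability there are at least $n/(4d)$ indices of each sign, each contributing residual at least $1$ on one side, hence $\|\mathcal A_WR\|_2^2\ge n/(4d)$, which gives \eqref{eq:rare-trace-atoms-lower-bound}. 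The main obstacle is verifying the (F2) cross terms at $p=1/d$, where the mean shift $\Tr(\Sigma'T)$ must be controlled by $\|T\|_F d^{-1/2}$ as above.
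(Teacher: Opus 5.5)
Your final construction is essentially the paper's proof: atoms $\pm I_d/d$ with total probability $p=1/d$ over a Gaussian with corrected covariance $(\Sigma_\kappa-p\,v_d\otimes v_d)/(1-p)$, (F2) via the within-Gaussian variance plus the between-component mean gap, and the lower bound from $\gtrsim n/d$ atoms of each sign. The paper computes $\min_b\{N_+(1-b)^2+N_-(1+b)^2\}=4N_+N_-/(N_++N_-)$ exactly, whereas your ``one sign is off by at least $1$'' bound gives $\min(N_+,N_-)$, which also suffices.

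There is one slip in the write-up. $\|\Sigma'\|_F$ is of order $1$, not $d^{-1/2}$, because the operator has about $d^2/2$ eigenvalues of size about $2/d$. So the correct estimate is $|\Tr(\Sigma'T)|\le C\|T\|_F$. This is exactly what your stated cross-term bound $Cp\|T\|_F^2$ requires, so the conclusion $\tau_d=O(d^{-1/2})$ at $p=1/d$ is unaffected.
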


	\begin{proof}
		Set $v_d=I_d/d$ and $p_d=1/d$.  The operator $\Sigma_\kappa$ has
		$e_d=I_d/\sqrt d$ as an eigenvector with eigenvalue
		$(\kappa-1)/d$, while $v_d\otimes v_d=d^{-1}e_d\otimes e_d$.  Hence
		$\Sigma_\kappa-p_dv_d\otimes v_d\succeq0$ for all large $d$.  Let $G_d$
		be centered Gaussian with covariance
		$\Sigma'_d=(\Sigma_\kappa-p_dv_d\otimes v_d)/(1-p_d)$, and define $W_d$ by
		\[
		W_d=
		\begin{cases}
			G_d,&\text{with probability }1-p_d,\\
			v_d,&\text{with probability }p_d/2,\\
			-v_d,&\text{with probability }p_d/2,
		\end{cases}
		\]
		independently of $G_d$.  The resulting law is centered and has covariance
		$(1-p_d)\Sigma'_d+p_dv_d\otimes v_d=\Sigma_\kappa$.

		We next verify regularity.  Since $\|\Sigma_\kappa\|_{\op}\asymp_\kappa
		d^{-1}$ and $\|\Sigma'_d\|_{\op}\le C_\kappa d^{-1}$, the Gaussian and
		discrete components satisfy, respectively,
		\[
			\|\langle G_d,A\rangle_F\|_{L^4}
			\le C_\kappa d^{-1/2}\|A\|_F
			\le C_\kappa \|\Sigma_\kappa\|_{\op}^{1/2}\|A\|_F,
			\qquad
		|\langle v_d,A\rangle_F|
		\le d^{-1/2}\|A\|_F
		\le C_\kappa \|\Sigma_\kappa\|_{\op}^{1/2}\|A\|_F .
		\]
		Thus $\mathrm{(F1)}$ holds with a constant depending only on $\kappa$.

		For $\mathrm{(F2)}$, fix a deterministic self-adjoint
		$T:\Ssym^d\to\Ssym^d$ and put $Q=\langle W_d,TW_d\rangle_F$.  On the
		Gaussian part,
		\[
		\Var\bigl(\langle G_d,TG_d\rangle_F\bigr)
		=2\|(\Sigma'_d)^{1/2}T(\Sigma'_d)^{1/2}\|_{\HS}^2
		\le C_\kappa d^{-2}\|T\|_{\HS}^2 .
		\]
		The variance from switching components is controlled by
		$\|\Sigma'_d\|_{\HS}=O_\kappa(1)$,
		$|\operatorname{tr}(T\Sigma'_d)|\le C_\kappa\|T\|_{\HS}$, and
		$|\langle v_d,Tv_d\rangle_F|\le d^{-1}\|T\|_{\HS}$.  The law of total
		variance therefore gives
		\[
		\Var(Q)\le C_\kappa(d^{-2}+p_d)\|T\|_{\HS}^2
		\le C_\kappa d^{-1}\|T\|_{\HS}^2,
		\]
		which is $\mathrm{(F2)}$ with $\tau_d=O_\kappa(d^{-1/2})$.

		It remains to use the signed atoms.  Let $N_+$ and $N_-$ count the copies of
		$+v_d$ and $-v_d$.  Since $np_d\to\infty$,
		\[
		\Pp\{N_+=0\}+\Pp\{N_-=0\}
		\le 2(1-p_d/2)^n\longrightarrow0.
		\]
		For every $R\in\mathcal R_d$, these two row values are $1$ and $-1$.
		Consequently,
		\[
		\|\mathcal A_WR\|_2^2
		=\min_{b\in\R}\sum_{i=1}^n
		\bigl(\langle W_i,R\rangle_F-b\bigr)^2
		\ge
		\min_{b\in\R}\left\{
		N_+(1-b)^2+N_-(-1-b)^2
		\right\}.
		\]
		The last minimum is $4N_+N_-/(N_++N_-)$.  Since
		$N_+/d\to\alpha/2$ and $N_-/d\to\alpha/2$ in probability, it equals
		$(\alpha+o_\Pp(1))d$.  This proves
		\eqref{eq:rare-trace-atoms-lower-bound}, and hence also rules out exact
		fitting.
	\end{proof}

	Thus $\mathrm{(F1)}$ and $\mathrm{(F2)}$, even with
	$\tau_d=O_\kappa(d^{-1/2})$, do not force exact fitting.  In this example the
	best normalized residual is at least of order $d^{-1/2}$, or equivalently the
	energy $n^{-1}\|\mathcal A_WR\|_2^2$ is at least of order $d^{-1}$.
	
	\emph{A nonatomic geometric obstruction.}
	Atoms are not essential.  The simplest geometric example is a nonatomic
	distribution on a line: restricting $x^\top Rx=\rho$ to that line gives a
	quadratic equation in one variable, so a centered ellipsoid contains at most
	two of its points.  B\'ezout's theorem gives the same mechanism for any
	algebraic curve of controlled degree.
	
	Let $V\subset\R^d$ be a real affine algebraic curve.  Its
	\emph{degree} means the degree of the complex projective Zariski closure
	\begin{equation}
		\label{eq:complex-projective-closure-curve}
		C:=\overline{\{[1:x]:x\in V\}}^{\,\mathrm{Zar}}
		\subset \mathbb P^d_{\mathbb C}.
	\end{equation}
	We assume for simplicity that $C$ is irreducible and has degree $D$.  The
	same statements hold for a pure one-dimensional projective variety of total
	degree $D$, provided no irreducible component is contained in the quadric
	under consideration.
	
	\begin{lemma}[A curve meets an ellipsoid in at most $2D$ points]
		\label{lem:bezout-curve-ellipsoid}
		Let $V\subset\R^d$ and $C\subset\mathbb P^d_{\mathbb C}$ be as above,
		with $\deg C=D$.  For $R\succ0$ and $\rho>0$, set
		\[
		E_{R,\rho}:=\{x\in\R^d:x^\top R x=\rho\}.
		\]
		If $C$ is not contained in the projective quadric obtained by
		homogenizing $x^\top R x-\rho$, then
		\begin{equation}
			\label{eq:bezout-ellipsoid-intersection-bound}
			\#\bigl(V\cap E_{R,\rho}\bigr)\le 2D.
		\end{equation}
		In particular, this holds for every centered ellipsoid if $V$ is not
		contained in any centered ellipsoid.  A sufficient condition is that
		$V$ be unbounded.
	\end{lemma}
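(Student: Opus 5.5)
The plan is to reduce to Bézout's theorem in $\mathbb P^d_{\mathbb C}$. Homogenize the quadratic $x^\top Rx-\rho$ to
\[
Q_{R,\rho}(x_0,x)=x^\top Rx-\rho x_0^2 .
\]
This is a degree-two homogeneous polynomial, and its zero set is a projective quadric hypersurface $H\subset\mathbb P^d_{\mathbb C}$. The map $x\mapsto[1:x]$ sends each real point of $V\cap E_{R,\rho}$ injectively into $C\cap H$. So it suffices to show $\#(C\cap H)\le 2D$.

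Since $C$ is an irreducible curve not contained in $H$, the intersection $C\cap H$ is a proper closed subset of $C$, hence a finite set. Bézout's theorem for a curve and a hypersurface gives
\[
\sum_{p\in C\cap H} i(p;C\cdot H)=\deg C\cdot\deg H=2D,
\]
and every intersection multiplicity is at least one. This yields \eqref{eq:bezout-ellipsoid-intersection-bound}. The reducible, pure one-dimensional case follows by summing over components, using the hypothesis that no component lies in $H$.

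For the "in particular" statement, suppose $C\subset H$. Then every point of $V$, identified with $[1:x]\in C$, satisfies $x^\top Rx=\rho$, so $V\subset E_{R,\rho}$. Contrapositively, if $V$ is not contained in the centered ellipsoid $E_{R,\rho}$, then $C\not\subset H$ and the bound applies. Finally, $E_{R,\rho}$ is bounded because $R\succ0$ (its points satisfy $\|x\|_2^2\le\rho/\lambda_{\min}(R)$). So an unbounded $V$ cannot be contained in any centered ellipsoid.

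The only delicate step is the finiteness and multiplicity bookkeeping in Bézout. This requires irreducibility, or the no-component-in-$H$ hypothesis, so that the intersection is zero-dimensional and the degree-product formula applies. I would cite the standard projective Bézout inequality (e.g. Fulton's refined form) rather than reprove it.
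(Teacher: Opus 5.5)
Your proposal is correct and follows essentially the same route as the paper: homogenize $x^\top Rx-\rho$ to a degree-two quadric, note that noncontainment makes $C\cap Q_{R,\rho}$ zero-dimensional, apply the hypersurface form of projective B\'ezout to get total multiplicity $2D$, and use compactness of the ellipsoid for the unbounded case. Your explicit contrapositive check, that $C\subset H$ would force $V\subset E_{R,\rho}$, is a small step the paper leaves implicit.
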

	
	\begin{proof}
		In homogeneous coordinates $[X_0:X]$, the ellipsoid equation becomes
		\[
		\widetilde q_{R,\rho}(X_0,X):=X^\top R X-\rho X_0^2,
		\]
		whose zero set $Q_{R,\rho}$ has degree two.  Since
		$C\not\subset Q_{R,\rho}$, their intersection is zero-dimensional.  The
		hypersurface-section form
		of projective B\'ezout's theorem states that the sum of its intersection
		multiplicities over $\mathbb C$ is
		\[
		\deg(C)\deg(Q_{R,\rho})=2D;
		\]
		see \cite[Chapter~I, \S7]{hartshorne1977algebraic} or
		\cite[\S8.4]{fulton1998intersection}.  Thus there are at most $2D$
		distinct complex intersections and hence at most $2D$ real affine
		intersections.  Finally, an unbounded real curve cannot be contained in the
		compact ellipsoid $E_{R,\rho}$.
	\end{proof}
	
	The count includes complex points, points at infinity, and tangencies with
	multiplicity, so it remains an upper bound for distinct real affine points.
	Noncontainment is essential.  The same proof applies to translated
	ellipsoids; treating singular PSD fits additionally requires noncontainment in
	the corresponding degenerate quadrics.  We formulate the result for positive
	definite fitting matrices, as in the SAT theorem.
	
	We now turn this deterministic intersection bound into a probabilistic
	obstruction for mixture models.
	
	\begin{lemma}[Mixtures with a low-degree curve component]
		\label{prop:algebraic-curve-mixture-obstruction}
		For each $d$, let $V_d\subset\R^d$ be an affine algebraic curve whose
		irreducible complex projective closure has degree $D_d$, and suppose
		$V_d$ is not contained in any centered ellipsoid.  Let $\nu_{V,d}$ be a
		non-atomic probability measure supported on $V_d$, let $\nu_{0,d}$ be any
		probability measure on $\R^d$, and set
		$\mu_d=(1-p_d)\nu_{0,d}+p_d\nu_{V,d}$ for $0<p_d\le1$.  If
		$x_1,\ldots,x_n$ are iid from $\mu_d$, then
		\begin{equation}
			\label{eq:algebraic-mixture-sat-probability-bound}
			\Pp\left\{\begin{array}{c}
				\text{there exist }R\succ0\text{ and }\rho>0\text{ such that}\\[-1mm]
				x_i^\top R x_i=\rho\quad\text{for every }i\le n
			\end{array}\right\}
			\le \Pp\{\operatorname{Binomial}(n,p_d)\le 2D_d\}.
		\end{equation}
		Consequently, for every fixed $\delta\in(0,1)$,
		\begin{equation}
			\label{eq:bezout-mixture-chernoff}
			2D_d\le(1-\delta)np_d
			\quad\Longrightarrow\quad
			\Pp\{\text{an exact fit exists}\}
			\le \exp\left(-\frac{\delta^2}{2}np_d\right).
		\end{equation}
		In particular, exact fitting fails with high probability whenever
		$np_d\to\infty$ and $D_d=o(np_d)$.  If
		$n/d^2\to\alpha\in(0,\infty)$ and $\liminf_d p_d>0$, this includes every
		$D_d=o(d^2)$; more generally, the same conclusion holds whenever
			\[
			\limsup_{d\to\infty}\frac{2D_d}{d^2}
			<\alpha\liminf_{d\to\infty}p_d.
			\]
	\end{lemma}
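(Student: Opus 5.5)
The plan is to reduce the existence of an exact fit to a counting statement about how many sample points fall on the curve $V_d$, and then to bound that count using Lemma~\ref{lem:bezout-curve-ellipsoid} together with non-atomicity.

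First, realize the mixture in the standard way. Draw i.i.d.\ labels $J_i\sim\operatorname{Ber}(p_d)$. Given $J_i=1$, draw $x_i\sim\nu_{V,d}$; given $J_i=0$, draw $x_i\sim\nu_{0,d}$. Then $N:=\sum_iJ_i\sim\operatorname{Binomial}(n,p_d)$, and conditionally on the labels the points with $J_i=1$ are i.i.d.\ from $\nu_{V,d}$.

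The key claim is that, almost surely, no exact fit exists on the event $\{N>2D_d\}$. This immediately gives \eqref{eq:algebraic-mixture-sat-probability-bound}. To prove the claim, suppose $R\succ0$ and $\rho>0$ satisfy $x_i^\top Rx_i=\rho$ for all $i$. Then every curve point $x_i$ with $J_i=1$ lies in $V_d\cap E_{R,\rho}$. By hypothesis $V_d$ is not contained in any centered ellipsoid, so $C$ is not contained in the homogenized quadric. Lemma~\ref{lem:bezout-curve-ellipsoid} then says $V_d\cap E_{R,\rho}$ has at most $2D_d$ points.

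It remains to ensure the curve samples are pairwise distinct. Since $\nu_{V,d}$ is non-atomic, Fubini gives $\Pp(x_i=x_j)=\int\nu_{V,d}(\{y\})\,d\nu_{V,d}(y)=0$ for $i\ne j$ both on the curve. So almost surely the curve samples are $N$ distinct points of $V_d\cap E_{R,\rho}$, which forces $N\le2D_d$. I expect this to be the main subtlety: the bound must hold uniformly over all $(R,\rho)$. The argument handles this because distinctness is a property of the sample alone, independent of $(R,\rho)$, and the Bézout bound is deterministic for every $(R,\rho)$. No union bound over fits is needed.

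For \eqref{eq:bezout-mixture-chernoff}, if $2D_d\le(1-\delta)np_d$ then $\{N\le2D_d\}\subseteq\{N\le(1-\delta)\E N\}$. The multiplicative Chernoff lower-tail bound gives $\exp(-\delta^2np_d/2)$.

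The final assertions follow from this. If $np_d\to\infty$ and $D_d=o(np_d)$, then for any fixed $\delta$ the hypothesis $2D_d\le(1-\delta)np_d$ eventually holds and the bound tends to zero. In the regime $n/d^2\to\alpha$, choose $\delta$ small enough that $2D_d/d^2\le(1-\delta)\alpha p_d$ for large $d$. This is possible under the stated $\limsup/\liminf$ condition, since $np_d/d^2\ge(\alpha-o(1))\liminf p_d$. The condition also forces $np_d\to\infty$. The case $D_d=o(d^2)$ with $\liminf p_d>0$ is a special instance.
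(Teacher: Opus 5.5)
Your proposal is correct and matches the paper's proof: you use Bernoulli labels to identify the curve observations, non-atomicity of $\nu_{V,d}$ to get almost-sure distinctness, and the deterministic B\'ezout bound of Lemma~\ref{lem:bezout-curve-ellipsoid} uniformly over all $(R,\rho)$ to force $N\le 2D_d$, followed by the multiplicative Chernoff lower-tail bound. The only blemish is cosmetic: in the last paragraph, the inequality to arrange is $2D_d\le(1-\delta)np_d$, not $2D_d/d^2\le(1-\delta)\alpha p_d$, and the lower bound should read $np_d/d^2\ge(\alpha-o(1))(\liminf p_d-o(1))$; your argument already handles both points once they are written this way.
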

	
	\begin{proof}
		Use independent Bernoulli labels to identify the observations drawn from
		$\nu_{V,d}$.  Their number $M_d$ has law
$\operatorname{Binomial}(n,p_d)$, and, since $\nu_{V,d}$ is nonatomic,
these observations are pairwise distinct almost surely.  If one ellipsoid fits the entire sample, then
		Lemma~\ref{lem:bezout-curve-ellipsoid} forces $M_d\le2D_d$.  This proves
		the first bound, and the second is the standard binomial lower-tail Chernoff
		bound.  The asymptotic conclusions follow by choosing a fixed $\delta>0$
		for all sufficiently large $d$.
	\end{proof}
	
	\emph{Rational normal curves.}
	For $1\le D\le d$, consider the affine moment curve
	\begin{equation}
		\label{eq:rational-normal-affine-curve}
		V_{D,d}:=\{(t,t^2,\ldots,t^D,0,\ldots,0):t\in\R\}\subset\R^d.
	\end{equation}
	Its projective closure is the degree-$D$ rational normal curve
	\[
	[s:t]\longmapsto
	[s^D:s^{D-1}t:\cdots:t^D:0:\cdots:0]
	\quad\text{in }\mathbb P^d;
	\]
	see, for example, \cite{harris1992algebraic}.  Since $V_{D,d}$ is unbounded,
	every centered ellipsoid meets it in at most $2D$ points.  Thus a mixture with
	fixed positive mass on a nonatomic part of this curve cannot be fit once more
	than $2D$ curve observations appear.  For $D=d$ and $n\asymp d^2$, there are
	typically order $d^2$ such observations, whereas an ellipsoid contains at most
	$2d$ of them.
	
	The degree-one case already gives a transparent example retaining strong
	one-dimensional concentration properties.
	
	\begin{example}[An isotropic uniformly subgaussian line mixture]
		\label{ex:isotropic-subgaussian-line-mixture}
		Fix $p\in(0,1)$.  Let $G\sim N(0,I_d)$, let $U$ be uniform on
		$[-1,1]$, and let $B\sim\operatorname{Bernoulli}(p)$, all independent.
		Define
		\[
		\widetilde x=(1-B)G+B Ue_1,
		\qquad
		\widetilde\Sigma:=\E\widetilde x\widetilde x^\top
		=(1-p)I_d+\frac p3 e_1e_1^\top,
		\qquad
		x:=\widetilde\Sigma^{-1/2}\widetilde x.
		\]
		Then $\E x=0$, $\E xx^\top=I_d$, and
		\begin{equation}
			\label{eq:line-mixture-subgaussian-bound}
			\sup_{\|u\|_2=1}\|\langle u,x\rangle\|_{\psitwo}
			\le C_p<\infty
		\end{equation}
		uniformly in $d$.  Indeed, $\lambda_{\min}(\widetilde\Sigma)\ge1-p$, while
		each linear form of $\widetilde x$ is a mixture of a centered Gaussian and a
		bounded random variable.  Conditional on $B=1$, the vector $x$ lies on the
		line $\widetilde\Sigma^{-1/2}\R e_1$ with a
		nonatomic conditional law.  Hence
		\[
		\Pp\{\text{the sample admits a positive definite exact fit}\}
		\le \Pp\{\operatorname{Binomial}(n,p)\le2\}\longrightarrow0.
		\]
		Thus isotropy and dimension-free subgaussian linear marginals do not prevent
		this obstruction; the law may still place substantial mass on a low-degree
		algebraic set.
	\end{example}

	\subsection{Nuclear-norm minimizers below the SAT threshold}
	Maillard and Kunisky \cite{maillard-kunisky2023replica} conjectured that a
	nuclear-norm minimizing exact fit is positive semidefinite in the SAT phase.
	We prove a quantitative approximation to this claim.  If the empirical
	covariance is close to the identity and a PSD exact fit exists, then every
	nuclear-norm minimizer has only a small negative spectral part.  With an
	additional trace constraint, positivity holds exactly.  These facts are not
	used elsewhere in the paper. For $x_1,\ldots,x_n\in\R^d$, write
	\[
	\widehat\Sigma:=\frac1n\sum_{i=1}^n x_i x_i^\top,
	\qquad
	\delta:=\|\widehat\Sigma-I_d\|_{\op}.
	\]
	For $S\in\Ssym^d$, let $S=S_+-S_-$ be its spectral decomposition into
	positive and negative parts, with $S_+,S_-\succeq0$ and $S_+S_-=0$.
	The mechanism is simple: a PSD competitor bounds the optimal nuclear norm,
	averaging the interpolation equations controls the trace, and
	$\|S\|_*-\Tr S=2\|S_-\|_*$ then controls the negative part.
	
	\begin{proposition}[Nuclear-norm exact fits are almost PSD]
		\label{prop:exact-nuclear-minimizers-almost-psd}
		Suppose $\delta<1$, and suppose there exists
		$S_{\rm psd}\succeq0$ satisfying
		\[
		x_i^\top S_{\rm psd}x_i=d,\qquad i=1,\ldots,n .
		\]
		Consider the exact nuclear-norm program
		\begin{equation}
			\label{eq:exact-nuclear-program}
			\min\left\{\|S\|_*:S\in\Ssym^d,\ 
			x_i^\top Sx_i=d\ \text{for all }i\le n\right\}.
		\end{equation}
		Then \eqref{eq:exact-nuclear-program} has minimizers, and every minimizer
		$\widehat S$ satisfies
		\[
		\left|\|\widehat S\|_*-d\right|,
		\ |\Tr\widehat S-d|,
		\ \|\widehat S_-\|_*
		\le\frac{\delta d}{1-\delta}.
		\]
	\end{proposition}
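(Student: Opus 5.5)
The plan is to compare the trace of any exact fit with its nuclear norm, using the averaged interpolation equations. For any feasible $S$ of \eqref{eq:exact-nuclear-program}, average the constraints to get
\[
\langle \widehat\Sigma, S\rangle = \frac1n\sum_{i=1}^n x_i^\top S x_i = d .
\]
Writing $\langle\widehat\Sigma,S\rangle=\Tr S+\langle \widehat\Sigma-I_d,S\rangle$ and using $|\langle \widehat\Sigma-I_d,S\rangle|\le\delta\|S\|_*$ by nuclear/operator duality, we obtain
\[
|\Tr S-d|\le\delta\|S\|_* \qquad\text{for every feasible }S.
\]
Since $|\Tr S|\le\|S\|_*$, this gives the a priori lower bound $\|S\|_*\ge d/(1+\delta)$.

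Existence of minimizers is routine. The feasible set is a nonempty affine subspace, since it contains $S_{\rm psd}$. The map $\|\cdot\|_*$ is coercive, so its sublevel sets intersected with this closed affine subspace are compact. Hence a minimizer exists.

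Now let $\widehat S$ be any minimizer. For the upper bound, compare with the PSD competitor: $\|\widehat S\|_*\le\|S_{\rm psd}\|_*=\Tr S_{\rm psd}$. The displayed bound applied to $S_{\rm psd}$ gives $\Tr S_{\rm psd}\le d+\delta\Tr S_{\rm psd}$, so $\Tr S_{\rm psd}\le d/(1-\delta)$. Therefore
\[
\|\widehat S\|_*\le \frac{d}{1-\delta}=d+\frac{\delta d}{1-\delta}.
\]
Combined with $\|\widehat S\|_*\ge d/(1+\delta)\ge d-\delta d/(1-\delta)$, this yields $|\|\widehat S\|_*-d|\le\delta d/(1-\delta)$.

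The trace bound follows next: $|\Tr\widehat S-d|\le\delta\|\widehat S\|_*\le\delta d/(1-\delta)$.

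For the negative part, use $\|\widehat S\|_*-\Tr\widehat S=2\|\widehat S_-\|_*$. Then
\[
2\|\widehat S_-\|_*\le \frac{d}{1-\delta}-\Bigl(d-\frac{\delta d}{1-\delta}\Bigr)=\frac{2\delta d}{1-\delta},
\]
which is the claimed bound on $\|\widehat S_-\|_*$.

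None of these steps is a serious obstacle. The one place requiring care is this final subtraction, which needs the upper bound on $\|\widehat S\|_*$ and the lower bound on $\Tr\widehat S$. These come from different estimates, and each must be checked to contribute at most $\delta d/(1-\delta)$ so that the constant $2$ cancels exactly.
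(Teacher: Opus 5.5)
Your proof is correct and follows essentially the same route as the paper. You bound $\|\widehat S\|_*\le \Tr S_{\rm psd}\le d/(1-\delta)$ via the PSD competitor, control the trace by averaging the constraints and using nuclear/operator duality, and handle the negative part through $\|\widehat S\|_*-\Tr\widehat S=2\|\widehat S_-\|_*$. The only cosmetic difference is that you derive the lower bound on $\|\widehat S\|_*$ from $d\le(1+\delta)\|\widehat S\|_*$ instead of from $\|\widehat S\|_*\ge|\Tr\widehat S|$, which is equally valid.
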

	
	\begin{proof}
		The PSD competitor makes the program feasible, and the corresponding
		nuclear-norm sublevel set is compact, so minimizers exist.  Moreover,
		$d=\langle\widehat\Sigma,S_{\rm psd}\rangle\ge
		(1-\delta)\Tr S_{\rm psd}$, and hence
		\[
		\|\widehat S\|_*
		\le \|S_{\rm psd}\|_*
		=\Tr S_{\rm psd}
		\le \frac d{1-\delta}.
		\]
		Averaging the interpolation constraints gives
		$d=\Tr\widehat S+\langle\widehat\Sigma-I_d,\widehat S\rangle$.  Nuclear--operator
		norm duality therefore yields
		\[
		|\Tr\widehat S-d|
		\le
		\|\widehat\Sigma-I_d\|_{\op}\,\|\widehat S\|_*
		\le \frac{\delta d}{1-\delta}.
		\]
		Together with $\|\widehat S\|_*\ge|\Tr\widehat S|$, the preceding upper
		bound on $\|\widehat S\|_*$ also gives $|\|\widehat S\|_*-d\|\le\frac{\delta d}{1-\delta}.$ 	Finally, using $2\|\widehat S_-\|_*=\|\widehat S\|_*-\Tr\widehat S$,
		\[
		2\|\widehat S_-\|_*
		=\|\widehat S\|_*-\Tr\widehat S
		\le
		\frac d{1-\delta}
		-\left(d-\frac{\delta d}{1-\delta}\right)
		=
		\frac{2\delta d}{1-\delta}.
		\]
	\end{proof}
	
	\begin{corollary}[Below-threshold nuclear minimizers are almost PSD]
		\label{cor:exact-nuclear-minimizers-almost-psd}
		Assume the hypotheses of Theorem~\ref{thm:phase-transition} and
		$0<\alpha<\alpha_\star(\kappa)$.  Then, with probability tending to one,
		the exact nuclear-norm program \eqref{eq:exact-nuclear-program} is feasible.
		If $\mathcal M_d$ denotes its minimizer set on this event, then
		\[
		\sup_{\widehat S\in\mathcal M_d}
		\left(
		|\Tr\widehat S-d|
		+\|\widehat S_-\|_*
		\right)
		=O_\Pp(\sqrt d).
		\]
		In particular,
		\[
		\sup_{\widehat S\in\mathcal M_d}
		\frac{\|\widehat S_-\|_*}{\Tr\widehat S}
		=O_\Pp(d^{-1/2}).
		\]
	\end{corollary}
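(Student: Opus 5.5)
The plan is to combine the SAT conclusion of Theorem~\ref{thm:phase-transition} with Proposition~\ref{prop:exact-nuclear-minimizers-almost-psd}, after renormalizing the exact fit so that its constraint value is $d$. Since $\alpha<\alpha_\star(\kappa)$, Theorem~\ref{thm:phase-transition}(i) gives, with probability tending to one, some $S\succ0$ with $x_i^\top Sx_i=1$ for all $i$. Then $S_{\rm psd}:=dS\succeq0$ satisfies $x_i^\top S_{\rm psd}x_i=d$. On this event, program \eqref{eq:exact-nuclear-program} is feasible, and the hypothesis of Proposition~\ref{prop:exact-nuclear-minimizers-almost-psd} concerning a PSD competitor holds.

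The second ingredient is the covariance bound $\delta=\|\widehat\Sigma-I_d\|_{\op}=O_\P(\sqrt{d/n})=O_\P(d^{-1/2})$. This follows from the standard covariance estimate for i.i.d.\ isotropic vectors with independent subgaussian coordinates, e.g.\ \cite[Theorem~4.6.1]{vershynin2018highdimensional}, because the rows of $X^\top$ are independent isotropic subgaussian vectors with $\psi_2$-norm bounded in terms of $K_x$. Since $n\asymp d^2$, this yields $\delta\le C d^{-1/2}$ with probability tending to one, and in particular $\delta<1/2$. The bound could also be read off from the a priori estimates, but the cited theorem is the direct route.

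On the intersection of these two events, Proposition~\ref{prop:exact-nuclear-minimizers-almost-psd} applies to every minimizer $\widehat S$ and gives
\[
|\Tr\widehat S-d|+\|\widehat S_-\|_*\le\frac{2\delta d}{1-\delta}\le 4\delta d=O_\P(\sqrt d),
\]
uniformly over $\mathcal M_d$, since the bound is deterministic given $\delta$. For the ratio, the same bound gives $\Tr\widehat S\ge d-4\delta d\ge d/2$ for large $d$. Hence
\[
\frac{\|\widehat S_-\|_*}{\Tr\widehat S}\le 8\delta=O_\P(d^{-1/2}).
\]

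No step is a real obstacle here. The only points needing care are checking that the renormalized fit meets the hypothesis of Proposition~\ref{prop:exact-nuclear-minimizers-almost-psd}, and quoting the correct concentration rate for $\widehat\Sigma$. Both the fit and the covariance bound come from earlier results or standard random-matrix facts.
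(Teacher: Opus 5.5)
Your proposal is correct and follows the paper's proof essentially verbatim. The SAT assertion of Theorem~\ref{thm:phase-transition}, rescaled by $d$, supplies the PSD competitor, and subgaussian sample-covariance concentration gives $\delta=O_\P(d^{-1/2})$ since $n\asymp d^2$. The deterministic bounds of Proposition~\ref{prop:exact-nuclear-minimizers-almost-psd} then hold uniformly over $\mathcal M_d$, and $\Tr\widehat S\ge d/2$ gives the ratio bound.
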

	
	\begin{proof}
		The SAT assertion of Theorem~\ref{thm:phase-transition} supplies a positive
		semidefinite feasible point with probability tending to one.  Standard
		subgaussian sample-covariance concentration (e.g.
		\cite[Theorem~4.6.1]{vershynin2018highdimensional}), applied to the
		$n\times d$ matrix with rows $x_i^\top$, gives
		\[
		\delta=\|\widehat\Sigma-I_d\|_{\op}
		=O_\Pp(d^{-1/2}).
		\]
		Proposition~\ref{prop:exact-nuclear-minimizers-almost-psd} now gives all
		three conclusions uniformly over the minimizer set.  Since
		$\Tr\widehat S=d+O_\Pp(\sqrt d)$, it also gives the ratio bound.
	\end{proof}
	
	
	\begin{corollary}[Trace-normalized nuclear minimizers are PSD]
		\label{cor:trace-normalized-nuclear-minimizers-psd}
		Assume the hypotheses of Theorem~\ref{thm:phase-transition} and
		$0<\alpha<\alpha_\star(\kappa)$.  Then, with probability tending to one,
		every minimizer of
		\begin{equation}
			\label{eq:trace-normalized-nuclear-program}
			\min\left\{\|R\|_*:R\in\Ssym^d,\ \mathcal{A}_XR=0,\ \Tr R=d\right\}
		\end{equation}
		is positive semidefinite.
	\end{corollary}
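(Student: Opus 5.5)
Every minimizer $\widehat R$ of \eqref{eq:trace-normalized-nuclear-program} has nuclear norm $d$ and trace $d$. From this, positivity is immediate, since $\|\widehat R\|_*\ge\Tr\widehat R=d$ with equality exactly when $\widehat R_-=0$. We therefore only need to show the optimal value is at most $d$, and for this a feasible PSD point suffices.

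\textbf{Feasible PSD competitor.} First, by the SAT assertion of Theorem~\ref{thm:phase-transition}, with probability tending to one there exists $R_0\succ0$ with $x_i^\top R_0x_i=1$ for all $i$. Consequently $\mathcal A_XR_0=0$, because $\mathcal L_XR_0$ is the constant vector $((1-\Tr R_0)/\sqrt d)_i$ and $P_{\one^\perp}$ kills constants. Next, rescale to $R:=dR_0/\Tr R_0$. This keeps $\mathcal A_XR=0$ by linearity, keeps $R\succ0$, and gives $\Tr R=d$. Thus \eqref{eq:trace-normalized-nuclear-program} is feasible, and its optimal value is at most $\|R\|_*=\Tr R=d$.

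\textbf{Existence of minimizers.} The feasible set is a nonempty closed affine subspace. Its intersection with the nuclear-norm sublevel set $\{\|R\|_*\le d\}$ is compact, so minimizers exist.

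\textbf{Conclusion.} Let $\widehat R$ be any minimizer. Then $d\ge\|\widehat R\|_*$, and also $\|\widehat R\|_*=\Tr\widehat R_++\Tr\widehat R_-\ge\Tr\widehat R=d$. Hence all these quantities equal $d$, which forces $2\Tr\widehat R_-=\|\widehat R\|_*-\Tr\widehat R=0$. Since $\widehat R_-\succeq0$, this gives $\widehat R_-=0$, i.e.\ $\widehat R\succeq0$.

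The only substantive input is the SAT theorem; the rest is deterministic. The one point to check is that the trace normalization is legitimate, which holds because $\Tr R_0>0$ for $R_0\succ0$.
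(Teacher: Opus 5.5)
Your proposal is correct and follows the paper's proof essentially verbatim: the SAT theorem gives a PSD feasible point of trace $d$, so the optimal value is at most $d$, and then $\|R\|_*-\Tr R=2\|R_-\|_*$ forces every minimizer to be PSD. Your extra checks, namely that $\mathcal A_XR_0=0$ and that minimizers exist, are correct; the paper leaves them implicit.
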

	
	\begin{proof}
		The SAT theorem supplies $R_0\succ0$ with $\mathcal A_XR_0=0$.  After
		rescaling it to trace $d$, we obtain a PSD feasible point of nuclear norm
		$d$.  Every feasible $R$ has $\|R\|_*\ge\Tr R=d$, so every minimizer
		satisfies $\|R\|_*=\Tr R$.  Since
		$\|R\|_*-\Tr R=2\|R_-\|_*$, this forces $R\succeq0$.
	\end{proof}

	\subsection{Numerical experiments}
	\label{sec:numerical-experiment}

	For $1\le\kappa\le3$, we use a location mixture of two Gaussians. Let $S$ be Rademacher and $Z\sim N(0,1)$ be independent,
	and set
	\[
	\xi=\frac{S+\sigma Z}{\sqrt{1+\sigma^2}},
	\qquad
	\sigma^2=-1+\sqrt{\frac{2}{3-\kappa}}.
	\]
	This has fourth moment $\kappa$, with $\kappa=3$ interpreted as the Gaussian
	limit.  
    
    For $\kappa\ge3$, we instead use a scale mixture. Let $\xi=AZ$, where $A$ is independent
	of $Z\sim N(0,1)$ and, writing $M=\kappa/3$,
	\[
	A^2=
	\begin{cases}
		1/M, & \text{with probability }1-p,\\
		1+M, & \text{with probability }p,
	\end{cases}
	\qquad
	p=\frac{M-1}{M^2+M-1}.
	\]
	A direct calculation gives $\E A^2=1$, $\E A^4=\kappa/3$, and hence
	$\E\xi^4=\kappa$.  The two families agree at $\kappa=3$, where we just
    use a standard Gaussian.

	We used the grid $1.05,1.25,1.5,2,3,5,7.5,10,15,20,25,30$ and ran eight
	independent trials for each value.  In each trial, a nested sequence of samples
	was drawn, and a binary search in $n$ estimated the largest prefix for which
	ellipsoid fitting was numerically feasible.  At each tested $n$, feasibility
	was checked by
	\[
	\max_{R,t}\ t
	\quad\text{subject to}\quad
	R-tI_d\succeq0,
	\qquad
	x_i^\top R x_i=d,\quad i=1,\ldots,n.
	\]
	The optimum is the largest certified spectral margin.  We declared an
	instance feasible when it was nonnegative up to tolerance $10^{-5}$ and solved
	the SDP with CVXPY and CLARABEL.  Figure~\ref{fig:alpha-star-kappa-experiment}
	shows the largest feasible $n/d^2$ in each trial, their mean, and their middle
	50 percent interval together with the curve $\alpha_\star(\kappa)$.

    \bibliographystyle{alpha}
	\bibliography{main}
\end{document}